\documentclass[11pt]{article}
\usepackage[all,2cell]{xy} \UseAllTwocells \SilentMatrices \SelectTips{cm}{}
\usepackage{latexsym,amsfonts,amssymb}
\usepackage{amsmath,amsthm,amscd}
\usepackage[dvips]{epsfig}
\usepackage{psfrag, pstricks}
\def\drawing#1{\begin{center}\epsfig{file=fig/#1}\end{center}}
\usepackage{diagcat}
\newstrandstyle{thick}{type=string,color=green,width=3pt} 

\usepackage{color}
\usepackage{etoolbox}
\usepackage{hyperref}
\usepackage{graphicx}
\usepackage{a4wide}
\usepackage{pinlabel}
\usepackage{tabularx}
\normalfont\upshape



    \def\BM{{\mathbb{B}}}

    \def\NM{{\mathbb{N}}}
    \def\OM{{\mathbb{O}}}

    \def\XM{{\mathbb{X}}}
    \def\YM{{\mathbb{Y}}}


  \def\ab{{\mathbf a}}  \def\AC{{\mathcal{A}}}
    \def\BC{{\mathcal{B}}}
    
    \def\DC{{\mathcal{D}}}
    \def\EC{{\mathcal{E}}}
    \def\FC{{\mathcal{F}}}
    
    \def\HC{{\mathcal{H}}}

    \def\MC{{\mathcal{M}}}
    \def\NC{{\mathcal{N}}}
    
    \def\PC{{\mathcal{P}}}
    
    \def\RC{{\mathcal{R}}}

    \def\UC{{\mathcal{U}}}
  \def\vb{{\mathbf v}}



\def\a{\alpha}
\def\b{\beta}
\def\g{\gamma}

\def\d{\delta}

\def\e{\varepsilon}

\def\l{\lambda}

\def\w{\omega}

\let\phi=\varphi

\newcommand{\ot}{\otimes}
\newcommand{\co}{\colon}

\newcommand{\odif}{\overline{\dif}}

\newcommand{\Proj}{\textrm{-proj}}

\newcommand{\Obj}{\textrm{Obj}}
\newcommand{\Kar}{\textrm{Kar}}
\newcommand{\Uthick}{\dot{\UC}}

\newcommand{\ig}[2]{\vcenter{\xy (0,0)*{\includegraphics[scale=#1]{fig/#2}} \endxy}}

\newcommand{\scs}{\scriptstyle}



\theoremstyle{plain}
\newtheorem{thm}{Theorem}[section]
\newtheorem{prop}[thm]{Proposition}
\newtheorem{lemma}[thm]{Lemma}
\newtheorem{cor}[thm]{Corollary}

\theoremstyle{definition}

\newtheorem{defn}[thm]{Definition}
\newtheorem{example}[thm]{Example}
\newtheorem{rmk}[thm]{Remark}


\numberwithin{equation}{section}

\usepackage{bbm}
\def\1{\mathbbm 1}

\def\Z{\mathbb Z}
\def\N{\mathbb N}
\def\C{\mathbb C}
\def\F{\mathbb F}
\def\o{\otimes}
\def\lra{\longrightarrow}

\def\Id{\mathrm{Id}}
\def\mc{\mathcal}
\def\mf{\mathfrak}

\def\END{\mathrm{END}}
 


\newcommand{\dmod}{\!-\!\mathrm{mod}}

\newcommand{\mH}{\mathrm{H}}  
\newcommand{\NH}{\mathrm{NH}} 
\newcommand{\pol}{\mathrm{Pol}}

\newcommand{\s}{\mathcal{S}}
\newcommand{\sym}{\mathrm{Sym}}
\newcommand{\Mat}{\mathrm{Mat}}

\newcommand{\Hom}{{\rm Hom}}
\newcommand{\HOM}{{\rm HOM}}

\newcommand{\op}{{\rm op}}
\renewcommand{\Im}{{\rm Im}\;}

\renewcommand{\sl}{\mathfrak{sl}}

\def\dif{{\partial}}

\def\lra{{\longrightarrow}}
\def\dmod{{\mathrm{-mod}}}   

\def\END{{\mathrm{END}}}

\def\Id{\mathrm{Id}}
\def\mc{\mathcal}
\def\mf{\mathfrak}
\def\shuffle{\,\raise 1pt\hbox{$\scriptscriptstyle\cup{\mskip
               -4mu}\cup$}\,}

\newcommand{\refequal}[1]{\xy {\ar@{=}^{#1}
(-1,0)*{};(1,0)*{}};
\endxy}



\def\lr{\mbox{\begin{picture}(7,10)
\put(1,0){\line(1,0){10}}
\put(1,0){\line(1,1){10}}
\put(11,0){\line(0,1){10}}
\end{picture}\;
}}

\def\ll{\mbox{\begin{picture}(7,10)
\put(1,0){\line(1,0){10}}
\put(11,0){\line(-1,1){10}}
\put(1,0){\line(0,1){10}}
\end{picture}\;
}}



\title{A categorification of quantum \texorpdfstring{$\mathfrak{sl}(2)$}{sl(2)} at prime roots of unity}
\author{Ben Elias, You Qi}

\date{\today}
%
\begin{document}
%

\maketitle

\begin{abstract}
We categorify the Beilinson-Lusztig-MacPherson integral form of quantum $\mf{sl}_2$ specialized at a prime root of unity.
\end{abstract}

\setcounter{tocdepth}{2}
\tableofcontents


\section{Introduction}

\subsection{Motivation}
The goal of this sequence of papers is to categorify quantum groups, as well as their representation theory, at a prime root of unity. There are several notions of
what the ``quantum group at a root of unity" can be. The ``small'' version of the quantum group, introduced by Lusztig in \cite{LusfdHopf}, is a finite dimensional Hopf algebra over
the field of cyclotomic integers. Alternatively, the Beilinson-Lusztig-MacPherson (BLM) integral form of the quantum group, constructed in \cite{BLM}, can be specialized to the same root of unity,
yielding an infinite dimensional Hopf algebra in which the small version sits as a Hopf subalgebra. Small quantum groups are intimately related to quantum topological invariants for
three-manifolds, while the BLM form connects the representation theory of quantum groups at roots of unity to that of affine Lie algebras at certain fixed levels. In \cite{EQ1} we categorified the small version of quantum $\sl_2$ at a prime root of unity, while in this paper we categorify the BLM form specialized at the same cyclotomic ring.

The original motivation for categorification, given by Crane and Frenkel \cite{CF}, was to find a combinatorial construction of certain gauge-theoretic quantum four-manifold invariants. They hoped to achieve this by lifting the three-dimensional Reshetikhin-Turaev-Witten topological field theory to a four-dimensional theory. The simple and elegant construction of Khovanov
homology \cite{KhJones} for links in the three-dimensional sphere is strong evidence for Crane and Frenkel's conjectural proposal. In order to lift link invariants into three-manifold invariants, one
should specialize the generic quantum parameters for link invariants to a root of unity, at least from the point of view of quantum physics.

The use of $p$-DG structures and $p$-homological algebra (or ``hopfological algebra") to categorify algebras specialized at a prime root of unity was first suggested in \cite{Hopforoots}. The previous papers in this series, \cite{KQ} and \cite{EQ1}, applied the theory of hopfological algebra developed in \cite{QYHopf} to categorify quantum $\sl_2$ at prime roots of unity in
various aspects. We expect the reader to have slight familiarity with \cite{KQ} and \cite{EQ1}, although we will recall the facts used from them when needed. We also expect the reader to be familiar with the definitions of a $p$-DG category and its $p$-DG Grothendieck group, but see Chapter~\ref{sec-idempotents} for a refresher.

\subsection{A brief summary}
We now give a rough sketch of the contents of each chapter.

The second chapter of this paper studies the $p$-DG algebra of symmetric polynomials in $n$ variables, and related structures. Recall that in the prequel \cite{KQ}, Khovanov and the second
author studied the polynomial ring, viewed as a $p$-DG module for symmetric polynomials, and its endomorphism ring, the nilHecke algebra. The polynomial ring can be equipped with a family
of possible $p$-differentials, which in turn equip the nilHecke algebra with a family of differentials; they showed that only two members of this family (which are, in some sense, dual to each other,
and therefore essentially unique) yield a $p$-DG nilHecke algebra with satisfactory hopfological properties. We investigate how the differential acts on the subalgebra of symmetric
polynomials, and compute its $p$-cohomology groups. The $p$-DG algebra of symmetric polynomials is shown to be quasi-isomorphic to a polynomial subalgebra with the trivial differential
(Proposition \ref{prop-cohomology-of-sym-n}). Therefore, it can be thought of as a ``formal" $p$-DG algebra.

In \S\ref{sec-grassmannian}, we generalize the method of \cite[Chapter 3]{KQ} to investigate partially symmetric polynomials, i.e., polynomials invariant under a Young subgroup of the
symmetric group. We study them as $p$-DG algebras, and as $p$-DG modules for the algebra of (fully) symmetric polynomials. It is known that the endomorphism algebras of these modules are
given by the ``positive half" of the thick calculus developed by Khovanov-Lauda-Mackaay-Sto\v{s}i\'{c} in \cite{KLMS}. This provides a natural explanation why the ``half thick calculus'' is
equipped with a $p$-differential, which is compatible with the differentials used in \cite{KQ, EQ1}. Derived categories of these $p$-DG endomorphism algebras are then used to construct a
categorification of the positive half of the divided-power form of quantum $\sl_2$. This is analogous to the categorification of the positive half of the small form in \cite{KQ}. We
formulate this result as Theorem \ref{thm-half-U-thick}. Furthermore, in Corollary \ref{cor-categorical-embedding-half-u}, we establish a categorical embedding of the categorified small $\sl_2$ at a prime root of unity into the divided-power form.

Recall that the $p$-DG nilHecke algebra is acyclic when the number of strands $n$ is at least $p$. Meanwhile, the results of \S\ref{sec-sym} show that the $p$-DG symmetric polynomial
algebras, and the algebras appearing in the half thick calculus, are never acyclic. This (to the informed reader) should explain why the former categorifies the small version, and the
latter the BLM version, of the positive half of the quantum group at a root of unity.

In \S\ref{sec-idempotents}, we provide a detailed discussion of the interactions between hopfological algebra and (partial) idempotent completion. This chapter is somewhat technical, but
also adds an interesting new wrinkle, and so we summarize its basic points in the next part of the introduction. In preparation for this discussion, \S\ref{sec-idempotents} also recalls
many of the necessary facts from hopfological algebra that we use in the remainder of the paper.

We then seek to ``double" the above half-construction of the categorified quantum group. In \S\ref{sec-thick} we equip the thick calculus $\Uthick$ of \cite{KLMS} with a $p$-differential
$\dif$, and provide some formulas for its computation. This differential agrees with the differential on the positive half from \S\ref{sec-grassmannian}, and with the differential induced
on a partial idempotent completion from \S\ref{sec-idempotents}. We prove in Theorem \ref{thm-U-thick} that (the derived category of) the $p$-DG monoidal category $(\Uthick,\dif)$
categorifies the BLM form of $\mathfrak{sl}_2$ at a $p$th root of unity.
There is, as yet, no satisfactory notion of a ``categorified Drinfeld double" in either the abelian categorification of the big quantum group pioneered by Khovanov-Lauda \cite{KL1,KL2,KL3,Lau1} and Rouquier \cite{Rou2}, or the $p$-DG context studied in this paper and \cite{EQ1}. However, we expect, when such a concept is developed, that the $p$-DG categories $(\UC,\dif)$ and $(\Uthick,\dif)$ from our work will be interesting examples.

There is one categorified relation (i.e., a direct sum decomposition) needed for $\Uthick$ which is not already present in the positive half, guaranteed by the Sto\v{s}i\'{c} formula
(\cite[Theorem 5.9]{KLMS}). In order for this categorified relation to descend to a relation on the $p$-DG Grothendieck group, we need to check a technical $p$-DG filtered module condition,
called the ``fantastic filtration condition" (Definition \ref{defnFcfiltration}). This computation occupies most of \S\ref{sec-categorification}. This chapter is highly computational, and
follows \cite{KLMS} extremely closely; consequently, we have gone light on the related introductory and background material, assuming at this point that the reader is very familiar with
\cite{KLMS}.

A diagrammatically-minded, computation-loving reader is welcome to skip to \S\ref{sec-thick}. The first part of the paper (\S\ref{sec-sym} and \S\ref{sec-grassmannian}) is logically
independent, and is mostly unnecessary for the main result of \S \ref{sec-categorification}. However, it helps to explain from a theoretical perspective why the differential on $\Uthick$ is what it is, unrelated to its
combinatorial construction, or the computational requirements of \S\ref{sec-categorification}. Moreover, it will certainly play a role in the categorical representation theory of quantum
$\sl_2$ at a prime root of unity, whose study is initiated in \cite{QiSussan}, and which we plan to pursue in future work.

\subsection{Idempotent completion and hopfological algebra}

The thick calculus $\Uthick$ can be thought of as a realization of the Karoubi envelope or idempotent completion of Lauda's category $\UC$ \cite{Lau1}. Therefore, our main result is an
example of a surprising phenomenon: a $p$-DG category and its Karoubi envelope need not be $p$-DG Morita equivalent! The $p$-DG Grothendieck group of $\Uthick$ is much larger than that of
$\UC$, despite the fact that the underlying additive categories are Morita equivalent.

In fact, the abelian categories of $p$-DG modules over a $p$-DG category and its Karoubi envelope are equivalent, if the Karoubi envelope admits a compatible $p$-DG structure. However, this
equivalence does not preserve the hopfological properties of $p$-DG modules, and for this reason the compact derived categories may not be equivalent. For example, the divided power object
$\EC^{(a)}$ for $a \ge p$ is acyclic when viewed as a module for $\UC$, and thus its symbol is zero in the $p$-DG Grothendieck group of $\UC$; however, it is not acyclic when viewed as a
module for $\Uthick$, and its symbol contributes non-trivially to the $p$-DG Grothendieck group of $\Uthick$.

This may seem a bit counter-intuitive at first glance, as it did to the authors at the beginning of this project. In abelian categorification it is a typical practice to take the Karoubi
envelope as a matter of course (as was done in the works of Khovanov, Lauda, and Rouquier \cite{KL1,KL2,KL3,Lau1,Rou2}), while the $p$-DG world requires a great deal more caution. We feel that this
poses the greatest stumbling block for people entering the field. Accordingly, \S\ref{sec-idempotents} is devoted to a careful study of partial idempotent completions and their
Grothendieck groups.

It is a complication that, given an additive $p$-DG category, its Karoubi envelope (i.e., the category obtained by adding the images of all idempotents as new objects) need not come
equipped with a $p$-DG structure. After all, the idempotents do not necessarily ``commute" with the differential. However, it may be the case that a Karoubian partial idempotent
completion (i.e., a category obtained by adding the images of enough idempotents) can be equipped with a $p$-DG structure. This, amazingly, is what happens in our example.

One should note that nothing in this discussion is unique to the $p$-DG situation; these observations apply equally to ordinary DG categories. Given a DG category, there need not be a DG
structure on the Karoubi envelope of the underlying additive category. This is not a problem that appears in the derived category of an abelian category, where the homological direction
is ``orthogonal" to the algebraic direction and the differential commutes with any idempotent, but it can happen in other DG categories. When a partial idempotent completion can be equipped
with a DG structure, its DG category may have a larger Grothendieck group than the original DG category. Unfortunately, we could not find a similar treatment of this situation for DG
categories in the literature.

\subsection{Acknowledgements}

We thank Mikhail Khovanov for his constant support and encouragement. We also thank Rapha\"{e}l Rouquier for helpful discussions and suggestions on further extensions of the current work.
Our appreciation goes to Aaron Lauda, and by extension to the other authors Mikhail Khovanov, Marco Mackaay, and Marko Sto\v{s}i\'{c} of \cite{KLMS}, for allowing us electronic access to
the diagrams used in their paper.

\section{The nilHecke algebra and symmetric polynomials}\label{sec-sym}

Fix once and for all a base field $\Bbbk$ of characteristic $p > 0$. The unadorned tensor product $\o$ denotes the tensor product over $\Bbbk$. We adopt the French convention that the set of natural numbers $\N$ contains zero.

%
\subsection{Review of the \texorpdfstring{$p$}{p}-DG nilHecke algebra}
%

Here we briefly review the $p$-DG categorification of \cite{KQ}. Let $\pol_n=\Bbbk[x_1,\dots, x_n]$ be the polynomial algebra with the differential $\dif(x_i):=x_i^2$ for all $i=1, \dots ,n$. The subring $\sym_n$
of symmetric polynomials is preserved by this differential, so it is also a $p$-DG algebra.

We define a family of $p$-DG modules for $\pol_n$, whose underlying modules are free of rank one. For each $n$-tuple of numbers $\mathtt{a}=(a_1, \dots, a_n)\in \F_p^n$, let $\PC_n(\mathtt{a}):=\pol_n\cdot v_{\mathtt{a}}$ be a free rank one module over $\pol_n$, equipped with the differential $\dif_\mathtt{a}$ determined by
\[
\dif_\mathtt{a}(v_\mathtt{a})=\sum_{i=1}^na_ix_iv_\mathtt{a}.
\]
The generator $v_\mathtt{a}$ is in degree $n(1-n)/2$. By restriction, one can regard $\PC_n(\mathtt{a})$ as a $p$-DG module over $\sym_n$.

\begin{rmk}
Because of the Leibniz rule, it is enough to specify the differential $\dif_\mathtt{a}$ on the generator $v_\mathtt{a}$. The Leibniz rule implies that
\[
\dif_\mathtt{a} (f v_\mathtt{a}) = \dif(f) v_\mathtt{a} + f\dif_\mathtt{a}(v_\mathtt{a}).
\]
In this chapter, the unadorned $\dif$ will denote a differential on an algebra, like $\pol_n$, $\sym_n$, or $\NH_n$ below, whereas a differential symbol with a subscript will denote the differential on a $p$-DG module.
\end{rmk}

Let $\mathtt{a}^+ = (-1,0) \in \F_p^2$, and denote $\PC_2(\mathtt{a}^+)$ by $\PC_2^+$. Similarly, let $\mathtt{a}^- = (0,-1)$. It is shown in \cite[Section 3.2]{KQ} that, up to reordering
the variables, the $p$-DG module $\PC_2(\mathtt{a})$ is a finite cell module over $\sym_2$ if and only if $\mathtt{a} = \mathtt{a}^\pm$. The modules $\PC_2(\mathtt{a}^+)$ and
$\PC_2(\mathtt{a}^-)$ are dual, in a sense to be described in the next chapter; we will restrict our attention to $\mathtt{a}^+$.

The finite-cell filtration on $\PC_2^+$ is given by
\[
0\subset \sym_2\cdot x_1v_{\mathtt{a}^+} \subset \sym_2\cdot x_1v_{\mathtt{a}^+}+\sym_2\cdot v_{\mathtt{a}^+} = \PC_2^+,
\]
arising from the basis $\{x_1 v_{\mathtt{a}^+}, v_{\mathtt{a}^+}\}$.
The induced differential on the algebra $\END_{\sym_2}(\PC_2({\mathtt{a}^+}))\cong \NH_2$ will be denoted $\dif$. It is given on the local diagrammatic generators by
\begin{equation}\label{eqn-dif-on-nilHecke-generator}
\dif\left(
\begin{DGCpicture}
\DGCstrand(1,0)(1,1) \DGCdot{0.5}
\end{DGCpicture} \right)
~ =~
\begin{DGCpicture}
\DGCstrand(1,0)(1,1) \DGCdot{0.5}[ur]{$\scs 2$}
\end{DGCpicture}
\ , \ \ \ \
\dif\left(~
\begin{DGCpicture}
\DGCstrand(1,0)(0,1)
\DGCstrand(0,0)(1,1)
\end{DGCpicture}~\right)
=
-
\begin{DGCpicture}
\DGCstrand(1,0)(0,1)\DGCdot{0.75}
\DGCstrand(0,0)(1,1)
\end{DGCpicture}
-
\begin{DGCpicture}
\DGCstrand(1,0)(0,1)\DGCdot{0.25}
\DGCstrand(0,0)(1,1)
\end{DGCpicture}.
\end{equation}
Here a dot on the $i$-th strand ($i=1,2$) indicates the endomorphism of multiplication on $\PC_2(\mathtt{a}^+)$ by $x_i$, while the crossing stands for the divided difference operator $D_2$
on $\PC_2^+$, which acts on any $f(x_1,x_2)v_{\mathtt{a}^+}\in \PC_2^+$ by
\[
D_2(f(x_1,x_2)v_{\mathtt{a}^+}):=\frac{f(x_1,x_2)-f(x_2, x_1)}{x_1-x_2}v_{\mathtt{a}^+}.
\]
Furthermore, it is shown that when $\mathrm{char}(\Bbbk)>2$, the $p$-DG module $\PC_2^+$ is a compact cofibrant generator of the derived category $\DC(\NH_2,\dif)$.

\begin{rmk} In \cite{KQ}, a family of differentials on $\NH_2$ were introduced. The differential $\dif$ here corresponds to $\dif_1$ from that paper, and is the only member of their family
we will consider in this work. Had we chosen $\mathtt{a}^-$ instead, the induced differential on $\END_{\sym_2}(\PC_2({\mathtt{a}^-})) \cong \NH_2$ would correspond to $\dif_{-1}$.
\end{rmk}

\begin{rmk}\label{rmk-dif-on-NH}
In \cite{EQ1}, the divided difference operator $D_2$ was denoted $\d(2)$, while in previous papers of Khovanov-Lauda and others it is denoted $\dif$. We have altered the notation to be consistent with \cite{KLMS}, who use $\d_a$ instead for the polynomial soon to be defined. In this paper, we will prefer the notation from \cite{KLMS} over \cite{EQ1} whenever the two are in conflict, because we will be following some difficult computations from \cite{KLMS} later. \end{rmk}

More generally, let $\mathtt{a}^+ = (-(n-1),\dots,-1,0) \in \F_p^n$, and let $\PC_n^+ := \PC_n(\mathtt{a}^+)$. The restriction of $\PC_n^+$ to the $p$-DG subalgebra $\sym_n$ is a finite cell module, with basis
\[
B_n^+:=\left\{x_1^{a_1}\cdots x_{n-1}^{a_{n-1}}v_{\mathtt{a}^+}| 0 \leq a_i \leq n-i\right\}.
\]
Let $\d_n = \prod_{i=1}^n x_i^{n-i}$ be the monomial coefficient in front of the maximal degree element of $B_n^+$. The endomorphism algebra $\NH_n$ of $\PC_n^+$ has an induced differential, also denoted $\dif$.

Let us introduce some notation for certain linear polynomials, viewed as elements in $\pol_n$ or $\NH_n$. We have

\begin{subequations}
\noindent
\begin{tabularx}{\textwidth}{@{}XXX@{}}
\begin{equation}
\ll_n = \sum_{i=1}^n (n-i)x_i \label{lltri}
\end{equation} &
\begin{equation} \lr_n = \sum_{i=1}^n (i-1)x_i. \label{lrtri}
\end{equation}
\end{tabularx}
\end{subequations}
For instance, we have
\[
\dif_{\mathtt{a}^+}(v_{\mathtt{a}^+}) = - \ll_n v_{\mathtt{a}^+}
\]
and
\begin{equation}
\dif(\d_n) =\dif(x_1^{n-1}x_2^{n-2}\dots x_{n-1})=\ll_n \d_n.
\end{equation}

We record the following useful formula for the $\dif$ action on the lowest degree element $D_n \in \NH_n$, attached to the longest element of $S_n$. It follows from \eqref{eqn-dif-on-nilHecke-generator} by induction on the number of strands (see \cite[Lemma 3.12]{KQ}).
\begin{equation}\label{eqn-diff-on-deltas}
\dif(D_n) = -\ll_n D_n - D_n \lr_n
\end{equation}
This is diagrammatically depicted as
\begin{eqnarray}\label{eq-dif-of-Dn}
\dif\left(
\begin{DGCpicture}
\DGCstrand(0,0)(0,2) \DGCstrand(0.5,0)(0.5,2) \DGCstrand(1,0)(1,2)
\DGCstrand(1.5,0)(1.5,2)
\DGCcoupon(-0.15,0.5)(1.65,1.5){$D_n$}
\end{DGCpicture}
\right)&  =  & - (n-1) {\begin{DGCpicture}
\DGCstrand(0,0)(0,2)\DGCdot{1.75} \DGCstrand(0.5,0)(0.5,2)
\DGCstrand(1,0)(1,2) \DGCstrand(1.5,0)(1.5,2)
\DGCcoupon(-0.15,0.5)(1.65,1.5){$D_n$}
\end{DGCpicture}}-(n-2)
{\begin{DGCpicture} \DGCstrand(0,0)(0,2)
\DGCstrand(0.5,0)(0.5,2)\DGCdot{1.75} \DGCstrand(1,0)(1,2)
\DGCstrand(1.5,0)(1.5,2)
\DGCcoupon(-0.15,0.5)(1.65,1.5){$D_n$}
\end{DGCpicture}}- \cdots -
{\begin{DGCpicture} \DGCstrand(0,0)(0,2) \DGCstrand(0.5,0)(0.5,2)
\DGCstrand(1,0)(1,2)\DGCdot{1.75} \DGCstrand(1.5,0)(1.5,2)
\DGCcoupon(-0.15,0.5)(1.65,1.5){$D_n$}
\end{DGCpicture}}\nonumber \\
&& \nonumber \\
&& - {\begin{DGCpicture} \DGCstrand(0,0)(0,2)
\DGCstrand(0.5,0)(0.5,2)\DGCdot{0.25} \DGCstrand(1,0)(1,2)
\DGCstrand(1.5,0)(1.5,2)
\DGCcoupon(-0.15,0.5)(1.65,1.5){$D_n$}
\end{DGCpicture}}
-\cdots-(n-2) {\begin{DGCpicture} \DGCstrand(0,0)(0,2)
\DGCstrand(0.5,0)(0.5,2) \DGCstrand(1,0)(1,2)\DGCdot{0.25}
\DGCstrand(1.5,0)(1.5,2)
\DGCcoupon(-0.15,0.5)(1.65,1.5){$D_n$}
\end{DGCpicture}}
-(n-1) {
\begin{DGCpicture} \DGCstrand(0,0)(0,2)
\DGCstrand(0.5,0)(0.5,2) \DGCstrand(1,0)(1,2)
\DGCstrand(1.5,0)(1.5,2)\DGCdot{0.25}
\DGCcoupon(-0.15,0.5)(1.65,1.5){$D_n$}
\end{DGCpicture}
} \ .
\end{eqnarray}

%
\subsection{Schur polynomials}
%

We identify partitions with Young diagrams. For instance, the following Young diagram corresponds to the partition $(5,3,3,1)$.
\vspace{0.1in}

\drawing{young1.eps}

\vspace{0.1in}

The partitions below are those can be obtained from the Young diagram above by adding one (shaded) box.

\vspace{0.1in}

\drawing{young4.eps}

\vspace{0.1in}

The diagram of case (d) does not occur if we confine the partitions to have less or equal to four rows. The following is a diagram with one box added that is not allowed.

\vspace{0.1in}

\drawing{young5.eps}

\vspace{0.1in}

Let $P(n)$ denote the set of partitions with at most $n$ rows. For $\l = (\l_1, \dots, \l_n) \in P(n)$ (we allow $\lambda_i=\lambda_{i+1}=\cdots =\lambda_{n}=0$), let $\pi_\l \in \sym_n$ be the corresponding Schur polynomial in $n$ variables. It is well-known that $\{\pi_\l\}_{\l \in P(n)}$ forms an integral basis for $\sym_n$. The following lemma indicates how $\dif$ acts with respect to this basis.

\begin{lemma}\label{lemma-difofpilambdalonghand} The differential $\dif$ on $\sym_n$ acts on $\pi_\lambda$ as follows,
\begin{equation}\label{difofpilambdalonghand}
\dif(\pi_\lambda)=\sum_{\mu_i}(\lambda_i+1-i)\pi_{\mu_i},
\end{equation}
where the sum is over Young diagrams $\mu_i$ that are obtained from $\lambda$ by adding a box such that $\mu_i$ remains a partition in $P(n)$. The index $i$ indicates the row in which the box was added.
\end{lemma}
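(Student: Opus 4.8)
The plan is to compute $\dif(\pi_\lambda)$ by pushing the differential through an explicit formula for the Schur polynomial, using the fact that $\pi_\lambda$ is the image of a monomial under the longest divided difference operator. Concretely, recall the Jacobi–Trudi-type presentation: for $\lambda \in P(n)$, one has $\pi_\lambda = D_n(x^{\lambda + \rho})$ up to a normalization, where $\rho = (n-1, n-2, \dots, 1, 0)$ and $x^{\lambda+\rho} = x_1^{\lambda_1+n-1} x_2^{\lambda_2+n-2}\cdots x_n^{\lambda_n}$; here $D_n$ is the lowest-degree element of $\NH_n$ attached to the longest element of $S_n$, which on symmetric-type inputs computes the Weyl antisymmetrizer divided by the Vandermonde. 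The advantage of this presentation is that we already know from \eqref{eqn-diff-on-deltas} how $\dif$ interacts with $D_n$, and $\dif$ on a monomial $x^{\mu}$ is elementary from the Leibniz rule and $\dif(x_i) = x_i^2$.

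The key steps, in order, are as follows. First I would fix the normalization so that $\pi_\lambda = D_n(x^{\lambda+\rho})$ genuinely holds on the nose (or with an explicit sign/scalar), working inside $\PC_n^+$ so that $D_n$, the dots $x_i$, and $\dif$ all mean what the excerpt has defined. Second, apply the Leibniz/commutation rule: $\dif(\pi_\lambda) = \dif(D_n(x^{\lambda+\rho})) = \dif(D_n)(x^{\lambda+\rho}) + D_n(\dif(x^{\lambda+\rho}))$, i.e.\ $\dif(D_n \cdot x^{\lambda+\rho}) = \dif(D_n)\cdot x^{\lambda+\rho} + D_n \cdot \dif(x^{\lambda+\rho})$ as operators/elements. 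Third, substitute \eqref{eqn-diff-on-deltas}: $\dif(D_n) = -\ll_n D_n - D_n \lr_n$, so the first term becomes $-\ll_n \pi_\lambda - D_n(\lr_n \, x^{\lambda+\rho})$, where $\ll_n = \sum (n-i)x_i$ and $\lr_n = \sum (i-1)x_i$. Fourth, compute $\dif(x^{\lambda+\rho}) = \sum_{i=1}^n (\lambda_i + n - i)\, x_i \cdot x^{\lambda+\rho}$ from the Leibniz rule. Fifth, combine: the $D_n(\,\cdot\,x^{\lambda+\rho})$ terms assemble into $D_n\big(\sum_i (\lambda_i + n - i) x_i x^{\lambda+\rho} - \sum_i (i-1) x_i x^{\lambda+\rho}\big) = D_n\big(\sum_i (\lambda_i + n - 2i + 1) x_i x^{\lambda+\rho}\big)$. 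Now $x_i x^{\lambda+\rho} = x^{(\lambda + e_i)+\rho}$, and $D_n$ of this is $\pm\pi_{\lambda+e_i}$ when $\lambda + e_i$ is still weakly decreasing (so lies in $P(n)$), and is $0$ otherwise because two exponents in $\lambda+e_i+\rho$ coincide, making the antisymmetrization vanish. Writing $\mu_i = \lambda + e_i$, the surviving terms give $\sum_{\mu_i} (\lambda_i + n - 2i + 1)\pi_{\mu_i}$ plus a correction from the $-\ll_n \pi_\lambda$ term. Finally, I would reconcile the bookkeeping: expressing $-\ll_n \pi_\lambda = -\sum_i (n-i) x_i \pi_\lambda$ and re-running the Pieri/antisymmetrizer argument on $x_i \pi_\lambda$, or better, grouping the full coefficient from the start so that the $(n-i)$ contributions and the $(\lambda_i + n - 2i + 1)$ contributions telescope to the claimed $(\lambda_i + 1 - i)$; one should track carefully that the box-adding condition for $\mu_i \in P(n)$ matches exactly the non-vanishing condition for $D_n(x^{\mu_i+\rho})$.

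I expect the main obstacle to be the sign and normalization bookkeeping, and in particular making the $-\ll_n \pi_\lambda$ term cancel cleanly against part of the $D_n(\lr_n \cdot)$ and $D_n(\dif(x^{\lambda+\rho}))$ contributions to leave precisely $(\lambda_i+1-i)$ rather than, say, $(\lambda_i + n - 2i + 1)$. The cleanest way to avoid a messy cancellation is to not split $\dif(D_n)$ symmetrically: instead write $\dif(D_n \cdot x^{\lambda+\rho})$ and immediately recognize $-\ll_n$ acting on the left of $D_n$, i.e.\ $-\ll_n \pi_\lambda$, as itself expandable by Pieri, and then verify that $-\sum_i(n-i)x_i\pi_\lambda$ combined with $D_n\big(\sum_i(\lambda_i+n-i)x_i x^{\lambda+\rho}\big) - D_n\big(\sum_i(i-1)x_ix^{\lambda+\rho}\big)$ yields the stated answer. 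Alternatively — and this is likely the slicker route — one can sidestep $D_n$ entirely and argue by induction on $|\lambda|$ together with linear independence: $\dif$ is a degree-raising operator on the finite-dimensional graded pieces, each $\pi_\mu$ with $|\mu| = |\lambda|+1$ can appear in $\dif(\pi_\lambda)$ only if $\mu \supset \lambda$ (a single box), the coefficient must be a polynomial of degree $\le 1$ in the entries of $\lambda$ by naturality, and then one pins down the constant by checking a single small case in each row position (e.g.\ $n=1$, or $\lambda$ a single row/column), invoking $\dif^{\,p}=0$ or the known $\sl_2$-action if needed. Either way the arithmetic is routine once the framework is set; the only genuine care needed is the vanishing-versus-box-adding dictionary and the final coefficient identity $(\lambda_i + n - i) - (i - 1) - (n - i) = \lambda_i + 1 - i$, which is exactly the shift that appears.
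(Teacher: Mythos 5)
Your proposal follows essentially the same route as the paper's proof: realize $\pi_\lambda$ as $D_n(x^{\lambda+\rho})$ inside $\PC_n^+$, apply the Leibniz rule together with \eqref{eqn-diff-on-deltas}, observe that $D_n$ kills $x_i x^{\lambda+\rho}$ exactly when adding a box to row $i$ fails to give a partition, and verify the coefficient identity $(\lambda_i+n-i)-(i-1)-(n-i)=\lambda_i+1-i$. The bookkeeping you flag as the main obstacle is handled in the paper by subtracting $\pi_\lambda\dif_{\mathtt{a}^+}(v_{\mathtt{a}^+})$, which cancels the $-\ll_n\pi_\lambda$ term from $\dif(D_n)$, but your arithmetic is already the correct one.
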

\begin{proof}Let $\lambda=(\lambda_1,\dots, \lambda_n)$ be an $n$-part partition. The Schur function $\pi_\lambda$ in $n$-variables is obtained by applying the longest divided difference operator $D_n$ to the monomial
$$x^\lambda \d_n v_{\mathtt{a}} = x_1^{\lambda_1+n-1}\cdots x_{n-1}^{\lambda_{n-1}+1}x_n^{\lambda_n}v_\mathtt{a}\in \PC^+_n.$$
When equipped with compatible $p$-DG structures, we have,
\begin{align*}
\dif_{\mathtt{a}^+}(\pi_{\lambda}v_{\mathtt{a}^+})  =\dif_{\mathtt{a}^+}(D_n(x^\lambda \d_n v_{\mathtt{a}^+}))= \dif(D_n)(x^\lambda \d_n v_{\mathtt{a}^+})+ D_n(\dif_{\mathtt{a}^+}(x^\lambda \d_n v_{\mathtt{a}^+})),
\end{align*}
which implies that
\begin{eqnarray*}
\dif(\pi_{\lambda})v_{\mathtt{a}^+} & = & \dif(D_n)(x^\lambda \d_n v_{\mathtt{a}^+})+ D_n(\dif_{\mathtt{a}^+}(x^\lambda \d_n v_{\mathtt{a}^+})) - \pi_{\lambda}\dif_{\mathtt{a}^+}(v_{\mathtt{a}^+})\\
& = & -\sum_{i=1}^n(n-i)x_iD_n(x^\lambda \d_n v_{\mathtt{a}^+})- \sum_{i=1}^n(i-1)D_n(x_ix^\lambda \d_n v_{\mathtt{a}^+})\\
&   &  + \sum_{i=1}^n (\lambda_i+n-i)D_n(x_1^{\lambda_1+n-1}\cdots x_{i}^{\lambda_i+n-i+1}\cdots x_{n}^{\lambda_n}v_{\mathtt{a}^+})\\
&   &  - \sum_{i=1}^n (n-i) D_n(x^\lambda \d_n x_iv_{\mathtt{a}^+})+\sum_{i=1}^n(n-i)\pi_{\lambda}x_iv_{\mathtt{a}^+}\\
& = & \sum_{i=1}^n (\lambda_i-i+1)D_n(x_1^{\lambda_1+n-1}\cdots x_{i}^{\lambda_i+n-i+1}\cdots x_{n}^{\lambda_n}v_{\mathtt{a}^+}).
\end{eqnarray*}
The $i$-th term in this sum is equal to the $i$-th term in the claimed formula, so long as adding a box to the $i$-th row of $\lambda$ produces a Young diagram $\mu_i$. If it does not produce a Young diagram (see the non-example before the lemma), then $\lambda_{i-1}=\lambda_i$ and
\[
x_1^{\lambda_1+n-1}\cdots x_{i-1}^{\lambda_{i-1}+n-i+1}x_{i}^{\lambda_{i}+n-i+1} \cdots x_n^{\lambda_n}v_{\mathtt{a}^+}
\]
is symmetric in $x_{i-1}$ and $x_i$, and hence the corresponding term is killed by $D_n$.
\end{proof}

Notice that, in the statement of Lemma \ref{lemma-difofpilambdalonghand}, the number $\lambda_i+1-i$ is the \emph{content} or \emph{residue} of the box in $\mu_i$ that was added to $\lambda$. The following example is a Young diagram with its boxes labeled by their content numbers.
\vspace{0.1in}
\drawing{young6.eps}
\vspace{0.1in}
We will use a shorthand when writing sums like \eqref{lemma-difofpilambdalonghand}, as exemplified below.
\begin{equation}
\dif(\pi_\l) = \sum_{\l + \square} C(\square) \pi_{\l + \square}. \label{difpil}
\end{equation}
The sum is taken over all ways to add a box to $\l$ to obtain another partition $\l + \square$ in $P(n)$, and $C(\square)$ is the content of the new box. In such a sum, the maximum number of rows in a partition is not written, but is understood; the new box $\square$ can not be in the $(n+1)$-st row.

\begin{defn}\label{def-p-dg-sym}
In what follows, we will always use $\sym_n$, $n\in \N$, to stand for the \emph{$p$-DG algebra} of symmetric functions on $n$-letters, equipped with the $p$-differential defined by equation \eqref{difofpilambdalonghand} or \eqref{difpil}.
\end{defn}

Let $e_k$ (resp. $h_k$) denote the $k$-th elementary (resp. complete) symmetric polynomial in $n$ variables. These are specific examples of Schur polynomials, associated to a single column (resp. row). Thus \eqref{difpil} implies:
\begin{equation} \label{difek}
\dif(e_k) = e_1 e_k - (k+1) e_{k+1},
\end{equation}
\begin{equation} \label{difhk}
\dif(h_k) = (k+1) h_{k+1} - h_1 h_k.
\end{equation}
By definition, $e_k=0$ for $k>n$. These formulas hold for all $k \in \N$, even when $k \ge n$.

Letting $n$ grow to infinity, we see that the formula \eqref{difpil} also applies to Schur functions $\pi_\l$ inside the ring $\Lambda$ of symmetric functions; one omits any constraints
on the number of rows in $\a + \square$.

%
\subsection{\texorpdfstring{$p$}{p}-DG symmetric polynomials: part I}
%

Now we investigate the underlying $p$-complex of the $p$-DG algebra $\sym_n$, and its rank one free modules. We begin with the case $n<p$. Part of the discussion below has already appeared in \cite{KQ, EQ1}, and we repeat it here for the sake of completeness.

\begin{lemma} \label{lemma-cohomology-of-sym-less-than-p}
When $n < p$, $\sym_n$ is quasi-isomorphic to $\Bbbk$.
\end{lemma}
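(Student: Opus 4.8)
The goal is to show that for $n < p$, the $p$-DG algebra $\sym_n$ is quasi-isomorphic to $\Bbbk$, i.e. its $p$-cohomology $\mH(\sym_n, \dif)$ is one-dimensional, concentrated appropriately so that the quasi-isomorphism is with the ground field in degree zero. My first step would be to pass from the Schur basis to the generators $e_1, \dots, e_n$ (or equivalently $h_1, \dots, h_n$), using that $\sym_n = \Bbbk[e_1, \dots, e_n]$ is a polynomial ring. The formulas \eqref{difek}, namely $\dif(e_k) = e_1 e_k - (k+1)e_{k+1}$, show that $\dif$ is not homogeneous with respect to the obvious filtration by polynomial degree in a naive way, but it \emph{does} interact well with a suitable filtration. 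The key observation is that, modulo the ideal generated by $(e_1, \dots, e_n)^2$, we have $\dif(e_k) \equiv -(k+1) e_{k+1}$ for $k < n$ and $\dif(e_n) \equiv 0$. Since $n < p$, the scalars $k+1$ for $k = 1, \dots, n-1$ are all invertible in $\Bbbk$.

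The cleanest route is then a change of variables plus a spectral-sequence (or direct filtration) argument. First I would try to rescale and re-coordinatize: set $y_1 = e_1$ and inductively define new generators so that, up to invertible scalars and higher-order corrections, $\dif(y_k) = y_{k+1}$ for $k = 1, \dots, n-1$ and $\dif(y_n) = 0$. Concretely, one can absorb the invertible constants $(k+1)$ and attempt to kill the quadratic tails $e_1 e_k$ by a triangular substitution (adding polynomials in lower $e_j$'s); because the obstruction at each stage lands in an ideal power we are already controlling, and because all the relevant integers are units mod $p$, such a substitution should exist. After this, $\sym_n \cong \Bbbk[y_1, \dots, y_n]$ with $\dif(y_k) = y_{k+1}$ ($k<n$), $\dif(y_n)=0$. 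This is (a polynomial version of) a Koszul-type $p$-complex: it is the free graded-commutative algebra on a two-term acyclic-ish complex, except over $\F_p$ one must be careful. The standard fact here (used in \cite{KQ}) is that such a $p$-DG polynomial algebra, where the differential sends one generator to the next in a chain of length $n < p$, is quasi-isomorphic to $\Bbbk$: the contractible pair $(y_k \to y_{k+1})$ contributes trivially to $p$-cohomology as long as the "chain length" is less than $p$, so that no $p$-torsion phenomenon (of the sort that makes $\Bbbk[x]/x^p$ with $\dif(x) = x^2$ behave specially) intervenes.

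The main obstacle, and the step I would spend the most care on, is justifying the triangular change of variables and then computing the $p$-cohomology of the resulting model $(\Bbbk[y_1,\dots,y_n], \dif y_k = y_{k+1})$. For the change of variables, one must check that at each inductive stage the correction term to be cancelled genuinely lies in the span of $\dif$ applied to already-adjusted lower generators, which uses $n < p$ to invert the coefficients; this is essentially a Gröbner/triangularity bookkeeping argument. For the cohomology computation, I would either cite the relevant lemma from \cite{KQ} directly (the review subsection suggests the $n < p$ acyclicity-type statements are imported from there), or give a short independent argument: filter $\Bbbk[y_1,\dots,y_n]$ by powers of $(y_2,\dots,y_n)$ — on the associated graded the differential is $y_1 \mapsto 0$ together with $y_k \mapsto y_{k+1}$, exhibiting $\Bbbk[y_1] \otimes (\text{Koszul-type } p\text{-complex on } y_2,\dots,y_n)$, and then observe the $p$-complex factor is contractible because $\dif y_2 = y_3, \dots, \dif y_{n-1}=y_n, \dif y_n = 0$ builds a free module over the contractible $p$-DG algebra $(\Bbbk[y_{n-1},y_n], \dif y_{n-1}=y_n)$ — and iterate. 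The condition $n<p$ guarantees all the integers appearing are invertible so that no exotic $\F_p$-behaviour appears, and the spectral sequence degenerates leaving $\mH(\sym_n,\dif) \cong \Bbbk$ in degree zero.
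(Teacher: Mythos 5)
Your strategy has a genuine gap at its central step, and the gap is not just bookkeeping. The normal form you want --- generators $y_1,\dots,y_n$ of $\sym_n$ with $\dif(y_k)=y_{k+1}$ for $k<n$ and $\dif(y_n)=0$ --- does not exist. Already for $n=1$ one has $\sym_1=\Bbbk[e_1]$ with $\dif(e_1)=e_1^2\neq 0$, and degree reasons force $y_1=ce_1$, so no substitution makes the differential of the top generator vanish. For general $n$ the canonical chain is given by power sums: $\dif(p_k)=k\,p_{k+1}$, so setting $y_1=p_1$ and $y_{k+1}=\dif(y_k)$ gives $y_k=(k-1)!\,p_k$, which do generate $\sym_n$ when $n<p$; but then $\dif(y_n)=n!\,p_{n+1}$, a \emph{nonzero} element of $\sym_n$ (a polynomial in $p_1,\dots,p_n$ via Newton's identities). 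The chain only closes at $p_p$, since $\dif(p_p)=p\,p_{p+1}=0$, and $p_{n+1},\dots,p_p$ are not free generators, so the ``free polynomial algebra on a length-$n$ chain'' model is unavailable. Moreover, even if that model existed it would give the wrong answer, because your contractibility criterion for $p$-complexes is backwards: the indecomposable $p$-complex $\Bbbk[\dif]/(\dif^\ell)$ is contractible precisely when $\ell=p$, and chains of length $\ell<p$ are exactly the ones contributing to cohomology. Concretely, $(\Bbbk[y_1,y_2],\ \dif y_1=y_2,\ \dif y_2=0)$ is not quasi-isomorphic to $\Bbbk$ for $p\geq 5$: the span of $y_1^2, y_1y_2, y_2^2$ is a $\dif$-stable length-$3$ chain with nonzero cohomology. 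Likewise $(\Bbbk[y_{n-1},y_n],\ \dif y_{n-1}=y_n)$ is not contractible (the unit is a nontrivial cohomology class), so the final iteration step also fails.

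The paper's proof is a one-line representation-theoretic observation that sidesteps all of this. Since $n<p$, the group algebra $\Bbbk S_n$ is semisimple, so the symmetrizing idempotent $\frac{1}{n!}\sum_{w\in S_n}w$ is defined and commutes with $\dif$ (because $\dif(x_i)=x_i^2$ is $S_n$-equivariant); hence $\sym_n$ is a $\dif$-stable \emph{direct summand} of $\pol_n$, not merely a subalgebra. The augmentation ideal $\pol_n'$ is contractible (in each variable, $x^{kp+1}\to x^{kp+2}\to\cdots\to x^{(k+1)p}$ are length-$p$ chains), and a direct summand of a contractible $p$-complex is contractible; therefore $\sym_n'$ is contractible and $\sym_n\simeq\Bbbk$. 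If you want an argument closer in spirit to yours, the contractibility must come from the power-sum chain $p_1\to p_2\to\cdots\to p_p$ having length exactly $p$ --- it is the hypothesis $n<p$ that makes the scalars $1,2,\dots,n$ invertible so this chain exists inside $\sym_n$, not that the chain is short.
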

\begin{proof} There is clearly an injection sending $\Bbbk \to \Bbbk \cdot 1$, with cokernel the augmentation ideal $\sym_n^\prime$. When $n < p$, the representations of $S_n$ over
$\Bbbk$ are actually semisimple. Therefore $\sym_n$ is not just a submodule of $\Bbbk[x_1,x_2,\ldots,x_n]$ but actually a summand, under an idempotent which
commutes with $\dif$. In particular, the positive degree symmetric polynomials $\sym_n^\prime$ are a summand of the contractible $p$-complex
$\Bbbk[x_1,\ldots,x_n]^\prime$, and are therefore contractible.
\end{proof}

This lemma does rely on the fact that $n<p$. When $n \ge p$, the augmentation ideal $\sym_n^\prime$ is still a submodule of an acyclic $p$-complex, but that does not mean it is acyclic itself.

\begin{example}
Suppose that $n=p$. Then $\sym_p$ is not quasi-isomorphic to $\Bbbk$, but is actually quasi-isomorphic to $\Bbbk[e_p^p]$ with the trivial differential. We will prove this in Corollary \ref{cor-cohomology-rank-one-modules-p}. Note for now that $\Bbbk[e_1,e_2,\ldots,e_{p-1}]$ is a $p$-DG subalgebra of $\sym_p$, since $\dif(e_{p-1}) = e_1 e_{p-1}- pe_p = e_1 e_{p-1}$.
\end{example}

Now we want to understand various $p$-DG module structures on the rank-one free module over $\sym_n$.

\begin{defn}\label{def-rank-one-mod}
Let $a\in \Bbbk$. Define a differential structure $\dif_a$ on the rank-one free module $\s_n (a):=\sym_n\cdot v_a$ over $\sym_n$, determined by
\begin{equation}
 \dif_a(v_a)=a e_1v_a.
\end{equation}
The degree of the generator $v_a$ is $n(1-n)/2$, to agree with standard conventions. The $p$-nilpotency condition $\dif_a^p=0$ holds if and only if $a \in \F_p$ (by an easy computation). When this happens, $\s_n(a)$ is isomorphic, up to a grading shift, to the $\dif$-closed ideal $(e_n^k) \subset \sym_n$ inside the algebra $\sym_n$, for any $k \in \N$ having residue class modulo $p$ equal to $a$.
\end{defn}

\begin{lemma}\label{lemma-contractible-rank-one-ideals-less-than-p}
Suppose that $1 \le a \le n < p$. Then the ideal generated by $e_n^a$ inside $\sym_n$ is acyclic.
\end{lemma}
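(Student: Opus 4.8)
The plan is to exhibit the ideal $(e_n^a) \subset \sym_n$ as a $p$-DG module that is filtered by copies of the rank-one modules $\s_n(\cdot)$, and then to argue acyclicity filtration-step by filtration-step using Lemma~\ref{lemma-cohomology-of-sym-less-than-p}. Concretely, by Definition~\ref{def-rank-one-mod} the ideal $(e_n^a)$ is, up to a grading shift, isomorphic to the $p$-DG module $\s_n(a)$, since $\dif(e_n^a) = a e_1 e_n^a$ (which follows from $\dif(e_n) = e_1 e_n - (n+1)e_{n+1} = e_1 e_n$ as $e_{n+1}=0$, together with the Leibniz rule). So the task reduces to showing that $\s_n(a) = \sym_n \cdot v_a$ with $\dif_a(v_a) = a e_1 v_a$ is acyclic when $1 \le a \le n < p$.

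First I would observe that as a $p$-complex (forgetting the module structure), $\s_n(a)$ is obtained from the $p$-complex $\sym_n$ by a twist, and that under $n<p$ the algebra $\sym_n$ is quasi-isomorphic to $\Bbbk$ with its augmentation ideal $\sym_n'$ contractible. The cleanest route is to build a finite filtration of $\s_n(a)$ by $p$-DG submodules whose subquotients are rank-one free $\sym_n$-modules of the form $\s_n(a')$ for various $a'$, arranged so that one of the subquotients — or rather, so that the whole thing — is recognizably contractible. Alternatively, and perhaps more directly: inside $\PC_n^+$ we can realize $(e_n^a)$ via the identification $\sym_n \cong \END_{\sym_n}(\PC_n^+)$, applying the longest divided difference operator $D_n$ to appropriate monomials $x^\mu \d_n v_{\mathtt{a}^+}$; the point is that $e_n^a = (x_1 \cdots x_n)^a$ already, so $e_n^a \cdot \d_n = \d_n \cdot (x_1\cdots x_n)^a$ corresponds under $D_n$ to a shifted copy of the polynomial side, and the twist $a e_1$ in the differential matches the restriction of $\dif_{\mathtt{a}}$ for a suitable $\mathtt{a}$. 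I would then invoke the same semisimplicity argument as in Lemma~\ref{lemma-cohomology-of-sym-less-than-p}: for $n<p$ the relevant Young symmetrizer idempotent commutes with the differential, so $\s_n(a)$ is a direct summand of an acyclic $p$-complex (a twisted polynomial ring, or $\pol_n \otimes$ a twist), provided the twist itself keeps that complex acyclic — which it does because $1 \le a$ forces the "degree-zero part" to vanish, i.e. there is no $\dif_a$-closed element of lowest degree surviving to cohomology.

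The main obstacle I anticipate is pinning down exactly which acyclic ambient $p$-complex $\s_n(a)$ embeds in as a differential-compatible summand, and verifying that the lower bound $a \ge 1$ is what kills the cohomology (whereas $a=0$ would give back $\sym_n \simeq \Bbbk \ne 0$). In other words, the content of the hypothesis $1 \le a \le n$ is that the ideal $(e_n^a)$ sits strictly inside the augmentation ideal and the twist $a e_1 v_a$ has nonzero leading coefficient mod $p$ (using $1 \le a \le n < p$, so $a \not\equiv 0$), which is precisely the input needed to run a contracting-homotopy argument or to identify it with a summand of $\sym_n' \cdot (\text{twist})$ rather than all of $\sym_n$. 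I expect the bookkeeping with grading shifts (the generator $v_a$ living in degree $n(1-n)/2$) and the precise choice of $\mathtt{a} \in \F_p^n$ realizing the twist to be the fiddly part, but no genuinely new idea beyond what is already in the proof of Lemma~\ref{lemma-cohomology-of-sym-less-than-p} should be required.
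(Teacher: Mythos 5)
Your reduction of the statement to the acyclicity of $\s_n(a)$ is fine (this is exactly Definition \ref{def-rank-one-mod} plus \eqref{difek}), but the mechanism you then propose does not work, and the clearest symptom is that your argument never uses the hypothesis $a \le n$. That hypothesis is essential: for $n < a \le p-1$ the module $\s_n(a)$ is \emph{not} acyclic (Corollary \ref{cor-cohomology-other-rank-one-mod-less-than-p}; e.g.\ $\s_1(2) = \Bbbk[e_1]v$ with $\dif(v) = 2e_1 v$ has a surviving piece spanned by $v, e_1 v, \dots, e_1^{p-2}v$). Concretely, the ambient complex into which you want to embed $\s_n(a)$ as a $\dif$-commuting summand would be $\pol_n \cdot v$ with $\dif(v) = a e_1 v$, i.e.\ $\PC_n((a,\dots,a))$, and this is \emph{not} contractible for $a \ge 2$: already in one variable, $\dif(x^k v) = (k+a)x^{k+1}v$ breaks $\Bbbk[x]v$ into an initial string of length $p-a+1 < p$ plus contractible strings, so the one-variable factor, and hence the tensor product, has nonzero stable class (for $p=3$, $n=a=2$ one gets $V_2 \otimes V_2 \simeq V_1 \oplus V_3 \not\simeq 0$). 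The semisimplicity argument of Lemma \ref{lemma-cohomology-of-sym-less-than-p} only transfers contractibility from an ambient complex to a summand, so it gives nothing here. Relatedly, ``no $\dif$-closed element in lowest degree'' is not a criterion for acyclicity of a $p$-complex: one must kill all the slash cohomologies $\ker \dif^k / \operatorname{im} \dif^{p-k}$, and the strings of length $<p$ above are exactly the obstruction. Finally, your first suggestion --- filtering $\s_n(a)$ by rank-one free $\sym_n$-modules --- is vacuous, since $\s_n(a)$ is itself rank-one free over $\sym_n$.

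The missing idea is to induct by passing to \emph{fewer variables}, which is where the hypothesis $a \le n$ enters. The paper argues by double induction on $n$ and then on $a$: for $a=1$ one uses the short exact sequence $0 \to (e_n) \to \sym_n \to \sym_{n-1} \to 0$, whose middle and right terms are both quasi-isomorphic to $\Bbbk$ by Lemma \ref{lemma-cohomology-of-sym-less-than-p}, forcing the kernel $(e_n)$ to be acyclic; for $a>1$ one uses $0 \to (e_n^a) \to (e_n^{a-1}) \to \s_{n-1}(a-1) \to 0$, where the quotient is a rank-one module over the \emph{smaller} ring $\sym_{n-1}$ (spanned by the classes of $f e_n^{a-1}$ with $f \in \Bbbk[e_1,\dots,e_{n-1}]$), and both the middle term (induction on $a$) and the right term (induction on $n$, which needs $a-1 \le n-1$) are acyclic. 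You would need to supply this inductive structure, or some genuine substitute for it, for your proposal to close.
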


Equation \eqref{difek} and the Lemma immediately implies the following.

\begin{cor} \label{cor-contractible-rank-one-modules-less-than-p}
For $1 \le a \le n < p$, the $p$-DG module $\s_n(a)$ is acyclic. \hfill$\square$
\end{cor}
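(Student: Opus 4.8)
The plan is to induct on $n$, using short exact sequences of $p$-complexes to climb from the ideal $(e_n)$ up to the ideals $(e_n^a)$ for $1 \le a \le n$.

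The structural input is the following. Since $\sym_n = \Bbbk[e_1,\dots,e_n]$ is a polynomial ring, $\sym_n/(e_n) \cong \Bbbk[e_1,\dots,e_{n-1}]$, and this quotient is precisely $\sym_{n-1}$ as a $p$-DG algebra: the formula $\dif(e_k) = e_1 e_k - (k+1)e_{k+1}$ is unchanged for $k \le n-2$, while $\dif(e_{n-1}) = e_1 e_{n-1} - n e_n$ reduces to $e_1 e_{n-1}$ modulo $e_n$, matching $\dif(e_{n-1})$ in $\sym_{n-1}$. Since $\dif(e_n) = e_1 e_n$ (equation \eqref{difek} with $e_{n+1}=0$), each ideal $(e_n^a)$ is $\dif$-closed, so for every $a \ge 0$ there is a short exact sequence of $p$-complexes
\[
0 \longrightarrow (e_n^{a+1}) \longrightarrow (e_n^{a}) \longrightarrow (e_n^a)/(e_n^{a+1}) \longrightarrow 0 .
\]
The quotient is annihilated by $e_n$, hence a module over $\sym_n/(e_n) \cong \sym_{n-1}$, free of rank one on the class $\overline{e_n^a}$, with $\dif(\overline{e_n^a}) = \overline{a e_1 e_n^a} = a e_1 \overline{e_n^a}$; so up to a grading shift it is the rank-one module $\s_{n-1}(a)$ of Definition \ref{def-rank-one-mod}, which in turn, when $1 \le a \le n-1$, is (up to a grading shift) the ideal $(e_{n-1}^a) \subset \sym_{n-1}$.

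The first rung, and the base case $n = 1$, is that $(e_n)$ is acyclic whenever $n < p$. Taking $a = 0$ above gives $0 \to (e_n) \to \sym_n \to \sym_{n-1} \to 0$. By Lemma \ref{lemma-cohomology-of-sym-less-than-p} both $\sym_n$ and $\sym_{n-1}$ are quasi-isomorphic to $\Bbbk$; in fact its proof exhibits $\sym_n = \Bbbk \oplus \sym_n'$ and $\sym_{n-1} = \Bbbk \oplus \sym_{n-1}'$ as $p$-complexes with the augmentation ideals $\sym_n'$, $\sym_{n-1}'$ contractible. The quotient map $\sym_n \to \sym_{n-1}$ carries $1$ to $1$ and $\sym_n'$ into $\sym_{n-1}'$, so it is a quasi-isomorphism, and therefore its kernel $(e_n)$ is acyclic. (For $n = 1$ this reads: the augmentation $\Bbbk[x] \to \Bbbk$ is a quasi-isomorphism, so $(x) \subset \Bbbk[x]$ is acyclic.)

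Finally I run the induction on $n$, the case $n = 1$ being the previous paragraph. Assume the lemma for $n - 1$, i.e.\ that $(e_{n-1}^a)$ is acyclic for all $1 \le a \le n-1$. Then $(e_n)$ is acyclic, and for each $1 \le a \le n-1$ the short exact sequence $0 \to (e_n^{a+1}) \to (e_n^a) \to \s_{n-1}(a) \to 0$ has acyclic quotient $\s_{n-1}(a) \cong (e_{n-1}^a)$; since acyclicity of two terms in a distinguished triangle forces it for the third, $(e_n^{a+1})$ is acyclic whenever $(e_n^a)$ is. Climbing $a = 1, 2, \dots, n$ proves the lemma for $n$ (and, via Definition \ref{def-rank-one-mod}, Corollary \ref{cor-contractible-rank-one-modules-less-than-p}). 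The delicate point is the acyclicity of $(e_n)$: one needs not merely that $\sym_n$ and $\sym_{n-1}$ have the same $p$-cohomology but that the particular quotient map between them is a quasi-isomorphism, which is why the strong form of Lemma \ref{lemma-cohomology-of-sym-less-than-p} (contractibility of the augmentation ideal) is invoked; everything else is routine manipulation of short exact sequences of $p$-complexes.
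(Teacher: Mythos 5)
Your proposal is correct and follows essentially the same route as the paper: the corollary is reduced to the acyclicity of the ideals $(e_n^a)$, proved by the same double induction using the short exact sequences $0 \to (e_n) \to \sym_n \to \sym_{n-1} \to 0$ and $0 \to (e_n^{a+1}) \to (e_n^a) \to \s_{n-1}(a) \to 0$. Your only addition is to spell out why the quotient map $\sym_n \to \sym_{n-1}$ is a quasi-isomorphism (it preserves the splitting into $\Bbbk \oplus$ augmentation ideal, with both augmentation ideals contractible), a point the paper asserts more tersely.
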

\begin{proof}[Proof of Lemma \ref{lemma-contractible-rank-one-ideals-less-than-p}] We show this by induction on $n$, and within each fixed $n$, by induction on $a$. For $n=a=1$, the result is already known.
Suppose that $a=1$. There is a short exact sequence of $p$-DG modules
\begin{align*}
0 \to (e_n) \to \sym_n \to \sym_{n-1} \to 0.
\end{align*}
The first map is the inclusion of the ideal, while the second is a quotient. The quotient is a quasi-isomorphism between two $p$-DG modules which are both quasi-isomorphic to $\Bbbk$. Therefore the ideal is acyclic. Now suppose that $1 < a \le n$. There is a short exact sequence
\[
0 \to (e_n^a) \to (e_n^{a-1}) \to \s_{n-1}(a-1) \to 0.
\]
The first map is an inclusion of ideals. Here, $\s_{n-1}(a-1)$ is realized as the classes modulo $(e_n^a)$ of the polynomials $f e_n^{a-1}$ for $f \in \Bbbk[e_1,e_2,\ldots,e_{n-1}] \subset \sym_n$. Now induction implies that the middle and right terms are acyclic, and therefore so is the left term.
\end{proof}

Our next goal is to compute the cohomology of $\s_n(a)$ for the remaining cases $1 \le n < a \le p$. Although this can be done without leaving the realm of $p$-DG submodules, it is useful
pedagogically to consider certain sub-vector-spaces of $\s_n(a)$ which are preserved by $\dif$, but not by $\sym_n$; i.e., they are subcomplexes, not submodules.

First, we define the \emph{d-degree} $\mathrm{deg}(f)$ of symmetric monomials as follows.
\begin{equation}
\mathrm{deg}(e_1^{i_1} \cdots e_n^{i_n}) = \sum_{k=1}^n i_k.
\end{equation}
The d-degree pretends that
all elementary symmetric polynomials have the same degree, whereas in the usual grading $e_k$ has degree $2k$; it is not defined for homogeneous symmetric polynomials, only for monomials.

\begin{lemma}
Let $f$ be a symmetric monomial. Then $\dif(f) = f' + \mathrm{deg}(f) e_1 f$, where $f'$ is a linear combination of symmetric monomials having the same d-degree as $f$.
\end{lemma}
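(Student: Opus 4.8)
The plan is a direct computation relying only on the Leibniz rule and on the formula \eqref{difek} for $\dif$ on the elementary symmetric polynomials. Write $f = e_1^{i_1}\cdots e_n^{i_n}$ with $i_k \in \N$. Expanding by the Leibniz rule over the factors of $f$ gives
\[
\dif(f) \;=\; \sum_{k=1}^n i_k\, e_1^{i_1}\cdots e_{k-1}^{i_{k-1}} e_k^{i_k-1}\,\dif(e_k)\, e_{k+1}^{i_{k+1}}\cdots e_n^{i_n},
\]
where only the indices $k$ with $i_k \ge 1$ actually contribute.

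Next I would substitute $\dif(e_k) = e_1 e_k - (k+1)e_{k+1}$ from \eqref{difek} and separate the two types of terms. In each summand the factor $e_1 e_k$ recombines with $e_k^{i_k-1}$ to give $e_1 e_k^{i_k}$, so the total contribution of these pieces is $\bigl(\sum_{k=1}^n i_k\bigr)\, e_1 f = \mathrm{deg}(f)\, e_1 f$, by the definition of the d-degree. The remaining pieces assemble into
\[
f' \;:=\; -\sum_{k=1}^n i_k (k+1)\, e_1^{i_1}\cdots e_k^{i_k-1} e_{k+1}^{i_{k+1}+1}\cdots e_n^{i_n},
\]
and each summand here is a symmetric monomial obtained from $f$ by deleting one factor $e_k$ and inserting one factor $e_{k+1}$; hence its d-degree is again $\mathrm{deg}(f)$. (For $k=n$ the summand is zero since $e_{n+1}=0$, which is harmless as the claim only asserts $f'$ is \emph{some} combination of monomials of d-degree $\mathrm{deg}(f)$.) This yields $\dif(f) = f' + \mathrm{deg}(f)\, e_1 f$ with $f'$ of the asserted form.

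I do not expect a genuine obstacle: the statement is essentially a bookkeeping reformulation of \eqref{difek}. The only points deserving a line of care are the boundary case $k=n$ noted above, and the remark that multiplication by $e_1$ raises d-degree by exactly one, so the two summands of $\dif(f)$ sit in d-degrees $\mathrm{deg}(f)$ and $\mathrm{deg}(f)+1$ respectively and the splitting $\dif(f)=f'+\mathrm{deg}(f)e_1f$ is unambiguous.
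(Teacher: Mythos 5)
Your proof is correct and is exactly the computation the paper intends: its own proof is the one-line remark that the lemma "follows immediately from \eqref{difek} and the Leibniz rule," and you have simply written out that computation in full, including the harmless boundary case $k=n$.
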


\begin{proof}
This follows immediately from \eqref{difek} and the Leibniz rule.
\end{proof}

For an interval $[a,b] \subset \NM$, let $N_{[a,b]} \subset \sym_n$ denote the $\Bbbk$-span of all symmetric monomials in $\sym_n$ having d-degree in $[a,b]$. Fix $k \in \NM$. By the above
claim, $N_{[kp+1, kp+p]}$ is a $p$-DG subcomplex of $\sym_n$, because the d-degree can not be lowered by $\dif$, nor can it be raised from a monomial whose d-degree is a multiple of $p$.
In similar fashion, the subspace $N_{[kp-a+1, kp-a+p]} v_a \subset \s_n(a)$ is a $p$-DG subcomplex.

\begin{cor}\label{cor-cohomology-other-rank-one-mod-less-than-p} For any $n\in \{1, \dots , p-1\}$ and $a\in \{n+1, \dots , p\}$, let $I_a = N_{[p-a+1,\infty]} v_a$, a $p$-DG submodule of
$\s_n(a)$. In other words, $$I_a:=\Bbbk\langle e_1^{i_1}\cdots e_n^{i_n}v_a|\sum_{k=1}^n i_{k}>p-a \rangle.$$ Then the natural quotient map $\s_n(a)\twoheadrightarrow \s_n(a)/I_a$, is a
quasi-isomorphism of p-DG $\sym_n$ modules. Equivalently, the inclusion of the subcomplex $N_{[0,p-a]} v_a$ is a quasi-isomorphism of $p$-complexes. \end{cor}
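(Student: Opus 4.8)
The plan is to prove that the $p$-DG submodule $I_a\subset\s_n(a)$ is acyclic; the two equivalent assertions then follow at once. Indeed, since $\dif_a$ never lowers d-degree and cannot raise it out of a monomial of d-degree $p-a$ (the relevant scalar being $(p-a)+a\equiv 0\pmod p$), there is a splitting $\s_n(a)=N_{[0,p-a]}v_a\oplus I_a$ as $p$-complexes with $\s_n(a)/I_a\cong N_{[0,p-a]}v_a$, so acyclicity of $I_a$ makes the quotient map (equivalently, the inclusion of $N_{[0,p-a]}v_a$) a quasi-isomorphism.

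The main input I would set up is a short exact sequence. For $n\ge 1$ and $a\ge 1$, the algebra surjection $\sym_n\twoheadrightarrow\sym_{n-1}$ killing $e_n$ is a map of $p$-DG algebras -- the only relation to check is $\dif(e_{n-1})=e_1e_{n-1}-ne_n$, which reduces modulo $e_n$ to the corresponding identity in $\sym_{n-1}$. Extending by $v_a\mapsto v_a$ yields a short exact sequence of $p$-DG $\sym_n$-modules $0\to(e_n)v_a\to\s_n(a)\to\s_{n-1}(a)\to 0$, and a Leibniz computation using $\dif(e_n)=e_1e_n$ (by \eqref{difek}, as $e_{n+1}=0$) gives $\dif_a(fe_nv_a)=\dif(f)e_nv_a+(a+1)e_1fe_nv_a$, so $fe_nv_a\mapsto fv_{a+1}$ identifies $(e_n)v_a$ with $\s_n(a+1)$ as $p$-DG modules, up to a grading shift. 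Writing $I_b(\s_m)$ for the submodule of $\s_m(b)$ defined as $I_a$ was, and using that multiplication by $e_n$ raises d-degree by exactly one while $\sym_n\to\sym_{n-1}$ preserves it, I would restrict this to a short exact sequence of $p$-complexes
\[
0\longrightarrow I_{a+1}(\s_n)\longrightarrow I_a(\s_n)\longrightarrow I_a(\s_{n-1})\longrightarrow 0,
\]
with a harmless grading shift on the left; exactness is, degree by degree, the Pascal identity $\binom{d+n-2}{n-1}+\binom{d+n-2}{n-2}=\binom{d+n-1}{n-1}$. Note it is $I_{a+1}$, not $I_a$, that appears on the sub-object.

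The proof would then be a double induction: an outer induction on $n$, and, for each $n$, an inner \emph{downward} induction on $a$ from $p$ to $n+1$. For $n=1$ I would argue directly: $\s_1(a)=\Bbbk[e_1]v_a$ with $\dif_a(e_1^mv_a)=(m+a)e_1^{m+1}v_a$, so $I_a(\s_1)$ is a direct sum over $k\ge 1$ of $p$-term complexes whose internal maps are the nonzero scalars $1,2,\dots,p-1$; since $(p-1)!\equiv-1\pmod p$ by Wilson, each summand is a free $\Bbbk[\dif]/(\dif^p)$-module, hence acyclic. For $n\ge 2$, the Corollary applied with $n-1$ in place of $n$ gives that $I_a(\s_{n-1})$ is acyclic whenever $n-1<a\le p$. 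In the inner downward induction, the base case $a=p$ holds because $\dif_p(v_p)=p\,e_1v_p=0$, so $\s_n(p)\cong\sym_n$ and $I_p(\s_n)\cong\sym_n'$, the augmentation ideal, which is acyclic by the proof of Lemma~\ref{lemma-cohomology-of-sym-less-than-p} since $n<p$; and for $n+1\le a\le p-1$ the terms $I_{a+1}(\s_n)$ (downward induction) and $I_a(\s_{n-1})$ (induction on $n$) are acyclic, so the two-out-of-three property for short exact sequences of $p$-complexes, applied to the displayed sequence, shows $I_a(\s_n)$ is acyclic.

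I expect the main obstacle to be the d-degree bookkeeping in the second step: checking that restricting the module sequence to d-degree-truncated pieces stays exact, and that $I_{a+1}$ rather than $I_a$ appears on the sub-object. This shift is precisely why the inner induction must run downward in $a$, and it is also why the argument closes -- the downward induction bottoms out at $a=p$, where the claim is the already-known acyclicity of $\sym_n'$, while the outer induction on $n$ supplies the $\s_{n-1}$-term; the reappearance of $\s_n$ in two of the three terms is absorbed by the inner induction.
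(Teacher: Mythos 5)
Your proof is correct, and it is organized differently from the paper's, although both arguments rest on the same two pillars: induction on $n$ through the relation between $\sym_n$ and $\sym_{n-1}$ via $e_n$, and the contractibility of the augmentation ideal $\sym_n^\prime$ for $n<p$ (Lemma \ref{lemma-cohomology-of-sym-less-than-p}). The paper works with the whole module $\s_n(a)$: it peels off the submodule $\sym_n\cdot e_n^{p-a}v_a$ (whose generator is $\dif$-closed because $(p-a)+a\equiv 0$), identifies $\s_n(a)$ up to quasi-isomorphism with the quotient $Q=\sum_{i=0}^{p-a}\sym_{n-1}\cdot e_n^i v_a$, and then filters $Q$ by powers of $e_n$ with subquotients $\s_{n-1}(a+l)$, invoking the inductive hypothesis for $n-1$ for all $l$ at once. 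You instead reduce to acyclicity of the complementary summand $I_a$ and run a downward induction on $a$ off the single sequence $0\to I_{a+1}(\s_n)\to I_a(\s_n)\to I_a(\s_{n-1})\to 0$; note that this is exactly the restriction to the ideals $I$ of the sequence $0\to(e_n^{a})\to(e_n^{a-1})\to\s_{n-1}(a-1)\to 0$ already used in the proof of Lemma \ref{lemma-contractible-rank-one-ideals-less-than-p}, so your route unifies the two proofs around one short exact sequence. What your version buys is that one never has to identify cohomology classes, only prove acyclicity, and the d-degree bookkeeping you flag (the shift from $I_a$ to $I_{a+1}$ on the sub-object, forcing the inner induction to run downward and bottom out at $I_p(\s_n)\cong\sym_n^\prime$) is handled correctly; what the paper's version buys is that it exhibits the surviving cohomology and the quasi-isomorphism explicitly in a single induction on $n$. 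Your $n=1$ base case and the two-out-of-three argument for short exact sequences of $p$-complexes are both sound.
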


The case $a = p$ is none other than Lemma \ref{lemma-cohomology-of-sym-less-than-p}, because $\s_n(0) \cong \sym_n$ as $p$-DG modules (up to grading shift), and $I_a$ is the
augmentation ideal.

\begin{proof}Note that in the module $\s_n(a)=\sym_n\cdot v_a$, we have $\dif(e_n^{p-a}v_a)=0$. Therefore this element generates a $(\sym_n,\dif)$-stable submodule $\sym_n\cdot (e_n^{p-a}v_a)$. By Lemma \ref{lemma-cohomology-of-sym-less-than-p}, this submodule has one-dimensional cohomology, generated by $e_n^{p-a}v_a$. There is a short exact sequence of $p$-DG $\sym_n$-modules
$$0\lra \sym_n^\prime\cdot e_n^{p-a}v_a \lra \sym_n\cdot v_a \lra Q\lra 0,$$
where $\sym_n^\prime$ is the augmentation ideal, and the quotient module $Q$ is \[ Q:=\sum_{i=0}^{p-a}\sym_{n-1}\cdot e_n^i v_a. \] Thus $\s_n(a)$ is quasi-isomorphic to $Q$.

Now $Q$ has a decreasing filtration $F^\bullet$ by $\sym_{n-1}$-modules, given by
\[F^l:=\sum_{i=l}^{p-a}\sym_{n-1}\cdot e_n^i v_a\]
for $0 \le l \le p-a$.
The $l$-th subquotient is isomorphic to the $p$-DG module $\s_{n-1}(b)$ for $b = l+a$. By induction, $\s_{n-1}(b)$ is quasi-isomorphic to
$$\Bbbk\langle e_1^{i_1}\cdots e_{n-1}^{i_{n-1}}|\sum_{k=1}^{n-1}i_k\leq p-b\rangle.$$
Multiplying these monomials by $e_n^l$, we obtain monomials of degree $\le p-a$, as desired.
The result follows.
\end{proof}

%
\subsection{\texorpdfstring{$p$}{p}-DG symmetric polynomials: part II}
%

We next move on to investigate the cohomology of $\sym_n$ and its rank-one modules when $n\geq p$. The central theme will be a modification of the d-degree arguments above. For a monomial
$e_1^{i_1} \cdots e_n^{i_n}$, one considers the sum $\sum_{k=p}^n i_k$, which could be thought of as the relative d-degree over $\sym_{p-1}$. We use the relative d-degree to study
restrictions of $\sym_n$ modules to $\sym_{p-1}$.

The first immediate result is the following.

\begin{cor}\label{cor-cohomology-rank-one-modules-p} The natural inclusion map of $\Bbbk[e_p^p]$ with the zero differential into $\sym_p$ is a quasi-isomorphism of $p$-DG algebras. The rank-one module $\s_p(a)$ is quasi-isomorphic to the $p$-complex $\Bbbk[e_p^p]\cdot (e_p^{p-a}v_a)$ with the zero differential.
\end{cor}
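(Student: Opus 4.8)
The plan is to mimic the inductive strategy of Corollary~\ref{cor-cohomology-other-rank-one-mod-less-than-p}, but now using the \emph{relative} d-degree $\sum_{k=p}^n i_k$ over $\sym_{p-1}$ rather than the full d-degree. First I would record the key computational fact, analogous to the lemma preceding Corollary~\ref{cor-cohomology-other-rank-one-mod-less-than-p}: for a monomial $m = e_1^{i_1}\cdots e_p^{i_p}$ in $\sym_p$, the differential satisfies $\dif(m) = m' + \big(\sum_{k=1}^{p} i_k\big) e_1 m$ with $m'$ of the same d-degree, and crucially $\dif(e_p^j) = (j+1)e_{p+1}h_? \cdots$, which inside $\sym_p$ (where $e_{p+1}=0$) reduces to $\dif(e_p^j) = j e_1 e_p^j$ via \eqref{difek} with $n=p$ and the Leibniz rule. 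Hence $\dif(e_p^p) = p e_1 e_p^p = 0$, confirming that $\Bbbk[e_p^p]$ is $\dif$-closed with zero differential, and that $e_p^{p-a} v_a$ is a cycle in $\s_p(a)$ (since $\dif_a(e_p^{p-a} v_a) = (p-a)e_1 e_p^{p-a}v_a + e_p^{p-a}\cdot a e_1 v_a = p e_1 e_p^{p-a}v_a = 0$). This already provides the claimed map of $p$-complexes $\Bbbk[e_p^p]\cdot(e_p^{p-a}v_a) \hookrightarrow \s_p(a)$; it remains to see it is a quasi-isomorphism.

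Next I would filter $\s_p(a) = \sym_p \cdot v_a$ by the exponent of $e_p$ modulo $p$, or rather decompose $\sym_p = \bigoplus_{j\ge 0} \sym_{p-1}\cdot e_p^j$ as a $\sym_{p-1}$-module and group the summands with $e_p$-exponent in each residue window $[kp, kp+p-1]$. Because $\dif$ cannot lower the $e_p$-exponent (it multiplies by $e_1$ and, via $\dif(e_{p-1}) = e_1 e_{p-1}$ etc., stays within $\sym_{p-1}\cdot e_p^j$ unless it raises the exponent by applying $\dif$ to $e_p^j$ — which raises by nothing since $\dif(e_p) = e_1 e_p - 0$), and cannot raise it out of a block whose top exponent is a multiple of $p$ (since $\dif(e_p^{kp}) = 0$), each such window $N^{e_p}_{[kp,\,kp+p-1]}\cdot v_a$ (after the shift by $v_a$, the relevant windows are $[kp - ?, \dots]$; I would set it up so the window containing $j=p-a$ is the bottom one) is a $p$-DG subcomplex. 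Then, exactly as in the earlier corollary, I would peel off the bottom submodule: $e_p^{p-a}v_a$ generates a $(\sym_{p-1},\dif)$-stable — in fact $\sym_{p}$-stable after noting $e_p\cdot e_p^{p-a}v_a = e_p^{p-a+1}v_a$ — copy of $\s_{p}$-style module, giving a short exact sequence $0 \to \sym_p^\prime \cdot e_p^{p-a}v_a \to \sym_p\cdot v_a \to Q \to 0$ with $Q = \sum_{i=0}^{p-a}\sym_{p-1}\cdot e_p^i v_a$, and a decreasing filtration of $Q$ by $\sym_{p-1}$-submodules whose subquotients are $\s_{p-1}(b)$ for $b = l+a$ ranging over $a, \dots, p$. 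By Corollary~\ref{cor-cohomology-other-rank-one-mod-less-than-p} (the case $n = p-1 < p$), each $\s_{p-1}(b)$ is quasi-isomorphic to the span of monomials $e_1^{i_1}\cdots e_{p-1}^{i_{p-1}}$ of d-degree $\le p - b$; multiplying by $e_p^l$ these land in total d-degree $\le p-a$, and one checks they assemble into exactly the monomials $e_1^{i_1}\cdots e_p^{i_p}$ with $\sum_{k=1}^{p} i_k \le p-a$ (equivalently $\sum_{k=p}^p i_k = i_p \le p-a$ once $\sum_{k<p} i_k$ is controlled). This handles the bottom window; the higher windows are treated identically after multiplying through by the appropriate power of $e_p^p$, which is a cycle, so the cohomology of the whole module is a free $\Bbbk[e_p^p]$-module on the class of $e_p^{p-a}v_a$.

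The one subtlety — and the step I expect to be the main obstacle — is verifying carefully that $Q$ really is quasi-isomorphic to its associated graded, i.e. that the filtration spectral sequence degenerates, and more basically that the $\sym_{p-1}$-module structure on the subquotients is the \emph{$p$-DG} structure $\s_{p-1}(b)$ and not some twist: one must track how $\dif$ acts on $e_p^l v_a$ modulo lower filtration, using $\dif_a(e_p^l v_a) = (l+a) e_1 e_p^l v_a + (\text{terms in } \sym_{p-1}\cdot e_p^l v_a \text{ of lower }e_p\text{-power? no — same power})$, so that the induced differential on the $l$-th subquotient is precisely $\dif_{l+a}$, matching $\s_{p-1}(l+a)$. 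Once that bookkeeping is pinned down, the acyclicity of the kernels from Corollary~\ref{cor-contractible-rank-one-modules-less-than-p} and the quasi-isomorphisms from Corollary~\ref{cor-cohomology-other-rank-one-mod-less-than-p} assemble the result. The algebra statement (inclusion of $\Bbbk[e_p^p]$ with zero differential into $\sym_p$ is a quasi-isomorphism of $p$-DG algebras) is then the special case $a=0$, $\s_p(0) \cong \sym_p$, together with the observation that the quasi-isomorphism is multiplicative since $e_p^p$ is a $\dif$-cycle generating a polynomial subalgebra.
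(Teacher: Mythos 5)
Your opening computation ($\dif(e_p^j)=je_1e_p^j$ inside $\sym_p$, hence $\dif(e_p^p)=0$ and $\dif_a(e_p^{p-a}v_a)=0$) is correct, and the germ of the right argument is already in your decomposition $\sym_p\cdot v_a=\bigoplus_{j\ge 0}\sym_{p-1}\cdot e_p^j v_a$: since $\dif_a(e_p^jv_a)=(j+a)e_1e_p^jv_a$ and $e_1\in\sym_{p-1}$, \emph{each individual summand} is already a $p$-DG submodule over $\sym_{p-1}$, isomorphic to $\s_{p-1}(j+a)$. At that point Corollary~\ref{cor-contractible-rank-one-modules-less-than-p} kills every summand with $p\nmid(j+a)$, and Lemma~\ref{lemma-cohomology-of-sym-less-than-p} reduces each surviving summand to $\Bbbk\cdot e_p^jv_a$; summing over $j\equiv -a\pmod p$ gives $\Bbbk[e_p^p]\cdot e_p^{p-a}v_a$, and the $a=0$ case gives the algebra statement. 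This is exactly the paper's proof. The windows and the short exact sequence are not only unnecessary but, as written, break the argument.

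Two concrete problems. First, the sequence $0\to\sym_p'\cdot e_p^{p-a}v_a\to\sym_p\cdot v_a\to Q\to 0$ does not let you conclude $\s_p(a)\simeq Q$ ``exactly as in the earlier corollary'': in Corollary~\ref{cor-cohomology-other-rank-one-mod-less-than-p} the kernel is acyclic because $n<p$ there, whereas here $\sym_p'$ is precisely \emph{not} acyclic (its cohomology is $e_p^p\Bbbk[e_p^p]$ --- that is the content of the very statement being proved), so the kernel carries all the ``higher windows'' and the peeling-off is circular unless you set up an induction you have not set up. Second, the appeal to Corollary~\ref{cor-cohomology-other-rank-one-mod-less-than-p} for the subquotients $\s_{p-1}(b)$ is invalid for $a\le b\le p-1$: that corollary requires $b>n=p-1$, so it only covers $b=p$. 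For $1\le b\le p-1$ the relevant statement is Corollary~\ref{cor-contractible-rank-one-modules-less-than-p}: $\s_{p-1}(b)$ is \emph{acyclic}, not quasi-isomorphic to the nonzero span of monomials of d-degree at most $p-b$. (Your own final answer betrays this: if each subquotient contributed such a span, the cohomology of $Q$ would be far larger than $\Bbbk\cdot e_p^{p-a}v_a$.) Deleting the SES and the windows, and replacing the misapplied corollary by the acyclicity statement, the direct-sum decomposition you already wrote down becomes a complete and correct proof.
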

\begin{proof}Since $\sym_{p-1} \subset \sym_p$ is an inclusion $p$-DG algebras, we may regard $\sym_p$ by restriction as a $p$-DG module over $\sym_{p-1}$. In this way $\sym_p$ splits into a direct sum of rank-one $\sym_{p-1}$-modules
\[
\sym_{p}\cong \bigoplus_{k=0}^{\infty}\sym_{p-1} \cdot e_p^k,
\]
and $\dif$ acts on each summand generator $e_p^k$ by $\dif(e_p^k)=ke_1e_p^k$. Therefore $\sym_{p-1}e_p^k \cong \s_{p-1}(k)$ as $p$-DG modules over $\sym_{p-1}$, and Corollary \ref{cor-contractible-rank-one-modules-less-than-p} shows that they are acyclic unless $p$ divides $k$. Moreover, when $p$ does divide $k$, $\sym_{p-1}e_p^{k}\cong \s_{p-1}(0)$, and by Lemma \ref{lemma-cohomology-of-sym-less-than-p} the inclusion $\Bbbk \cdot e_p^k \hookrightarrow \sym_{p-1}e_p^{k}$ is a quasi-isomorphism. The first claim follows.

The last statement follows from a similar consideration, and the fact that $\s_p(a)$ is isomorphic to the $p$-DG ideal generated by $e_p^a$ in $\sym_p$.
\end{proof}

Now we look at $\sym_{p+1}=\Bbbk[e_1,\dots, e_p, e_{p+1}]$. Again $\sym_{p-1}$ sits in it as a $p$-DG subalgebra, and by restriction $\sym_{p+1}$ decomposes into blocks of $p$-DG $\sym_{p-1}$-modules
\[
\sym_{p+1}\cong \bigoplus_{d \in \N} \sym_{p-1} \cdot E_d,
\]
where $E_d$ is the spanned by a basis $\{e_p^re_{p+1}^s|r, s \in \N , r+s=d\}$. This can be seen readily from the differential action on the basis elements:
\begin{equation}\label{eqn-dif-on-basis-of-blocks-p-plus-1}
\dif(e_p^re_{p+1}^s)=(r+s)e_1e_p^re_{p+1}^s-(p+1)re_p^{r-1}e_{p+1}^{s+1}.
\end{equation}
We analyze the $p$-DG structure of each block $\sym_{p-1} \cdot E_d$. By equation (\ref{eqn-dif-on-basis-of-blocks-p-plus-1}), there exists on it an increasing $\dif$-stable filtration $F_\bullet$ with $F_{-1}=0$ and $F_j:=\sum_{i=0}^j\sym_{p-1}\cdot e_p^{i}e_{p+1}^{d-i}$. The subquotients of this filtration, as $p$-DG modules over $\sym_{p-1}$, are all isomorphic to $\s_{p-1}(d)$. Therefore by Corollary \ref{cor-contractible-rank-one-modules-less-than-p}, this block will be acyclic unless $d$ is a multiple of $p$. When $d=kp$ for some $k\in \N$, the subquotients in this more interesting block are all isomorphic to $\s_{p-1}(0)\cong\sym_{p-1}$. Hence the quotient map
\begin{equation}
\sym_{p-1} \cdot E_d \lra \sym_{p-1} \cdot E_d/(\sym_{p-1}^\prime \cdot E_d),
\end{equation}
is a surjective quasi-isomorphism. The right hand side splits into a direct sum of $p$-complexes
\begin{equation}\label{eqn-decomp-interesting-block}
 (e_p^d) \bigoplus \left(\bigoplus_{l=0}^{k-1}(e_p^{d-lp-1}e_{p+1}^{lp+1} \stackrel{1}{\lra} e_p^{d-lp-2}e_{p+1}^{lp+2} \stackrel{2}{\lra} \cdots \stackrel{p-1}{\lra} e_p^{d-lp-p}e_{p+1}^{lp+p})\right),
\end{equation}
where the differential action on the residue basis elements is indicated by multiplication with the corresponding number on the arrow. It follows that the inclusion $\Bbbk \cdot e_p^{d} \hookrightarrow \sym_{p-1}E_d$ is a quasi-isomorphism in this case.

\begin{lemma}\label{lemma-cohomomology-rank-one-modules-p-plus-1}
\begin{itemize}
\item[(i)] The $p$-DG algebra $\sym_{p+1}$ is quasi-isomorphic to its subalgebra $\Bbbk[e_p^p]$ with trivial differential.
\item[(ii)] The rank-one $p$-DG module $\s_{p+1}(1)$ is acyclic.
\item[(iii)] The rank-one $p$-DG modules $\s_{p+1}(a)=\sym_{p+1}\cdot v_a$, where $\dif_a(v_a)=ae_1 \cdot v_a$ and $a\in \{2, \dots, p-1\}$, have cohomologies isomorphic to the $p$-complex
    $$\Bbbk[e_p^p]\cdot \left(e_p^{p-a}e_{p+1}^0v_a\stackrel{a}{\lra}e_p^{p-a-1}e_{p+1}^{1}v_a\stackrel{a-1}{\lra}\cdots\stackrel{1}{\lra}e_p^0e_{p+1}^{p-a}v_a\right)$$
\end{itemize}
\end{lemma}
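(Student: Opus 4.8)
The plan is to mimic closely the structure of the proof of Corollary~\ref{cor-cohomology-rank-one-modules-p}, now using $\sym_{p-1} \subset \sym_{p+1}$ and the block decomposition $\sym_{p+1} \cong \bigoplus_{d} \sym_{p-1} \cdot E_d$ that has just been set up, together with the analysis of each block via the filtration $F_\bullet$ and the splitting \eqref{eqn-decomp-interesting-block}. All three parts will be immediate consequences of the following summary of the discussion preceding the lemma: each block $\sym_{p-1}\cdot E_d$ is acyclic unless $p \mid d$, and when $d = kp$ the inclusion $\Bbbk \cdot e_p^{d} \hookrightarrow \sym_{p-1}\cdot E_d$ is a quasi-isomorphism, while the complement summand in \eqref{eqn-decomp-interesting-block} (the sum over $l$ of the length-$p$ strings with differentials $1, 2, \dots, p-1$) is acyclic since those strings are contractible $p$-complexes.

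For part~(i), I would observe that the blocks $\sym_{p-1}\cdot E_d$ with $p \mid d$, say $d = kp$, are exactly the graded pieces $\Bbbk[e_p^p]\cdot e_p^{kp}$ after taking cohomology, so summing over $k$ gives that the inclusion $\Bbbk[e_p^p] \hookrightarrow \sym_{p+1}$ is a quasi-isomorphism; it is a map of $p$-DG algebras since $\dif(e_p^p) = p e_1 e_p^p = 0$, and the remaining blocks (those with $p \nmid d$) contribute nothing to cohomology.

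For parts~(ii) and~(iii), I would pass to the rank-one module $\s_{p+1}(a) = \sym_{p+1}\cdot v_a$, and use that it is isomorphic up to grading shift to the $\dif$-closed ideal $(e_{p+1}^a) \subset \sym_{p+1}$ (by Definition~\ref{def-rank-one-mod}), or equivalently run the same block/filtration argument directly on $\s_{p+1}(a)$ with the twisted differential $\dif_a(v_a) = a e_1 v_a$. Restricting to $\sym_{p-1}$, the module $\s_{p+1}(a)$ again decomposes into blocks spanned by $\{e_p^r e_{p+1}^s v_a : r+s = d\}$, and the block-$d$ subquotients of the analogous filtration are now all isomorphic to $\s_{p-1}(d+a)$. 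By Corollary~\ref{cor-contractible-rank-one-modules-less-than-p} and Corollary~\ref{cor-cohomology-other-rank-one-mod-less-than-p}, such a block is acyclic unless $p \mid d+a$; for $a = 1$ this forces $d \equiv p-1 \pmod p$, and a careful look at the resulting strings (using \eqref{eqn-dif-on-basis-of-blocks-p-plus-1} and that $\dif_a$ adds $a e_1 = e_1$ to the d-degree contribution) shows that after quotienting by the augmentation ideal of $\sym_{p-1}$ the surviving $p$-complex is a direct sum of length-$p$ contractible strings, giving acyclicity — this is part~(ii). For $a \in \{2, \dots, p-1\}$, the condition $p \mid d+a$ gives $d \equiv p-a \pmod p$, and in the first (smallest) such block $d = p-a$ the quotient-by-$\sym_{p-1}'$ complex is precisely the length-$(p-a+1)$ string $e_p^{p-a}v_a \xrightarrow{a} e_p^{p-a-1}e_{p+1}v_a \xrightarrow{a-1}\cdots\xrightarrow{1} e_{p+1}^{p-a}v_a$ read off from \eqref{eqn-dif-on-basis-of-blocks-p-plus-1}; each higher block $d = p-a+kp$ ($k \ge 1$) contributes, after the same quotient and the splitting \eqref{eqn-decomp-interesting-block}, a copy of this same string tensored with $e_p^{kp}$ plus an acyclic remainder, so assembling over $k$ yields the claimed $\Bbbk[e_p^p]$-multiple of that string — this is part~(iii).

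The main obstacle I anticipate is bookkeeping: correctly tracking how the twist $\dif_a(v_a) = a e_1 v_a$ shifts which blocks are acyclic and, within the non-acyclic blocks, pinning down the exact differentials $a, a-1, \dots, 1$ on the surviving length-$(p-a+1)$ strings. One must check that the $-（p+1) r e_p^{r-1}e_{p+1}^{s+1}$ term of \eqref{eqn-dif-on-basis-of-blocks-p-plus-1} survives modulo the augmentation ideal (it does, since it is again a pure $e_p, e_{p+1}$ monomial times $v_a$) and that its coefficient, reduced mod $p$, gives the arrows claimed in~(iii); once the $d = p-a$ base block is handled, the higher blocks follow by the already-established pattern, and the passage from "cohomology of each block" to "cohomology of the whole module" is the same direct-sum argument as in Corollary~\ref{cor-cohomology-rank-one-modules-p}.
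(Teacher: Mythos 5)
Your proposal is correct and takes essentially the same route as the paper: the paper's own proof consists of the block decomposition of $\sym_{p+1}$ over $\sym_{p-1}$ from the preceding discussion (the interesting blocks being those $d$ with $p\mid d+a$, which for $a=1$ decompose after quotienting by $\sym_{p-1}'$ into contractible length-$p$ strings), with part (iii) left as an exercise that you carry out by exactly the same filtration-and-quotient argument. One small remark on the coefficient check you flag: modulo the augmentation ideal the arrow out of $e_p^{p-a-j}e_{p+1}^{j}v_a$ has coefficient $-(p+1)(p-a-j)\equiv a+j \pmod p$, so the labels come out as $a, a+1,\dots,p-1$ rather than the $a, a-1,\dots,1$ printed in the statement; since all are nonzero mod $p$ the resulting $p$-complex is isomorphic to the one claimed, and your argument is unaffected.
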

\begin{proof}
The first statement follows from the discussion before the lemma. The second one follows by a similar consideration as with the first. The only difference is that, using the same argument, the more interesting block is spanned by $\{e_p^ae_{p+1}^b|a, b \in \N , a+b+1=kp, k\in \N\}$. The analogous decomposition of (\ref{eqn-decomp-interesting-block}) in this case becomes
\begin{equation}\label{eqn-decomp-interesting-block-2}
\bigoplus_{l=0}^{k-1}(e_p^{(k-l)p-1}e_{p+1}^{lp} \stackrel{1}{\lra} e_p^{(k-l)p-2}e_{p+1}^{lp+1} \stackrel{2}{\lra} \cdots \stackrel{p-1}{\lra} e_p^{(k-l-1)p}e_{p+1}^{(l+1)p-1}),
\end{equation}
which is a direct sum of contractible $p$-complexes. The last case follows by a similar consideration as the first two cases, and we leave it to the reader as an exercise.
\end{proof}

\begin{lemma}\label{lemma-cohomology-rank-one-modules-greater-than-p}Let $n$ be an integer in $\{p+1, \dots, 2p-1\}$.
\begin{itemize}
\item[(i)]The natural inclusion $\Bbbk[e_p^p] \hookrightarrow \sym_n$, where $\Bbbk[e_p^p]$ is equipped with the zero differential, is a quasi-isomorphism of $p$-DG algebras.
\item[(ii)] When $1\leq a \leq n-p$, the rank-one $p$-DG module $\s_n(a)$  is acyclic.
\item[(iii)] When $n-p+1\leq a \leq p-1$, the $p$-DG module $\s_n(a)$ has its cohomology isomorphic to the $p$-complex
    $$\Bbbk[e_p^p]\cdot\left(\bigoplus_{i_p+i_{p+1}+\cdots + i_n= p-a}\Bbbk e_p^{i_p}e_{p+1}^{i_{p+1}}\cdots e_{n}^{i_n}v_a\right),$$
    where the $\dif$-stable $\Bbbk$-space in the bracket is equipped with the inherited differential from \eqref{difek}.
\end{itemize}
\end{lemma}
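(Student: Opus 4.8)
The plan is to imitate the strategy already used for $\sym_{p-1}$, $\sym_p$, and $\sym_{p+1}$: restrict everything to the $p$-DG subalgebra $\sym_{p-1}\subset\sym_n$, decompose into blocks along the ``relative d-degree'' $\sum_{k=p}^n i_k$, and show each block is either acyclic or collapses onto something built from $\Bbbk[e_p^p]$. Concretely, since $\sym_n=\Bbbk[e_1,\dots,e_n]$ and $\sym_{p-1}=\Bbbk[e_1,\dots,e_{p-1}]$ is $\dif$-stable with $\dif(e_{p-1})=e_1e_{p-1}$, we get
\[
\sym_n\cong\bigoplus_{d\in\N}\sym_{p-1}\cdot E_d,
\]
where $E_d$ is spanned by the monomials $e_p^{i_p}\cdots e_n^{i_n}$ with $i_p+\dots+i_n=d$. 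The key computational input is the analogue of \eqref{eqn-dif-on-basis-of-blocks-p-plus-1}: applying \eqref{difek} and the Leibniz rule to $e_p^{i_p}\cdots e_n^{i_n}$ gives $\dif$(that monomial) $= d\,e_1\cdot$(that monomial) plus a $\sym_{p-1}$-linear combination of monomials with the \emph{same} relative d-degree $d$ (the ``error'' terms $-(k+1)e_{k+1}$ coming from $\dif(e_k)$ for $p\le k\le n$ keep $i_p+\dots+i_n$ fixed, while those for $k\le p-1$ land in $\sym_{p-1}$). Hence each block $\sym_{p-1}\cdot E_d$ is $\dif$-stable.

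Next I would put a finite $\dif$-stable filtration on each block $\sym_{p-1}\cdot E_d$ — ordering the basis monomials of $E_d$ so that the ``error'' part of $\dif$ is strictly triangular — whose subquotients are each isomorphic as $p$-DG $\sym_{p-1}$-modules to $\s_{p-1}(d)$, with $v$ sent to a monomial of relative d-degree $d$. By Corollary~\ref{cor-contractible-rank-one-modules-less-than-p}, $\s_{p-1}(d)$ is acyclic whenever $1\le d\le p-1$; since $n-1<2p-1$, i.e. $n-(p-1)\le p-1$... more carefully: $d$ not a multiple of $p$ with $d<p$ forces acyclicity, and a general $d$ not divisible by $p$ reduces to its residue $\bar d\in\{1,\dots,p-1\}$ via $\s_{p-1}(d)\cong\s_{p-1}(\bar d)$, so \emph{every block with $p\nmid d$ is acyclic}. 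For $d=kp$, the subquotients are all $\s_{p-1}(0)\cong\sym_{p-1}\simeq\Bbbk$, so the quotient $\sym_{p-1}\cdot E_d\to E_d/(\sym_{p-1}'\cdot E_d)$ is a quasi-isomorphism, and the right side — the ``residue $p$-complex'' spanned by the monomials $e_p^{i_p}\cdots e_n^{i_n}$ with $\sum i_k=d$, with the inherited differential from \eqref{difek} — computes the cohomology of the block. For part (i) ($a=0$) this residue complex at $d=kp$ must be shown to have one-dimensional cohomology spanned by $e_p^{kp}$; I would verify this by exhibiting an explicit contracting homotopy on the complement of $e_p^{kp}$, using that $\dif$ on $e_p^{i_p}\cdots e_n^{i_n}$ includes the nonzero term $-(p+1)i_p\,e_p^{i_p-1}e_{p+1}^{i_{p+1}+1}\cdots$ (and its higher-index variants) to ``push weight to the right'', exactly as in \eqref{eqn-decomp-interesting-block}, \eqref{eqn-decomp-interesting-block-2}. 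Assembling over all $k$ gives $\Bbbk[e_p^p]\hookrightarrow\sym_n$ a quasi-isomorphism.

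For parts (ii) and (iii) I would repeat the argument with $\s_n(a)=\sym_n\cdot v_a$, $\dif_a(v_a)=ae_1v_a$: restricting to $\sym_{p-1}$, the block $\sym_{p-1}\cdot E_d\cdot v_a$ has subquotients $\s_{p-1}(d+a)$, so it is acyclic unless $p\mid(d+a)$. When $1\le a\le n-p$, one checks that the relevant ``interesting'' blocks $d+a\equiv0$ still have residue complexes that are sums of contractible strings (no monomial survives), giving acyclicity — this is the direct generalization of Lemma~\ref{lemma-cohomomology-rank-one-modules-p-plus-1}(ii), and the parameter range $a\le n-p$ is exactly what makes every surviving monomial non-extremal. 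When $n-p+1\le a\le p-1$, the extremal monomials that survive are precisely those of shape $e_p^{i_p}\cdots e_n^{i_n}v_a$ with $\sum_{k=p}^n i_k=p-a$ sitting inside the blocks $d+a=p$ (times powers of $e_p^p$ from the $\Bbbk[e_p^p]$-action on higher blocks $d+a=(k+1)p$), which yields the stated $\Bbbk[e_p^p]$-module in (iii) with its inherited differential. The main obstacle is the bookkeeping in the last step: identifying exactly which residue monomials survive in each block $d\equiv -a\pmod p$ and checking that the surviving piece is precisely $\Bbbk[e_p^p]\cdot\big(\bigoplus_{i_p+\dots+i_n=p-a}\Bbbk\,e_p^{i_p}\cdots e_n^{i_n}v_a\big)$ with no extra cohomology — in other words, producing the contracting homotopy on the complement uniformly in $n$ and $a$. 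Everything else is a routine transcription of the $n=p$ and $n=p+1$ arguments.
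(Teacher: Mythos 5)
Your strategy is sound, but it is genuinely different from the proof the paper gives for this lemma. You generalize the block decomposition over $\sym_{p-1}$ by relative d-degree that the paper uses only for the single case $n=p+1$ (Lemma \ref{lemma-cohomomology-rank-one-modules-p-plus-1}). The paper instead proves Lemma \ref{lemma-cohomology-rank-one-modules-greater-than-p} by induction on $n$ using short exact sequences of ideals: part (ii) follows from
\[
0 \lra (e_n^a) \lra (e_n^{a-1}) \lra \s_{n-1}(a-1) \lra 0
\]
with Lemma \ref{lemma-cohomomology-rank-one-modules-p-plus-1} as base case; part (i) then follows from part (ii) and $0 \to (e_n) \to \sym_n \to \sym_{n-1} \to 0$, since $(e_n)\cong\s_n(1)$ is acyclic by (ii) and the base case is Corollary \ref{cor-cohomology-rank-one-modules-p}; part (iii) is an induction modeled on Corollary \ref{cor-cohomology-other-rank-one-mod-less-than-p}. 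What that induction buys is exactly the avoidance of the step you flag as your ``main obstacle'': one never has to analyze the residue $p$-complexes $E_d$ for general $n$, because each step of the induction only peels off one variable and reduces to a module over $\sym_{n-1}$ that is already understood.

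The one genuine gap in your write-up is therefore that deferred step, and I would caution that it is more than bookkeeping. Your reduction to the residue complexes is correct (the blocks with $p\nmid(d+a)$ are acyclic via subquotients $\s_{p-1}(\overline{d+a})$ and Corollary \ref{cor-contractible-rank-one-modules-less-than-p}, and the quotient by $\sym_{p-1}'\cdot E_d$ is a quasi-isomorphism in the remaining blocks). But for $n\geq p+2$ the surviving complex in a block $p\mid(d+a)$ is a genuinely multi-dimensional array of monomials in $e_p,\dots,e_n$ with several ``shift'' operators $e_k\mapsto e_{k+1}$ acting simultaneously, not a disjoint union of strings as in \eqref{eqn-decomp-interesting-block} and \eqref{eqn-decomp-interesting-block-2}; exhibiting a contracting homotopy on the complement of the claimed cohomology, uniformly in $n$ and $a$, and verifying $p$-contractibility (not just exactness of $\dif$) is a real piece of combinatorics that your proposal does not carry out. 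If you want to keep your block-decomposition framework, the cleanest fix is to run the paper's induction on $n$ \emph{inside} each residue complex (peel off the $e_n$-degree, reducing the block for $\sym_n$ to blocks for $\sym_{n-1}$), which effectively reproduces the paper's argument; as written, ``routine transcription of the $n=p$ and $n=p+1$ arguments'' oversells what remains.
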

\begin{proof}
The proof is analogous to that of Corollary \ref{cor-contractible-rank-one-modules-less-than-p}. One first shows part $(ii)$ using an induction argument based on Lemma \ref{lemma-cohomomology-rank-one-modules-p-plus-1} and the short exact sequence of $p$-DG modules (ideals)
\[
0 \lra (e_n^a) \lra (e_n^{a-1}) \lra \s_{n-1}(a-1) \lra 0.
\]
Then to prove $(i)$, one starts from Corollary \ref{cor-cohomology-rank-one-modules-p} and inducts on $n$ using the short exact sequence
\[
0\lra (e_n) \lra \sym_n \lra \sym_{n-1}\lra 0.
\]
Here the ideal $(e_n)=\sym_n\cdot e_n \cong \s_n(1)$ is acyclic by part $(ii)$. To prove part $(iii)$, one uses an induction argument similar as done for Corollary \ref{cor-cohomology-other-rank-one-mod-less-than-p}. We leave this as an exercise to the reader.
\end{proof}

The above method generalizes immediately to the case of arbitrary $\sym_n$.

\begin{prop}\label{prop-cohomology-of-sym-n}Let $n$ be a natural number such that $kp\leq n \leq (k+1)p-1$ for some $k\in \N$, and $\s_n(a)$ be the rank-one $p$-DG module over $\sym_n$, which is generated by $v_a$ with $\dif_a(v_a)=ae_1v_a$.
\begin{itemize}
\item[(i)] The inclusion $\Bbbk[e_p^p,\cdots, e_{kp}^p]\lra \sym_n$, where $\Bbbk[e_p^p,\cdots, e_{kp}^p]$ is equipped with the trivial differential, is a quasi-isomorphism of $p$-DG algebras.
\item[(ii)] When  $1\leq a \leq n-kp$, the rank-one $p$-DG module $\s_n(a)$ is acyclic.
\item[(iii)]When $n-kp+1 \leq a \leq p-1$, the $p$-DG module $\s_n(a)$ has its cohomology isomorphic to the $p$-complex
    $$\Bbbk[e_p^p,\dots, e_{kp}^p]\cdot\left(\bigoplus_{i_{kp}+i_{kp+1}+\cdots + i_n= p-a}\Bbbk e_{kp}^{i_{kp}}e_{kp+1}^{i_{kp+1}}\cdots e_{n}^{i_n}v_a\right),$$
    where the $\dif$-stable $\Bbbk$-space in the bracket is equipped with the inherited differential from \eqref{difek}.
\end{itemize}
\end{prop}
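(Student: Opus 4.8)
The plan is to induct on $n$. The base case $n<p$ is Lemma~\ref{lemma-cohomology-of-sym-less-than-p} together with Corollaries~\ref{cor-contractible-rank-one-modules-less-than-p} and~\ref{cor-cohomology-other-rank-one-mod-less-than-p}, and Corollary~\ref{cor-cohomology-rank-one-modules-p} and Lemmas~\ref{lemma-cohomomology-rank-one-modules-p-plus-1} and~\ref{lemma-cohomology-rank-one-modules-greater-than-p} are the worked-out instances $p\le n\le 2p-1$; the inductive step is their common generalization, so I will describe it for arbitrary $n$ with $kp\le n\le (k+1)p-1$, assuming the proposition for all smaller $n$. The one structural input is that $\sym_{kp-1}$ sits inside $\sym_n$ as a $p$-DG \emph{sub}algebra: since $p\mid kp$, formula~\eqref{difek} gives $\dif(e_{kp-1})=e_1e_{kp-1}-kp\,e_{kp}=e_1e_{kp-1}$ in $\sym_n$, so the inclusion $\Bbbk[e_1,\dots,e_{kp-1}]\hookrightarrow\Bbbk[e_1,\dots,e_n]$ commutes with $\dif$. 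As $kp-1$ lies in the range attached to $k-1$, the inductive hypothesis gives that $\sym_{kp-1}$ is quasi-isomorphic to $R:=\Bbbk[e_p^p,\dots,e_{(k-1)p}^p]$ with the zero differential (read as $\Bbbk$ when $k=1$), and that $\s_{kp-1}(b)$ is acyclic for every $b\in\{1,\dots,p-1\}$.

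Next I would restrict $\sym_n$, or more generally $\s_n(a)=\sym_n\cdot v_a$ with $0\le a\le p-1$, to $\sym_{kp-1}$ and grade it by the \emph{relative d-degree} $d(e_1^{i_1}\cdots e_n^{i_n})=i_{kp}+i_{kp+1}+\cdots+i_n$, with $v_a$ given relative d-degree $0$. By~\eqref{difek} and Leibniz, $\dif$ preserves this grading (again because the $e_{kp}$-term $-kp\,e_{kp}$ of $\dif(e_{kp-1})$ vanishes), so $\s_n(a)=\bigoplus_{d\ge0}\s_n(a)^{(d)}$ splits as a direct sum of $p$-DG $\sym_{kp-1}$-submodules, each free over $\sym_{kp-1}$ on the monomials $m=e_{kp}^{i_{kp}}\cdots e_n^{i_n}$ of relative d-degree $d$. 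A short computation from~\eqref{difek} gives, for such $m$, the formula $\dif(m\,v_a)=(d+a)\,e_1\,m\,v_a-\sum_{j=kp}^{n-1}(j+1)\,i_j\,m_j\,v_a$, where $m_j$ denotes $m$ with one factor $e_j$ replaced by $e_{j+1}$. When $p\nmid d+a$, a secondary filtration by the \emph{excess} $\sum_j(j-kp)i_j$, which $\dif$ can only raise, exhibits $\s_n(a)^{(d)}$ as a finite iterated extension of copies of $\s_{kp-1}(d+a)$, each acyclic by the inductive hypothesis; hence $\s_n(a)^{(d)}$ is acyclic. When $p\mid d+a$, the $e_1$-term vanishes and $\s_n(a)^{(d)}$ becomes, verbatim, the tensor product of $p$-complexes $\sym_{kp-1}\otimes_{\Bbbk}\overline{C}^{(a)}_d$, where $\overline{C}^{(a)}_d$ is the finite $p$-complex on the monomials $m\,v_a$ of relative d-degree $d$ with differential $m\,v_a\mapsto-\sum_j(j+1)i_j\,m_j\,v_a$. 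Since $\sym_{kp-1}\simeq R$ with trivial differential and $-\otimes_\Bbbk\overline{C}^{(a)}_d$ preserves quasi-isomorphisms, this gives $H(\s_n(a)^{(d)})\cong R\otimes_\Bbbk H(\overline{C}^{(a)}_d)$.

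Everything is now reduced to computing the cohomology of the finite combinatorial $p$-complexes $\overline{C}^{(a)}_{\ell p-a}$ (those with $p\mid d+a$, writing $d+a=\ell p$), which is the one place real computation is needed. The claim I would prove, by induction on $\ell$ (equivalently on $n-kp$, peeling off the top variable $e_n$) following the explicit splittings~\eqref{eqn-dif-on-basis-of-blocks-p-plus-1}--\eqref{eqn-decomp-interesting-block-2}, is that $\overline{C}^{(a)}_{\ell p-a}$ decomposes as a direct sum of full length-$p$ (hence contractible) $p$-complexes together with: a one-dimensional summand $\Bbbk\,e_{kp}^{\ell p}$ if $a=0$; and, if $a\ge1$, the $\dif$-stable subspace spanned by the monomials with $i_{kp}\ge(\ell-1)p$, which is identified by multiplication by $e_{kp}^{(\ell-1)p}$ with the degree-$(p-a)$ complex $\overline{C}^{(a)}_{p-a}$. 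Granting this, the three parts follow by summing over $\ell$ and multiplying by $R$: for $a=0$, $H(\sym_n^{(\ell p)})\cong R\cdot e_{kp}^{\ell p}$, so $H(\sym_n)\cong R[e_{kp}^p]=\Bbbk[e_p^p,\dots,e_{kp}^p]$, which is (i); for $n-kp+1\le a\le p-1$, the complex $\overline{C}^{(a)}_{p-a}$ is exactly the one displayed in (iii) (where the $e_1$-terms cancel precisely because $(p-a)+a$ is a multiple of $p$), and $\s_n(a)\simeq\Bbbk[e_p^p,\dots,e_{kp}^p]\otimes\overline{C}^{(a)}_{p-a}$, which is (iii); and for $1\le a\le n-kp$, the complex $\overline{C}^{(a)}_{p-a}$ is itself a full length-$p$ piece, so every block is acyclic and $\s_n(a)$ is acyclic, which is (ii).

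Alternatively, once (i) is known one can obtain (ii) more cheaply, avoiding part of the combinatorics: the short exact sequence $0\to(e_n)\to\sym_n\to\sym_{n-1}\to0$ of $p$-DG modules shows $\s_n(1)\cong(e_n)$ is acyclic, because the quotient is a quasi-isomorphism ($\sym_n$ and $\sym_{n-1}$ being quasi-isomorphic to $\Bbbk[e_p^p,\dots,e_{kp}^p]$ compatibly, using that in this range $n$ is not a multiple of $p$, so no generator $e_{jp}$ with $j\le k$ is killed), and then $0\to(e_n^a)\to(e_n^{a-1})\to\s_{n-1}(a-1)\to0$ with induction on $a$ finishes off $1\le a\le n-kp$ as in Lemma~\ref{lemma-cohomology-rank-one-modules-greater-than-p}. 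The hard part throughout is the combinatorial lemma on the $\overline{C}^{(a)}_d$: for $n-kp\le1$ it is the single linear $p$-complex of the excerpt, but in general it is a several-variable ``$p$-Koszul'' statement whose proof requires arranging the splitting into length-$p$ blocks plus the asserted bottom piece.
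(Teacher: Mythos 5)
Your proposal is correct and is essentially the argument the paper intends: the paper's own proof of Proposition~\ref{prop-cohomology-of-sym-n} is literally ``Exercise,'' with the preceding remark that the method of Corollary~\ref{cor-cohomology-rank-one-modules-p} and Lemmas~\ref{lemma-cohomomology-rank-one-modules-p-plus-1}--\ref{lemma-cohomology-rank-one-modules-greater-than-p} ``generalizes immediately,'' and that method is exactly your relative-d-degree block decomposition together with the ideal short exact sequences you give as the alternative route to (ii). The one organizational difference is the choice of base subalgebra: the paper sets up the relative d-degree over $\sym_{p-1}$, so its acyclicity inputs are only the base cases $\s_{p-1}(b)$ from Corollary~\ref{cor-contractible-rank-one-modules-less-than-p}, whereas you restrict to $\sym_{kp-1}$ and feed back the full inductive hypothesis for $n=kp-1$. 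Your choice buys a cleaner residual object --- $\overline{C}^{(a)}_d$ becomes the $d$-th symmetric power of a single indecomposable $p$-complex of length $n-kp+1\le p$ on the variables $e_{kp},\dots,e_n$ --- at the cost of needing parts (i)--(ii) for $n=kp-1$ rather than just the $n<p$ lemmas. The only place your write-up is a sketch rather than a proof is the splitting of $\overline{C}^{(a)}_{\ell p-a}$ into contractible length-$p$ pieces plus the asserted bottom piece; the statement is correct (it is the multivariable analogue of \eqref{eqn-decomp-interesting-block} and \eqref{eqn-decomp-interesting-block-2}, amounting to the classical computation of symmetric powers of a Jordan block in characteristic $p$), it is consistent with all the worked cases $n\le 2p-1$, and your reduction to it is airtight, so I would not count this as a gap relative to what the paper itself supplies.
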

\begin{proof}Exercise.
\end{proof}

\section{Grassmannian modules for symmetric polynomials}\label{sec-grassmannian}


\subsection{Definition}

In this chapter, we will be interested in the space $\sym_{a,b}:=\sym_{a}\boxtimes \sym_b$, viewed as a $p$-DG algebra or as a module over the $p$-DG algebras $\sym_{a,b}$ and
$\sym_{a+b}$. We refer to it as a Grassmannian module, because it is isomorphic to the equivariant cohomology ring $\mH^*_{GL(a+b)}(Gr(a,a+b))$ of the Grassmannian variety of $a$-planes in $\C^{a+b}$. As in the previous chapter, we write
\begin{equation}
\s_{a,b}:=\sym_a\boxtimes\sym_b=\sym_{a,b} \cdot v_{a,b},
\end{equation}
to indicate the free rank-one module over the algebra $\sym_{a,b}$ with generator $v_{a,b}$, viewed as a bimodule over $\sym_{a,b}$ and $\sym_{a+b}$. The degree of $v_{a,b}$ is $-ab$.

As a module over $\sym_{a+b}$, $\s_{a,b}$ is free (its graded rank agrees with the graded dimension of the non-equivariant cohomology ring $\mH^*(Gr(a,a+b))$). A pair of convenient bases over $\sym_{a+b}$ is given by Schur functions:
\begin{equation}\label{eqn-Schur-basis}
\left\{(\pi_{\lambda}\boxtimes 1)|\lambda\in P(a,b)\right\} ~\textrm{or}~ \left\{(1\boxtimes \pi_{\lambda})|\lambda\in P(b,a)\right\},
\end{equation}
where $P(a,b)$ contains those Young diagrams that fit into a height $a$, width $b$ rectangle.

\begin{defn}\label{def-dg-bimod}Consider the rank-one $\sym_{a,b}$-module $\s_{a,b}:=\sym_{a,b}\cdot v_{a,b}$. Given $k,l\in \Bbbk$, let $\dif_{k,l}$ be the differential on $\s_{a,b}$ which acts on the generator $v_{a,b}$ by
\[
\dif_{k,l}(v_{a,b}):=ke_1(x_1,\dots, x_a)v_{a,b}+le_1(x_{a+1},\dots, x_{a+b})v_{a,b},
\]
extended to the whole space via the Leibnitz rule
\[
\dif_{k,l}(fv_{a,b}):=\dif(f)v_{a,b}+f\dif_{k,l}(v_{a,b}),
\]
for any $f\in \sym_{a,b}$. This $p$-DG module will be denoted by $\s_{a,b}(k,l)$ in what follows. (It is assumed that the $p$-differential $\dif$ on $\sym_{a+b}$ or $\sym_{a,b}$ is specified as in Lemma \ref{lemma-difofpilambdalonghand}.)

We will abbreviate the elementary symmetric functions in the first $a$-variables (resp.~the last $b$-variables) by $e_1^{(a)}$ (resp.~$e_1^{(b)}$). Thus we have
\[
\dif_{k,l}(v_{a,b}):=ke_1^{(a)}v_{a,b}+le_1^{(b)}v_{a,b}.
\]
\end{defn}

It is clear that the differential on $\s_{a,b}(k,l)$ is $p$-nilpotent if and only if $k,l\in \F_p$, which we will assume from now on. Moreover, $\s_{a,b}(k,l)$ is a $p$-DG bimodule over the
$p$-DG algebras $\sym_{a,b}$ and $\sym_{a+b}$ (see \S\ref{subsec-bimodules} for more on $p$-DG bimodules and their endomorphism rings).

\subsection{Cofibrance}

The most basic question to ask is for which parameters $k,l \in \F_p$ will the bimodules $\s_{a,b}(k,l)$ be cofibrant as $p$-DG modules over $\sym_{a+b}$. Because $\sym_{a+b}$ is a positively
graded local ring, a finitely-generated $p$-DG module will be cofibrant if and only if it is a finite-cell module (see \cite[Definition 2.4]{EQ1}).

\begin{defn} \label{def-finite-cell} Let $A$ be a $p$-DG algebra which is graded local, and $M$ a $p$-DG (left or right) module over $A$. Then $M$ is called \emph{finite-cell} if there is a
finite, exhaustive, increasing $p$-DG filtration $F_\bullet$ on $M$ whose subquotients are isomorphic to $A$ with the natural differential (up to grading shifts). \end{defn}

\begin{prop}\label{prop-finite-cell-parameters}The $p$-DG module $\s_{a,b}(k,l)$ in Definition \ref{def-dg-bimod}, when considered as a $p$-DG right module over $\sym_{a+b}$, is finite-cell if either of the following conditions hold
\[
\left\{
\begin{array}{ll}
k\equiv -b & (\mathrm{mod}~p),\\
l \equiv 0 & (\mathrm{mod}~p).
\end{array}
\right.
\quad
\quad
\left\{
\begin{array}{ll}
k\equiv 0 & (\mathrm{mod}~p),\\
l \equiv -a & (\mathrm{mod}~p).
\end{array}
\right.
\]
\end{prop}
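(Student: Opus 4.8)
The plan is to exhibit an explicit finite-cell filtration on $\s_{a,b}(k,l)$ by working inductively, mirroring the strategy used for Grassmannian-type modules but now keeping track of the $p$-differential. By symmetry (swapping the roles of the first $a$ variables and the last $b$ variables), it suffices to treat the first case, $k\equiv -b$ and $l\equiv 0 \pmod p$. First I would recall that, as a module over $\sym_{a+b}$, $\s_{a,b}$ is free with basis $\{(\pi_\lambda\boxtimes 1)\,v_{a,b}\mid \lambda\in P(a,b)\}$, so any filtration of $\s_{a,b}(k,l)$ by $\sym_{a+b}$-submodules whose successive quotients are each free of rank one will automatically be a cell filtration at the level of the underlying additive category; the real content is to choose the filtration so that it is $\dif_{k,l}$-stable and so that the induced differential on each subquotient matches the natural differential $\dif$ on $\sym_{a+b}$ (i.e.\ the subquotient is isomorphic to $\s_{a+b}(0)\cong\sym_{a+b}$ up to a grading shift).

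The key computation is to understand how $\dif_{k,l}$ acts on the Schur basis elements $(\pi_\lambda\boxtimes 1)v_{a,b}$. Using the Leibniz rule we get $\dif_{k,l}\big((\pi_\lambda\boxtimes 1)v_{a,b}\big) = \dif(\pi_\lambda\boxtimes 1)\,v_{a,b} + (\pi_\lambda\boxtimes 1)\big(ke_1^{(a)}+le_1^{(b)}\big)v_{a,b}$. When $l\equiv 0$, the $le_1^{(b)}$ term drops out. By Lemma~\ref{lemma-difofpilambdalonghand} applied inside $\sym_a$, $\dif(\pi_\lambda\boxtimes 1)$ is a sum over adding a box to $\lambda$ inside a height-$a$ strip, weighted by contents; multiplication by $e_1^{(a)}=\pi_{(1)}\boxtimes 1$ also adds a box, via the Pieri rule, with all coefficients equal to one. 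I would order the partitions $\lambda\in P(a,b)$ by size (or by a compatible total order refining the inclusion order of Young diagrams inside the $a\times b$ rectangle) and let $F_j$ be the $\sym_{a+b}$-span of those $(\pi_\lambda\boxtimes 1)v_{a,b}$ with $|\lambda|\le j$. Stability of $F_\bullet$ under $\dif_{k,l}$ follows because both $\dif(\pi_\lambda\boxtimes 1)$ and $e_1^{(a)}(\pi_\lambda\boxtimes 1)$ are supported on partitions of size $|\lambda|+1$, except that one must be careful: adding a box may leave the rectangle $P(a,b)$, in which case one rewrites $\pi_\mu$ for $\mu\notin P(a,b)$ in terms of the free $\sym_{a+b}$-basis — this introduces lower-order correction terms (Schur polynomials in the other set of variables times elements of $\sym_{a+b}$), and the crucial point is that these corrections land in lower filtration pieces. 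The constant $k\equiv -b$ is exactly what is needed to make the "leading" part of the differential on the subquotient $F_j/F_{j-1}$ vanish, so that the induced differential is the natural one on $\sym_{a+b}$.

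More precisely, on the subquotient indexed by $\lambda$ the induced differential should compute to: (content-sum from adding boxes inside the rectangle that stay in the basis) $+$ $k$ $-$ (content-type corrections coming from boxes that exit the rectangle through the bottom or the right side). The identity $\sum(\text{added-box contents}) + k = 0$ modulo the corrections, which forces $k\equiv -b\pmod p$, is essentially a statement about contents of boxes added to a maximal partition, and I expect it to follow from a short combinatorial lemma about the $a\times b$ rectangle (the $b$ boxes one could add along the "staircase boundary" have contents summing, with signs, to $b$). \textbf{The main obstacle} I anticipate is precisely the bookkeeping of these boundary corrections: verifying that when a box is added outside $P(a,b)$ and one re-expands the resulting (virtual) Schur function in the free basis over $\sym_{a+b}$, every term genuinely drops the filtration degree \emph{and} that the surviving diagonal coefficient is exactly $-b+k\equiv 0$. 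This is the sort of computation that is conceptually clear but notationally heavy; one clean way to organize it is to pass to the limit $\sym_\infty$ / the ring $\Lambda$ of symmetric functions as suggested after \eqref{difpil}, do the content bookkeeping there where there are no rectangle constraints, and then project back, isolating the correction terms as precisely those Schur functions that become reducible modulo $\sym_{a+b}$. Once the filtration and the subquotient identification are in place, finite-cellness is immediate since $P(a,b)$ is a finite set and each subquotient is $\sym_{a+b}$ with a grading shift.
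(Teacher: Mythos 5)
Your overall strategy is the paper's: use the Schur basis $\{(\pi_\lambda\boxtimes 1)v_{a,b}\mid\lambda\in P(a,b)\}$ from \eqref{eqn-Schur-basis}, compute $\dif_{-b,0}$ on it via Lemma \ref{lemma-difofpilambdalonghand} and the Pieri rule, and filter by $|\lambda|$. But the step you flag as the ``main obstacle'' --- re-expanding $\pi_\mu\boxtimes 1$ for $\mu\notin P(a,b)$ in the free $\sym_{a+b}$-basis and tracking correction terms --- is where your argument is incomplete, and in fact that whole machinery is unnecessary. The computation gives $\dif_{-b,0}((\pi_\lambda\boxtimes 1)v_{a,b})=\sum_{\lambda+\square}(C(\square)-b)(\pi_{\lambda+\square}\boxtimes 1)v_{a,b}$, and the only way $\lambda+\square$ can leave the $a\times b$ rectangle (while staying a partition with at most $a$ rows) is by adding a box to the first row when $\lambda_1=b$; that box sits in column $b+1$ of row $1$ and has content exactly $b$, so its coefficient $C(\square)-b$ is zero. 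Hence the $\Bbbk$-span of the Schur basis is literally closed under $\dif_{-b,0}$: there are no out-of-rectangle terms to re-expand and no correction terms at all. This single observation is the entire content of the choice $k\equiv -b$; your alternative accounting (``the surviving diagonal coefficient is exactly $-b+k$'', and the remark about $b$ staircase boxes with contents ``summing, with signs, to $b$'') does not match what actually happens --- no diagonal term ever appears, since every term of the differential strictly increases $|\lambda|$, and only one boundary box is ever in play.

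Two smaller points. First, your filtration runs the wrong way: since the differential raises $|\lambda|$ by one, an increasing $\dif$-stable filtration must start with the \emph{largest} partitions (the full rectangle $(b^a)$ generates the first step, as its differential vanishes on basis elements), not with $|\lambda|\le j$. Second, once the span is $\dif$-closed, each rank-one subquotient has its generator killed by the induced differential automatically, so it is $\sym_{a+b}$ with its natural differential up to shift --- no further condition on $k$ is needed there. With the content-$b$ observation supplied and the filtration order reversed, your argument closes and coincides with the paper's proof; the second parameter choice $(0,-a)$ follows symmetrically using the basis $\{(1\boxtimes\pi_\lambda)v_{a,b}\mid\lambda\in P(b,a)\}$, as you say.
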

\begin{proof} First consider the $p$-DG module $\s_{a,b}(-b,0)$ together with the basis
$
\{(\pi_{\lambda}\boxtimes 1)v_{a,b}| \lambda \in P(a,b)\}
$ over $\sym_{a+b}$ (see equation \eqref{eqn-Schur-basis}). We claim that the $\Bbbk$-span of this basis is closed with respect to the module differential $\dif_{-b,0}$. By the differential action on Schur functions (Lemma \ref{lemma-difofpilambdalonghand}),
\begin{eqnarray*}
\dif((\pi_{\lambda}\boxtimes 1)v_{a,b}) & = &\sum_{\lambda+\square}C(\square)(\pi_{\lambda+\square}\boxtimes 1)v_{a,b}-b(e_1^{(a)}\pi_{\lambda}\boxtimes 1)v_{a,b}\\
& = & \sum_{\lambda+\square}(C(\square)-b)(\pi_{\lambda+\square}\boxtimes 1)v_{a,b},
\end{eqnarray*}
where we have used the Pieri rule in the second equality. Note that, because these Schur polynomials live in $\sym_a$, the partitions in this sum have at most $a$ rows, but nothing constrains them to have at most $b$ columns. However, whenever $\l + \square$ has more than $b$ columns, it must be the case that $\l_1 = b$ and $C(\square)=b$; such a partition appears with coefficient zero, as desired.

Thus the $\Bbbk$-span of this basis forms a $p$-complex, and one can choose a $\dif_{-b,0}$-stable increasing filtration with one-dimensional subquotients. Inducing this filtration from
$\Bbbk$ to $\sym_{a+b}$ yields the desired filtration of $\s_{a,b}(-b,0)$.

A similar argument works for $\s_{a,b}(0,-a)$, using the second basis from \eqref{eqn-Schur-basis} instead. \end{proof}

\begin{rmk} It seems as though the converse to Proposition \ref{prop-finite-cell-parameters} also holds for large characteristic. In other words, for a given $a,b$, there may be small
primes for which some other pair $(k,l)$ gives rise to a cofibrant $\sym_{a+b}$-module $\s_{a,b}(k,l)$, but for large enough primes, these two seem to be the only cofibrant possibilities.
We have checked this for small values of $a$ and $b$, and conjecture that it is the case in general. \end{rmk}

\subsection{Duality}

There is a non-degenerate, $\sym_{a+b}$-bilinear pairing on $\sym_{a,b}$ given as follows. Identify $\sym_{a,b}$ as the space of polynomials that are symmetric in the first $a$-variables as well as the last $b$-variables. Consider the following element $D_{a,b}$ of the nilHecke algebra on $a+b$ strands.
\begin{equation}\label{eqn-trace}
D_{a,b}:=
\begin{DGCpicture}
\DGCstrand(0,0)(2,3)[$^1$`$\empty$]
\DGCstrand(0.5,0)(2.5,3)[$^2$`$\empty$]
\DGCstrand(1.5,0)(3.5,3)[$^a$`$\empty$]
\DGCstrand(2.5,0)(0,3)[$^{a+1}$`$\empty$]
\DGCstrand(3.5,0)(1,3)[$^{a+b}$`$\empty$]
\DGCcoupon*(0.65,0.1)(1.45,0.25){$\dots$}
\DGCcoupon*(2.65,0.1)(3.45,0.25){$\dots$}
\end{DGCpicture}~.
\end{equation}
It is not hard to observe that $D_{a,b}$ sends any polynomial in $\sym_{a,b}$ to a polynomial in $\sym_{a+b}$. Then for any elements $f,g\in \sym_{a,b}$, the bilinear form is defined by
\begin{equation}
\langle-,-\rangle:\sym_{a,b}\times \sym_{a,b}\lra \sym_{a+b}, \quad \langle f,g\rangle:= D_{a,b}(fg).
\end{equation}
One can also view this as a pairing on $\s_{a,b}$, pairing $f v_{a,b}$ and $g v_{a,b}$ to yield $D_{a,b}(fg) v_{a,b}$.

\begin{lemma} Under this bilinear pairing, the Schur bases
\begin{equation}\label{eqn-dual-basis}
\left\{(\pi_{\lambda}\boxtimes 1)\cdot v_{a,b}|\lambda\in P(a,b)\right\} ~\textrm{and}~ \left\{(1\boxtimes \pi_{\lambda})\cdot v_{a,b}|\lambda\in P(b,a)\right\},
\end{equation}
are orthogonal to each other:
\begin{equation}\label{eqn-Schur-dual-basis}
\langle(\pi_{\lambda}\boxtimes 1),(1\boxtimes \pi_{\hat{\mu}})\rangle=(-1)^{|\hat{\mu}|}\delta_{\lambda,\mu}.
\end{equation}
Here, $\l$ and $\mu$ are both in $P(a,b)$, and $\hat{\mu}$ is the partition obtained as follows: take the complement of $\mu$ inside an $a\times b$-box, rotate it by 180 degrees to get a Young diagram, and take the transpose. The absolute value indicates the number of boxes in $\hat{\mu}$. \end{lemma}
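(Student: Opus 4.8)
The statement is a classical orthogonality fact for Schur functions under the nilHecke trace pairing, and the cleanest route is to recognize the pairing $\langle f,g\rangle = D_{a,b}(fg)$ as the ``integration'' or ``push-forward'' pairing on the cohomology of the Grassmannian $Gr(a,a+b)$, where the answer is governed by the classical Grassmann duality of Schur polynomials. Concretely, $D_{a,b}$ is the composite divided-difference operator attached to the minimal-length coset representative that shuffles $\{1,\dots,a\}$ past $\{a+1,\dots,a+b\}$, so it sends $\sym_{a,b}$ onto $\sym_{a+b}$ and, up to a sign $(-1)^{ab}$ or $(-1)^{|\hat\mu|}$ depending on normalization, computes the coefficient extraction that picks out the top Schur function in the complementary box.

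First I would record the fundamental computation that $D_{a,b}$, applied to $\pi_\lambda \boxtimes \pi_{\hat\mu}^{\,t}$ (Schur functions in the two variable groups), equals $\pm\delta$ between $\lambda$ and the complement-of-$\mu$ partition. The standard way to see this is via the Jacobi--Trudi / bialternant presentation: write $\pi_\lambda \boxtimes 1$ and $1 \boxtimes \pi_{\hat\mu}$ as ratios of alternants in the two disjoint sets of variables, multiply, and observe that $D_{a,b}$ acting on the product antisymmetrizes fully across all $a+b$ variables. The result is then a single $(a+b)\times(a+b)$ alternant divided by the full Vandermonde, i.e. a Schur function in $a+b$ variables indexed by the exponents $\lambda + \delta_a$ on the first block and $\hat\mu + \delta_b$ on the second block (with $\delta$ the staircase); this is nonzero precisely when the combined exponent sequence is strictly decreasing after sorting, which forces $\hat\mu$ to be exactly the rotated-transposed complement of $\lambda$ inside the $a\times b$ box, and in that case it equals $\pm 1$ (the full staircase over itself). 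The sign is $(-1)^{|\hat\mu|}$, tracked through the permutation sign needed to sort the exponent vector, or equivalently through the number of crossings in the diagram $D_{a,b}$ restricted to the relevant strands.

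Alternatively — and this may be the more self-contained presentation given the paper's diagrammatic setup — I would argue directly with the nilHecke relations: $D_{a,b}$ is a product of $ab$ elementary divided-difference operators $D_i$, and one uses the known action $D_i(\pi_\lambda) $ on Schur polynomials together with the fact that $D_{a,b}$ annihilates anything symmetric in adjacent crossed variables to reduce the pairing to a single leading-term computation. The key intermediate claim is that $\langle \pi_\lambda \boxtimes 1, 1 \boxtimes \pi_{\hat\mu}\rangle$ is triangular with respect to the dominance (or containment) order on $P(a,b)$ paired with the complementation involution $\mu \mapsto \hat\mu$, with $\pm 1$ on the diagonal; since both indexing sets $\{(\pi_\lambda\boxtimes 1)\}$ and $\{(1\boxtimes\pi_{\hat\mu})\}$ are bases of $\s_{a,b}$ over $\sym_{a+b}$ of the same (finite) rank $\binom{a+b}{a}$, triangularity with invertible diagonal forces the off-diagonal entries to vanish, giving exact orthogonality.

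The main obstacle is pinning down the sign $(-1)^{|\hat\mu|}$ cleanly rather than just up to an overall constant: the two natural normalizations (bialternant sorting sign vs.\ diagram crossing count) must be shown to agree, and one has to be careful that $|\hat\mu| = ab - |\mu|$ interacts correctly with the number of inversions in the shuffle permutation underlying $D_{a,b}$. I expect this bookkeeping — verifying, say, on the extreme cases $\mu = \varnothing$ (so $\hat\mu$ is the full $a\times b$ box, $\lambda$ must also be the full box, sign $(-1)^{ab}$) and $\mu$ the full box (so $\hat\mu = \varnothing$, $\lambda = \varnothing$, sign $+1$) and then propagating by a single-box induction — to be where the real work lies; everything else is a routine consequence of the bialternant formula or the nilHecke calculus already recalled in Section~\ref{sec-sym}.
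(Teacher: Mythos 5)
Your first route is precisely the paper's argument: the paper writes $\pi_\lambda = D_a(x^{\lambda+\delta_a})$ and $\pi_{\hat\mu} = D_b(x^{\hat\mu+\delta_b})$, composes $D_{a,b}\circ(D_a\boxtimes D_b) = D_{a+b}$, and observes that $D_{a+b}$ applied to the resulting monomial is nonzero iff the two exponent sequences shuffle into the full staircase $(a+b-1,\dots,1,0)$ --- exactly your ``single alternant over the full Vandermonde'' criterion --- with sign given by the length of the minimal shuffle permutation, which is $|\hat\mu|$. One small correction to your sanity checks: since the answer is $(-1)^{|\hat\mu|}\delta_{\lambda,\mu}$, when $\mu=\varnothing$ the pairing is nonzero only for $\lambda=\varnothing$ (with $\hat\mu$ the full $b\times a$ box and sign $(-1)^{ab}$), and when $\mu$ is the full box only for $\lambda$ equal to the full box (sign $+1$); you have the roles of $\lambda$ reversed in both extremes, though the signs themselves are right.
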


\begin{proof} If $\lambda\in P(a,b)$ and $\hat{\mu}\in P(b,a)$, we have
\begin{align*}
D_{a,b}(\pi_{\lambda}\boxtimes\pi_{\hat{\mu}}) & = D_{a,b}(D_a(x_1^{\lambda_1+a-1}\dots x_a^{\lambda_a})D_b(x_{a+1}^{\hat{\mu}_1+b-1}\dots x_{a+b}^{\hat{\mu}_b}))\\
&=D_{a+b}(x_1^{\lambda_1+a-1}\dots x_a^{\lambda_a}x_{a+1}^{\hat{\mu}_1+b-1}\dots x_{a+b}^{\hat{\mu}_b}).
\end{align*}
The last term is non-zero if and only if the sequence of numbers $(a+b-1,a+b-2,\dots, 1,0)$ can be obtained from the sequences $(\lambda_1+(a-1),\dots,\lambda_a)$ and $(\hat{\mu}_1+(b-1),\dots, \hat{\mu}_{b})$ by a shuffling, which in turn happens if and only if $\lambda=\mu$.  It is not hard to check that
the minimal permutation corresponding to the shuffling has length $|\hat{\mu}|$.
\end{proof}

Now we investigate how this form interacts with the $p$-DG structure.

\begin{prop}\label{prop-bilinear-p-dg-pairing}The bilinear form $\langle-,-\rangle:\s_{a,b}\times\s_{a,b}\lra \sym_{a+b}$ extends to a $p$-DG pairing \[
\langle-,-\rangle:\s_{a,b}(k,l)\times \s_{a,b}(-b-k,-a-l)\lra \sym_{a+b},
\]
which is $\dif$-invariant, $\sym_{a+b}$-linear and non-degenerate.
\end{prop}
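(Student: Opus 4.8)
The statement asserts three properties of the extended pairing: $\sym_{a+b}$-linearity, $\dif$-invariance, and non-degeneracy. The $\sym_{a+b}$-linearity is inherited directly from the original pairing $\langle f, g\rangle = D_{a,b}(fg)$ together with the fact that $D_{a,b}$ commutes with multiplication by symmetric polynomials in all $a+b$ variables (this is the observation already recorded before Definition~\ref{def-dg-bimod}'s neighborhood that $D_{a,b}$ sends $\sym_{a,b}$ to $\sym_{a+b}$ and is $\sym_{a+b}$-linear). Non-degeneracy is an immediate consequence of the previous lemma: the two Schur bases of \eqref{eqn-dual-basis} are orthonormal up to signs under the pairing, so the Gram matrix is (anti)diagonal with $\pm 1$ entries, hence invertible over $\sym_{a+b}$. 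Neither of these two points requires the $p$-DG structure at all.

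The real content is the $\dif$-invariance, i.e.\ the Leibniz-type identity
\[
\dif\big(\langle \xi, \eta\rangle\big) = \langle \dif_{k,l}(\xi), \eta\rangle + \langle \xi, \dif_{-b-k,-a-l}(\eta)\rangle
\]
for $\xi \in \s_{a,b}(k,l)$ and $\eta \in \s_{a,b}(-b-k,-a-l)$. First I would reduce to the case $\xi = f v_{a,b}$, $\eta = g v_{a,b}$ with $f,g \in \sym_{a,b}$, and expand both module differentials via their Leibniz rules: $\dif_{k,l}(fv_{a,b}) = \dif(f)v_{a,b} + f(ke_1^{(a)} + le_1^{(b)})v_{a,b}$, and similarly for $\eta$ with parameters $-b-k$, $-a-l$. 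The left-hand side is $\dif(D_{a,b}(fg))$. Since $D_{a,b} = D_{a+b}\cdot(\text{idempotent-like factor})$, or more precisely since $D_{a,b}(h) = D_{a+b}(\d_b^{(a,b)} h)$ for an appropriate monomial shift coming from the shape of the permutation in \eqref{eqn-trace}, I would use formula \eqref{eqn-diff-on-deltas} — the identity $\dif(D_n) = -\ll_n D_n - D_n \lr_n$ — specialized to the permutation defining $D_{a,b}$, to compute $\dif \circ D_{a,b}$ as an operator on $\sym_{a,b}$. The key combinatorial fact is that the ``left'' linear term $\ll$ for this particular permutation will contribute exactly $-(b)e_1^{(a)} - 0\cdot e_1^{(b)}$ worth of correction on the $\sym_a$-side and the ``right'' term $\lr$ will contribute the matching correction on the $\sym_b$-side (after accounting for which variables get permuted to the front). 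Tracking these constants carefully is what produces precisely the shifts $k \mapsto -b-k$ and $l \mapsto -a-l$ in the dual module.

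Concretely, the plan is: (1) write $D_{a,b} = D_{a+b}\circ m_{\d}$ where $m_\d$ is multiplication by the monomial $\d_b$ in the last $b$ variables (or the appropriate monomial making the degrees match \eqref{eqn-trace}), or alternatively realize $D_{a,b}$ directly as a composite of elementary divided differences and apply \eqref{eqn-dif-on-nilHecke-generator} inductively; (2) compute $\dif(D_{a,b}(fg))$ using the Leibniz rule for the induced differential $\dif$ on $\NH_{a+b}$ acting on the product $D_{a,b}\cdot (fg)$, getting $\dif(D_{a,b})(fg) + D_{a,b}(\dif(fg))$; (3) expand $\dif(fg) = \dif(f)g + f\dif(g)$ and $\dif(D_{a,b}) = -L\, D_{a,b} - D_{a,b}\, R$ where $L, R$ are the relevant linear polynomials; (4) observe that $D_{a,b}\cdot R$, when $R$ is a linear combination of $x_1,\dots,x_{a+b}$, becomes after passing through the crossings a linear combination with coefficients that reorganize into $e_1^{(a)}$ and $e_1^{(b)}$ terms with the exact constants $-b-k+k = -b$ and $-a-l+l = -a$ appearing once we add $\langle \dif_{k,l}(\xi),\eta\rangle + \langle\xi,\dif_{-b-k,-a-l}(\eta)\rangle$ and cancel; (5) conclude. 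The main obstacle is step (4): bookkeeping the linear polynomials $\ll$ and $\lr$ attached to the specific permutation in \eqref{eqn-trace} — which is not the longest element of $S_{a+b}$ but the ``block swap'' permutation — and verifying that $L$ equals (up to the $\dif$ on $\sym_a$ already accounted for) multiplication by $b\, e_1^{(a)}$ on symmetric inputs, while $R$ corresponds to $a\, e_1^{(b)}$; i.e.\ checking that the coefficient combinatorics of the block permutation dovetails exactly with the parameter shift $(k,l)\mapsto(-b-k,-a-l)$. Once the constants are pinned down, everything else is the routine Leibniz manipulation above.
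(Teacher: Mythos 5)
Your handling of $\sym_{a+b}$-linearity and non-degeneracy matches the paper (which simply declares these clear), but your route to $\dif$-invariance is genuinely different. The paper verifies \eqref{eqn-dif-invariance} directly on the dual Schur bases \eqref{eqn-dual-basis}: it expands $\dif_{k,l}((\pi_\lambda\boxtimes 1)v)$ and $\dif_{-b-k,-a-l}((1\boxtimes\pi_\gamma)u)$ via Lemma \ref{lemma-difofpilambdalonghand}, and uses the orthogonality \eqref{eqn-Schur-dual-basis} to reduce everything to a two-case cancellation of contents of added boxes. You instead aim for a single operator identity,
\[
\dif\bigl(D_{a,b}(h)\bigr) \;=\; D_{a,b}(\dif(h)) \;-\; b\,D_{a,b}(e_1^{(a)}h)\;-\;a\,D_{a,b}(e_1^{(b)}h),\qquad h\in\sym_{a,b},
\]
from which invariance for every $(k,l)$ follows by one line of Leibniz bookkeeping (the coefficients $k+(-b-k)=-b$ and $l+(-a-l)=-a$ match). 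This identity is true, and your approach, once completed, is cleaner and more uniform than the basis-by-basis check; it also explains why the dual parameters are forced.

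There is, however, a concrete trap in your steps (2)--(3) as written. The formula $\dif(D_n)=-\ll_nD_n-D_n\lr_n$ of \eqref{eqn-diff-on-deltas} is the differential induced on $\NH_n=\END_{\sym_n}(\PC_n^+)$; the Leibniz rule $\dif(D\cdot m)=\dif(D)(m)+D(\dif(m))$ it encodes is valid only when $m$ lies in $\PC_n^+$ with its \emph{twisted} differential $\dif_{\mathtt{a}^+}$, whose generator satisfies $\dif_{\mathtt{a}^+}(v)=-\ll_n v$. If you apply it to a plain polynomial $fg\in\sym_{a,b}\subset\pol_{a+b}$ with the untwisted differential, you get the wrong answer: for $a=b=1$ and $h=x_1$ the naive expansion gives $\dif(\partial_1)(x_1)+\partial_1(\dif(x_1))=-x_1\partial_1(x_1)-\partial_1(x_2x_1)+\partial_1(x_1^2)=x_2$, whereas $\dif(\partial_1(x_1))=\dif(1)=0$. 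The discrepancy is exactly the commutator with multiplication by $\ll_{a+b}$ coming from the twist; for the untwisted polynomial differential one has instead $[\dif,D_n]=-(n-1)e_1D_n$. With this correction your plan closes up: writing $D_{a,b}(h)=D_{a+b}(\d_a(x_1,\dots,x_a)\,\d_b(x_{a+1},\dots,x_{a+b})\,h)$, using $[\dif,D_{a+b}]=-(a+b-1)e_1D_{a+b}$ together with $\dif(\d_a)=\ll_a\d_a$ and the observation that $D_a(\ll_a\d_a)=(a-1)e_1^{(a)}$ (only the dot on the first strand survives), the coefficients $(a-1)-(a+b-1)=-b$ and $(b-1)-(a+b-1)=-a$ emerge and the displayed identity, hence the proposition, follows. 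So the approach is sound, but the differential you feed into the Leibniz rule must be the twisted one --- and this is precisely the bookkeeping you flagged as the obstacle and left undone.
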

\begin{proof}The linearity and non-degeneracy properties are clear.
The $\dif$-invariance condition means that for any $f\in \s_{a,b}(k,l)$ and $g\in \s_{a,b}(-b-k,-a-l)$, we have
\begin{equation}\label{eqn-dif-invariance}
\dif(\langle f,g\rangle)=\langle\dif_{k,l}(f),g\rangle+\langle f,\dif_{-b-k,-a-l}(g)\rangle.
\end{equation}
It suffices to check \eqref{eqn-dif-invariance} on our dual bases above. We wish to show that
\begin{align}\label{eqn-dif-invariance-dual-bases}
\langle(\dif_{k,l}(\pi_{\lambda}\boxtimes 1)v),(1\boxtimes\pi_{\gamma})u\rangle +\langle(\pi_{\lambda}\boxtimes 1)v,\dif_{-b-k,-a-l}((1\boxtimes\pi_{\gamma})u)\rangle =0
\end{align}
holds for all $\lambda\in P(a,b)$ and $\gamma\in P(b,a)$, where $v$, $u$ are respectively the module generators of $\s_{a,b}(k,l)$ and $\s_{a,b}(-b-k, -a-l)$.

First off, we compute the differential action on an element of the form $(\pi_{\lambda}\boxtimes 1)v\in \s_{a,b}(k,l)$,
\begin{align*}
\dif_{k,l}((\pi_{\lambda}\boxtimes 1)v) & = (\dif(\pi_{\lambda})\boxtimes 1)v+ (\pi_{\lambda}\boxtimes 1)\dif_{k,l}(v)\\
& = \sum_{\lambda+\square}C(\square) (\pi_{\lambda+\square}\boxtimes 1)v+ k(e_1^{(a)}\pi_{\lambda}\boxtimes 1)v+l(\pi_{\lambda}\boxtimes e_1^{(b)})v\\
& = \sum_{\lambda+\square}(C(\square)+k-l)(\pi_{\lambda+\square}\boxtimes 1)+le_1(\pi_\lambda\boxtimes 1)v.
\end{align*}
In the second equality, we have used the differential action on $\pi_{\lambda}$ as in Lemma \ref{lemma-difofpilambdalonghand}, and we have abbreviated the elementary symmetric functions as in Definition \ref{def-dg-bimod}. Likewise, we have that
\begin{align*}
\dif_{-b-k,-a-l}((1\boxtimes \pi_{\gamma})u) & = \sum_{\gamma+\square}(C(\square)-a-l+b+k)(1\boxtimes \pi_{\gamma+\square})u -(b+k)e_1(1\boxtimes \pi_{\gamma})u.
\end{align*}

Now we show that equation \eqref{eqn-dif-invariance-dual-bases} holds. From \eqref{eqn-Schur-dual-basis} we see that, for the pairing to be non-zero, there are only two possibilities:
\begin{enumerate}
\item[(i)]When $\lambda$ is obtained from adding one box in the $r$-th row to $\hat{\gamma}$. In this situation, there are two non-zero terms in the sum. The first one comes from adding a box to $\lambda$ (differentiating $\pi_{\lambda}$), and pairing with $\pi_{\hat{\gamma}}$. The second term arises from adding a box to $\pi_{\gamma}$ and pairing with $\pi_{\lambda}$. In both cases, the box added is the unique one missing from the $a\times b$-rectangle with $\lambda$ in the upper left and $\hat{\gamma}$ in the lower right. This is clear in the diagram below, where $\lambda = (9^2,8,4^2,3)$ is shaded red, and $\gamma = (7,5,4^3,3,2,1^3)$ is shaded green.

\drawing{young8.eps}

    These two coefficients add up to
    \begin{align*}
    \left(\lambda_r-r+1+k-l\right)(-1)^{|\hat{\gamma}|}+\left(\gamma_{b-\lambda_r}-(b-\lambda_r)+1-a-l+b+k\right)(-1)^{|\hat{\gamma}|+1} = 0,
    \end{align*}
    since $\gamma_{b-\lambda_r}+r=a$.
\item[(ii)]When $\lambda=\hat{\gamma}$. In this case, either $\lambda_1=b$ or $\gamma_1=a$. Assuming the first, the non-zero coefficients combine to give
    \begin{align*}
    \langle (b+k-l)(\pi_{\lambda+\square_1}\boxtimes 1)v,(1\boxtimes\pi_{\hat{\lambda}})u\rangle+\langle le_1(\pi_{\lambda}\boxtimes 1)v,(1\boxtimes\pi_{\hat{\lambda}})u\rangle
    \\+\langle (\pi_{\lambda}\boxtimes 1)v,(-b-k)e_1(1\boxtimes\pi_{\hat{\lambda}})u \rangle  =  (b+k-l+l-b-k)e_1 = 0,
    \end{align*}
    where $\lambda+\square_1$ denotes $\lambda$ with one box added to the first row, of which the content equals $b$. Likewise, a similar computation shows that, when $\gamma_1=a$, the non-trivial coefficients also add up to zero.
\end{enumerate}
This finishes the proof of the proposition.
\end{proof}

Notice that the two system of parameters in Proposition \ref{prop-finite-cell-parameters} are in fact dual to each other in the sense of Proposition \ref{prop-bilinear-p-dg-pairing}. Thus,
up to duality, there is a unique cofibrant $p$-DG $\sym_{a+b}$-module structure on $\s_{a,b}$.

\begin{rmk}[A notational simplification]\label{rmk-Sab-notation} Because of Propositions \ref{prop-finite-cell-parameters} and \ref{prop-bilinear-p-dg-pairing}, we will abuse notation and
denote $\s_{a,b}(-b,0)$ by $\s_{a,b}$, unless otherwise specified. \end{rmk}

\subsection{Diagrammatic notation}

Following \cite{KLMS}, we develop diagrammatic notation for elements of $\s_{a,b}$ and its endomorphism ring. Alternatively, the bimodule $\s_{a,b}$ gives rise to an exact tensor product functor
\begin{equation}\label{eqn-bimod-giving-tensor-functor}
(\s_{a,b})\otimes_{\sym_{a+b}}(-): \sym_{a+b}\dmod\lra \sym_{a,b}\dmod,
\end{equation}
and its bimodule endomorphisms can be viewed as natural transformations of this functor.

We depict an element $x \in \sym_n$ as a thick line of width $n$ carrying the label $x$. We view such an element as an endomorphism of a $\sym_n$-module, so it has degree equal to the degree of $x$. Multiplication of elements is given by vertically concatenating pictures:
\[
\begin{DGCpicture}
\DGCstrand[thick](0,-0.35)(0,1.35)[$^n$]
\DGCcoupon(-0.3,0.25)(0.3,0.75){$x$}
\end{DGCpicture}\ , \quad \quad \quad \quad
\begin{DGCpicture}
\DGCstrand[thick](0,-0.35)(0,1.35)[$^n$]
\DGCcoupon(-0.3,0.25)(0.3,0.75){$xy$}
\end{DGCpicture}
~=~
\begin{DGCpicture}
\DGCstrand[thick](0,-0.35)(0,1.35)[$^n$]
\DGCcoupon(-0.3,0.6)(0.3,1.1){$x$}
\DGCcoupon(-0.3,-0.15)(0.3,0.35){$y$}
\end{DGCpicture}
\ .
\]

We depict the bimodule generator $v_{a,b}$ of $\s_{a,b}(-b,0)$, unique up to a non-zero constant, by
\[
v_{a,b}:=
\begin{DGCpicture}
\DGCPLstrand[thick](1,0)(1,1)[$^{a+b}$]
\DGCPLstrand[thick](1,1)(0,2)[`$_a$]
\DGCPLstrand[thick](1,1)(2,2)[`$_b$]
\end{DGCpicture},
\]
and we declare the degree of the element equals $-ab$. Then $\s_{a,b}$ can be identified with the span of diagrams
\[
\s_{a,b}\cong \left\{
\begin{DGCpicture}
\DGCPLstrand[thick](1,0)(1,1)[$^{a+b}$]
\DGCPLstrand[thick](1,1)(0,2)[`$_a$]
\DGCPLstrand[thick](1,1)(2,2)[`$_b$]
\DGCcoupon(0.2,1.25)(0.8,1.75){$f$}
\DGCcoupon(1.2,1.25)(1.8,1.75){$g$}
\end{DGCpicture}\Bigg| f\in \sym_a, g\in \sym_b\right\},
\]
while placing a box on the bottom strand indicates the action of $\sym_{a+b}$. These diagrams are
subject to the \emph{Grassmannian sliding relation}:
\[
\begin{DGCpicture}
\DGCPLstrand[thick](1,0)(1,1)[$^{a+b}$]
\DGCPLstrand[thick](1,1)(0,2)[`$_a$]
\DGCPLstrand[thick](1,1)(2,2)[`$_b$]
\DGCcoupon(0.625,.25)(1.375,.75){$e_k$}
\end{DGCpicture}
=\sum_{l=0}^k
\begin{DGCpicture}
\DGCPLstrand[thick](1,0)(1,1)[$^{a+b}$]
\DGCPLstrand[thick](1,1)(0,2)[`$_a$]
\DGCPLstrand[thick](1,1)(2,2)[`$_b$]
\DGCcoupon(0.125,1.25)(0.875,1.75){$e_l$}
\DGCcoupon(1.125,1.25)(1.875,1.75){$e_{k-l}$}
\end{DGCpicture}~,
\]
where it is understood that $e_m(x_1,\dots,x_n)=0$ if $m> n$.
The differential is given by the formula
\begin{equation}\label{eqn-d-action-mod-generator}
\dif\left(~
\begin{DGCpicture}
\DGCPLstrand[thick](1,0)(1,1)[$^{a+b}$]
\DGCPLstrand[thick](1,1)(0,2)[`$_a$]
\DGCPLstrand[thick](1,1)(2,2)[`$_b$]
\end{DGCpicture}
~\right)
=
-b\begin{DGCpicture}
\DGCPLstrand[thick](1,0)(1,1)[$^{a+b}$]
\DGCPLstrand[thick](1,1)(0,2)[`$_a$]
\DGCPLstrand[thick](1,1)(2,2)[`$_b$]
\DGCcoupon(0.2,1.25)(0.8,1.75){$e_1$}
\end{DGCpicture}~.
\end{equation}

The dual bimodule $\s_{a,b}(-b,0)^\vee = \s_{a,b}(0,-a)$ has generator $v_{a,b}^\vee$ in degree $-ab$, which we denote by
\[
v_{a,b}^\vee:=
\begin{DGCpicture}
\DGCPLstrand[thick](0,0)(1,1)[$^a$]
\DGCPLstrand[thick](2,0)(1,1)[$^b$]
\DGCPLstrand[thick](1,1)(1,2)[`$_{a+b}$]
\end{DGCpicture}~.
\]
Placing polynomials on the strands has the obvious meaning, and the dual Grassmannian sliding relation is evident. The differential on the dual module is given by
\begin{equation}\label{eqn-d-action-dual-mod-generator}
\dif\left(~
\begin{DGCpicture}
\DGCPLstrand[thick](0,0)(1,1)[$^a$]
\DGCPLstrand[thick](2,0)(1,1)[$^b$]
\DGCPLstrand[thick](1,1)(1,2)[`$_{a+b}$]
\end{DGCpicture}
~\right)=
-a
\begin{DGCpicture}
\DGCPLstrand[thick](0,0)(1,1)[$^a$]
\DGCPLstrand[thick](2,0)(1,1)[$^b$]
\DGCPLstrand[thick](1,1)(1,2)[`$_{a+b}$]
\DGCcoupon(1.2,0.25)(1.8,0.75){$e_1$}
\end{DGCpicture}
\end{equation}

The endomorphism algebra of the module $\s_{a,b}(-b,0)$ over $\sym_{a+b}$, which is identified with
\begin{eqnarray}\label{eqn-endo-iso-to-matrix}
\END_{\sym_{a+b}}(\s_{a,b}(-b,0)) & \cong & \s_{a,b}(-b,0)\otimes_{\sym_{a+b}}\s_{a,b}(-b,0)^\vee \nonumber\\
 & \cong & \s_{a,b}(-b,0)\otimes_{\sym_{a+b}}\s_{a,b}(0,-a)
\end{eqnarray}
under the bilinear pairing of Proposition \ref{prop-bilinear-p-dg-pairing}, inherits a natural $p$-DG algebra structure. Elements of this $p$-DG algebra, generated by $v_{a,b} \otimes v_{a,b}^\vee$ can be viewed diagrammatically as polynomials on the gluing of the two diagrams along the common edge $\sym_{a+b}$ (see below for an example).  If we choose a filtered $p$-DG basis for $\s_{a,b}(-b,0)$ as in the proof of Proposition \ref{prop-finite-cell-parameters}, we may identify the endomorphism algebra with a graded-matrix algebra of size ${a+b \brack a}^2$ with coefficients in $\sym_{a+b}$. A basis for the matrix algebra is given by
\begin{equation}\label{eqn-Grass-matrix-basis}
\left\{
(-1)^{|\hat{\mu}|}
\begin{DGCpicture}
\DGCPLstrand[thick](0,0)(1,1.15)[$^a$]
\DGCPLstrand[thick](2,0)(1,1.15)[$^b$]
\DGCPLstrand[thick](1,1.15)(1,1.85)
\DGCPLstrand[thick](1,1.85)(0,3)[`$_a$]
\DGCPLstrand[thick](1,1.85)(2,3)[`$_b$]
\DGCcoupon(1.15,0.35)(1.85,0.85){$\pi_{\hat{\mu}}$}
\DGCcoupon(0.15,2.15)(0.85,2.65){$\pi_\lambda$}
\end{DGCpicture}~\Bigg|
\lambda, \mu \in P(a,b)
\right\}.
\end{equation}
The inherited differential, under the isomorphism \eqref{eqn-endo-iso-to-matrix}, acts on the lowest degree basis element as follows,
\begin{equation}\label{eqn-d-action-matrix-generator}
\dif\left(~
\begin{DGCpicture}
\DGCPLstrand[thick](0,0)(1,1.15)[$^a$]
\DGCPLstrand[thick](2,0)(1,1.15)[$^b$]
\DGCPLstrand[thick](1,1.15)(1,1.85)
\DGCPLstrand[thick](1,1.85)(0,3)[`$_a$]
\DGCPLstrand[thick](1,1.85)(2,3)[`$_b$]
\end{DGCpicture}
~\right)=
-b
\begin{DGCpicture}
\DGCPLstrand[thick](0,0)(1,1.15)[$^a$]
\DGCPLstrand[thick](2,0)(1,1.15)[$^b$]
\DGCPLstrand[thick](1,1.15)(1,1.85)
\DGCPLstrand[thick](1,1.85)(0,3)[`$_a$]
\DGCPLstrand[thick](1,1.85)(2,3)[`$_b$]
\DGCcoupon(0.15,2.15)(0.85,2.65){$e_1$}
\end{DGCpicture}
-a
\begin{DGCpicture}
\DGCPLstrand[thick](0,0)(1,1.15)[$^a$]
\DGCPLstrand[thick](2,0)(1,1.15)[$^b$]
\DGCPLstrand[thick](1,1.15)(1,1.85)
\DGCPLstrand[thick](1,1.85)(0,3)[`$_a$]
\DGCPLstrand[thick](1,1.85)(2,3)[`$_b$]
\DGCcoupon(1.15,0.35)(1.85,0.85){$e_1$}
\end{DGCpicture}~,
\end{equation}
a juxtaposition of the differential action on the generators of $\s_{a,b}(-b,0)$ and $\s_{a,b}(0,-a)$. This formula can be seen as a generalization of the differential action \eqref{eqn-dif-on-nilHecke-generator} on the local divided difference operator, the latter being the special case when $a=b=1$.

To compose two endomorphisms, one may vertically stack their diagrams, resulting in an element of $\s_{a,b} \otimes \s_{a,b}^\vee \otimes \s_{a,b} \otimes \s_{a,b}^\vee$, and then pair the
middle factors against each other using the bilinear pairing $\s_{a,b}^\vee \otimes \s_{a,b} \to \sym_{a+b}$. Diagrammatically, this corresponds to the \emph{duality relation}:
\begin{equation} \label{eq-duality}
\begin{DGCpicture}
\DGCPLstrand[thick](1,0)(1,0.5)[$^{a+b}$]
\DGCstrand[thick](1,0.5)(0,1.5)
\DGCstrand[thick](1,0.5)(2,1.5)
\DGCstrand[thick](0,1.5)(1,2.5)\DGCdot.{2.15}[l]{$^a$}
\DGCstrand[thick](2,1.5)(1,2.5)\DGCdot.{2.15}[r]{$^b$}
\DGCPLstrand[thick](1,2.5)(1,3)
\DGCcoupon(-0.3,1.25)(0.3,1.75){$\pi_{\alpha}$}
\DGCcoupon(1.7,1.25)(2.3,1.75){$\pi_{\beta}$}
\end{DGCpicture}
=\left\{
\begin{array}{ll}
(-1)^{|\hat{\alpha}|}~
\begin{DGCpicture}
\DGCstrand[thick](0,0)(0,2)[$^{a+b}$]
\end{DGCpicture}& \textrm{if $\beta=\hat{\alpha}$,}\\
& \\
0 & \textrm{otherwise.}
\end{array}
\right.
\end{equation}

Because $\END_{\sym_{a,b}}(\s_{a,b}) \cong \sym_{a,b}$ as a $\sym_{a,b}$-bimodule, one can view this endomorphism algebra representing the identity functor on $\sym_{a,b}$-modules.  Under this identification, one has the \emph{identity decomposition relation}:
\begin{equation} \label{eq-identitydecomp}
\begin{DGCpicture}
\DGCPLstrand[thick](0,0)(0,3)[$^a$`$_a$]
\DGCPLstrand[thick](2,0)(2,3)[$^b$`$_b$]
\end{DGCpicture}
=\sum_{\alpha\in P(a,b) }(-1)^{|\hat{\alpha}|}
\begin{DGCpicture}
\DGCPLstrand[thick](0,0)(1,1.15)[$^a$]
\DGCPLstrand[thick](2,0)(1,1.15)[$^b$]
\DGCPLstrand[thick](1,1.15)(1,1.85)
\DGCPLstrand[thick](1,1.85)(0,3)[`$_a$]
\DGCPLstrand[thick](1,1.85)(2,3)[`$_b$]
\DGCcoupon(1.15,0.35)(1.85,0.85){$\pi_{\hat{\alpha}}$}
\DGCcoupon(0.15,2.15)(0.85,2.65){$\pi_\alpha$}
\end{DGCpicture}~.
\end{equation}

Recall from Remark \ref{rmk-Sab-notation} that we have abbreviated $\s_{a,b}(-b,0)$ just by $\s_{a,b}$.
More generally, if $\underline{a}:=(a_1,a_2,\dots,a_k)\in \N^k$ is a decomposition of an integer $n\in \N$, so that $a_1+a_2+\cdots+a_k=n$, we set $$\sym_{\underline{a}}:=\sym_{a_1}\boxtimes \sym_{a_2}\boxtimes \cdots \boxtimes \sym_{a_k},$$
regarded as a $p$-DG algebra, and define the \emph{generalized $p$-DG Grassmannian module} (or partial flag variety module)
\[
\s_{\underline{a}}:=\sym_{\underline{a}}\cdot v_{\underline{a}},
\]
where the differential acts on the generator by
\[
\dif(v_{\underline{a}}):=\sum_{i=1}^{k-1}-(a_{i+1}+\dots+a_{k})e_1(x_{a_1+\dots+a_i}, \dots, x_{a_1+\dots+a_{i+1}-1})v_{\underline{a}}.
\]
This module generator is depicted by
\[
v_{\underline{a}}=
\begin{DGCpicture}
\DGCPLstrand[thick](1,0.5)(1,1)[$^{n}$]
\DGCPLstrand[thick](1,1)(0,2)[`$_{a_1}$]
\DGCPLstrand[thick](1,1)(0.5,2)[`$_{a_2}$]
\DGCPLstrand[thick](1,1)(2,2)[`$_{a_k}$]
\DGCcoupon*(0.8,1.8)(1.7,2){$\dots$}
\end{DGCpicture}=
\begin{DGCpicture}
\DGCPLstrand[thick](1,0.5)(1,1)[$^{n}$]
\DGCPLstrand[thick](1,1)(0,2)[`$_{a_1}$]
\DGCPLstrand[thick](0.5,1.5)(1,2)[`$_{a_2}$]
\DGCPLstrand[thick](1,1)(2,2)[`$_{a_k}$]
\DGCcoupon*(1,1.8)(1.9,2){$\dots$}
\end{DGCpicture},
\]
where the second equality follows from the associativity of tensor product:
\[
(\s_{a_1,a_2}\boxtimes \s_{a_3}\boxtimes\cdots\boxtimes\s_{a_n})\otimes_{\sym_{(a_1+a_2,\dots, a_n)}}\s_{a_1+a_2,\dots,a_n}\cong \s_{a_1,a_2,\dots, a_n}.
\]
The differential action on the generator is then depicted by
\begin{equation}\label{eqn-d-action-generalized-Grassmann}
\dif
\left(
\begin{DGCpicture}
\DGCPLstrand[thick](1,0.5)(1,1)[$^{n}$]
\DGCPLstrand[thick](1,1)(0,2)[`$_{a_1}$]
\DGCPLstrand[thick](1,1)(1,2)[`$_{a_i}$]
\DGCPLstrand[thick](1,1)(2,2)[`$_{a_k}$]
\DGCcoupon*(0.2,1.8)(0.8,2){$\dots$}
\DGCcoupon*(1.2,1.8)(1.8,2){$\dots$}
\end{DGCpicture}
\right)=-\sum_{i=1}^{k-1}(a_{i+1}+\dots+a_k)
\begin{DGCpicture}
\DGCPLstrand[thick](1,0.5)(1,1)[$^{n}$]
\DGCPLstrand[thick](1,1)(0,2)[`$_{a_1}$]
\DGCPLstrand[thick](1,1)(1,2)[`$_{a_i}$]
\DGCcoupon(0.75,1.35)(1.25,1.75){$e_1$}
\DGCPLstrand[thick](1,1)(2,2)[`$_{a_k}$]
\DGCcoupon*(0.2,1.8)(0.8,2){$\dots$}
\DGCcoupon*(1.2,1.8)(1.8,2){$\dots$}
\end{DGCpicture}.
\end{equation}
The thick strand on the bottom is utilized, as before, to emphasize that we are regarding $\s_{\underline{a}}$ also as a right $p$-DG module over $\sym_{n}$.

Starting with the case when the decomposition $\underline{a}$ has only two parts, an easy inductive argument on the number $k$ of the parts of the decomposition establishes the following result.

\begin{lemma}\label{lemma-generalized-Grassmann-basis}
If $\underline{a}=(a_1,\dots,a_k)$ is a decomposition of $n$, then the generalized $p$-DG Grassmannian module has a $\dif$-stable basis over the algebra $\sym_n$ consisting of elements of the form
\[
\left\{
(\pi_{\lambda_1}(x_1,\dots, x_{a_1})\pi_{\lambda_2}(x_1,\dots, x_{a_1+a_2})\cdots\pi_{\lambda_{k-1}}(x_1,\dots, x_{a_1+\dots+a_{k-1}}))v_{\underline{a}}
\right\},
\]
where each $\lambda_i$ ranges over all partitions that fit into an $(a_1+\dots+a_i)\times a_{i+1}$-box. \hfill$\square$
\end{lemma}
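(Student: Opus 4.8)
The plan is to induct on the number $k$ of parts of the decomposition $\underline{a}$, peeling off the merger of the first two parts at each stage. The base case $k=1$ is immediate: $\s_{(n)}=\sym_n\cdot v_{(n)}$ with $\dif(v_{(n)})=0$, so the required basis is the singleton $\{v_{(n)}\}$, i.e. the empty product of Schur polynomials, which is trivially $\dif$-stable. (For $k=2$ the assertion already coincides with Proposition \ref{prop-finite-cell-parameters}, which will serve as the building block below.)

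For $k\ge 2$, I would set $\underline{b}:=(a_1+a_2,a_3,\dots,a_k)$, a decomposition of $n$ into $k-1$ parts, and prove the $p$-DG identification
\[
\s_{\underline{a}}\;\cong\;\s_{a_1,a_2}\otimes_{\sym_{a_1+a_2}}\s_{\underline{b}}
\]
of right $p$-DG $\sym_n$-modules, where $\s_{a_1,a_2}=\s_{a_1,a_2}(-a_2,0)$ is boxed with $\s_{a_3},\dots,\s_{a_k}$ exactly as in the associativity isomorphism displayed just above the Lemma. The underlying additive isomorphism is that associativity statement, so the only genuine point is to check that, under $v_{\underline{a}}\leftrightarrow v_{a_1,a_2}\otimes v_{\underline{b}}$, the Leibniz differential on the right-hand side reproduces $\dif(v_{\underline{a}})$. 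This is a short bookkeeping computation: $\dif(v_{a_1,a_2})=-a_2\,e_1(x_1,\dots,x_{a_1})$ supplies part of the $i=1$ term of $\dif(v_{\underline{a}})$; the $i=1$ term of $\dif(v_{\underline{b}})$, namely $-(a_3+\dots+a_k)\,e_1(x_1,\dots,x_{a_1+a_2})$, rewrites via $e_1(x_1,\dots,x_{a_1+a_2})=e_1(x_1,\dots,x_{a_1})+e_1(x_{a_1+1},\dots,x_{a_1+a_2})$ and, being a scalar from $\sym_{a_1+a_2}$, slides across the tensor product, thereby supplying the rest of the $i=1$ term together with all of the $i=2$ term; the remaining terms of $\dif(v_{\underline{b}})$ become the terms of $\dif(v_{\underline{a}})$ with $i\ge 3$ after relabeling blocks of variables. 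This differential-matching is where all of the real content sits, and it is \textbf{the main obstacle} — though only a mild one, amounting to getting the coefficients $-(a_{i+1}+\dots+a_k)$ and the variable ranges to line up.

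Granting the identification, the conclusion is formal. Let $V_{a_1,a_2}\subset\s_{a_1,a_2}$ be the finite-dimensional, $\dif$-stable $\Bbbk$-span of the Proposition \ref{prop-finite-cell-parameters} basis $\{(\pi_{\lambda_1}\boxtimes 1)v_{a_1,a_2}\mid\lambda_1\in P(a_1,a_2)\}$, over which $\s_{a_1,a_2}$ is free as a right $\sym_{a_1+a_2}$-module, and let $V_{\underline{b}}\subset\s_{\underline{b}}$ be the $\dif$-stable $\Bbbk$-span of the basis supplied by the inductive hypothesis applied to $\underline{b}$. Sliding $\sym_{a_1+a_2}$ across the tensor product yields $\s_{\underline{a}}\cong V_{a_1,a_2}\otimes_\Bbbk\s_{\underline{b}}\cong\big(V_{a_1,a_2}\otimes_\Bbbk V_{\underline{b}}\big)\cdot\sym_n$, so $V_{a_1,a_2}\otimes_\Bbbk V_{\underline{b}}$ is a free $\sym_n$-basis of $\s_{\underline{a}}$; it is $\dif$-stable because the differential on the tensor product is computed by Leibniz from the $\dif$-stable pieces $V_{a_1,a_2}$ and $V_{\underline{b}}$. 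Finally, rewriting the inductive description of the $V_{\underline{b}}$-basis in terms of the parts $a_1,\dots,a_k$ of $\underline{a}$ turns the product of a typical element $\pi_{\lambda_1}(x_1,\dots,x_{a_1})v_{a_1,a_2}$ of $V_{a_1,a_2}$ with a typical element $\pi_{\lambda_2}(x_1,\dots,x_{a_1+a_2})\cdots\pi_{\lambda_{k-1}}(x_1,\dots,x_{a_1+\dots+a_{k-1}})v_{\underline{b}}$ of $V_{\underline{b}}$ into exactly the asserted basis element, with $\lambda_1$ running over partitions in an $a_1\times a_2$ box and $\lambda_i$ over partitions in an $(a_1+\dots+a_i)\times a_{i+1}$ box for $2\le i\le k-1$.
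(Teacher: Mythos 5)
Your proposal is correct and is precisely the argument the paper intends: the paper's proof is the one-line remark preceding the lemma ("starting with the case when the decomposition has only two parts, an easy inductive argument on the number $k$ of parts"), and your induction via $\s_{\underline{a}}\cong\s_{a_1,a_2}\otimes_{\sym_{a_1+a_2}}\s_{\underline{b}}$ together with Proposition \ref{prop-finite-cell-parameters} fills in exactly that outline. The differential-matching bookkeeping and the $\dif$-stability of the tensor-product basis both check out.
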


The top-degree $\sym_n$ summand is generated by the product of all Young diagrams $\lambda_i$ that are full rectangles. The canonical projection map from $\s_{\underline{a}}$ to its top-degree summand defines a non-degenerate trace form on $\s_{\underline{a}}$. This projection can be obtained as an iteration of the rank-two traces as follows.
\[
D_{\underline{a}}:=D_{a_1,a_2+\dots+a_k}\circ D_{a_2,a_3+\dots+a_n}\circ\cdots\circ D_{a_{n-1},a_n},
\]
where each term on the right hand side is given by \eqref{eqn-trace} with appropriate indices.
Then, with respect to the trace form, it is easy to check that the dual module
\[
\s_{\underline{a}}^\vee:=\HOM_{\sym_n}(\s_{\underline{a}},\sym_n),
\]
with the natural induced differential, is isomorphic to the $p$-DG $(\sym_n,\sym_{\underline{a}})$-bimodule
$\sym_{\underline{a}}\cdot v_{\underline{a}}^\vee$,
where $v_{\underline{a}}^\vee$ is acted upon the differential by
\[
\dif(v_{\underline{a}}^\vee)=\sum_{i=2}^k-(a_1+\dots+a_{i-1})v_{\underline{a}}^\vee e_1(x_{a_1+\dots+a_{i-1}+1},\dots,x_{a_{1}+\dots+a_{i}}).
\]
As for $\s_{\underline{a}}$, we depict the generator for the dual module by

\begin{equation}
v^\vee_{\underline{a}}:=
\begin{DGCpicture}
\DGCPLstrand[thick](0,0)(1,1)[$^{a_1}$]
\DGCPLstrand[thick](.5,0)(1,1)[$^{a_2}$]
\DGCPLstrand[thick](2,0)(1,1)[$^{a_k}$]
\DGCPLstrand[thick](1,1)(1,1.5)[`$_{n}$]
\DGCcoupon*(0.75,0)(1.35,0.2){$\dots$}
\end{DGCpicture}
=
\begin{DGCpicture}
\DGCPLstrand[thick](0,0)(1,1)[$^{a_1}$]
\DGCPLstrand[thick](.5,0)(1.3,0.75)[$^{a_2}$]
\DGCPLstrand[thick](2,0)(1,1)[$^{a_k}$]
\DGCPLstrand[thick](1,1)(1,1.5)[`$_{n}$]
\DGCcoupon*(0.75,0)(1.9,0.2){$\dots$}
\end{DGCpicture},
\end{equation}
so that the differential action will be diagrammatically written as
\begin{equation}\label{eqn-d-action-dual-generalized-Grassmann}
\dif\left(~
\begin{DGCpicture}
\DGCPLstrand[thick](0,0)(1,1)[$^{a_1}$]
\DGCPLstrand[thick](1,0)(1,1)[$^{a_i}$]
\DGCPLstrand[thick](2,0)(1,1)[$^{a_k}$]
\DGCPLstrand[thick](1,1)(1,1.5)[`$_{n}$]
\DGCcoupon*(0.25,0)(0.75,0.2){$\dots$}
\DGCcoupon*(1.25,0)(1.75,0.2){$\dots$}
\end{DGCpicture}
~\right)=\sum_{i=2}^k
-(a_1+\dots+a_{i-1})
\begin{DGCpicture}
\DGCPLstrand[thick](0,0)(1,1)[$^{a_1}$]
\DGCPLstrand[thick](1,0)(1,1)[$^{a_i}$]
\DGCcoupon(0.75,0.25)(1.25,0.6){$e_1$}
\DGCPLstrand[thick](2,0)(1,1)[$^{a_k}$]
\DGCPLstrand[thick](1,1)(1,1.5)[`$_{n}$]
\DGCcoupon*(0.25,0)(0.75,0.2){$\dots$}
\DGCcoupon*(1.25,0)(1.75,0.2){$\dots$}
\end{DGCpicture} \ .
\end{equation}
In particular, when each $a_i=1$, we have, from equations \eqref{eqn-d-action-generalized-Grassmann} and \eqref{eqn-d-action-dual-generalized-Grassmann}, the following special cases:
\begin{subequations}
\begin{equation}\label{eqn-d-full-flag}
\dif
\left(
\begin{DGCpicture}
\DGCPLstrand[thick](1,0.5)(1,1)[$^{n}$]
\DGCPLstrand(1,1)(0,2)
\DGCPLstrand(1,1)(1,2)[`$_i$]
\DGCPLstrand(1,1)(2,2)
\DGCcoupon*(0.2,1.8)(0.8,2){$\dots$}
\DGCcoupon*(1.2,1.8)(1.8,2){$\dots$}
\end{DGCpicture}
\right)=-\sum_{i=1}^{n-1}(n-i)
\begin{DGCpicture}
\DGCPLstrand[thick](1,0.5)(1,1)[$^{n}$]
\DGCPLstrand(1,1)(0,2)
\DGCPLstrand(1,1)(1,2)[`$_i$]
\DGCdot{1.5}
\DGCPLstrand(1,1)(2,2)
\DGCcoupon*(0.2,1.8)(0.8,2){$\dots$}
\DGCcoupon*(1.2,1.8)(1.8,2){$\dots$}
\end{DGCpicture}.
\end{equation}
Here the $i$ in the diagram indicates the $i$th strand of thickness one, rather than the thickness. Likewise, the differential acts on the dual module generator by
\begin{equation}\label{eqn-d-dull-full-flag}
\dif\left(~
\begin{DGCpicture}
\DGCPLstrand(0,0)(1,1)
\DGCPLstrand(1,0)(1,1)[$^{i}$]
\DGCPLstrand(2,0)(1,1)
\DGCPLstrand[thick](1,1)(1,1.5)[`$_{n}$]
\DGCcoupon*(0.25,0)(0.75,0.2){$\dots$}
\DGCcoupon*(1.25,0)(1.75,0.2){$\dots$}
\end{DGCpicture}
~\right)=
-\sum_{i=2}^k
(i-1)
\begin{DGCpicture}
\DGCPLstrand(0,0)(1,1)
\DGCPLstrand(1,0)(1,1)[$^{i}$]
\DGCdot{0.5}
\DGCPLstrand(2,0)(1,1)
\DGCPLstrand[thick](1,1)(1,1.5)[`$_{n}$]
\DGCcoupon*(0.25,0)(0.75,0.2){$\dots$}
\DGCcoupon*(1.25,0)(1.75,0.2){$\dots$}
\end{DGCpicture}.
\end{equation}
\end{subequations}
Gluing these differentials together recovers the differential on $\NH_n$ defined in \cite[Chapter 3]{KQ}.

Dualizing the basis constructed in  Lemma \ref{lemma-generalized-Grassmann-basis}, we have obtained the next result.

\begin{lemma}\label{lemma-dual-generalized-Grassmann-basis}
If $\underline{a}$ is a decomposition of $n$, then the dual generalized Grassmannian module $\s_{\underline{a}}^\vee$ has a $\dif$-stable basis over $\sym_n$ given by
\[
\{v_{\underline{a}}^\vee \pi_{\mu_2}(x_{a_1+1},\dots, x_n)\pi_{\mu_3}(x_{a_1+a_{2}+1},\dots, x_n)\cdots \pi_{\mu_k}(x_{a_1+\dots +a_{k-1}+1},\dots, x_n)\},
\]
where $\mu_i$'s range over partitions that fit into a rectangle of size $a_i\times (a_{i+1}+\dots+a_k)$ for each $i=2,\dots,k$.
\end{lemma}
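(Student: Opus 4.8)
The plan is to obtain Lemma~\ref{lemma-dual-generalized-Grassmann-basis} as a formal consequence of Lemma~\ref{lemma-generalized-Grassmann-basis} by dualizing with respect to the trace form $D_{\underline{a}}$ described just above. First I would recall that $\s_{\underline{a}}^\vee = \HOM_{\sym_n}(\s_{\underline{a}}, \sym_n)$ has been identified, as a $p$-DG $(\sym_n, \sym_{\underline{a}})$-bimodule, with $\sym_{\underline{a}} \cdot v_{\underline{a}}^\vee$, where the differential acts on $v_{\underline{a}}^\vee$ by the formula displayed before the statement. Under this identification, the pairing $\s_{\underline{a}}^\vee \otimes_{\sym_n} \s_{\underline{a}} \to \sym_n$ is $(g v_{\underline{a}}^\vee, f v_{\underline{a}}) \mapsto D_{\underline{a}}(fg)$, exactly as in the two-part case treated in the Duality subsection, iterated via $D_{\underline{a}} = D_{a_1, a_2+\dots+a_k} \circ \cdots \circ D_{a_{k-1}, a_k}$.

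Next I would dualize the basis of Lemma~\ref{lemma-generalized-Grassmann-basis} term by term. In the two-part case $\underline{a} = (a,b)$, the lemma on Schur bases (equation~\eqref{eqn-Schur-dual-basis}) shows that $\{(\pi_\lambda \boxtimes 1) v_{a,b} \mid \lambda \in P(a,b)\}$ and $\{(1 \boxtimes \pi_{\hat\mu}) v_{a,b}^\vee \mid \mu \in P(a,b)\}$ are, up to the sign $(-1)^{|\hat\mu|}$, dual bases under $D_{a,b}$; reindexing, $\{v_{a,b}^\vee \pi_\nu(x_{a+1},\dots,x_{a+b}) \mid \nu \in P(b,a)\}$ is (up to rescaling by signs) the dual basis to the first basis. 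Since rescaling basis elements by nonzero scalars preserves the property of being a basis, and since the trace form is nondegenerate, the dual of the basis in Lemma~\ref{lemma-generalized-Grassmann-basis} is precisely a basis of $\s_{\underline{a}}^\vee$; tracking how $\HOM_{\sym_n}(-,\sym_n)$ interacts with the iterated tensor decomposition $\s_{\underline a} \cong (\s_{a_1,a_2} \boxtimes \cdots) \otimes_{\sym_{(a_1+a_2,\dots,a_k)}} \s_{a_1+a_2,\dots,a_k}$ shows that the factor $\pi_{\lambda_i}$ ranging over partitions in an $(a_1+\dots+a_i)\times a_{i+1}$-box dualizes to a factor $\pi_{\mu_{i+1}}$ in the variables $x_{a_1+\dots+a_i+1},\dots,x_n$ ranging over partitions in an $a_{i+1}\times(a_{i+2}+\dots+a_k)$-box, after the appropriate reindexing of the product. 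This gives exactly the claimed form of the basis.

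Finally I would check $\dif$-stability. Since $D_{\underline{a}}$ is a composite of divided difference operators, it is a $p$-DG map up to the shifts already built into the generators, and the pairing is $\dif$-invariant by (an iteration of) Proposition~\ref{prop-bilinear-p-dg-pairing}. Hence the dual of a $\dif$-stable filtered basis is again $\dif$-stable: explicitly, if $F_\bullet$ is the $\dif$-stable filtration on $\s_{\underline{a}}$ with one-dimensional (over $\sym_n$) subquotients spanned by the basis of Lemma~\ref{lemma-generalized-Grassmann-basis}, then its annihilator filtration on $\s_{\underline{a}}^\vee$ is $\dif$-stable with subquotients spanned by the dual basis elements, so the stated basis spans a $\dif$-stable $p$-complex.

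The main obstacle I anticipate is bookkeeping rather than conceptual: correctly matching the box-shapes $a_i \times (a_{i+1}+\dots+a_k)$ for the dual partitions $\mu_i$ against the box-shapes $(a_1+\dots+a_i)\times a_{i+1}$ for the $\lambda_i$ under the $180^\circ$-rotate-and-transpose operation $\mu \mapsto \hat\mu$ from the Duality subsection, and keeping the iterated trace $D_{\underline{a}}$ and its associativity straight through the induction on $k$. Everything else — nondegeneracy, $\dif$-invariance, the reduction to the two-part case — is already in hand from the results above, so I would present this proof as a short induction on the number of parts, with the base case $k=2$ being the content of the Duality subsection.
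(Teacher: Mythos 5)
Your proposal is correct and is essentially the paper's own argument: the paper disposes of this lemma in one sentence (``Dualizing the basis constructed in Lemma~\ref{lemma-generalized-Grassmann-basis}\dots''), and your induction on the number of parts via the trace form $D_{\underline{a}}$, the two-part orthogonality \eqref{eqn-Schur-dual-basis}, and the $\dif$-invariance of the pairing simply fills in the details the paper omits. One bookkeeping caution: under $\mu\mapsto\hat\mu$ the dual of $\lambda_i\in P(a_1+\cdots+a_i,\,a_{i+1})$ lands in an $a_{i+1}\times(a_1+\cdots+a_i)$ box, so when carrying out the reindexing you should verify the rectangle shapes against the $k=2$ case rather than taking the displayed shapes on faith.
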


Finally, we can consider the space of $\sym_n$-linear morphisms between two different generalized Grassmannian modules.

\begin{prop}\label{prop-morphism-space-half}
Let $\underline{a}=(a_1,\dots,a_k)$ and $\underline{b}=(b_1,\dots, b_l)$ be two decompositions of $n\in \N$. Then the space of $\sym_n$-linear $p$-DG homomorphisms from $\s_{\underline{b}}$ to $\s_{\underline{a}}$ is canonically isomorphic to
\[
\HOM_{\sym_n}(\s_{\underline{b}},\s_{\underline{a}})\cong \s_{\underline{a}}\o_{\sym_n}\s^\vee_{\underline{b}} \cong \HOM_{\sym_n}((\s_{\underline{a}}^\vee)^\vee, \s_{\underline{b}}^\vee).
\]
The space in the middle has a $\dif$-stable basis over $\sym_n$, which is given, diagrammatically, by
\[
\left\{
\begin{DGCpicture}
\DGCPLstrand[thick](0,0)(1,1.15)[$^{b_1}$]
\DGCPLstrand[thick](2,0)(1,1.15)[$^{b_l}$]
\DGCPLstrand[thick](1,0)(1.5,0.65)[$^{b_2}$]
\DGCPLstrand[thick](1,1.15)(1,1.85)
\DGCPLstrand[thick](1,1.85)(0,3)[`$_{a_1}$]
\DGCPLstrand[thick](1,1.85)(2,3)[`$_{a_k}$]
\DGCPLstrand[thick](0.5,2.45)(1,3)[`$_{a_2}$]
\DGCcoupon(1.5,0.1)(2.1,0.4){$_{{{\mu_l}}}$}
\DGCcoupon(1,0.7)(1.6,1){$_{{{\mu_2}}}$}
\DGCcoupon*(1.2,0)(1.8,0.1){$\dots$}
\DGCcoupon(0.3,2)(1.0,2.3){$_{_{{\lambda_{k-1}}}}$}
\DGCcoupon(-0.1,2.6)(0.6,2.9){$_{{{\lambda_1}}}$}
\DGCcoupon*(1.2,2.9)(1.8,3){$\dots$}
\end{DGCpicture}
\Bigg|
\begin{array}{l}
\lambda_i\in P((a_1+\dots+a_i), a_{i+1}), \quad i=1,\dots, k-1\\
\mu_j\in P(b_i, (b_{i+1}+\dots+b_l)), \quad j=2,\dots, l.
\end{array}
\right\}.
\]
Here, each $\lambda_i$ stands for a Schur function whose Young diagram fits into an $(a_1+\dots+a_i)\times a_{i+1}$-box, while each $\mu_j$ is a Schur function whose Young diagram fits into a rectangle of size $b_i\times (b_{i+1}+\dots+b_l)$.
\end{prop}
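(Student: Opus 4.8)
The plan is to reduce the statement to the two lemmas already established about generalized Grassmannian modules and their duals, together with the standard behaviour of Hom between such modules under tensor/dualization.

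First I would justify the chain of canonical isomorphisms
\[
\HOM_{\sym_n}(\s_{\underline{b}},\s_{\underline{a}})\cong \s_{\underline{a}}\o_{\sym_n}\s^\vee_{\underline{b}} \cong \HOM_{\sym_n}((\s_{\underline{a}}^\vee)^\vee, \s_{\underline{b}}^\vee).
\]
For the first isomorphism, recall that each $\s_{\underline{c}}$ is a finite-cell (hence cofibrant, in particular projective, in particular finitely generated projective up to grading shifts) $p$-DG module over the graded local ring $\sym_n$; this is Lemma \ref{lemma-generalized-Grassmann-basis} combined with Proposition \ref{prop-finite-cell-parameters} applied inductively along the decomposition. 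For a finitely generated projective (right or left, as appropriate) module $M$ over any ring, $\HOM(M,N)\cong N\o M^\vee$ naturally, and when $M$ carries a $p$-DG structure this isomorphism is one of $p$-complexes, where $M^\vee=\HOM_{\sym_n}(M,\sym_n)$ gets the natural induced differential (the one already described before Lemma \ref{lemma-dual-generalized-Grassmann-basis}). Applying this with $M=\s_{\underline{b}}$, $N=\s_{\underline{a}}$ gives the first isomorphism. For the second, note $\s_{\underline{a}}\o_{\sym_n}\s_{\underline{b}}^\vee$ is symmetric in the roles of the two factors up to duality: $\HOM_{\sym_n}(P,Q)\cong\HOM_{\sym_n}(Q^\vee,P^\vee)$ for finitely generated projective $P,Q$, again as $p$-complexes, which applied to $P=\s_{\underline{b}}$, $Q=\s_{\underline{a}}$ and using $(\s_{\underline{b}}^\vee)\cong\s_{\underline{b}}^\vee$ identifies the outer terms; using that $(\s_{\underline{a}}^\vee)^\vee\cong\s_{\underline{a}}$ since the module is reflexive (finitely generated projective over the Noetherian ring $\sym_n$) yields the displayed form. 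I would remark that all these isomorphisms are $\dif$-equivariant precisely because the trace form $D_{\underline{a}}$ used to identify $\s_{\underline{a}}^\vee$ with $\sym_{\underline{a}}\cdot v_{\underline{a}}^\vee$ is itself $\dif$-compatible, a fact that follows from \eqref{eqn-diff-on-deltas} exactly as in the rank-two case (Proposition \ref{prop-bilinear-p-dg-pairing}).

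Next I would produce the basis. By Lemma \ref{lemma-generalized-Grassmann-basis}, $\s_{\underline{a}}$ has the $\dif$-stable $\sym_n$-basis of products $\pi_{\lambda_1}\pi_{\lambda_2}\cdots\pi_{\lambda_{k-1}}$ with $\lambda_i\in P(a_1+\dots+a_i,\,a_{i+1})$, and by Lemma \ref{lemma-dual-generalized-Grassmann-basis}, $\s_{\underline{b}}^\vee$ has the $\dif$-stable $\sym_n$-basis of products $\pi_{\mu_2}\pi_{\mu_3}\cdots\pi_{\mu_l}$ with $\mu_j\in P(b_j,\,b_{j+1}+\dots+b_l)$. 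Since $\s_{\underline{a}}$ and $\s_{\underline{b}}^\vee$ are both free as $\sym_n$-modules with these bases, their tensor product over $\sym_n$ is free with the basis of all pairwise products, which is exactly the indexed set depicted in the statement (the diagram being the vertical gluing of the $\s_{\underline{b}}^\vee$-diagram below the $\s_{\underline{a}}$-diagram along the common width-$n$ edge). The $\dif$-stability of this product basis follows from $\dif$-stability of each factor basis together with the Leibniz rule, since the differential on a tensor product of $\sym_n$-modules is computed factorwise. This is routine bookkeeping; the diagrammatic picture is merely notation for the element $\bigl(\prod_i\pi_{\lambda_i}\bigr)v_{\underline{a}}\otimes v_{\underline{b}}^\vee\bigl(\prod_j\pi_{\mu_j}\bigr)$.

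The main obstacle, such as it is, is purely bookkeeping: making sure the induced differential on $\s_{\underline{b}}^\vee$ really is the one appearing in Lemma \ref{lemma-dual-generalized-Grassmann-basis}, i.e. that the trace form $D_{\underline{b}}=D_{b_1,b_2+\dots+b_l}\circ\cdots\circ D_{b_{l-1},b_l}$ intertwines the module differential on $\s_{\underline{b}}$ with the stated differential on $\sym_{\underline{b}}\cdot v_{\underline{b}}^\vee$. This is the generalized-Grassmannian analogue of Proposition \ref{prop-bilinear-p-dg-pairing}, proved by iterating the rank-two case, and it is the only genuinely non-formal input; everything else is the standard projective-module manipulation above. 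I would therefore organize the proof as: (1) recall/cite that each $\s_{\underline{c}}$ is finite-cell over $\sym_n$; (2) state the projective-module Hom identities and observe they are $p$-DG once the duals carry their induced differentials; (3) invoke the iterated rank-two duality to pin down those differentials; (4) read off the free $\sym_n$-basis of the middle term as the product of the bases from Lemmas \ref{lemma-generalized-Grassmann-basis} and \ref{lemma-dual-generalized-Grassmann-basis}, and identify it with the pictured set.
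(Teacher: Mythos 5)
Your proposal is correct and follows the same route as the paper: the paper's proof is a one-line citation of Lemmas \ref{lemma-generalized-Grassmann-basis} and \ref{lemma-dual-generalized-Grassmann-basis}, whose $\dif$-stable $\sym_n$-bases you tensor together exactly as the authors intend. The additional detail you supply (the projective-module Hom identities and the $\dif$-equivariance of the duality) is standard bookkeeping that the paper leaves implicit.
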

\begin{proof}
The result follows directly from Lemma \ref{lemma-generalized-Grassmann-basis} and Lemma \ref{lemma-dual-generalized-Grassmann-basis}.
\end{proof}
Notice that, if all $a_i$ and $b_j$'s are equal to $1$, this diagrammatic basis recovers the basis in \cite[Propostion 2.16]{KLMS}, up to a sign, in the matrix decomposition of $\NH_n$ over its center.

\subsection{Categorification of \texorpdfstring{$\dot{U}^+$}{U plus}}

We recall the definition of $\dot{U}^+$, the positive half of quantum $\mathfrak{sl}(2)$ a la Beilinson-Lusztig-MacPherson, over the ground ring
\[
\mathbb{O}_p:=K_0(\DC^c(\Bbbk))\cong \Z[q]/(1+q^2+\dots+q^{2(p-1)}).
\]
As a free $\mathbb{O}_p$-module, it is generated by symbols $E^{(a)}$ for any $a\in \N$:
\[
\dot{U}^+=\bigoplus_{a\in \N}\mathbb{O}_p\cdot E^{(a)}.
\]
The algebra structure is given by
\[
E^{(a)}E^{(b)}={a+b \brack a} E^{(a+b)}.
\]
It is understood that the quantum integers in the above formula are specialized in $\mathbb{O}_p$. For instance, it is easy to see that, although $E^{(p)}\neq 0$,
\[
E^p=
[p]!E^{(p)}=0
\]
in this algebra, since $[p]=q^{-p+1}(1+q^2+\dots+q^{2(p-1)})=0$ in $\mathbb{O}_p$.

The algebra $\dot{U}^+$ is a \emph{twisted bialgebra} over $\mathbb{O}_p$. The coproduct structure $r:\dot{U}^+\lra \dot{U}^+\otimes_{\mathbb{O}_p}\dot{U}^+$ is given by
\begin{equation}\label{eqn-comul}
r(E^{(a)})=\sum_{k=0}^a q^{-k(a-k)}E^{(k)}\otimes E^{(a-k)}.
\end{equation}
The multiplication on $\dot{U}^+\otimes_{\mathbb{O}_p}\dot{U}^+$ is a twist of the naive multiplication. It is determined on basis elements by
\[
(E^{(a)}\otimes 1)(1\otimes E^{(b)})=E^{(a)}\o E^{(b)},\quad (1\otimes E^{(a)})(E^{(b)}\o 1)=q^{ab}E^{(b)}\o E^{(a)}.
\]
This twisted multiplication is chosen so that $r$ is an algebra homomorphism from $\dot{U}^+$ to $\dot{U}^+\otimes_{\mathbb{O}_p}\dot{U}^+$.

The \emph{(positive) half of the small quantum} $\mathfrak{sl}(2)$, written as $\dot{u}^+$, sits inside $\dot{U}^+$ as the subalgebra generated by $E$. Our goal in this section is to give a $p$-DG monoidal categorification of the bialgebra structure on $\dot{U}^+$, as well as the embedding of $\dot{u}^+$ inside it.

\begin{defn} \label{def-half-U-thick}
Let $\DC(\sym)$ denote the $p$-DG derived category defined as follows.
 \begin{itemize}
\item The category $\DC(\sym)$ is the direct sum of the $p$-DG derived categories $\sym_a$, one for each $a\in \N$:
\[
\DC(\sym):=\bigoplus_{a\in \N}\DC(\sym_a).
\]
Here by convention, we set $\sym_0$ to be the ground field $\Bbbk$ with the trivial $p$-differential action.
\item Let $\MC:\DC(\sym) \otimes \DC(\sym)\lra \DC(\sym)$ be the functor given by derived tensoring with the bimodule
$\s^\vee = \oplus_{a,b\in \N}{\s_{a,b}^\vee}$. That is, componentwise, this functor takes
\[
{}_{a+b}\MC_{a,b}:\DC(\sym_{a,b})\lra \DC(\sym_{a+b}), \quad
(M,N)\mapsto \s_{a,b}^\vee\otimes_{\sym_{a,b}}^\mathbf{L}(M\boxtimes N).
\]
\end{itemize}
\end{defn}

\begin{rmk}\label{rmk-U-plus-diagram}
Alternatively, we may regard $\DC(\sym)$ as a $p$-DG monoidal category as follows. Generating 1-morphisms will be thick strands of thickness $a \in \NM$, which we call $\EC^{(a)}$.  Generating 2-morphisms will be elements of $\sym_a$ attached to a strand of thickness $a$, and trivalent vertices $v_{a,b}$ and $v_{a,b}^\vee$. These 2-morphisms satisfy the relations \eqref{eq-duality} and \eqref{eq-identitydecomp}, as well as the \emph{associativity relation}:
\begin{equation}
\begin{DGCpicture}
\DGCPLstrand[thick](1,0)(1,1)[$^{a+b+c}$]
\DGCPLstrand[thick](1,1)(0,2)[`$_a$]
\DGCPLstrand[thick](0.5,1.5)(1,2)[`$_b$]
\DGCPLstrand[thick](1,1)(2,2)[`$_c$]
\end{DGCpicture}
=
\begin{DGCpicture}
\DGCPLstrand[thick](1,0)(1,1)[$^{a+b+c}$]
\DGCPLstrand[thick](1,1)(0,2)[`$_a$]
\DGCPLstrand[thick](1.5,1.5)(1,2)[`$_b$]
\DGCPLstrand[thick](1,1)(2,2)[`$_c$]
\end{DGCpicture},
\quad \quad \quad
\begin{DGCpicture}
\DGCPLstrand[thick](0,0)(1,1)[$^a$]
\DGCPLstrand[thick](2,0)(1,1)[$^c$]
\DGCPLstrand[thick](1,0)(0.5,0.5)[$^b$]
\DGCPLstrand[thick](1,1)(1,2)[`$_{a+b+c}$]
\end{DGCpicture}
=
\begin{DGCpicture}
\DGCPLstrand[thick](0,0)(1,1)[$^a$]
\DGCPLstrand[thick](2,0)(1,1)[$^c$]
\DGCPLstrand[thick](1,0)(1.5,0.5)[$^b$]
\DGCPLstrand[thick](1,1)(1,2)[`$_{a+b+c}$]
\end{DGCpicture}.
\end{equation}
The differential is defined on symmetric polynomials as in Lemma \ref{lemma-difofpilambdalonghand}, and on trivalent vertices as in \eqref{eqn-d-action-mod-generator} and \eqref{eqn-d-action-dual-mod-generator}.

One should think of this diagrammatic description of $\DC(\sym)$ as an ``upward-oriented" version of the category $\Uthick$ from \cite{KLMS}, with an equipped differential. This differential diagrammatic calculus gives an explicit way of describing the $p$-DG endomorphism algebra of the functor $\MC$ (see Proposition \ref{prop-morphism-space-half}).
\end{rmk}

Besides the bimodule $\s^\vee$ considered in Definition \ref{def-half-U-thick}, we will also utilize the following bimodule.

\begin{defn}
\label{def-comultiplication-bimod}
Let $\RC:\DC(\sym)\lra \DC(\sym)\otimes \DC(\sym)$ be the derived tensor with the following bimodule,
\[
\RC=\bigoplus_{a,b\in \N} \sym_{a,b}\cdot v_{a,b},
\]
where the generators $v_{a,b}$ have degree zero, equipped with the trivial differential action: $\dif(v_{a,b})=0$. Componentwise, each $\sym_{a,b} v_{a,b}$, considered as a $p$-DG bimodule over the $p$-DG algebras $(\sym_{a,b},\sym_{a+b})$, gives rise to a functor
\[
(\sym_{a,b}\cdot v_{a,b})\otimes_{\sym_{a+b}}^{\mathbf{L}}(-):\DC(\sym_{a+b})\lra \DC(\sym_a\otimes \sym_b), \quad M\mapsto (\sym_{a,b}v_{a,b})\otimes^{\mathbf{L}}_{\sym_{a+b}}M.
\]
\end{defn}

\begin{lemma}On the level of Grothendieck groups, the following K\"{u}nneth property holds:
\[
K_0(\sym_a\otimes \sym_b)\cong K_0(\sym_a)\otimes_{\mathbb{O}_p}K_0(\sym_{b}).
\]
\end{lemma}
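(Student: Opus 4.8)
The plan is to reduce the statement to polynomial $p$-DG algebras carrying the zero differential, where the Grothendieck group is easy to pin down, and then to match the natural external-product map against generators. By Proposition~\ref{prop-cohomology-of-sym-n}(i) the inclusions $B_a\hookrightarrow\sym_a$ and $B_b\hookrightarrow\sym_b$ are quasi-isomorphisms of $p$-DG algebras, where $B_a,B_b$ denote polynomial subalgebras with generators in positive even degrees and the zero differential. Tensoring over $\Bbbk$, the inclusion $B_a\otimes B_b\hookrightarrow\sym_a\otimes\sym_b=\sym_{a,b}$ is again a quasi-isomorphism: viewing a quasi-isomorphism $f$ merely as a morphism of $p$-complexes over $\Bbbk$, one has $\mathrm{cone}(f\otimes\mathrm{id})\cong\mathrm{cone}(f)\otimes_\Bbbk(-)$, and the tensor product over $\Bbbk$ of a contractible $p$-complex with any $p$-complex is again contractible, by the standard untwisting isomorphism $H\otimes_\Bbbk V\cong H\otimes_\Bbbk V_{\mathrm{triv}}$ over the Hopf algebra $H=\Bbbk[\dif]/(\dif^p)$. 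Since quasi-isomorphic $p$-DG algebras have equivalent compact derived categories and hence isomorphic Grothendieck groups, it suffices to prove the Künneth isomorphism after replacing $\sym_a,\sym_b,\sym_{a,b}$ by $B_a,B_b,B_a\otimes B_b$; the last of these is again a polynomial algebra with the zero differential.

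Next I would show that for any polynomial $p$-DG algebra $R=\Bbbk[y_1,\dots,y_m]$ with generators in positive even degrees and $\dif=0$, the group $K_0(\DC^c(R))$ is free of rank one over $\mathbb{O}_p$ on $[R]$. Such an $R$ is positively graded and graded local, so the free rank-one module $R$ is a compact cofibrant generator of $\DC(R)$ (see \S\ref{sec-idempotents} and \cite{QYHopf}); hence $[R]$ generates $K_0(\DC^c(R))$ over $\mathbb{O}_p$. For freeness, derived base change along the augmentation gives a functor $-\otimes^{\mathbf{L}}_R\Bbbk\colon\DC^c(R)\to\DC^c(\Bbbk)$, well defined and compactness-preserving because $\Bbbk$ admits a finite Koszul resolution over $R$; it sends $[R]$ to $[\Bbbk]=1\in\mathbb{O}_p$, so the composite $\mathbb{O}_p\xrightarrow{\,\cdot[R]\,}K_0(\DC^c(R))\xrightarrow{\,-\otimes^{\mathbf{L}}_R\Bbbk\,}\mathbb{O}_p$ is the identity. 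As the first map is also surjective, both maps are isomorphisms and $K_0(\DC^c(R))\cong\mathbb{O}_p\cdot[R]$. In particular $K_0(\sym_a)$, $K_0(\sym_b)$ and $K_0(\sym_{a,b})$ are free of rank one over $\mathbb{O}_p$, generated respectively by $[\sym_a]$, $[\sym_b]$ and $[\sym_{a,b}]$.

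Finally, the external tensor product $(M,N)\mapsto M\boxtimes N$ defines an exact bifunctor $\DC^c(\sym_a)\times\DC^c(\sym_b)\to\DC^c(\sym_{a,b})$, which preserves compactness and cofibrance and therefore descends to the compact derived categories without needing derivation. The natural isomorphisms $(V\otimes M)\boxtimes N\cong V\otimes(M\boxtimes N)\cong M\boxtimes(V\otimes N)$ for $V\in\DC^c(\Bbbk)$ show that the induced biadditive pairing on Grothendieck groups is $\mathbb{O}_p$-balanced, so it factors through an $\mathbb{O}_p$-module map $\kappa\colon K_0(\sym_a)\otimes_{\mathbb{O}_p}K_0(\sym_b)\to K_0(\sym_{a,b})$. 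By construction $\kappa([\sym_a]\otimes[\sym_b])=[\sym_a\boxtimes\sym_b]=[\sym_{a,b}]$; thus $\kappa$ is an $\mathbb{O}_p$-linear map between free rank-one $\mathbb{O}_p$-modules carrying a basis element to a basis element, hence an isomorphism, which is the asserted Künneth property. I expect the only real difficulty to be the middle step — showing that $K_0$ of the formal polynomial $p$-DG algebra is exactly $\mathbb{O}_p$, with no relation beyond the tautological $p$-DG one — while the remaining points are routine bookkeeping in the hopfological formalism of \cite{QYHopf} (flatness and compactness of external products, stability of quasi-isomorphisms under $-\otimes_\Bbbk-$), recalled in \S\ref{sec-idempotents}.
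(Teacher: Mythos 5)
Your proof is correct and follows essentially the same route as the paper's: both reduce to the formality statement of Proposition \ref{prop-cohomology-of-sym-n}, conclude that each side is a free rank-one $\mathbb{O}_p$-module generated by the class of the free module, and identify the two. You simply supply details the paper leaves implicit (stability of quasi-isomorphisms under $\otimes_\Bbbk$, freeness via the augmentation, and the explicit external-product map), all of which are sound.
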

\begin{proof}This is true because, by Proposition \ref{prop-cohomology-of-sym-n}, each $\sym_n$ is quasi-isomorphic to a polynomial algebra with the trivial differential. It follows that, for any $a\in \N$
\[
K_0(\sym_a)\cong \mathbb{O}_p[\sym_a],
\]
the right-hand-side meaning the free module generated by the class of $\sym_a$ as a left $p$-DG module over itself. Then both sides of the equation to be shown are isomorphic to $\mathbb{O}_p$. The result follows.
\end{proof}

\begin{thm} \label{thm-half-U-thick}
The $p$-DG Grothendieck group of $\DC(\sym)$ is isomorphic to the positive half $\dot{U}^+$ of the divided powers integral form of quantum $\sl_2$ as a bialgebra. The functors $\MC$ and $\RC$ categorify the multiplication and comultiplication structures on $\dot{U}^+$ respectively.
\end{thm}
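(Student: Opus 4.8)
The plan is to compute the $p$-DG Grothendieck group $K_0(\DC(\sym))$ and identify it with $\dot{U}^+$ as a bialgebra, by separately establishing the additive structure, the multiplication, and the comultiplication.

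First I would establish the underlying $\mathbb{O}_p$-module structure. By Proposition \ref{prop-cohomology-of-sym-n}, each $\sym_a$ is quasi-isomorphic to a polynomial algebra with trivial differential, and in particular it is a ``positive'' $p$-DG algebra admitting a finite-cell structure over itself. Standard hopfological algebra (as recalled in \S\ref{sec-idempotents}) then gives $K_0(\DC(\sym_a)) \cong \mathbb{O}_p$, generated by the class $[\sym_a]$, which we rename $[\EC^{(a)}]$. Taking the direct sum over $a$, we get a free $\mathbb{O}_p$-module on the symbols $[\EC^{(a)}]$, $a \in \N$; this matches the $\mathbb{O}_p$-basis $\{E^{(a)}\}$ of $\dot{U}^+$ on the nose.

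Next I would check that $\MC$ categorifies the multiplication. The functor ${}_{a+b}\MC_{a,b}$ is derived tensoring with $\s_{a,b}^\vee$. Since (by Remark \ref{rmk-Sab-notation} and Proposition \ref{prop-finite-cell-parameters}) $\s_{a,b}$, and hence $\s_{a,b}^\vee$, is a finite-cell $p$-DG module over the relevant symmetric polynomial algebras, the derived tensor agrees with the underived one and sends cofibrant objects to cofibrant objects; thus $\MC$ descends to a map on $K_0$. To compute $[\MC(\EC^{(a)} \boxtimes \EC^{(b)})] = [\s_{a,b}^\vee \otimes_{\sym_{a,b}} \sym_{a,b}] = [\s_{a,b}^\vee]$ as a class in $K_0(\DC(\sym_{a+b})) \cong \mathbb{O}_p$, I would use the $\dif$-stable finite-cell filtration of $\s_{a,b}^\vee$ over $\sym_{a+b}$ obtained (dually) from Proposition \ref{prop-finite-cell-parameters}: its associated graded is a direct sum of grading-shifted copies of $\sym_{a+b}$ indexed by $\lambda \in P(a,b)$, with the shift on the $\lambda$-summand recording $-|\lambda|$ (or its complement), and the differential on the one-dimensional base being exactly the content-twisted action from Proposition \ref{prop-finite-cell-parameters}. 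One must be slightly careful: the filtration subquotients are copies of $\sym_{a+b}$ only up to shift, so the class is $\left(\sum_{\lambda \in P(a,b)} q^{c_\lambda}\right)[\EC^{(a+b)}]$ for appropriate exponents $c_\lambda$; summing the resulting generating function over $P(a,b)$ gives the Gaussian binomial ${a+b \brack a}$ specialized in $\mathbb{O}_p$, matching $E^{(a)}E^{(b)} = {a+b \brack a}E^{(a+b)}$. Associativity of $\MC$ up to the coherent associativity $2$-isomorphisms of Remark \ref{rmk-U-plus-diagram} makes this an algebra isomorphism.

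Finally I would verify that $\RC$ categorifies the comultiplication and that the two are compatible (the twisted-bialgebra axiom). The functor ${}_{a,b}\RC_{a+b}$ is derived tensoring with $\sym_{a,b} \cdot v_{a,b}$ with $\dif(v_{a,b})=0$; over $\sym_{a+b}$ this is free of the expected graded rank, and one decomposes it into finite-cell pieces over $\sym_a \boxtimes \sym_b$. Applying $\RC$ to $\EC^{(a+b)}$ and reading off the class, the Schur-basis filtration (cf. \eqref{eqn-Schur-basis}) with its degree shifts produces exactly $\sum_{k=0}^{a} q^{-k(a-k)} [\EC^{(k)} \boxtimes \EC^{(a-k)}]$, matching \eqref{eqn-comul}. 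That $r$ is an algebra map for the twisted multiplication is a relation purely in $\dot{U}^+$, so once $K_0(\MC)$ and $K_0(\RC)$ are identified with $m$ and $r$, this compatibility is automatic; alternatively one can see it categorically from a Mackey-type isomorphism of the relevant bimodule compositions, but invoking the known bialgebra structure on $\dot{U}^+$ suffices.

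The main obstacle I expect is the bookkeeping of grading shifts in the finite-cell filtrations: one must match the $q$-powers coming from the degree of $v_{a,b}$ (namely $-ab$) and from the Schur-function degrees against the exact $q$-exponents in the Gaussian binomial and in \eqref{eqn-comul}, and confirm that the induced differentials on the one-dimensional cell subquotients do not contribute extra classes (i.e., that the content-twisted rank-one complexes are either acyclic or contribute a single $[\EC^{(a+b)}]$, as dictated by Proposition \ref{prop-cohomology-of-sym-n} and the cohomology computations of \S\ref{sec-sym}). Everything else is a formal consequence of the cofibrancy results already established and standard hopfological Grothendieck-group yoga.
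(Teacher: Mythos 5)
Your proposal is correct and follows essentially the same route as the paper: compute $K_0(\DC(\sym_a))\cong \mathbb{O}_p$ from Proposition \ref{prop-cohomology-of-sym-n}, then use the finite-cell (Schur-basis) filtration of Proposition \ref{prop-finite-cell-parameters} to read off the graded rank ${a+b\brack a}$ for $\MC$, and the analogous filtration for $\RC$. The paper's proof is just a terser version of the same argument, with the Künneth lemma handling the tensor-factor bookkeeping.
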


\begin{proof}It is clear from the definition that
\[
K_0(\DC(\sym))\cong \bigoplus_{a\in \N}\mathbb{O}_p[\sym_a].
\]
To construct the desired isomorphism, we send the class of the rank-one free module $[\s_a(0)]$ (see Definition \ref{def-rank-one-mod}) to the generator $E^{(a)}$ of $\dot{U}^+$.

By the previous lemma, the functors $\MC$ and $\RC$ induce maps on the Grothendieck groups
\[
[\MC]:K_0(\sym)\otimes_{\mathbb{O}_p}K_0(\sym)\lra K_0(\sym),\quad
[\RC]:K_0(\sym)\lra K_0(\sym)\otimes_{\mathbb{O}_p}K_0(\sym).
\]
It then suffices to check their actions on the symbols of the free modules. The bimodule $\s^\vee_{a,b}$, as a left $\sym_{a+b}$-module, has a filtration of graded dimension ${a+b\brack a}$ whose subquotients are all isomorphic to the free module $\sym_{a+b}$ (this was shown in Proposition \ref{prop-finite-cell-parameters}). Therefore, when acting on the free modules $\sym_a\boxtimes \sym_b$, one has
\[
{[\s_{a,b}^\vee\otimes_{\sym_{a,b}}^\mathbf{L}(\sym_a\boxtimes\sym_b)]=[\s_{a,b}^\vee]={a+b\brack a}[\sym_{a+b}]}.
\]
Therefore the functor $\s^\vee$ categorifies the product structure. Similarly, one can check that $\RC$ categorifies the coproduct structure $r$ on $\dot{U}^+$.
\end{proof}

\begin{rmk}\label{rmk-why-twisting-comultiplication}
Because the comultiplication on $\dot{U}^+$ is twisted, we must use the bimodule $\RC$ rather than $\oplus_{a,b\in \N}\s_{a,b}$. The latter
would send the class of $[\sym_{a+b}]$, up to a grading shift, to ${a+b\brack a}[\sym_a\otimes \sym_b]$, again by Proposition \ref{prop-finite-cell-parameters}. A similar categorical twist was used in \cite{KQ}
when categorifying the comultiplication on the small quantum group. \end{rmk}

\begin{rmk}
The proof of this theorem can also be given purely diagrammatically in terms of idempotents and Fc-filtrations. In the next chapter, we will recall the concept of Fc-filtrations, as defined in \cite{EQ1}.

In \cite{KQ} it was shown that the decomposition of $\EC^a$ into $a!$ copies of $\EC^{(a)}$ is a DG-filtration (also reviewed in the next chapter), using the $p$-DG structure on the nilHecke algebra. This can be seen as an extension of that result.
\end{rmk}

In \cite{KQ}, the authors constructed a categorification of $\dot{u}^+$ by placing a $p$-DG structure on the nilHecke algebras. This is a family of $p$-DG algebras
\[
\NH=\bigoplus_{a\in \N}\NH_n,
\]
where the $p$-DG structure is described, on the local generators, via equation \eqref{eqn-dif-on-nilHecke-generator}. We now explain how $\DC(\NH)$ embeds inside $\DC(\sym)$, and this will provide a categorification of the inclusion of $\dot{u}^+$ into $\dot{U}^+$.

As shown in \cite[Proposition 3.14]{KQ}, the differential on $\NH_n$ appears naturally by identifying $\NH_n$ as an endomorphism algebra
\[
\NH_n\cong \END_{\sym_n}(\PC_n^+),
\]
where $\PC_n^+)$ is described in Remark \ref{rmk-dif-on-NH}. Therefore, one can regard $\PC_n^+$ as a bimodule over $(\NH_n,\sym_n)$. Its $\sym_n$-dual $\PC_n^\vee=\HOM_{\sym_n}(\PC_n^+,\sym_n)$ is therefore a $p$-DG bimodule over $(\sym_n, \NH_n)$, which in turn provides a derived tensor functor
\[
\PC_n^\vee\otimes^\mathbf{L}_{\NH_n}(-):\DC(\NH_n)\lra \DC(\sym_n), \quad M \mapsto \PC_n^\vee\otimes^{\mathbf{L}}_{\NH_n}M.
\]
Defining $\PC$ as the direct sum of all $\PC_n^\vee$, this gives rise to a functor
\[
\PC^\vee: \DC(\NH)\lra \DC(\sym).
\]

\begin{cor}\label{cor-categorical-embedding-half-u} The functor $\PC^\vee$ is a fully-faithful embedding of $\DC(\NH)$ into $\DC(\sym)$. On the Grothendieck group level, this categorical embedding categorifies the embedding of $\dot{u}^+$ into $\dot{U}^+$.
\end{cor}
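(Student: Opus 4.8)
The plan is to realize $\PC^\vee$ as a ``one-sided Morita'' functor, deducing full faithfulness from a computation of a derived endomorphism algebra and then invoking the standard hopfological d\'evissage. Everything decomposes over $\NM$, so it suffices to treat, for each $n$, the functor $F_n:=\PC_n^\vee\otimes^{\mathbf{L}}_{\NH_n}(-)\co\DC(\NH_n)\to\DC(\sym_n)$, with $\PC^\vee=\bigoplus_n F_n$. Recall from \cite[Proposition 3.14]{KQ} that $\NH_n\cong\END_{\sym_n}(\PC_n^+)$ as $p$-DG algebras, and that $\PC_n^+$, restricted to $\sym_n$, is a finite-cell module with basis $B_n^+$; in particular it is cofibrant and compact over $\sym_n$.

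First I would verify that $\PC_n^\vee=\HOM_{\sym_n}(\PC_n^+,\sym_n)$ is again finite-cell over $\sym_n$. Indeed, applying $\HOM_{\sym_n}(-,\sym_n)$ to a finite-cell filtration of $\PC_n^+$ --- whose subquotients are copies of $\sym_n$ up to grading shift, hence free as graded modules, so the filtration $\sym_n$-splits and $\HOM_{\sym_n}(-,\sym_n)$ stays exact on it --- produces a finite-cell filtration of $\PC_n^\vee$ with the subquotients in reverse order. Hence $\PC_n^\vee$ is cofibrant and compact over $\sym_n$; consequently $F_n$ commutes with arbitrary coproducts, preserves compact objects, and $\HOM_{\sym_n}(\PC_n^\vee,-)$ already computes the derived functor, so $\END_{\sym_n}(\PC_n^\vee)$ has no higher cohomology. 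The ``transpose'' map $\END_{\sym_n}(\PC_n^+)^{\op}\to\END_{\sym_n}(\PC_n^\vee)$, $f\mapsto f^\vee$, is then a $p$-DG algebra isomorphism, and since $\NH_n$ is isomorphic to its opposite via the diagram-reflection anti-automorphism (which respects $\dif$, by the left--right symmetry of \eqref{eqn-dif-on-nilHecke-generator}), this identifies $\END_{\sym_n}(\PC_n^\vee)\cong\NH_n$ as $p$-DG algebras. Tracking the bimodule structure defining $F_n$, this is precisely the natural comparison map $\RHOM_{\DC(\NH_n)}(\NH_n,\NH_n)\to\RHOM_{\DC(\sym_n)}(F_n\NH_n,F_n\NH_n)$; in particular the latter is a quasi-isomorphism.

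Given this, full faithfulness follows by the usual bootstrap. For fixed compact $M$, the objects $N$ for which the comparison map $\Hom_{\DC(\NH_n)}(M,N[k])\to\Hom_{\DC(\sym_n)}(F_nM,F_nN[k])$ is bijective for all $k$ form a triangulated subcategory of $\DC(\NH_n)$, closed under summands (and, in the large derived category, under coproducts, since $F_n$ commutes with coproducts and $\PC_n^\vee$ is compact); for $M=\NH_n$ it contains the compact generator $\NH_n$ by the previous step, hence is everything. A symmetric closure argument in the variable $M$ then yields full faithfulness of $F_n$. This is the $p$-DG analogue of Keller's criterion; the necessary facts are assembled in \cite{QYHopf}, and the argument parallels those in \cite{KQ,EQ1}. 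Summing over $n$, $\PC^\vee$ is fully faithful. It is not an equivalence: for $a\ge p$ the summand $\DC(\NH_a)$ vanishes (as $\NH_a$ is acyclic), whereas $\DC(\sym_a)$ does not (Proposition \ref{prop-cohomology-of-sym-n}), so $\EC^{(a)}\in\DC(\sym)$ lies outside the essential image of $\PC^\vee$.

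For the Grothendieck group, $[\PC^\vee]$ is $\OM_p$-linear, and on the distinguished generator $[\PC_n^+]$ of $K_0(\DC(\NH_n))$ --- which the isomorphism $K_0(\DC(\NH))\cong\dot{u}^+$ of \cite{KQ} carries to $E^{(n)}$ --- it takes the value $[\PC_n^\vee\otimes_{\NH_n}\PC_n^+]$. Writing $\PC_n^+\cong\NH_n e$ with $e\NH_n e\cong\sym_n$, one computes $\PC_n^\vee\otimes_{\NH_n}\PC_n^+\cong\sym_n=\s_n(0)$ (the grading shift being trivial by the degree conventions on $v_{\mathtt{a}^+}$), whose class Theorem \ref{thm-half-U-thick} sends to $E^{(n)}$. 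Hence $[\PC^\vee]$ carries $E^{(n)}$ to $E^{(n)}$ for every $n$, i.e.\ it is precisely the inclusion $\dot{u}^+\hookrightarrow\dot{U}^+$; equivalently $[F_n\NH_n]=[\PC_n^\vee]=[n]!\,[\sym_n]\mapsto[n]!\,E^{(n)}=E^n$, matching $[\NH_n]\mapsto E^n$ on the source. The step I expect to be the genuine obstacle is the middle one --- confirming that $\PC_n^\vee$ is cofibrant over $\sym_n$ and, above all, that the differential induced on $\END_{\sym_n}(\PC_n^\vee)$ agrees with $\dif$ on $\NH_n$ under the ``dualize, then reflect'' identification, so that the comparison map coming from $F_n$ is a genuine $p$-DG algebra isomorphism. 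The remainder is either formal, or bookkeeping with graded ranks, the key numerical point being that $\PC_n^+$ has graded rank the (bar-invariant) quantum factorial $[n]!$ over $\sym_n$.
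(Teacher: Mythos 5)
Your proposal is correct, and it follows the same underlying strategy as the paper --- reduce to a comparison of derived endomorphism algebras of a compact generator of $\DC(\NH_n)$, then bootstrap --- but you test on a different generator. The paper tests on $\PC_a^+$ itself: since $\mathbf{R}\END_{\NH_a}(\PC_a^+)\cong\sym_a$ and $\PC_a^\vee\otimes_{\NH_a}\PC_a^+\cong\sym_a$ is the representable (hence automatically cofibrant) module over $\sym_a$, both endomorphism computations are immediate and one never needs to know that $\PC_a^\vee$ is cofibrant over $\sym_a$; the case $a\ge p$ is dispatched by $\DC(\NH_a)\cong 0$. You test on the free module $\NH_n$, which forces you to prove the extra lemma that $\PC_n^\vee$ is finite-cell over $\sym_n$ (your dualization-of-the-filtration argument for this is fine) and to identify $\END_{\sym_n}(\PC_n^\vee)$; in exchange your argument is uniform in $n$. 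Both versions ultimately rest on the same input from \cite{KQ}.

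One side remark in your write-up is wrong, though harmlessly so: the "left--right reflection" is an automorphism, not an anti-automorphism, and the up--down reflection, which is an anti-automorphism, does \emph{not} commute with $\dif=\dif_1$ (it interchanges $\dif_1$ and $\dif_{-1}$; e.g.\ for $n=2$ it sends $-x_1D_2-D_2x_2$ to $-D_2x_1-x_2D_2$, which differ by a multiple of $(x_1-x_2)D_2-1\ne 0$). The anti-automorphism that does respect $\dif_1$ is the $180^\circ$ rotation, $x_i\mapsto x_{n+1-i}$. In any case this identification is not needed: the comparison map induced by $F_n$ on $\END(\NH_n)$ is canonically the double-dual isomorphism $\NH_n^{\op}\to\END_{\sym_n}(\PC_n^\vee)$ coming from the $(\sym_n,\NH_n)$-bimodule structure on $\PC_n^\vee$, and full faithfulness only requires this to be a quasi-isomorphism, not that the target be identified with $\NH_n$ rather than $\NH_n^{\op}$.
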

\begin{proof}Since $\DC(\NH)=\oplus_{a\in \N}\DC(\NH_a)$, it suffices to show the fully-faithful embedding componentwise for each $a$. If $a\geq p$, it is shown in \cite[Corollary 3.17]{KQ} that $\DC(\NH_a)\cong 0$, and the claim above is trivially true. Thus we are reduced to proving it when $a\in\{0,1,\dots, p-1\}$. To this end, we recall from \cite[Proposition 3.26]{KQ} that $\PC_{a}^+$ is a compact cofibrant generator of $\DC(\NH_a)$. Therefore, to show that the above functor is fully-faithful, it suffices to compute the endomorphism spaces of $\PC_a^+$ before and after we apply the functor. On the one hand, the cofibrance of $\PC_a^+$ in $\DC(\NH_a)$ tells us that the endomorphism in this category can be computed using the usual $\HOM$-space
\[
\mathbf{R}\END_{\NH_a}(\PC_a^+)\cong \END_{\NH_a}(\PC_a^+)\cong \sym_a.
\]
On the other hand, after applying the functor, the new object we obtain equals
\[
\PC_a^\vee\otimes_{\NH_a}^\mathbf{L}\PC_a^+\cong \PC_a^\vee\otimes_{\NH_a}\PC_a^+\cong \END_{\NH_a}(\PC_a^+)\cong \sym_a,
\]
whose endomorphism space inside $\DC(\sym_a)$ also equals $\sym_a$. This proves the first part of our result.

On the level of $K_0$, we have known from \cite[Theorem 3.35]{KQ} that $\PC_a^+$ categorifies $E^{(a)}$ when $a\in \{0,1,\dots,p-1\}$. Hence, on $K_0$, the functor sends the element $E^{(a)}\in \dot{u}^+$
\[
[\PC_a]: E^{(a)}=[\PC_a^+]\mapsto [\sym_a]=E^{(a)}\in \dot{U}^+.
\]
The corollary now follows.
\end{proof}

\begin{rmk}One should notice that, if one considers homotopy category of $p$-DG modules over $\sym$ and $\NH$, the above functor actually provides an equivalence between the respective homotopy categories
\[
\PC^\vee:\HC(\NH)\lra \HC(\sym).
\]
The quasi-inverse is given by derived tensor product with the $(\NH_,\sym)$-bimodule
\[
\PC:=\bigoplus_{a\in \N} \PC_a^+.
\]
This is a special case of Lemma \ref{lemma-Morita-homotopy-category} to be proved in the next chapter.

However, the derived category $\DC(\NH_a)$ collapses to the zero category if $a\geq p$, so these functors fail to induce an equivalence of derived categories.
\end{rmk}

\section{Grothendieck groups and the Karoubi envelope}\label{sec-idempotents}

%
\subsection{Bimodules, endomorphisms, and differentials} \label{subsec-bimodules}
%

We gather here some basic discussion of differentials and bimodules.

Let $(A,\dif_A)$ and $(B,\dif_B)$ be two $p$-DG algebras, and $M$ be an $(A,B)$-bimodule. The bimodule structure can be lifted to the $p$-DG level if $M$ can be equipped with $p$-nilpotent differential $\dif_M$ such that the following \emph{compatibility} condition is satisfied: for any $a\in A$, $b\in B$ and $m\in M$, we have
\begin{equation}\label{eqn-compatibility-cond}
\dif_M(amb)=\dif_A(a)mb+a\dif_M(m)b+am\dif_B(b).
\end{equation}
We will drop the subscripts in the various differentials whenever no confusion can be caused.
Now if $(A,\dif_A)$ is a $p$-DG algebra, and $(M,\dif_M)$ is a $p$-DG left module over $A$, then the graded algebra $\END_A(M)$ of $A$-module endomorphisms is equipped with a natural differential $\dif_E$ as follows. If $\phi\in \END_{A}(M)^{\mathrm{op}}$, and $m\in M$, then
\[
\dif_E(\phi) (m):= \dif_M(\phi(m))-\phi(\dif_M(m)).
\]
It is easy to check that $\dif_E^p\equiv 0$, so that $(\END_A(M),\dif_E)$ is a $p$-DG algebra. Furthermore, if we regard $M$ as a bimodule over $(A,\END_A(M)^{\textrm{op}})$ via
\[
a\cdot(m)\cdot \phi:=\phi(am),
\]
where $\END_A(M)^{\textrm{op}}$ denotes the opposite algebra of $\END_A(M)$ with the differential action unchanged, then it is a simple exercise to check that the compatibility condition \eqref{eqn-compatibility-cond} is satisfied. In this way $M$ is then promoted to a $p$-DG bimodule.

More generally, the graded space $\HOM_A(M,N)$ of $A$-module morphisms between two $p$-DG modules also has a $p$-DG module structure with differential $\dif_H$, using the formula
\[ \dif_H(\phi)(m) = \dif_N(\phi(m)) - \phi(\dif_M(m)). \]

%
\subsection{The differential on the image of an idempotent} \label{subsec-differential-on-image-of-idempotent}
%

Let $\e$ be an idempotent in a $p$-DG algebra $A$. Applying the Leibniz rule to the equality $\e^2 = \e$, one obtains the following facts.
\begin{equation} \label{eq:de} \dif(\e) = \e \dif(\e) + \dif(\e) \e. \end{equation}
\begin{equation} \label{edeplusdee} \dif(\e) = 0 \textrm{ if and only if } \e \dif(\e) = 0 \textrm{ and } \dif(\e)\e = 0. \end{equation}
\begin{equation} \label{edee} \e \dif(\e) \e = 0. \end{equation}
In this section we investigate when the left $A$-module summand $A \e$ inherits a $p$-DG structure.

Suppose that $\e \in A$ is an idempotent with the property that $\dif(\e) \in A \e$, or equivalently $\e \dif(\e) = 0$. The subset $A \e \subset A$ is not just a summand of $A$ as a left $A$-module, but is also a $p$-DG submodule, and as such we call $\e$ a \emph{left submodule idempotent}. Let us write $\odif$ for the induced differential on the $p$-DG
submodule $A \e$. In particular, for any $a \in A$, one has the formula \begin{equation} \label{odifonAe} \odif(a \e) = \dif(a \e) \e, \end{equation} in which the final $\e$ is redundant.

Now, suppose that $\e \in A$ is an idempotent with the property that $\dif(1 - \e) \in A (1 - \e)$, or equivalently $\dif(\e) \e = 0$. Then $A \e$ can be viewed as a $p$-DG
quotient of $A$ by the $p$-DG submodule $A (1 - \e)$, so we call $\e$ a \emph{left quotient idempotent}. Let us write $\odif$ for the induced differential on $A \e$ as a $p$-DG quotient.
Once again, it is easy to see that \begin{equation*} \odif(a \e) = \dif(a \e) \e, \end{equation*} where now the final $\e$ is necessary.

\begin{defn} Let $\e \in A$ be an idempotent. We say that $\e$ is a \emph{(left) summand idempotent} if $\dif(\e)=0$. By \eqref{edeplusdee}, $A \e$ is both a left $p$-DG submodule and a left $p$-DG quotient (that is, its $A$-module complement is a $p$-DG submodule), so it is a $p$-DG summand. \end{defn}

\begin{defn} \label{defn:subquotsummand} Let $\e \in A$ be an idempotent. We say that $A \e$ is a \emph{(left) subquotient summand} or that $\e$ is a \emph{(left) subquotient idempotent}
if there exists an idempotent $\w$ dominating $\e$ (i.e. for which $\w \e = \e \w = \e$), such that $\w \dif(\w) = 0$, and $(\w - \e) \dif(\w - \e) = 0$. In other words, $A \e$ is a left
$p$-DG quotient of $A \w$, which is a left $p$-DG submodule of $A$, all of which are summands in the category of $A$-modules. In this case, we define the (left) differential $\odif$ on $A
\e$ by the formula \eqref{odifonAe}. \end{defn}

The differential $\odif$ on a subquotient summand $A \e$ agrees with its induced differential as a $p$-DG subquotient of $A$. From this, it is easy to see that $\odif^n(a \e) = \dif^n(a \e) \e$.

In our definition of a subquotient summand above, we gave conditions for $A \e$ to be a quotient of a submodule in the $p$-DG category (all of which are summands in the ordinary module
category). The condition for $A \e$ to be a submodule of a quotient appears slightly different: there exists an idempotent $\eta$ dominating $\e$ with $\dif(\eta) \eta = 0$ and $\e
\odif(\e) = \e \dif(\e) \eta = 0$, where $\odif$ was the induced differential on the quotient $A \eta$. However, we leave it as an exercise to prove that $(1-\eta + \e)$ satisfies the
conditions of $\w$ in Definition \ref{defn:subquotsummand}.

The notion of a \emph{right submodule idempotent} and a \emph{right quotient idempotent} are easy to intuit. Because $\e \dif(\e) = 0$ if and only if $\dif(1-\e) (1-\e) = 0$, it is easy to
see that a right submodule idempotent is a left quotient idempotent, and vice versa. Similarly, a right summand (resp. subquotient) idempotent is the same as a left summand (resp. subquotient) idempotent, which is why we omit the word left or right in these cases.

One may be tempted to formally define a differential on $A \e$ for any idempotent, by the formula \eqref{odifonAe}. However, when $\e$ is not a subquotient summand, the formula
$\odif^n(a \e) = \dif^n(a \e) \e$ may fail, and there is no guarantee that $\odif^p = 0$. There does not seem to be any way to place a natural $p$-DG module structure on a
general summand $A \e$.

\begin{rmk} \label{source-of-counterexamples} In the prequel \cite{EQ1} we defined a family of differentials on the Khovanov-Lauda category $\UC$. We demonstrated that, for two members of
this family, the idempotent decompositions used by Khovanov-Lauda are DG-filtrations (defined later in this chapter), and therefore the idempotents are subquotient idempotents. However, for
the other members of the family of differentials, these idempotents are not subquotient idempotents; in such a situation, if one defines $\odif$ by \eqref{odifonAe}, then $\odif^p \ne 0$. These faulty differentials can be used to construct several other counterexamples in this chapter. \end{rmk}

Now we consider the endomorphism ring of a subquotient summand $A \e$
\begin{equation}\label{eqn-endo-algebra-idempotent}
\END_A(A\e)\cong \e A \e ,\quad \phi \mapsto \phi(\e)=\e \phi(\e)
\end{equation}
which carries a natural differential $\dif_E$ by the discussion above. Let us change notation, and also refer to the induced differential on $\e A \e$ as $\odif$. Note that $\e A \e$ is not closed under $\dif_A$. Instead, given $\phi\in \END_A(A\e)$ which corresponds to $\e a \e \in \e A \e$ (i.e.~$\phi(x\e):=x\e a \e$), and $x\e \in A\e$, we have,
\begin{equation*}
\odif(\phi)(x\e) = x \e \dif_A(\e a \e) \e.
\end{equation*}
Comparing this with the corresponding element under the isomorphism \eqref{eqn-endo-algebra-idempotent}, we have proven the following.

\begin{lemma}\label{lemma-dif-endo-algebra-idempotent}Let $\e$ be a subquotient idempotent of a $p$-DG algebra $(A,\dif_A)$. Under the isomorphism $\END_A(A\e)\cong \e A \e$, the inherited differential on the algebra $\e A \e$, denoted $\odif$, is given by
\[
\odif(\e a \e)= \e \dif(\e a \e) \e,
\]
for any $a \in A$. \hfill$\square$
\end{lemma}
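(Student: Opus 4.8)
The statement is essentially the computation sketched in the paragraph preceding it, so the plan is to carry that computation out cleanly. Recall that on the subquotient summand $A\e$ the induced module differential is $\odif(x\e) = \dif_A(x\e)\e$ by \eqref{odifonAe}, and that under the isomorphism \eqref{eqn-endo-algebra-idempotent} an element $\e a\e \in \e A\e$ corresponds to the left $A$-module endomorphism $\phi$ of $A\e$ with $\phi(x\e) := x\e a\e$; in particular $\phi(\e) = \e a\e$. Write $\dif_E$ for the induced differential on $\END_A(A\e)$ coming from \S\ref{subsec-bimodules} (this is the $\odif$ appearing in the statement, transported through the isomorphism). Since $A\e$ is generated by $\e$ as a left $A$-module and $\dif_E(\phi)$ is again left $A$-linear, it will suffice to evaluate $\dif_E(\phi)$ on the single element $\e$ and check that $\dif_E(\phi)(\e) = \e\dif_A(\e a\e)\e$, which under \eqref{eqn-endo-algebra-idempotent} is precisely the element $\e\dif_A(\e a\e)\e \in \e A\e$.

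First I would unwind the definition of $\dif_E$:
\[
\dif_E(\phi)(\e) = \odif(\phi(\e)) - \phi(\odif(\e)) = \odif(\e a\e) - \phi(\dif_A(\e)\e) = \dif_A(\e a\e)\e - \dif_A(\e)\e a\e,
\]
where the last equality uses \eqref{odifonAe} for both occurrences of $\odif$ and the fact that $\dif_A(\e)\e \in A\e$. Next I would expand $\dif_A(\e a\e) = \dif_A(\e)a\e + \e\dif_A(a)\e + \e a\dif_A(\e)$ by the Leibniz rule, multiply on the right by $\e$, and then use $\e^2 = \e$ together with the identity $\dif_A(\e) = \e\dif_A(\e) + \dif_A(\e)\e$ from \eqref{eq:de} to rewrite $\dif_A(\e a\e)\e - \dif_A(\e)\e a\e$ as $\e\dif_A(\e)a\e + \e\dif_A(a)\e + \e a\dif_A(\e)\e = \e\dif_A(\e a\e)\e$. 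Transporting this back through \eqref{eqn-endo-algebra-idempotent} gives the asserted formula $\odif(\e a\e) = \e\dif_A(\e a\e)\e$, which is manifestly $\dif$-stable as an expression in $\e A\e$.

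The only genuinely delicate point is the bookkeeping: one must remember that $\odif$ on $A\e$ is \emph{not} the restriction of $\dif_A$ --- the trailing $\e$ in \eqref{odifonAe} is essential in the quotient/subquotient case --- and one must keep the idempotents in the correct positions in order to apply \eqref{eq:de} (the identities \eqref{edeplusdee} and \eqref{edee} are also available, although the organization above needs only \eqref{eq:de}). It is worth emphasizing that the subquotient hypothesis on $\e$ is what guarantees that $\odif$ is a well-defined $p$-nilpotent differential on $A\e$ in the first place, so that $\dif_E$ and hence the statement make sense; the closing algebraic manipulation itself only invokes the universal idempotent relations. Finally, $p$-nilpotency of $\odif$ on $\e A\e$ requires no extra argument, since it is simply the differential of the $p$-DG algebra $\END_A(A\e)$ furnished by the discussion in \S\ref{subsec-bimodules}.
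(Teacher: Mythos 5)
Your computation is correct and is essentially the paper's own argument: the authors establish the lemma by the same unwinding of $\dif_E(\phi)$ on $A\e$ (they evaluate at a general element $x\e$ rather than just at $\e$, but the Leibniz expansion and the use of \eqref{eq:de} are identical in content). Nothing further is needed.
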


Note that when $\e$ is either a submodule or a quotient idempotent, the $\e$ on one side of the RHS will be redundant.

In similar fashion, we can consider subquotient idempotents of a left $p$-DG module $M$ over a $p$-DG algebra $A$. Given an idempotent $\e \in \END_A(M)$, one can call it a \emph{left
submodule idempotent} if its image is closed under $\dif_M$. This is equivalent to $\e$ being a right submodule idempotent inside $\END_A(M)$, i.e. $\dif_E(\e) \e = 0$. The switch
from left to right arises because $M$ is naturally a right module over $\END_A(M)$. In general, one can transfer notions about idempotents in the $p$-DG algebra $\END_A(M)^\op$ into notions
about the module $M$. We leave the details to the reader, and mention only this lemma.

\begin{lemma} \label{lemma-dif-hom-idempotent} Let $(A,\dif_A)$ be a $p$-DG algebra, and $(M,\dif_M)$ and $(N,\dif_N)$ be two $p$-DG modules. Let $\e \in \END_A(M)$ and $\eta \in \END_A(N)$ be subquotient idempotents, and let $(X,\dif_X)$ and $(Y,\dif_Y)$ be the induced $p$-DG modules on their respective images. Under the isomorphism
	\[ \HOM_A(X,Y) \cong \eta \HOM_A(M,N) \e, \]
the inherited differential on $\HOM_A(X,Y)$, denoted $\odif$, is given by
\[
\odif(\eta \phi \e) = \eta \dif_H(\eta \phi \e) \e,
\]
for any $\phi \in \HOM_A(M,N)$. \hfill$\square$
\end{lemma}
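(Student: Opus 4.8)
The plan is to imitate, almost verbatim, the argument for Lemma \ref{lemma-dif-endo-algebra-idempotent}, transporting everything through the standard dictionary between idempotents in an endomorphism algebra and subquotient summands of a module. First I would fix conventions: regard $M$ as a right module over $E_M := \END_A(M)$ via $m\cdot\phi := \phi(m)$, and similarly $N$ over $E_N := \END_A(N)$, so that $\HOM_A(M,N)$ is a left $E_N$-module and right $E_M$-module with $\eta\phi\e = \eta\circ\phi\circ\e$. By the discussion in \S\ref{subsec-differential-on-image-of-idempotent} (applied to the $p$-DG algebra $E_M^{\op}$, respectively $E_N^{\op}$), the hypotheses say precisely that $X = \mathrm{im}(\e)$ and $Y = \mathrm{im}(\eta)$ are subquotient summands, and the induced differentials satisfy the right-module analogue of \eqref{odifonAe}, namely $\dif_X(m\e) = \dif_M(m\e)\e$ and $\dif_Y(n\eta) = \dif_N(n\eta)\eta$ — with the usual remark that one occurrence of the idempotent is redundant in the pure submodule or pure quotient cases, and that $\dif_X^{\,k}(m\e) = \dif_M^{\,k}(m\e)\e$ for all $k$.

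Next I would unwind the isomorphism $\HOM_A(X,Y) \cong \eta\,\HOM_A(M,N)\,\e$. A general element of the right-hand side is $\eta\phi\e = \eta\circ\phi\circ\e$ with $\phi\in\HOM_A(M,N)$; since $X$ is a summand of $M$ and $Y$ a summand of $N$, restriction of $\eta\phi\e$ to $X$ gives a morphism $\psi\colon X\to Y$ with $\psi(x) = \eta(\phi(x))$ for $x\in X$ (using $\e(x)=x$), and conversely any $\psi$ extends back. Under this identification the inherited differential $\odif$ on $\HOM_A(X,Y)$ is by definition the differential $\dif_H$ attached to the $p$-DG modules $(X,\dif_X)$ and $(Y,\dif_Y)$ via \S\ref{subsec-bimodules}; in particular its $p$-nilpotency is automatic and needs no separate verification.

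The heart of the proof is then a one-line chase. For $x\in X$,
\[
\odif(\psi)(x) = \dif_Y(\psi(x)) - \psi(\dif_X(x)) = \eta\bigl(\dif_N(\eta\phi\e(x))\bigr) - \eta\bigl(\phi(\e\,\dif_M(x))\bigr),
\]
using the formulas for $\dif_X,\dif_Y$ recalled above together with $\psi = \eta\circ\phi\circ\e$. On the other hand $\dif_H(\eta\phi\e)\in\HOM_A(M,N)$ sends $m\mapsto \dif_N(\eta\phi\e(m)) - \eta\phi\e(\dif_M(m))$, and evaluating $\eta\,\dif_H(\eta\phi\e)\,\e$ at $x\in X$ reproduces exactly the displayed right-hand side: the outer $\eta$ inserts the required idempotent on the target side, and precomposition with $\e$ does so on the source side, precisely as in the endomorphism-algebra case. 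Comparing the two expressions gives $\odif(\eta\phi\e) = \eta\,\dif_H(\eta\phi\e)\,\e$, as claimed.

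I expect the only real obstacle to be bookkeeping: keeping straight that $M$ and $N$ are \emph{right} modules over their (opposite) endomorphism algebras, so that a ``left submodule idempotent'' of the module corresponds to a \emph{right} submodule idempotent inside $\END_A(M)$ (as already flagged in the text), and tracking which occurrences of $\e$ and $\eta$ are redundant versus necessary across the submodule, quotient, and genuine subquotient cases. Once those conventions are pinned down, no genuinely new input beyond Lemma \ref{lemma-dif-endo-algebra-idempotent} and \S\ref{subsec-differential-on-image-of-idempotent} is required.
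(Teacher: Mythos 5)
Your proof is correct and is precisely the argument the paper intends: the lemma is stated with its proof left to the reader as "similar fashion" to Lemma \ref{lemma-dif-endo-algebra-idempotent}, and your evaluation of $\odif(\psi)(x)=\dif_Y(\psi(x))-\psi(\dif_X(x))$ against $\eta\,\dif_H(\eta\phi\e)\,\e$ on $x=\e(x)$ is exactly that transported computation. The bookkeeping you flag (left submodule idempotents of $M$ corresponding to right submodule idempotents of $\END_A(M)$) is handled consistently, so nothing is missing.
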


\subsection{A basic problem} \label{subsec-basic-problem}

Let us motivate the remainder of the chapter.

Given an algebra $A$, its Grothendieck group $K_0(A)$ is defined to be the split Grothendieck group of the additive category of finitely-generated projective left $A$-modules. It has a
basis given by the classes of indecomposable projectives. For a projective left $A$-module $M$, and a complete collection of primitive orthogonal idempotents $\{\e_i\} \subset \END_A(M)$
(i.e.~whose images are indecomposable projective summands of $M$), one has a relation in $K_0(A)$ that
\begin{equation} \label{whatwewant}
[M] = \sum_i [\Im \e_i].
 \end{equation}

Now suppose that $A$ is a $p$-DG algebra, and $M$ is a left $p$-DG module whose underlying $A$-module is finitely-generated projective, and $\{\e_i\}$ is a collection of idempotents as
above. In applications, we wish to show that the same relation \eqref{whatwewant} holds as an equality in the $p$-DG Grothendieck group of $A$. However, it may not, for three essential
reasons.

Firstly, the idempotents $\e_i$ may not be subquotient idempotents, and therefore $\Im \e_i$ may not even have a $p$-DG module structure.

Secondly, even when the $\e_i$ are each individually subquotient idempotents, they need not fit into a filtration of $M$ by $p$-DG submodules. Below, we will encapsulate the notion of a
filtration of $M$ by $p$-DG submodules for which the underlying $A$-modules of the subquotients are projective in the algorithmic definition of a DG-filtration.

Thirdly, even when $\Im \e_i$ are the subquotients in a DG-filtration, they need not be cofibrant! A reminder of the definition of the $p$-DG Grothendieck group is coming soon, but a key
point is that \eqref{whatwewant} is only guaranteed to hold when the subquotients are cofibrant. Unfortunately, $p$-DG algebras typically have very few cofibrant objects, as they correspond
roughly to summand idempotents (i.e. idempotents $\e$ with $\dif(\e)=0$). For this reason, we will pass from discussing $p$-DG algebras to discussing $p$-DG categories (i.e., $p$-DG
algebroids), which have many more cofibrant objects. The next few sections will review some $p$-DG homological algebra, rephrased in the language of $p$-DG categories.

In a $p$-DG category, one way to show that $\Im \e_i$ is cofibrant is to factor the idempotent through another object $N_i$ in the category (compatibly with the $p$-DG structure). Often,
the relation \begin{equation} \label{whatwewant2} [M] = \sum_i [N_i] \end{equation} is the one we desire in the first place. Below, we will encapsulate the notion of a filtration by $p$-DG
submodules whose subquotients are isomorphic to other objects in the algorithmic definition of a fantastic filtration, or Fc-filtration for short. When the idempotents $\e_i$ form a
Fc-filtration, the desired relation \eqref{whatwewant2} will hold in the $p$-DG Grothendieck group.

\subsection{Pre-additive categories}

We begin with a reminder about ordinary pre-additive categories (without any $p$-DG structure). These are generalizations of the concept of a $\Bbbk$-algebra.

A \emph{pre-additive category} or \emph{algebroid} $\AC$ is a (small) $\Bbbk$-linear category. It is convenient to regard such a category as an ``algebra'' $A = \oplus_{X,Y \in \Obj(\AC)}
\Hom_\AC(X,Y)$, with a collection of orthogonal idempotents $\1_X$ instead of a unit, one for each object of the category. Recall that a pre-additive category is called \emph{additive} if
it admits all finite direct sums on objects. For instance, the category with a single object whose endomorphism space equals $\Bbbk$ is pre-additive, while the category $\Bbbk-\mathrm{vec}$
of $\Bbbk$-vector spaces is additive. A pre-additive category with only one object is a $\Bbbk$-algebra.

\begin{defn}\label{def-C-mod}
\begin{enumerate}
\item[(i)] Let $\AC$ be a pre-additive category. A \emph{left (resp. right) module} $\MC$ over $\AC$ is a covariant functor
\[
\MC:\AC\lra \Bbbk\!-\!\mathrm{vec} \quad (\textrm{resp.}~\MC:\AC^{\mathrm{op}}\lra \Bbbk\!-\!\mathrm{vec}).
\]
Similarly, a \emph{bimodule} $\MC$ over $\AC$ is a covariant functor
\[
\MC:\AC \times \AC^{\mathrm{op}} \lra \Bbbk\!-\!\mathrm{vec}.
\]
\item[(ii)] A left (resp. right) module $\MC$ over $\AC$ is called \emph{representable} or \emph{principal} if it is isomorphic to a module of the form
\[
\Hom_{\AC}(X,-):\AC\lra \Bbbk\!-\!\mathrm{vec} \quad (\textrm{resp.}~\Hom_{\AC}(-,X):\AC\lra \Bbbk\!-\!\mathrm{vec}),
\]
for some object $X\in \AC$.
\end{enumerate}
\end{defn}

It is easy to check that the category of left (resp.~right) modules over a pre-additive category $\AC$ is abelian, which we will denote as $\AC\dmod$ (resp. $\AC^{\op}\dmod$). Via
the Yoneda embedding, the additive closure of $\AC$ is equivalent to the full subcategory in $\AC\dmod$ or $\AC^{\mathrm{op}}\dmod$ consisting of finite direct sums of representable
modules. As in the case of algebras, a projective module will be a summand of a principal module.

\begin{defn} \label{def-proj-C-mod}Let $\AC$ be an additive category.
\begin{enumerate}
\item[(i)] We often refer to a pair $(M,\e)$, consisting of an object $M\in \AC$ and an idempotent $\e \in \END_{\AC}(M)$, as an \emph{idempotent} in $\AC$. Given an idempotent $(M,\e)$, the \emph{left (resp.~right) projective module} $\AC\e$ (resp.~$\e\AC$) is the functor
$\AC\lra \Bbbk\!-\!\mathrm{vec}$, which assigns to an object $N\in \AC$ the vector space
$$\Hom_{\AC}(M,N) \cdot \e:=\{\phi \circ \e|\phi\in \Hom_{\AC}(M,N)\},$$
$$\left(\textrm{resp.}~\e \cdot \Hom_{\AC}(N,M):=\{\e \circ \psi|\psi\in \Hom_{\AC}(N,M)\},\right)$$
and to morphisms the natural composition of morphisms.
\item[(ii)] Given $\mathbb{X} = \{(M_i,\e_i)|i\in I\}$ a set of idempotents, define the \emph{partial idempotent completion category} $\AC(\mathbb{X})$ as follows. One has $\Obj (\AC(\mathbb{X})) = \Obj (\AC) \cup \mathbb{X}$, with $\AC$ included as a full subcategory in the natural way. Then,
    for any $X,Y \in \Obj (\AC)$ and $(M_i,\e_i), (M_j,\e_j) \in \mathbb{X}$, set
\begin{eqnarray*}
\Hom(X,(M_j,\e_j)) &=& \e_j \cdot \Hom(X,M_j), \\
\Hom((M_i,\e_i),Y) &=& \Hom(M_i,Y) \cdot \e_i, \\
\Hom((M_i, \e_i),(M_j,\e_j)) &=& \e_j \cdot
\Hom(M_i,M_j) \cdot \e_i.
\end{eqnarray*}
The composition maps are obvious.
\item[(iii)] An idempotent $(M,\e)$ \emph{factors} through an object $N$ if there are morphisms $u \co N \to M$ and $v \co M \to N$ such that $vu = \1_N$ and $uv = \e$. Note that any $(M,\e) \in \XM$ will factor in $\AC(\XM)$. A category in which every idempotent factors is called \emph{Karoubian} or \emph{idempotent-closed}.
\item[(iv)] When $\mathbb{X}$ contains every idempotent of $\AC$, the resulting category $\AC(\mathbb{X})$ is called the \emph{Karoubi envelope} or \emph{idempotent completion} $\Kar(\AC)$.
\end{enumerate}
\end{defn}

Our definition of the Karoubi envelope is slightly larger than the usual construction of $\Kar(\AC)$ in the literature, which ignores the objects coming from $\AC$ originally. These objects
are redundant, as $X \cong (X,1)$ in our construction of $\Kar(\AC)$.

In this paper, it will be important (once we introduce a $p$-DG structure) to distinguish between a partial idempotent completion category $\AC(\XM)$ which is Karoubian, and the true
Karoubi envelope $\Kar(\AC)$. As additive categories, however, there is no essential difference.

\begin{lemma}\label{lemma-classical-Morita}
Let $\mathbb{X}=\{(M_i,\e_i)|i\in I\}$ be a collection of idempotent endomorphisms in an additive category $\AC$. Then the category $\AC\dmod$ and $\AC(\mathbb{X})\dmod$ are Morita equivalent. In particular, there is an equivalence of abelian categories between $\AC\dmod$ and $\Kar(\AC)\dmod$.
\end{lemma}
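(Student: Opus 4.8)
The plan is to prove the stronger statement that restriction of modules along the (full) inclusion $\iota\co\AC\hookrightarrow\AC(\XM)$ is an equivalence of abelian categories $\iota^*\co\AC(\XM)\dmod\lra\AC\dmod$. Since ``$\AC$ and $\AC(\XM)$ are Morita equivalent'' means exactly that their categories of modules are equivalent, this gives the first assertion; the ``in particular'' clause is then the special case obtained by letting $\XM$ be the class of \emph{all} idempotents of $\AC$, for which $\AC(\XM)=\Kar(\AC)$ by Definition \ref{def-proj-C-mod}(iv).

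The one structural input I would isolate first is that every object of $\AC(\XM)$ is a retract of an object of $\AC$. Concretely, for $(M_i,\e_i)\in\XM$ the element $\e_i$, read both as a morphism $(M_i,\e_i)\to M_i$ (it lies in $\Hom_\AC(M_i,M_i)\cdot\e_i$) and as a morphism $M_i\to(M_i,\e_i)$ (it lies in $\e_i\cdot\Hom_\AC(M_i,M_i)$), exhibits $(M_i,\e_i)$ as a direct summand of $M_i\in\AC$: one composite, living in $\End_{\AC(\XM)}((M_i,\e_i))=\e_i\End_\AC(M_i)\e_i$, equals $\e_i=\1_{(M_i,\e_i)}$, while the other, living in $\End_\AC(M_i)$, equals the idempotent $\e_i$. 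Applying any functor $\MC\co\AC(\XM)\to\Bbbk-\mathrm{vec}$, it follows that $\MC((M_i,\e_i))$ is canonically the image of the idempotent operator $\MC(\e_i)$ on $\MC(M_i)$, and that for a natural transformation $\eta$ between two such functors, $\eta_{(M_i,\e_i)}$ must be the restriction of $\eta_{M_i}$ to these summands. In particular $\iota^*$ is faithful.

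To finish I would build an explicit quasi-inverse $\iota_\bullet\co\AC\dmod\to\AC(\XM)\dmod$: send $\NC$ to the functor agreeing with $\NC$ on $\AC$ and sending $(M_i,\e_i)$ to $\mathrm{im}\big(\NC(\e_i)\co\NC(M_i)\to\NC(M_i)\big)$, with a morphism $\e_jf\e_i$ acting by the map it induces on images; functoriality holds because composition in $\AC(\XM)$ is the one inherited from $\AC$, and $\iota^*\iota_\bullet=\mathrm{id}$ on the nose. The retract description of the previous paragraph supplies a natural isomorphism $\iota_\bullet\iota^*\MC\cong\MC$, and the same image-restriction recipe extends any natural transformation between restrictions, giving fullness of $\iota^*$. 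The only step that requires genuine work — and it is mechanical bookkeeping rather than anything deep — is verifying functoriality of $\iota_\bullet\NC$ together with the naturality of these (co)restricted transformations over the three flavours of hom-space in Definition \ref{def-proj-C-mod}(ii): between two objects of $\AC$, between two idempotent objects, and mixed. In each case one composes with the structure morphisms $\e_i$ above to reduce the square to a naturality square over $\AC$, so I would record one representative case and leave the rest to the reader.
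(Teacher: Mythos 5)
Your proof is correct and is essentially the paper's argument unwound: the paper obtains the same pair of mutually inverse functors by tensoring with the bimodules $\AC\oplus(\oplus_{i\in I}\AC\e_i)$ and $\AC\oplus(\oplus_{i\in I}\e_i\AC)$, whose effect on modules is precisely your restriction $\iota^*$ and your image-of-the-idempotent extension $\iota_\bullet$. The explicit verification you supply (retract structure of $(M_i,\e_i)$ inside $M_i$, functoriality, full faithfulness) is exactly the bookkeeping the paper leaves implicit.
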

\begin{proof}
Tensoring with the bimodules $\AC\oplus(\oplus_{i\in I})\AC\e_i$ and $\AC\oplus(\oplus_{i\in I}\e_i\AC)$ gives rise to exact functors between these module categories which are inverse to each other.
\end{proof}

From this discussion, it is clear that the notion of the (split) Grothendieck group of an additive category
$$K_0(\AC):=K_0(\Kar(\AC))$$
is an analogue of the usual Grothendieck group of finitely generated projective modules over an algebra. This is because, under the Yoneda embedding, $\Kar(\AC)$ is equivalent to the category of finitely generated projective modules inside $\AC\dmod$ or $\AC^{\mathrm{op}}\dmod$.

\subsection{\texorpdfstring{$p$}{p}-DG categories}

We recall some definitions from \cite[Section 2.5]{EQ1}.

\begin{defn}\label{def-p-dg-cat}Let $\Bbbk$ be a field of characteristic $p>0$.
\begin{enumerate}
\item[(i)]A graded $\Bbbk$-linear (pre-)additive category $\AC$ is called a \emph{$p$-DG category} if, for any object $X,Y\in \AC$, the graded morphism space $\HOM_{\AC}(X,Y)$ is equipped with a $p$-nilpotent \emph{differential} of degree $2$,
\[
\dif:\HOM_{\AC}^\bullet (X,Y)\lra \HOM_{\AC}^{\bullet+2}(X,Y),
\]
such that, for any $X,Y,Z\in \AC$, $f\in \HOM_{\AC}(X,Y)$ and $g\in \HOM_{\AC}(Y,Z)$, the following Leibnitz rule is satisfied
\[
\dif(g\circ f)=\dif(g)\circ f + g\circ \dif(f)\in \HOM_{\AC}(X,Z).
\]
\item[(ii)]Let $\AC$ be a $p$-DG category. A \emph{left (resp.~right) $p$-DG module} over $\AC$ is a covariant functor from $\AC$ (resp. $\AC^{\mathrm{op}}$) to the graded category $\Bbbk_\dif\dmod$ of $p$-complexes, which preserves the $p$-differential structures on morphism spaces. They form an abelian category $\AC_\dif\dmod$.
\item[(iii)]Let $\MC$ be a left (resp.~right) $p$-DG module over a $p$-DG category $\AC$. The module $\MC$ is called \emph{representable} if it is isomorphic to a module of the form
    \[
    \HOM_{\AC}(M,-):\AC\lra \Bbbk_\dif\dmod,\quad (\textrm{resp.}~\HOM_{\AC}(-,M):\AC^{\mathrm{op}}\lra \Bbbk_\dif\dmod),
    \]
    where $M$ is some object in $\AC$ depending on $\MC$.
\end{enumerate}
\end{defn}

A $p$-DG category with a unique object is the same concept as a $p$-DG algebra. Note that for any $X \in \Obj(\AC)$, $\dif(\1_X)=0$.

Let $\MC$ be a representable left (resp.~right) $p$-DG module over a $p$-DG category $\AC$, represented by an object $M\in \AC$. If we view the $p$-DG algebra $\END_{\AC}(M)$ as a $p$-DG category with a unique object, the natural functor
\[
\jmath_M:\END_{\AC}(M)\lra \AC,
\]
which sends the unique object to $M$, is a map of $p$-DG categories. It is clear that $\MC$ can then be identified with the induced module
\begin{equation}\label{eqn-parabolic-induced-mod}
\MC\cong \AC\otimes_{\jmath_M,\END_{\AC}(M)}\END_{\AC}(M) \quad \left(\textrm{resp.}~\MC\cong \END_{\AC}(M)\otimes_{\END_{\AC}(M),\jmath_M}\AC\right).
\end{equation}
This identification extends to an isomorphism of $p$-DG bimodules over $(\AC, \END_{\AC}(M))$ (resp. $(\END_{\AC}(M),\AC)$).  Note that $\END_{\AC}(M) \cong \END_{\AC_\dif\dmod}(\MC)$, and the latter is thus equipped with the structure of a $p$-DG algebra. We refer to this $p$-DG algebra as $A_M$.

In this fashion, our discussion of idempotents in $p$-DG algebras from Section \ref{subsec-differential-on-image-of-idempotent} can be transferred to modules over $p$-DG categories. Let $\e \in A_M$ be a subquotient idempotent. The induced module
$$
\AC\e\cong \AC\otimes_{\jmath_M,A_M}A_M\e \quad \left(\textrm{resp.}~\e \AC \cong \e A_M\otimes_{A_M,\jmath_M}\AC\right)
$$
is naturally a left (resp.~right) $p$-DG module over $\AC$. Furthermore, the endomorphism algebra of this module is identified with
\[
\END_{\AC}(\AC\e)\cong \e\cdot A_M\cdot\e \quad \left(\textrm{resp.}~\END_{\AC}(\e\AC)\cong \e\cdot A_M\cdot \e\right),
\]
which inherits the $p$-differential $\odif$ described by Lemma \ref{lemma-dif-endo-algebra-idempotent}.

Abusing notation, we can also write $\AC$ for the direct sum of all representable left modules, i.e. for $\oplus_{M \in \Obj(\AC)} \AC \1_M$, or for the direct sum of all representable
right modules.

More generally, we have the following construction for families of idempotents.

\begin{defn}\label{def-p-dg-partial-completion}Let $\AC$ be a $p$-DG category, and $\mathbb{X}$ a set of subquotient idempotents.
The \emph{partial completion $p$-DG category} $(\AC(\mathbb{X}),\overline{\dif})$ is the (pre-)additive category $\AC(\mathbb{X})$ (Definition \ref{def-proj-C-mod}), equipped with the differential $\overline{\dif}$ defined as follows. Given any morphism
\[
f\in \HOM_{\AC(\mathbb{X})}((M_i,\e_i),(M_j,\e_j)):= \e_j\cdot \HOM_{\AC}(M_i,M_j) \cdot \e_i,
\]
which is written as $f=\e_j\cdot g \cdot \e_i$ for some $g\in \HOM_{\AC}(M_i,M_j)$ and $\e_i,~\e_j$ either idempotents in $\mathbb{X}$ or identity morphisms in $\AC$, then $\overline{\dif}$ acts on $f$ by
\[
\overline{\dif}(f):=\e_j\cdot \dif(\e_j\cdot g\cdot \e_i) \e_i.
\]
\end{defn}

This definition is justified by Lemma \ref{lemma-dif-hom-idempotent}. Note that one may not be able to define the Karoubi envelope of a $p$-DG category, because not every idempotent is a
subquotient idempotent. However, there may be enough subquotient idempotents such that a partial idempotent completion $\AC(\XM)$ is Karoubian, and this (when it exists) will be our
approximation to the Karoubi envelope. For an example when there are not enough subquotient idempotents for any partial idempotent completion to be Karoubian, see Remark
\ref{source-of-counterexamples}.

The following result is an analogue of Lemma \ref{lemma-classical-Morita}.

\begin{lemma}\label{lemma-Morita-p-DG-abelian}
Let $\AC$ be a $p$-DG category, and $\mathbb{X}$ a set of subquotient idempotents in $\AC$. Then the abelian categories of left (resp. right) $p$-DG modules over $\AC$ and $\AC(\mathbb{X})$ are Morita equivalent.
\end{lemma}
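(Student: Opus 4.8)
The plan is to follow the proof of Lemma~\ref{lemma-classical-Morita} verbatim, upgrading the two Morita bimodules used there to $p$-DG bimodules. Work with left modules throughout; the right-module statement is entirely analogous. Let $P$ be the left $p$-DG $\AC$-module $\AC\oplus\bigoplus_{i\in I}\AC\e_i$ and $Q$ the right $p$-DG $\AC$-module $\AC\oplus\bigoplus_{i\in I}\e_i\AC$. Since each $\e_i$ is a subquotient idempotent of $A_{M_i}=\END_\AC(M_i)$, the discussion preceding Definition~\ref{def-p-dg-partial-completion} shows that $\AC\e_i$ (resp.\ $\e_i\AC$) really is a left (resp.\ right) $p$-DG module over $\AC$, whose $p$-DG endomorphism algebra $\e_iA_{M_i}\e_i\cong\END_{\AC(\mathbb{X})}((M_i,\e_i))$ carries the differential $\odif$ of Lemma~\ref{lemma-dif-endo-algebra-idempotent}. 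By the construction in \S\ref{subsec-bimodules} promoting a $p$-DG module to a bimodule over its endomorphism algebra, $P$ then becomes a $p$-DG bimodule over $(\AC,\AC(\mathbb{X}))$ and $Q$ a $p$-DG bimodule over $(\AC(\mathbb{X}),\AC)$; the compatibility condition \eqref{eqn-compatibility-cond} on these bimodule differentials reduces on the idempotent pieces to the sandwich formula of Lemma~\ref{lemma-dif-hom-idempotent}.

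Next I would observe that tensoring with a $p$-DG bimodule sends $p$-DG modules to $p$-DG modules (the differential on a tensor product is defined by the Leibniz rule and is $p$-nilpotent because of \eqref{eqn-compatibility-cond}) and is automatically a $p$-DG functor on the abelian module categories. Thus we obtain functors
\[
Q\otimes_\AC(-)\co\AC_\dif\dmod\lra\AC(\mathbb{X})_\dif\dmod,\qquad
P\otimes_{\AC(\mathbb{X})}(-)\co\AC(\mathbb{X})_\dif\dmod\lra\AC_\dif\dmod .
\]
These are exact: exactness is a statement about kernels and cokernels in the underlying abelian categories, where the differential is forgotten, so it is precisely the exactness already established in Lemma~\ref{lemma-classical-Morita}.

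It remains to produce the unit and counit. On the level of ordinary (non-$p$-DG) bimodules, Lemma~\ref{lemma-classical-Morita} already furnishes isomorphisms $P\otimes_{\AC(\mathbb{X})}Q\cong\AC$ of $(\AC,\AC)$-bimodules and $Q\otimes_\AC P\cong\AC(\mathbb{X})$ of $(\AC(\mathbb{X}),\AC(\mathbb{X}))$-bimodules, both given by composition of morphisms. The plan is to check that these maps are homogeneous of degree zero and $\dif$-closed, hence are isomorphisms of $p$-DG bimodules; then the two composites are naturally isomorphic, as $p$-DG functors, to the respective identity functors, and the lemma follows.

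The only genuine content — and the main obstacle — is this last verification. One must chase the induced differentials through the composition maps on the pieces $\AC\e_i\otimes_{\e_iA_{M_i}\e_i}\e_i\AC\to\AC$ and the corresponding pieces of $Q\otimes_\AC P$. Using Lemma~\ref{lemma-dif-hom-idempotent} to rewrite every induced differential as a sandwich $\e(\,\cdot\,)\e$ of the ambient $\dif$, together with the identity $\e_i\dif(\e_i)\e_i=0$ from \eqref{edee} and the Leibniz rule, this comes down to a bookkeeping computation. The one subtlety to keep in mind is that the individual $\e_i$ need not be summand idempotents, so $\AC\e_i$ is honestly only a $p$-DG subquotient of $\AC$ rather than a $p$-DG summand; but this causes no difficulty here, since the statement concerns only the abelian module categories and never invokes cofibrancy or any derived notion.
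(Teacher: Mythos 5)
Your proposal is correct and follows essentially the same route as the paper, which simply exhibits the $p$-DG bimodule $\AC\oplus(\oplus_{i}\AC\e_i)$ together with its dual $\HOM_\AC(\AC\oplus(\oplus_i\AC\e_i),\AC)\cong\AC\oplus(\oplus_i\e_i\AC)$ and asserts that the associated tensor functors are mutually inverse, leaving the verification to the reader. You supply exactly the omitted details (the sandwiched differentials via Lemma~\ref{lemma-dif-endo-algebra-idempotent}, $\dif$-closedness of the composition maps), and your checks go through.
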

\begin{proof}The $p$-DG bimodules $\AC\oplus (\oplus_{\e_i\in \mathbb{X}} \AC\cdot \e_i)$ and its dual module $\HOM_{\AC}(\AC\oplus (\oplus_{\e_i\in \mathbb{X}} \AC\cdot \e_i),\AC)$ give rise to tensor product functors, which are inverse of each other, between the left $p$-DG module categories. The proof is left to the reader.  \end{proof}
	
%

The above result is no surprise for the abelian category of $p$-DG modules. However, it is not the abelian category but the derived category of $p$-DG modules which controls the $p$-DG
Grothendieck group. We shall see soon that this Morita equivalence will descend to an equivalence between the corresponding homotopy categories of $p$-DG modules, but it may fail to descend
to a derived equivalence.

\subsection{Hopfological algebra for \texorpdfstring{$p$}{p}-DG categories}

This material is a straightforward generalization of the corresponding notions over a $p$-DG algebra. We refer the reader to \cite{QYHopf} for proofs and further details.

Given a $p$-DG category $\AC$, one can form its homotopy category $\HC(\AC)$ by taking the quotient of $\AC_\dif\dmod$ by the ideal of null-homotopic morphisms. Recall that a morphism $f$ between two $p$-DG modules $\NC_1$ and $\NC_2$ are called \emph{null-homotopic}, if there exists a morphism $h:\NC_1\to \NC_2$ such that
\[
f=\dif^{p-1}(h)=\sum_{i=0}^{p-1}\dif^{p-1-i}\circ h \circ \dif^i.
\]
The homotopy category $\HC(\AC)$ is triangulated.
Given a $p$-complex of vector spaces and an $\AC$-module, one can take their tensor product to obtain another $\AC$-module. This equips $\HC(\AC)$ with an action of the (monoidal) homotopy
category of $p$-complexes.

Let $\AC$, $\BC$ be two $p$-DG categories, and $_{\BC}\MC_{\AC}$ be a $p$-DG bimodule over them. It is clear that if $f=\dif^{p-1}(h)$ is a null-homotopic morphism between $\NC_1$ and $\NC_2$, then
\[
\Id_{\MC}\otimes f =\dif^{p-1}(\Id_{\MC}\o h): \MC\o_{\AC} \NC_1\lra \MC\o_{\AC} \NC_2
\]
is also null-homotopic. Therefore, the following result is a direct consequence of its abelian analogue Lemma \ref{lemma-Morita-p-DG-abelian}, by taking $\MC$ to be the $p$-DG bimodule $\AC\oplus(\oplus_{\e_i\in\mathbb{X}}\AC\e_i)$.

\begin{lemma}\label{lemma-Morita-homotopy-category}
Let $\AC$ be a $p$-DG category, and $\mathbb{X}$ a set of subquotient idempotents in $\AC$. Then the homotopy categories of left (resp. right) $p$-DG modules $\HC(\AC)$ and $\HC(\AC(\mathbb{X}))$ are Morita equivalent triangulated categories.
\hfill $\square$
\end{lemma}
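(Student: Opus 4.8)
The plan is to deduce the statement directly from its abelian counterpart, Lemma~\ref{lemma-Morita-p-DG-abelian}, using the homotopy-invariance of tensoring with a $p$-DG bimodule. Recall that, in the proof of that lemma, one uses the $p$-DG bimodule $\MC:=\AC\oplus(\oplus_{\e_i\in\mathbb{X}}\AC\e_i)$ and its dual $\MC^\vee:=\HOM_\AC(\MC,\AC)$ to build tensor functors
\[
F\co\AC_\dif\dmod\lra\AC(\mathbb{X})_\dif\dmod,\qquad G\co\AC(\mathbb{X})_\dif\dmod\lra\AC_\dif\dmod,
\]
which are mutually inverse equivalences of abelian categories of $p$-DG modules. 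Since each $\AC\e_i$ is a summand of a representable module, these bimodules are flat on the relevant side, so $F$ and $G$ are in fact exact and $\Bbbk$-linear. The goal then reduces to showing: (i)~$F$ and $G$ descend to functors $\overline{F}\co\HC(\AC)\to\HC(\AC(\mathbb{X}))$ and $\overline{G}\co\HC(\AC(\mathbb{X}))\to\HC(\AC)$; (ii)~these stay mutually quasi-inverse; and (iii)~they are triangulated and compatible with the $\HC(\Bbbk_\dif\dmod)$-action.

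For (i) I would reuse the computation displayed immediately before the statement: if $f=\dif^{p-1}(h)$ is null-homotopic, then $\Id_\MC\o f=\dif^{p-1}(\Id_\MC\o h)$, and likewise for $\MC^\vee$, so $F$ and $G$ send null-homotopic morphisms to null-homotopic morphisms and therefore descend to the quotient categories. For (ii), the natural isomorphisms $GF\cong\Id$ and $FG\cong\Id$ furnished by Lemma~\ref{lemma-Morita-p-DG-abelian} are isomorphisms of genuine $p$-DG module maps; their images in the homotopy categories are still isomorphisms, so $\overline{F}$ and $\overline{G}$ remain mutually quasi-inverse equivalences.

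For (iii), tensoring over $\AC$ or over $\AC(\mathbb{X})$ commutes with tensoring over $\Bbbk$ with a $p$-complex, so $F$ and $G$ intertwine the $\Bbbk_\dif\dmod$-actions on the module categories; passing to homotopy categories, $\overline{F}$ and $\overline{G}$ are compatible with the $\HC(\Bbbk_\dif\dmod)$-action and in particular commute with the shift functor. Being exact and $\Bbbk$-linear, $F$ and $G$ moreover carry admissible short exact sequences of $p$-DG modules to admissible short exact sequences, hence the distinguished triangles generated by the former to those generated by the latter; so $\overline{F}$ and $\overline{G}$ are triangulated, which is precisely the Morita equivalence of triangulated categories asserted. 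I expect the only point needing real care to be the exactness of $F$ and $G$, i.e.\ the flatness of $\MC$ and $\MC^\vee$ on the correct side so that admissible short exact sequences are preserved; once Lemma~\ref{lemma-Morita-p-DG-abelian} is granted this is already implicit, and the remainder of the argument is purely formal.
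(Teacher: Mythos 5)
Your proposal is correct and follows the paper's own argument: the paper also deduces the lemma directly from Lemma~\ref{lemma-Morita-p-DG-abelian} by observing that tensoring with the bimodule $\AC\oplus(\oplus_{\e_i\in\mathbb{X}}\AC\e_i)$ (and its dual) sends a null-homotopic morphism $f=\dif^{p-1}(h)$ to the null-homotopic morphism $\dif^{p-1}(\Id_\MC\o h)$, so the mutually inverse abelian equivalences descend to the homotopy categories. Your additional remarks on exactness and compatibility with the $\HC(\Bbbk_\dif\dmod)$-action are the standard details the paper leaves implicit.
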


The derived category $\DC(\AC)$ of a $p$-DG algebra is obtained from $\HC(\AC)$ by inverting quasi-isomorphisms. However, this definition does not make it easy to describe morphism spaces
in the derived category. It is better to use a description analogous to the description in ordinary homological algebra which uses complexes of projective modules.

 Let $\AC$ be a $p$-DG category. A $p$-DG module $\MC$ is \emph{cofibrant} if, given any surjective quasi-isomorphism $f:\NC_1\lra \NC_2$ of $p$-DG modules over $\AC$, the induced map
\[
\HOM_{\AC}(\MC,\NC_1)\lra \HOM_{\AC}(\MC,\NC_2)
\]
is a homotopy equivalence. A natural class of cofibrant $p$-DG modules are given by direct summands of $\AC$, i.e., representable modules. A $p$-DG module $\MC$ is said to satisfy \emph{property-P} if there is an exhaustive, possibly infinite, $p$-DG $\AC$-module filtration on $\MC$, whose subquotients are isomorphic to finite direct sums of representable modules. We say that $\MC$ is a \emph{finite-cell module}, if the module satisfies property-P with a finite-step filtration. An alternative construction of the derived category of $p$-DG modules over $\AC$ is realized by restricting, in the homotopy category $\HC(\AC)$, to the full subcategory formed by cofibrant $p$-DG modules.

A $p$-DG module $\MC$ over $\AC$, considered as an object in the derived category $\DC(\AC)$, is called \emph{compact}, if the functor $\Hom_{\DC(\AC)}(\MC,-)$ commutes with taking arbitrary direct sums. It is easily seen that finite-cell modules are compact. Furthermore, any direct summand, in the derived category, of a finite cell module is also compact. In fact, compact modules are characterized as objects that are isomorphic to derived direct summands of finite cell modules (\cite[Corollary 7.15]{QYHopf}). The strictly full subcategory $\DC^c(\AC)$ consisting of compact objects in $\DC(\AC)$ is called the \emph{compact derived category}, whose Grothendieck group will be denoted by $K_0(\AC)$. Computing this invariant of certain $p$-DG categories will be our primary concern in the next chapter.

%
\subsection{DG filtrations and Fc filtrations}
%

As per the discussion in Section \ref{subsec-basic-problem}, we now define certain filtrations on $p$-DG modules which help to compute their images in the Grothendieck group. These notions were originally defined in \cite[Section 5.1]{EQ1}.

\begin{defn} \label{defnDGfiltration}
Let $(\AC,\dif)$ be a $p$-DG category, and $M\in \Obj(\AC)$. A \emph{left $p$-DG filtration} on $M$ is a finite set of endomorphisms $\{\e_i\}_{i \in I} \in
\END^0_{\AC}(M)$, satisfying the following conditions.
\begin{itemize}
\item These maps give a decomposition of the $\AC$-module represented by $M$. In formulae,
\begin{subequations}
\begin{eqnarray}
\e_i\e_i &=& \e_i, \label{theyareidempts} \\
\e_i\e_j &=& 0~ \textrm{ for } i \ne j, \label{theyareorthogonalidempotents} \\
\1_M &=& \sum_{i\in I} \e_i. \label{theydecomposedontthey}
\end{eqnarray}
\end{subequations}
\item There exists some total order ``$<$'' on $I$ for which the above decomposition satisfies the following \emph{left triangularity condition}:
\begin{equation}
\label{conditionforDGfilt}
\dif(\e_i)\cdot\e_j = 0 ~\textrm{ for } j > i.
\end{equation}
\end{itemize}
\end{defn}

Notice that, upon differentiating \eqref{theyareorthogonalidempotents}, we have
$
\e_j\cdot\dif(\e_i)+\dif(\e_j)\cdot \e_i=0,
$
and the left triangularity condition is equivalent to the following \emph{right triangularity condition}
\begin{equation}
\label{rightconditionforDGfilt}
\e_i\cdot\dif(\e_j)= 0 ~\textrm{ for } i < j.
\end{equation}
Therefore, a left $p$-DG filtration on $M$ gives rise to a right $p$-DG filtration on $M$, consisting of the same data except with the opposite order $>$.

Let $A_M:=\END_{\AC}(M)$. By equation \eqref{theydecomposedontthey}, if $\e_1$ is the minimal idempotent in the total order, then
 \[
 \dif(\e_1) = \dif (\e_1)\cdot\1_M = \dif(\e_1)\cdot \e_1.
 \]
Hence $\dif (\e_1)\in A_M\cdot \e_1$. In similar fashion, $\sum_{j \le i} \e_j$ is also a left submodule idempotent for each $i \in I$, and therefore each $\e_i$ is a subquotient idempotent. It follows that a $p$-DG filtration as in Definition \ref{defnDGfiltration} on an object $M$ gives rise to an increasing filtration on the representable left $p$-DG module $\HOM_{\AC}(M, -)$ whose subquotients are isomorphic, as $p$-DG modules over $\AC$, to ones of the form $(\HOM_{\AC}((M, \e_i), -),\overline{\dif})$, for which the differentials $\overline{\dif}$ are defined by
\[
\overline{\dif}(   f \cdot \e_i ) := \dif(  f \cdot\e_i) \cdot\e_i.
\]
Likewise, using the opposite order, one also has a decreasing filtration on the representable right $p$-DG module $\HOM_{\AC}(-,M)$, whose subquotients are isomorphic to $\HOM_{\AC}(-, (M,\e_i))$, and equipped with the subquotient differential
$$\overline{\dif}( \e_i\cdot f ) := \e_i \cdot \dif(\e_i \cdot f ) .$$

However, these subquotient $p$-DG modules may not be well-behaved. For example, they need not be compact in $\DC(\AC)$. To ensure good behavior, one can impose the condition that the subquotients are isomorphic to representable modules.

\begin{defn} \label{defnFcfiltration}
Let $(\AC,\dif)$ be a $p$-DG category with an object $M$. A \emph{Fc-filtration} on $M$ is a finite set $\{N_i\}_{i \in I}$ of objects in $\AC$, equipped with maps $u_i \co N_i \lra M$ and $v_i \co M \lra N_i$, satisfying the following conditions.
\begin{itemize}
\item These maps give an additive decomposition of $M$ with summands $N_i$. In formulae,
\begin{subequations}
\begin{eqnarray} \label{factoringthroughN}
v_i u_i &=& \1_{N_i}, \label{theygive1N} \\
v_i u_j &=& 0 \textrm{ for } i \ne j, \label{theyareorthogonal} \\
\1_M &=& \sum_i u_i v_i. \label{theydecompose}
\end{eqnarray}
\end{subequations}
\item There exists some total order ``$>$'' on $I$ for which the above decomposition is a $p$-DG filtration. In formulae,
\begin{equation}
\label{conditionforfantastic}
\dif(v_i) u_j = 0 \textrm{ for } j \ge i.
\end{equation}
\end{itemize}
\end{defn}

Note the key difference between \eqref{conditionforDGfilt} and \eqref{conditionforfantastic}: the latter requires $j \ge i$, while the former only requires $j > i$. This is to ensure that the subquotient modules of this filtration on $M$ are isomorphic, as $p$-DG modules, to the modules represented by $N_i$'s with their natural differentials.

\begin{prop}
Given a Fc-filtration as above, the collection of idempotents $\{\e_i=u_iv_i|i\in I\}$ and the total ordering on $I$ defines a $p$-DG filtration on $M$, whose subquotients are $p$-DG isomorphic to the modules represented by $N_i$'s. \hfill $\square$
\end{prop}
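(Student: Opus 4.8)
The plan is to verify that $\{\e_i:=u_iv_i\}_{i\in I}$, together with the total order on $I$ witnessing the Fc-filtration, satisfies Definition~\ref{defnDGfiltration}, and then to identify the subquotients of the associated filtration. The idempotent relations are immediate: $v_iu_i=\1_{N_i}$ gives $\e_i^2=u_i(v_iu_i)v_i=\e_i$; $v_iu_j=0$ for $i\neq j$ gives $\e_i\e_j=u_i(v_iu_j)v_j=0$; and $\sum_i\e_i=\sum_iu_iv_i=\1_M$ is \eqref{theydecompose}. For the left triangularity condition \eqref{conditionforDGfilt}, take $j>i$ and expand by the Leibniz rule,
\[
\dif(\e_i)\e_j=\dif(u_i)\,(v_iu_j)\,v_j+u_i\,(\dif(v_i)u_j)\,v_j ;
\]
the first term is zero because $v_iu_j=0$ (as $j\neq i$), and the second because $\dif(v_i)u_j=0$ for $j\geq i$ by \eqref{conditionforfantastic}. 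Hence $\{\e_i\}$ is a $p$-DG filtration on $M$, and by the discussion following Definition~\ref{defnDGfiltration} the induced increasing filtration on $\HOM_\AC(M,-)$ has subquotients $(\HOM_\AC((M,\e_i),-),\odif)$, with $\odif(f\e_i)=\dif(f\e_i)\e_i$.

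It then remains to identify each such subquotient with the module represented by $N_i$, \emph{with its natural differential}. Since $v_iu_i=\1_{N_i}$ and $u_iv_i=\e_i$, the maps $u_i,v_i$ exhibit $(M,\e_i)\cong N_i$ in $\AC(\mathbb{X})$, and I would realize this as the $\AC$-module isomorphism $\Phi\colon\HOM_\AC(N_i,-)\to\HOM_\AC((M,\e_i),-)$, $\psi\mapsto\psi v_i$, with inverse $\phi\e_i\mapsto\phi u_i$ (well-defined and mutually inverse by $v_iu_i=\1_{N_i}$, $v_i\e_i=v_i$, $\e_iu_i=u_i$). To check $\Phi$ respects differentials, note first $\psi v_i=(\psi v_i)\e_i$, so that for $\psi\in\HOM_\AC(N_i,Y)$
\[
\odif(\Phi(\psi))=\dif(\psi v_i)\e_i=\dif(\psi)\,v_i\e_i+\psi\,\dif(v_i)\,\e_i .
\]
Now $v_i\e_i=v_iu_iv_i=v_i$, so the first summand is $\dif(\psi)v_i=\Phi(\dif\psi)$; and $\dif(v_i)\e_i=\dif(v_i)u_i\,v_i=0$ by the $j=i$ instance of \eqref{conditionforfantastic}, so the second summand vanishes. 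Thus $\Phi(\dif\psi)=\odif(\Phi\psi)$, i.e.\ $\Phi$ is an isomorphism of $p$-DG modules; the argument for the decreasing filtration on the right representable module $\HOM_\AC(-,M)$ is entirely analogous, using instead that $v_i\dif(u_i)=-\dif(v_i)u_i=0$.

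The only genuinely load-bearing use of the Fc hypothesis beyond a mere $p$-DG filtration is the vanishing $\dif(v_i)u_i=0$, the $j=i$ case of \eqref{conditionforfantastic} that Definition~\ref{defnDGfiltration} does not demand; this is precisely what makes the subquotient be $N_i$ on the nose rather than only $\Im\e_i$ with some potentially different induced differential. I expect this to be the crux of the argument, while the remaining computations are routine Leibniz-rule bookkeeping in $\AC$ (one may additionally invoke \eqref{edee} to see that $\odif$ is unchanged under pre- or post-composition by $\e_i$, though this is not needed for the steps above).
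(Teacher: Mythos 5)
Your proof is correct, and it fills in exactly the argument the paper leaves implicit (the proposition is stated with no written proof): the Leibniz-rule verification of \eqref{conditionforDGfilt} from \eqref{theyareorthogonal} and \eqref{conditionforfantastic}, and the identification of the subquotients via $\psi\mapsto\psi v_i$, with the $j=i$ case $\dif(v_i)u_i=0$ doing the real work. You also correctly isolate that last point as the precise content distinguishing a Fc-filtration from a mere $p$-DG filtration.
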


The result tells us that, in the derived category $\DC(\AC)$, the module represented by $M$ is naturally a convolution product of those represented by $N_i$'s. Furthermore, this convolution relation takes place inside the compact derived category $\DC^c(\AC)$, and thereby results in a relation in the Grothendieck group $K_0(\AC)=K_0(\DC^c(\AC))$:
\begin{equation}\label{eqn-fc-filtration-K0-relation}
[M] = \sum_{i \in I} [N_i].
\end{equation}

A $p$-DG filtration does lead to a (boring) Fc-filtration in the partial Karoubi envelope; adjoining the subquotients as new objects essentially forces them to be cofibrant.

\begin{prop} \label{prop-DGtoFc} Let $(\AC,\dif)$ be a $p$-DG category, $M\in \Obj(\AC)$, and suppose that $M$ has a $p$-DG filtration with idempotents $\XM = \{\e_i\}$. Then, in the
partial idempotent completion $\AC(\XM)$, there is a Fc-filtration of $M$ whose subquotients are $(M,\e_i)$, in the same order.
\end{prop}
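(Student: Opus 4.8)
The plan is to take the same idempotents $\e_i$ and promote them to the objects $(M,\e_i)$ that were adjoined to $\AC$ in forming $\AC(\XM)$, together with the obvious structure maps coming from the definition of a partial idempotent completion. Concretely, recall from Definition \ref{def-proj-C-mod}(iii) that inside $\AC(\XM)$ each new object $(M,\e_i)$ comes equipped with morphisms $u_i \co (M,\e_i) \to M$ and $v_i \co M \to (M,\e_i)$ realizing $\e_i$ as a factoring idempotent: in the $\Hom$-spaces of $\AC(\XM)$ we may take $u_i = \e_i \in \Hom(M,M)\cdot\e_i = \Hom((M,\e_i),M)$ and $v_i = \e_i \in \e_i\cdot\Hom(M,M) = \Hom(M,(M,\e_i))$. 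First I would verify the additive decomposition axioms \eqref{theygive1N}--\eqref{theydecompose}: the composite $v_i u_i$ is computed in $\AC(\XM)$ as $\e_i \cdot \e_i \cdot \e_i = \e_i$, which is by definition $\1_{(M,\e_i)}$; the composite $v_i u_j = \e_i\e_j = 0$ for $i\ne j$ by \eqref{theyareorthogonalidempotents}; and $\sum_i u_i v_i = \sum_i \e_i = \1_M$ by \eqref{theydecomposedontthey}, all computed inside $\AC$ and hence inside $\AC(\XM)$.

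Next I would check the triangularity condition \eqref{conditionforfantastic}, which is the one genuinely new point. Using the differential $\overline\dif$ on $\AC(\XM)$ from Definition \ref{def-p-dg-partial-completion}, we have $\overline\dif(v_i) = \e_i \cdot \dif(\e_i\cdot\1_M\cdot\1_M)\cdot\1_M = \e_i\,\dif(\e_i)$, viewed as an element of $\Hom((M,1),(M,\e_i)) = \e_i\cdot\Hom(M,M)$. Composing with $u_j = \e_j$ in $\AC(\XM)$ gives the element $\e_i\,\dif(\e_i)\,\e_j \in \Hom((M,\e_j),(M,\e_i))$. Now I invoke the right triangularity condition \eqref{rightconditionforDGfilt}, which was derived in the text from the left triangularity condition in the definition of a $p$-DG filtration: $\e_i\cdot\dif(\e_j) = 0$ for $i<j$. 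Differentiating $\e_i\e_j=0$ gives $\e_i\dif(\e_j) + \dif(\e_i)\e_j = 0$, but more directly, combining \eqref{eq:de} for $\e_i$ with \eqref{conditionforDGfilt} yields $\e_i\dif(\e_i)\e_j = (\e_i\dif(\e_i) + \dif(\e_i)\e_i)\e_j - \dif(\e_i)\e_i\e_j = \dif(\e_i)\e_j - \dif(\e_i)\e_j$ when $i<j$ using $\e_i\e_j=0$, wait — more cleanly: since $\sum_{k\le i}\e_k$ is a left submodule idempotent, $\dif(\e_i)\cdot\e_j=0$ for $j>i$ is \eqref{conditionforDGfilt}, and $\e_i\dif(\e_i)\e_j = \e_i(\dif(\e_i)\e_j)$, but that handles $j>i$ and leaves the case $j=i$. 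For $j=i$ one needs $\e_i\dif(\e_i)\e_i = 0$, which is exactly \eqref{edee}. Thus $\overline\dif(v_i)u_j = \e_i\dif(\e_i)\e_j = 0$ for all $j\ge i$, which is precisely \eqref{conditionforfantastic} with respect to the given total order.

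The only subtlety — and the step I would be most careful about — is bookkeeping the identifications of morphism spaces in $\AC(\XM)$ and checking that $\overline\dif$ as defined in Definition \ref{def-p-dg-partial-completion} really does restrict/corestrict to the $\e_i$-truncated differential on the relevant $\Hom$-spaces, so that the computation $\overline\dif(v_i) = \e_i\dif(\e_i)$ is legitimate; this is what Lemma \ref{lemma-dif-hom-idempotent} guarantees, applied with $M$ itself (the idempotent $\1_M$) as source and $(M,\e_i)$ as target. Once these identifications are in place, every verification above is a one-line computation with the relations \eqref{eq:de}, \eqref{edee}, \eqref{conditionforDGfilt} and \eqref{rightconditionforDGfilt}. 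Finally, the statement that the subquotients of the resulting Fc-filtration are the $(M,\e_i)$ in the same order is immediate from the matching of idempotents $\e_i = u_i v_i$ and is recorded in the Proposition just before the statement. This completes the plan; there is essentially no hard analytic content, only the need to make sure the $j=i$ diagonal case of \eqref{conditionforfantastic} is covered by \eqref{edee} rather than by \eqref{conditionforDGfilt}, which is the whole reason the Fc-filtration condition is strictly stronger than the DG-filtration condition.
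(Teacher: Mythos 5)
Your proof is correct, and since the paper leaves this proposition as an exercise, your argument is precisely the intended one: take $u_i = v_i = \e_i$ in the adjoined hom-spaces, verify the decomposition axioms by idempotent arithmetic, and observe that $\overline{\dif}(v_i)u_j = \e_i\dif(\e_i)\e_j$ vanishes for $j>i$ by \eqref{conditionforDGfilt} and for $j=i$ by \eqref{edee}. The one point worth tightening in the write-up is the garbled middle passage ("wait — more cleanly"); the clean statement is simply that the two cases $j>i$ and $j=i$ are handled by those two identities respectively, which you do arrive at.
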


\begin{proof}
Exercise.
\end{proof}

However, Fc-filtrations within $\AC$ itself are more ``interesting," as they result in relations in the Grothendieck group between existing symbols of compact objects. For example, suppose that one has an
idempotent decomposition $\XM = \{\e_i = u_i v_i\}$ of $M$, factoring through objects $N_i$ as in \eqref{factoringthroughN}, and satisfying \eqref{conditionforDGfilt}. One desires
\eqref{eqn-fc-filtration-K0-relation} to hold, but without the additional condition \eqref{conditionforfantastic} it need not. In $\AC(\XM)$ one has a Fc-filtration of $M$ by the images of
$\e_i$, but these images need not be isomorphic to $N_i$ as $p$-DG modules; in fact, this is precisely condition \eqref{conditionforfantastic}.

\begin{example}\label{eg-fc-condtion-for-Es}
As an instance of Fc-filtrations, we revisit one example inside the $p$-DG category $\sym$ discussed in the previous Chapter. Here, we take $M$ to be the $p$-DG derived category $\DC(\sym_{a,b})$, and each $N_{\lambda}$ to be a copy of $\DC(\sym_{a+b})$, where the indices $\lambda \in P(a,b)$. Define functors
\[
u_{\lambda}:=
\begin{DGCpicture}
\DGCPLstrand[thick](1,0)(1,1)[$^{a+b}$]
\DGCPLstrand[thick](1,1)(0,2)[`$_a$]
\DGCPLstrand[thick](1,1)(2,2)[`$_b$]
\DGCcoupon(0.2,1.25)(0.8,1.75){$\pi_{\lambda}$}
\end{DGCpicture}~\quad \quad
v_{\lambda}:=
(-1)^{|\hat{\lambda}|}
\begin{DGCpicture}
\DGCPLstrand[thick](0,0)(1,1)[$^a$]
\DGCPLstrand[thick](2,0)(1,1)[$^b$]
\DGCPLstrand[thick](1,1)(1,2)[`$_{a+b}$]
\DGCcoupon(1.2,0.25)(1.8,0.75){$\pi_{\hat{\lambda}}$}
\end{DGCpicture}.
\]
Now, conditions \eqref{factoringthroughN} through \eqref{theydecompose} in Definition \ref{defnFcfiltration} are satisfied thanks to the diagrammatic relations \eqref{eq-duality} and \eqref{eq-identitydecomp}. Placing a partial order on $\l \in P(a,b)$ by the number of boxes, the smallest one being the empty diagram in $P(a,b)$, it is easy to see that this factored idempotent decomposition
upgrades to a Fc-filtration, i.e., equation \eqref{conditionforfantastic} is satisfied. The corresponding equalities of symbols in $K_0$ gives us the relation in $\dot{U}^+$
\[
E^{(a)}
E^{(b)} = {a+b \brack a}E^{(a+b)}.
\]
\end{example}

\begin{rmk} In future work, we will consider filtrations for which only some of the idempotents factor. It is straightforward to mix the definitions of a DG filtration and a Fc-filtration to provide a framework which will guarantee the desired relations on the Grothendieck group. For example, one will have a poset $I$, and for each $i \in I$, either a pair of morphisms $(u_i,v_i)$ factoring through an object $N_i$, or an idempotent endomorphism $e_i$. Conditions \eqref{conditionforDGfilt} and \eqref{conditionforfantastic} will hold when they make sense, as will
\begin{subequations}
\label{conditionformixed}
\begin{eqnarray}
\dif(e_i) u_j = 0 &\textrm{ for }& j > i. \\
\dif(v_i) e_j = 0 &\textrm{ for }& j > i.
\end{eqnarray}
\end{subequations}
\end{rmk}

%
\subsection{A key example}\label{sec-key-example}
%

The purpose of this chapter is to provide an illustrative example of a $p$-DG algebra which is Morita equivalent, but not $p$-DG Morita equivalent, to a Karoubian partial idempotent
completion. First we state a result which gives a necessary criterion for the homotopy Morita equivalence discussed previously to descend to a derived equivalence. The special case when $\AC$ is a $p$-DG algebra is proven in \cite[Proposition 8.8]{QYHopf}, and the general case is proven in a similar fashion.

\begin{prop}\label{prop-criteria-derived-equivalence}
Let $\AC$ be a $p$-DG category and $\mathbb{X}$ a set of subquotient idempotents in $\AC$. Suppose each $\AC\e_i$ is cofibrant as a left $p$-DG module over $\AC$. Set $\MC:=\AC\oplus (\oplus_{\e_i\in \mathbb{X}}\AC\e_i)$ to be $p$-DG bimodule over $(\AC,\AC(\mathbb{X}))$. Then the derived tensor functor
\[
\MC\otimes^{\mathbf{L}}_{\AC(\mathbb{X})}(-): \mc{D}(\AC(\mathbb{X}))\lra \mc{D}(\AC)
\]
is an equivalence of triangulated categories if and only if the canonical map of $p$-DG categories
\[
\AC\lra \END_{\AC(\mathbb{X})}(\MC)
\]
is a quasi-isomorphism of $p$-DG categories. \hfill$\square$
\end{prop}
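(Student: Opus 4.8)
The plan is to reduce this statement to the $p$-DG algebra case, Proposition 8.8 of \cite{QYHopf}, and then to check the extra bookkeeping needed because $\AC$ and $\AC(\mathbb{X})$ are $p$-DG categories (algebroids) rather than $p$-DG algebras. First I would observe that, since $\AC(\mathbb{X})$ has a small set of objects, the functor $\MC\otimes^{\mathbf{L}}_{\AC(\mathbb{X})}(-)$ always has a right adjoint, namely $\HOM_{\AC}(\MC,-)$, on the level of the homotopy categories. The content of Lemma \ref{lemma-Morita-homotopy-category} is that this adjunction is already an equivalence on homotopy categories $\HC(\AC(\mathbb{X}))\simeq \HC(\AC)$; what can fail is that this equivalence need not take the thick subcategory of cofibrant modules over $\AC(\mathbb{X})$ onto the thick subcategory of cofibrant modules over $\AC$, because the representable modules $(\HOM_{\AC(\mathbb{X})}((M_i,\e_i),-))$ are sent to $\AC\e_i$, which need not be cofibrant unless we assume it. So the hypothesis ``each $\AC\e_i$ is cofibrant'' is exactly what makes $\MC\otimes^{\mathbf{L}}_{\AC(\mathbb{X})}(-)$ preserve cofibrant objects and hence descend to a functor between derived categories at all.

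The next step is the standard ``detection of equivalence on compact generators'' argument. The representable modules over $\AC(\mathbb{X})$, i.e. $\oplus_{X}\AC(\mathbb{X})\1_X$ together with the $\AC(\mathbb{X})\1_{(M_i,\e_i)}$, form a set of compact generators of $\DC(\AC(\mathbb{X}))$; under $\MC\otimes^{\mathbf{L}}_{\AC(\mathbb{X})}(-)$ they go to the cofibrant modules $\AC\1_X$ and $\AC\e_i$, and by the cofibrance hypothesis these are compact and generate $\DC(\AC)$ as well. Thus the functor sends a set of compact generators to a set of compact generators; by a Beilinson–Bernstein–Neeman style argument (or by \cite[Proposition 8.8]{QYHopf} applied object-by-object), it is an equivalence of triangulated categories if and only if it is fully faithful on this generating set, i.e. if and only if for every pair of objects $P, Q$ among the representables of $\AC(\mathbb{X})$ the map
\[
\HOM_{\DC(\AC(\mathbb{X}))}(P,Q)\lra \HOM_{\DC(\AC)}(\MC\otimes^{\mathbf{L}}P,\MC\otimes^{\mathbf{L}}Q)
\]
is an isomorphism in the derived category of $p$-complexes. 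Because all of $P$, $Q$, $\MC\otimes^{\mathbf{L}}P$, $\MC\otimes^{\mathbf{L}}Q$ are cofibrant, these derived $\HOM$-complexes are computed by the naive $\HOM$-complexes, and the assertion becomes: the canonical $p$-DG map $\AC\to \END_{\AC(\mathbb{X})}(\MC)$, whose source is the $p$-DG category with objects the representables of $\AC$ and whose target has the same objects, is a quasi-isomorphism, meaning that on each morphism space $\HOM_{\AC}(X,Y)\to \HOM_{\AC(\mathbb{X})}(\MC\1_X,\MC\1_Y)$ it induces an isomorphism on $p$-cohomology. This is precisely the stated criterion.

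I would then reverse the argument for the ``only if'' direction: if $\MC\otimes^{\mathbf{L}}_{\AC(\mathbb{X})}(-)$ is an equivalence, then since $\AC\1_X\cong \MC\otimes^{\mathbf{L}}_{\AC(\mathbb{X})}(\AC(\mathbb{X})\1_X)$ its derived endomorphism dg-category over $\AC$ is quasi-isomorphic to that of $\oplus_X\AC(\mathbb{X})\1_X$ over $\AC(\mathbb{X})$, and unwinding definitions (again using cofibrance to replace $\mathbf{R}\HOM$ by $\HOM$) identifies the latter with $\END_{\AC(\mathbb{X})}(\MC)$, while the former is the $p$-DG category $\AC$ up to quasi-isomorphism via the canonical map; so that canonical map must be a quasi-isomorphism. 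The main obstacle, and the only place where real care is needed beyond citing \cite{QYHopf}, is the bookkeeping in passing from algebras to algebroids: one must check that ``quasi-isomorphism of $p$-DG categories'' unwinds correctly as a degreewise/objectwise statement about cohomology of $\HOM$-complexes, that the compact-generation statement for $\DC(\AC(\mathbb{X}))$ genuinely holds for the partial idempotent completion (using Lemma \ref{lemma-Morita-homotopy-category} together with the identification of representables), and that the derived tensor with $\MC$ interacts with infinite direct sums so that it preserves compactness of the generators — all of which are routine once the cofibrance hypothesis is in force, but which do need to be stated. Since the proof is entirely parallel to \cite[Proposition 8.8]{QYHopf}, I expect that in the paper this proposition is simply asserted with a pointer to that reference, and I would do the same, at most sketching the compact-generator reduction above.
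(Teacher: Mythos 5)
Your proposal is correct and matches the paper's treatment: the paper gives no proof beyond remarking, immediately before the statement, that the special case of a $p$-DG algebra is \cite[Proposition 8.8]{QYHopf} and that the general case is proven in a similar fashion, which is precisely the reduction you carry out and the outcome you anticipated. Your sketch of the compact-generator tilting argument, including the role of the cofibrance hypothesis in making the images of representables compact and in replacing derived $\HOM$'s by naive ones, is the standard argument underlying that reference.
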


The following example is studied in \cite[Section 2.3]{KQ}, and we use it to illustrate the general discussion of this chapter. Let $(U,\dif_U)$ be a finite dimensional $p$-complex, and let $A:=\END_{\Bbbk}(U)$ be the endomorphism algebra of $U$ equipped with the natural differential $\dif_A$: for any $f\in A$, $x\in U$,
\[
\dif_A(f)(x):=\dif_U(f(x))-f(\dif_U(x)).
\]
It is easy to check that $(A,\dif_A)$ is a $p$-DG algebra, which we regard as a $p$-DG category with a unique object $\ab$.

\begin{example} \label{eg-coordinates} For those who prefer coordinates, it is easiest to work with the following subexample. Let $U$ be the $p$-complex $\Bbbk[\dif]/(\dif^n)$ for $1 \le n
\le p$. Then $A = \Mat_n(\Bbbk)$, equipped with the doubled diagonal grading: if $\{e_{ij}\}$ is the standard basis, then $\deg e_{ij} = 2(j-i)$. Let $D = \sum_{i=1}^{n-1} e_{i(i+1)}$ be
the degree $2$ element with $1$ everywhere on the first off-diagonal. Then $\dif_A(x) = [D,x]$, and the fact that $n \le p$ forces $\dif_A^p = 0$. Observe that the idempotents $e_i =
e_{ii}$ form a DG-filtration, whose subquotients, the column modules, are all $p$-DG isomorphic. \end{example}

Let $\e \in A$ be an indecomposable left submodule idempotent\footnote{Such an idempotent always exists, since if $x\in U$ is part of a homogeneous basis of $U$ such that $\dif_U(x)=0$, then under the isomorphism $\END_\Bbbk(U)\cong U^\vee\otimes U$, $\e:=x^\vee\o x$ is left $\dif$-stable.}. We will take $\mathbb{X}:=\{\e\}$, and consider the category $\BC = \AC(\mathbb{X})$; we denote the new object $(\ab,\e)$ by $\vb$. One easily checks that $\BC$, as an algebroid, can be identified with the block-matrix
\begin{equation}\label{eqn-algebra-B}
B:=
\left(
\begin{matrix}
A & A\e\\
\e A & \e A \e
\end{matrix}
\right)
\cong
\left(
\begin{matrix}
A & U\\
U^\vee & \Bbbk
\end{matrix}
\right),
\end{equation}
where $U^\vee$ stands for the graded dual $p$-complex of $U$. In other words, in the new category $\BC$, we have
$$\HOM(\ab,\ab)= A, \quad \HOM(\vb,\ab) = U, \quad \HOM(\ab,\vb)=U^\vee, \quad \HOM(\vb,\vb)= \Bbbk.$$
The bimodule in Proposition \ref{prop-criteria-derived-equivalence} is equal to
\begin{equation}\label{eqn-bimod-B-to-A}
\MC ={_A
(A\oplus U)_B}.
\end{equation}
Notice that the canonical map
\[
A \lra \END_{B}(A\oplus U)\cong A
\]
is always an isomorphism of $p$-complexes. Proposition \ref{prop-criteria-derived-equivalence} tells us that, if $U$ is cofibrant as a $p$-DG module over $A$, then the derived tensor functor
\[
( A\oplus U)\otimes_{B}(-): \DC (B)\lra \DC(A)
\]
induces an equivalence of triangulated categories. The question now becomes, when is $U$ cofibrant as a $p$-DG $A$-module, and this is answered by the following.

\begin{lemma}\label{lemma-cofibrance-over-matrix}
Let $U$ be a finite dimensional $p$-complex, and let $A=\END_\Bbbk(U)$ be its graded endomorphism algebra equipped with the natural differential. Then $U$ is cofibrant as a $p$-DG module over $A$ if and only if $U$ is not contractible.
\end{lemma}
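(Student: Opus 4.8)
The plan is to analyze the $p$-DG module structure on $U$ over $A = \END_\Bbbk(U)$ by using the fact that $A$ is (up to Morita equivalence) a simple algebra, so that $U$ is essentially the unique indecomposable projective. First I would recall that, forgetting the differential, $A$ is a matrix algebra over $\Bbbk$ and $U$ is its (unique, up to grading shift and multiplicity) indecomposable projective module; since $A$ is semisimple, \emph{every} $A$-module is projective, and in particular $U$ is projective of finite rank. The content of the lemma is therefore entirely about the interaction of the differential with the module structure: we must decide when the $p$-DG module $U$ is cofibrant, i.e., (since $A$ is graded local? — no, it is not local, but it is Morita trivial) when $U$ has property-P, equivalently when $[U]$ is a nonzero class, equivalently — given that the underlying module is already projective — when $U$ is \emph{not} acyclic as a $p$-complex. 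Contractibility of the $p$-complex $U$ is the same as acyclicity here because $U$ is finite-dimensional.

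For the easy direction: if $U$ is contractible as a $p$-complex, then I claim $U$ is acyclic hence not cofibrant. Indeed a cofibrant module which is acyclic is contractible, and a nonzero module cannot be simultaneously cofibrant and acyclic unless it is the zero object in the derived category; more concretely, if $U$ were cofibrant then the identity map $U \to U$, being a quasi-isomorphism to the acyclic (hence zero in $\HC$) object, would be null-homotopic over $A$, forcing $U \cong 0$, a contradiction. So contractibility of $U$ obstructs cofibrance. For the converse — the substantive direction — suppose $U$ is not contractible. I want to exhibit a finite-cell (property-P) filtration of $U$ by representable $A$-modules, i.e., copies of $A$ itself up to grading shift. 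The strategy is: choose a homogeneous basis of $U$ adapted to the differential, i.e., a Jordan-type decomposition of the $p$-complex $U$ into a direct sum of ``$p$-Jordan blocks'' $\Bbbk[\dif]/(\dif^k)$ for $1 \le k \le p$ (this is the structure theory of finite-dimensional $p$-complexes over $\Bbbk[\dif]/(\dif^p)$). Since $U$ is not contractible, at least one block has $k < p$ (a length-$p$ block is the contractible piece). Using a vector $x$ with $\dif_U(x)=0$ lying at the bottom of a block, the rank-one idempotent $\e = x^\vee \otimes x \in A$ is a left submodule idempotent (as noted in the paper's footnote), and $A\e \cong U$ as a $p$-DG module — indeed $A\e = \END_\Bbbk(U)\cdot(x^\vee \otimes x) \cong U \otimes x^\vee \cdot x \cong U$ compatibly with $\dif$ up to the appropriate grading shift. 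Thus $U$ itself is representable-up-to-shift over $A$, hence trivially has property-P (a one-step filtration), hence is cofibrant.

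The one subtlety to check carefully — and the step I expect to be the main obstacle — is verifying that the identification $A\e \cong U$ really is an isomorphism of $p$-DG modules, not merely of ungraded modules: one must confirm that $\dif_A$ restricted to $A\e$ corresponds under this map to $\dif_U$ on $U$, which requires $\dif_A(\e)$ to lie in $A\e$ (equivalently $\e\dif_A(\e) = 0$, i.e.\ $\e$ is a left submodule idempotent), and this is exactly where $\dif_U(x) = 0$ is used. Concretely, $\dif_A(f\e)(y) = \dif_U((f\e)(y)) - (f\e)(\dif_U(y))$, and writing out $(f\e)(y) = f(x)\cdot x^\vee(y)$ one checks that the differential lands back in $A\e$ precisely because $\dif_U(x) = 0$. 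Once this compatibility is nailed down, the lemma follows: non-contractibility guarantees the existence of such a $\dif$-closed vector $x$, producing an isomorphism $U \cong A\e$ of $p$-DG $A$-modules (up to grading shift), and $A\e$ is a summand of the free module $A$, hence cofibrant. Finally I would remark that one need not worry about the distinction between "finite-cell" and merely "cofibrant" here: a representable module is automatically cofibrant in any $p$-DG category, so the single identification $U \cong A\e$ suffices, and conversely the contractible case is excluded because $A\e$ with $\dif(\e)=0$ can never be acyclic (its cohomology contains the class of $\e$).
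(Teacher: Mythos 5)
Your converse direction (contractible $\Rightarrow$ not cofibrant) is essentially sound: cofibrance applied to the surjective quasi-isomorphism $U \to 0$ forces $\Id_U$ to be null-homotopic in $\END_A(U)\cong\Bbbk$, which is impossible; this is a legitimate variant of the paper's argument, though you should spell out that $\END_A(U)\cong\Bbbk$ sits in a single degree with zero differential, so it admits no nonzero null-homotopic elements.

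The forward direction, however, has a genuine gap, and it is precisely the trap this chapter of the paper is built to warn against. From $\dif_U(x)=0$ you correctly get that $\e = x^\vee\otimes x$ is a submodule (or quotient) idempotent, so $A\e$ carries an induced $p$-DG structure isomorphic to $U$ up to shift. But $A\e$ is a direct summand of $A$ only as an ordinary $A$-module; as a $p$-DG module it is merely a submodule (or quotient), since the complement $A(1-\e)$ is not $\dif$-stable unless $\dif(\e)=0$. A $p$-DG sub- or quotient module of a cofibrant module need not be cofibrant, and $A\e$ is not representable over $A$ (it only becomes representable after passing to the partial idempotent completion $\AC(\{\e\})$ --- which is exactly why that completion can change the derived category). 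Your argument never actually uses non-contractibility: a $\dif$-closed homogeneous vector $x$ exists in \emph{every} nonzero finite-dimensional $p$-complex (e.g.\ the bottom of any Jordan block, including a length-$p$ one), so your reasoning would equally ``prove'' that $U=\Bbbk[\dif]/(\dif^p)$ is cofibrant over $\Mat_p(\Bbbk)$, contradicting the direction of the lemma you have already established. The paper instead exhibits $U$ as a genuine $p$-DG direct summand of the free module $A\otimes U\cong U\otimes U^\vee\otimes U$: it picks a non-contractible indecomposable summand $V\subset U$, notes $1\le\dim V\le p-1$, and splits the trace map $V^\vee\otimes V\to\Bbbk$ by $1\mapsto\frac{1}{\dim V}\Id_V$. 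The invertibility of $\dim V$ modulo $p$ is exactly where non-contractibility enters, and it is the ingredient your proof is missing.
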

\begin{proof} If $U$ is not contractible, then $U$ contains a finite-dimensional, indecomposable $p$-complex $V$ as a direct summand that is not contractible. Consider the natural surjective map of $p$-DG modules
\[
A\o U \lra U, \quad (f,x)\mapsto f(x).
\]
The left $A$-module $A\o U\cong U\otimes U^\vee \otimes U$ contains as a direct summand $U\otimes V^\vee\o V$. Since $V$ is not acyclic, it has dimension between $1$ and $p-1$, and
\[
\mathrm{Tr}: V^\vee \otimes V\lra \Bbbk, \quad (x^\vee, x)\mapsto x^\vee(x)
\]
is a surjective map of $p$-complexes, with a section given by sending $1\in \Bbbk$ to the image of $\frac{1}{\mathrm{dim}(V)}\mathrm{Id}_V\in \END(V)\cong V^\vee \otimes V$. Hence $U$ is a direct summand, as a $p$-DG module over $A$, of $U\otimes V^\vee \otimes V$, and therefore of $U\otimes U^\vee\otimes U\cong A\o U$. The cofibrance of $U$ follows.

Conversely, if $U$ is contractible, then the natural map above $A\otimes U\lra U$ is a surjective quasi-isomorphism. It follows that
\[
\HOM_A(A\o U, U)\cong U^\vee \otimes U
\]
is a contractible $p$-complex. If $U$ were cofibrant over $A$, then $U$ is a direct summand of $A\otimes U$ as a $p$-DG module, and the inclusion
\[
\Bbbk\cong \HOM_A(U,U)\hookrightarrow \HOM_A(A\otimes U, U)
\]
would split as a map of $p$-complexes, and would be a contradiction.
\end{proof}

\begin{example}
Let us continue Example \ref{eg-coordinates}. The idempotent $e_n$ is an indecomposable left submodule idempotent, although we obtain identical results using any of the other idempotents $e_i$. The column module $U$ is acyclic if and only if $n = p$, so let us restrict to $n=p$. Then $A$ is contractible as a $p$-complex, and thus $\DC(A) = 0$. On the other hand, the inclusion of $\Bbbk$ into the lower right corner of $B$ is a quasi-isomorphism, as $A$, $U$, and $U^\vee$ are all contractible. Thus $\DC(B) \cong \DC(\Bbbk) \ne 0$.
Clearly then, $A$ and $B$ are not $p$-DG derived equivalent.
\end{example}

It is a common custom to abuse notation and identify a (representable) module over an additive category $\AC$ with a module over its Karoubi envelope $\Kar(\AC)$, even though these are two
distinct categories. This leads to confusion in the $p$-DG setting; let us reiterate what happens in the case of $\AC$ and $\BC$ above, when $U$ is acyclic.
\begin{itemize}
\item The
representable left module $\HOM_\AC(\ab,-)$ sends the unique object $\ab$ of $\AC$ to the $p$-complex $A$, which is contractible. Thus this representable module is acyclic.
\item The
non-representable summand $\HOM_\AC(\vb,-)$ sends the unique object $\ab$ of $\AC$ to $U$, which is contractible. Thus this module is also acyclic. \item The representable module
$\HOM_\BC(\ab,-)$ sends the object $\ab$ to $A$ and the object $\vb$ to $U^\vee$, both of which are contractible; thus this module is acyclic. \item However, the representable module
$\HOM_\BC(\vb,-)$ sends the object $\ab$ to $U$ (contractible) and the object $\vb$ to $\Bbbk$ (not contractible!). Thus this module is not acyclic.
\end{itemize}
For this reason it is important whether $\ab$ and $\vb$ indicate modules over $\AC$ or $\BC$. The representable module $\ab$ for $\AC$ remains acyclic when extended to $\BC$, but its acyclic summand $\vb$ is ``no longer acyclic" when extended to $\BC$.

Now let us examine the categories $\DC(A)$ and $\DC(B)$ and their Grothendieck groups. When $U$ is a \emph{non-contractible} $p$-complex, Lemma \ref{lemma-cofibrance-over-matrix} and Proposition \ref{prop-criteria-derived-equivalence} tells us that $\DC(A)$ and $\DC(B)$ are derived equivalent, giving rise to isomorphic invariants $K_0(A)\cong K_0(B)$.

However, when $U$ is \emph{contractible}, it is not a cofibrant module over $A$ by the previous Lemma. Nevertheless, Lemma~\ref{lemma-Morita-homotopy-category} tells us that the usual tensor product functor
\begin{equation}\label{eqn-homotopy-equivalence-A-B}
(A\oplus U)\otimes_{B}(-): \HC(B)\lra \HC(A)
\end{equation}
is a triangulated equivalence of homotopy categories. Yet, upon localization this fails to become a derived equivalence. This is because, on the one hand, $A$ is acyclic, and $\DC(A)\cong 0$. On the other hand, the map $\Bbbk \hookrightarrow B $, which sends $1$ to the identity map of $B$, is a quasi-isomorphism of $p$-DG algebras by \eqref{eqn-algebra-B}, and therefore the restriction functor induces a derived equivalence
\[
\DC(\Bbbk)\cong \DC(B).
\]
The same reasoning tells us that the tensor product with the block column bimodule
\[
\left(
\begin{matrix}
A\\
U^\vee
\end{matrix}
\right)\otimes_A(-): \HC(A)\lra \HC(B)
\]
is also a homotopy equivalence, and descends to a fully-faithful embedding of the zero category inside $\DC(B)$.

On the Grothendieck group level, we see two different behaviors upon taking a partial idempotent completion. When $U$ is non-contractible, the object added to $A$ is cofibrant. Adding such a
nice object to the category does not change the $p$-DG Morita-equivalence class. When $U$ is acyclic, the object added formally creates a new (nonzero) compact cofibrant object, and
enlarges the Grothendieck group!

%
\subsection{Computing the Grothendieck group of $\UC$ and $\dot{\UC}$}
%

Let us remind the reader how the $p$-DG Grothendieck group of $\UC$ was computed in \cite{EQ1}, before contrasting with our general arguments here.

In \cite{EQ1}, we had two collections of objects: a set $\XM$ of indecomposable compact cofibrant objects in $\UC\Proj$, and a collection $\YM$ of objects in $\UC$ whose Yoneda modules in $\UC\Proj$ were contractible. Then one constructed enough Fc-filtrations to demonstrate that the Yoneda module of every object was filtered by objects from the set $\XM \cup \YM$ (and their grading shifts). Therefore, $\XM$ is a generating class of $\UC$, and $\UC$ is $p$-DG Morita equivalent to $A = \END(\oplus_{X \in \XM} X)$. Using Lauda's computation of the endomorphism space \cite[Proposition 9.9]{Lau1}, we see that $A$ is a positively graded $p$-DG algebra, whose degree zero part consists only of scalar multiples of identity maps. Therefore, by \cite[Corollary 2.18]{EQ1}, the classes $\{[X] \}_{X \in \XM}$ will form an $\OM_p$-basis for $K_0(\Uthick)$.

To compare this situation to the matrix example of the previous section: when $1 \le n < p$ one would set $\XM = \{\vb\}$ and $\YM = \emptyset$; when $n = p$ one would set $\XM =
\emptyset$ and $\YM = \{\ab\}$.

In the event that a Karoubian partial idempotent completion exists, the computation of its Grothendieck group can be simplified. By adding summands of $Y \in \YM$ as new cofibrant objects,
we enlarge $\XM$ until $\YM$ disappears.

Let us state this in some generality. For a Karoubian category $\AC$, any indecomposable object in $\AC\Proj$ is representable, and therefore is compact and cofibrant. Moreover, every object $M$ has an idempotent decomposition $\1_M = \sum u_i v_i$ whose summands $N_i$ are indecomposable. However, this decomposition need not be a Fc-filtration.

Suppose that our $p$-DG category $\AC$ is Karoubian and mixed. Essentially, mixedness is the property that each indecomposable object has a unique grading shift which is self-dual (under
some duality functor on $\AC$), and that, letting $\XM$ denote the collection of these self-dual indecomposables, the algebra $A = \END(\oplus_{X \in \XM} X)$ is positively graded, with
only identity maps in degree zero. For instance, it was proven in \cite{KLMS} that $\Uthick \cong \Kar(\UC)$ is mixed.

\begin{defn} Let $\AC$ be a Karoubian, mixed $p$-DG category, with self-dual indecomposables $\XM$. Suppose that every $M \in \AC$ has a Fc-filtration as in Definition
\ref{defnFcfiltration}, whose summands $N_i$ are grading shifts of objects in $\XM$. We call $\AC$ \emph{fantastically filtered}, or \emph{Fc-filtered} for short.
\end{defn}

For the next result, denote by $K_0^\prime(\AC)$ the usual Grothendieck group of $\AC$ regarded as a graded (pre-)additive category, ignoring the $p$-differential. The graded structure on $\AC$ makes $K_0^\prime(\AC)$ into a $\Z[q,q^{-1}]$-module.

\begin{prop} \label{propKaroubianMixedGroth} If a Karoubian, mixed $p$-DG category $\AC$ is fantastically filtered, then it is $p$-DG Morita equivalent to the positive $p$-DG algebra
$\END(\oplus_{X \in \XM} X)$. There is an isomorphism
\[
K_0(\AC) \cong K_0^\prime(\AC) \ot_{\Z[q,q^{-1}]} \OM_p.
\]
\end{prop}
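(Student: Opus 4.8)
The plan is to deduce the isomorphism from the general $p$-DG Morita theory recalled earlier in the chapter, applied to the self-dual indecomposables $\XM$. First I would set $A = \END(\oplus_{X \in \XM} X)$, regarded as a $p$-DG category (equivalently a $p$-DG algebroid). The Fc-filtration hypothesis says that every object $M \in \AC$ has an Fc-filtration whose subquotients $N_i$ are grading shifts of objects in $\XM$; by Proposition \ref{prop-DGtoFc} and the discussion around \eqref{eqn-fc-filtration-K0-relation}, this means the Yoneda module of $M$ in $\DC^c(\AC)$ is a finite convolution of representable modules attached to objects of $\XM$. Hence $\oplus_{X \in \XM} X$ is a compact cofibrant generator of $\DC(\AC)$: compactness and cofibrance follow because each $X \in \XM$ is representable in the Karoubian category $\AC$ (so its Yoneda module is a direct summand of a representable, hence cofibrant), and it generates because every object is filtered by these. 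Then the standard $p$-DG Morita argument — the derived tensor functor with the bimodule ${}_A(\oplus_X X)_\AC$ — gives a derived equivalence $\DC(A) \simeq \DC(\AC)$, so $K_0(\AC) \cong K_0(A)$.

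Next I would identify $K_0(A)$. By the mixedness hypothesis, $A$ is a positively graded $p$-DG algebra whose degree-zero part consists only of scalar multiples of identity morphisms (i.e. $A_0 = \Bbbk^{|\XM|}$, one copy for each indecomposable). This is precisely the setting of \cite[Corollary 2.18]{EQ1}, which asserts that for such a positive $p$-DG algebra the classes of the representable modules $\AC\1_X$, $X \in \XM$, form a basis of $K_0(A)$ over $\OM_p = K_0(\DC^c(\Bbbk))$, and moreover that the $\OM_p$-module $K_0(A)$ is freely generated by these symbols. Concretely $K_0(A) \cong \bigoplus_{X \in \XM} \OM_p \cdot [X]$.

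Finally I would match this with $K_0^\prime(\AC) \ot_{\Z[q,q^{-1}]} \OM_p$. Since $\AC$ is Karoubian, $K_0^\prime(\AC)$ — the split Grothendieck group of the underlying graded additive category — is free over $\Z[q,q^{-1}]$ on the classes of the indecomposable objects, and by mixedness we may take the self-dual representatives $\XM$ as our generating set. Thus $K_0^\prime(\AC) \cong \bigoplus_{X \in \XM} \Z[q,q^{-1}] \cdot [X]$, and tensoring over $\Z[q,q^{-1}]$ with $\OM_p$ (which is a quotient of $\Z[q,q^{-1}]$, namely $\Z[q]/(1 + q^2 + \cdots + q^{2(p-1)})$) yields $\bigoplus_{X \in \XM} \OM_p \cdot [X]$. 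The two descriptions of $K_0(\AC)$ agree on the nose, the isomorphism being $[X] \mapsto [X] \ot 1$, so it remains only to check this bijection is compatible with the module structures, which is immediate since both are built by extension of scalars along $\Z[q,q^{-1}] \twoheadrightarrow \OM_p$ from the same free module.

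The main obstacle I expect is justifying that $\oplus_{X \in \XM} X$ is genuinely a \emph{compact cofibrant} generator of $\DC(\AC)$ in the $p$-DG sense — i.e. verifying the hypotheses of Proposition \ref{prop-criteria-derived-equivalence} (or the relevant $p$-DG Morita statement) rather than just the homotopy-category statement. The Fc-filtration hypothesis is exactly designed to supply this: because the subquotients $N_i$ are \emph{representable} (after grading shift), the filtration realizes each Yoneda module as an iterated cone of cofibrant modules inside $\DC^c(\AC)$, so the canonical map $A \to \END_{\AC}(\oplus_X X)$ is the identity and there is nothing further to check for the quasi-isomorphism criterion. The other point requiring a little care is invoking \cite[Corollary 2.18]{EQ1} correctly — one must confirm that $A$ really is \emph{positive} (no morphisms in negative degree between self-dual indecomposables) and that degree zero is spanned by identities, both of which are packaged into the definition of ``mixed'' used here.
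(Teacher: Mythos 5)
Your argument is correct and is essentially the proof the paper has in mind: the paper leaves this as an exercise "analogous to [EQ1, Corollary 2.18]," and the surrounding discussion in that section sketches exactly your steps (Fc-filtrations make $\oplus_{X \in \XM} X$ a compact cofibrant generator, hence a $p$-DG Morita equivalence with the positive algebra $\END(\oplus_{X \in \XM} X)$, whose $K_0$ is then free over $\OM_p$ on the classes of the $X$ by the positivity argument of [EQ1, Corollary 2.18]). Your filled-in details, including the caveats about verifying cofibrance of the generator and positivity of the endomorphism algebra, match the intended argument.
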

\begin{proof}
The proof is analogous to \cite[Corollary 2.18]{EQ1}, and we leave it to the reader as an exercise.
\end{proof}

Thus in a Karoubian mixed category, we need never deal with a class $\YM$ of acyclic objects, as we were forced to do in the non-Karoubian category $\UC$. These same objects $\YM
\subset \UC$ are still acyclic in $\Kar(\UC)$, but the freshly-added summands of a contractible object $Y \in \YM$ are ``no longer acyclic."

In the remainder of this paper, we:
\begin{itemize}
\item Construct a differential on the partial idempotent completion $\Uthick$, and show that it agrees with the canonically induced
differential $\odif$.
\item Prove that $\Uthick$ is fantastically filtered, by showing that the most important idempotent decomposition in \cite{KLMS}, the so-called Sto\v{s}i\'{c} formula, is actually a Fc-filtration.
\end{itemize}
Having accomplished this, our main Theorem \ref{thm-U-thick} is a corollary of Proposition \ref{propKaroubianMixedGroth}.

\section{Differential thick calculus}\label{sec-thick}

We now proceed to place a differential on the thick calculus $\dot{\UC}$ developed in \cite{KLMS}. The resulting $p$-DG category $(\Uthick, \dif)$ is essentially just the ``categorical double," a la Khovanov-Lauda-Rouquier, of the $p$-DG category $(\sym,\dif)$ constructed in \S\ref{sec-grassmannian}.  We will show in the last chapter that the relations in $\dot{U}$ are lifted to the $p$-DG categorical level.

%
\subsection{Generators and relations for the thick calculus}
%

In \cite{KLMS}, diagrammatics are given for morphisms between certain objects in the Karoubi envelope $\Kar(\UC)$. These objects are compositions of the objects $\EC$ and $\FC$ from
$\UC$, together with the divided powers $\EC^{(a)}$ and $\FC^{(a)}$. Explicitly, $\EC^{(a)}$ is the image of the following endomorphism of $\EC^a$, with notation borrowed from \cite[\S 2.2]{KLMS}.

\begin{equation} \label{eq_def_ea}
   \xy
 (0,0)*{\includegraphics[scale=0.5]{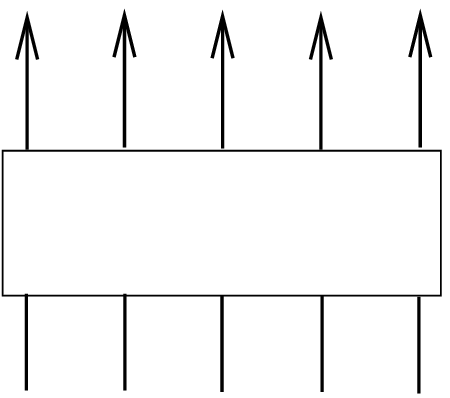}};
 (0,-1.5)*{\e_{a}};
  \endxy
\quad := \quad
  \xy
 (0,0)*{\includegraphics[scale=0.5]{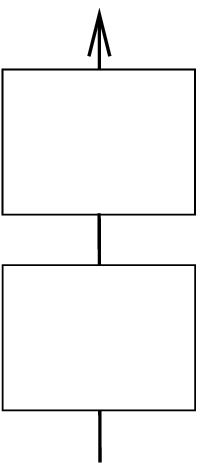}};
 (0,-5.5)*{D_a};(0,4.5)*{\delta_a};
  \endxy
  \quad = \quad
 \xy
 (0,0)*{\includegraphics[scale=0.5]{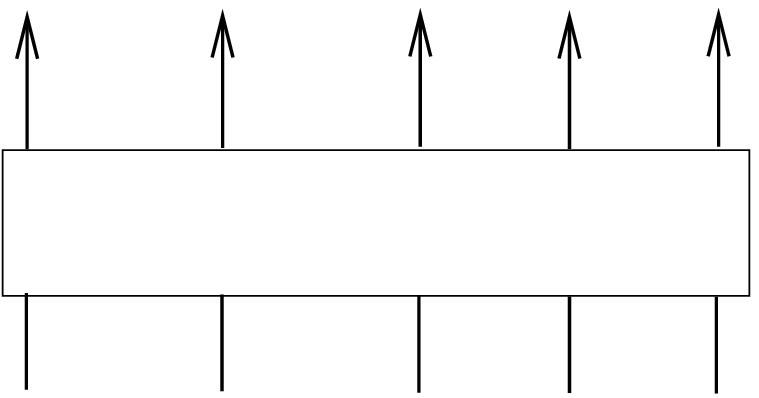}};
 (-17.7,5)*{\bullet}+(-4,1)*{\scs a-1};
 (-7.7,5)*{\bullet}+(-4,1)*{\scs a-2};
 (-3,4)*{\cdots};(-3,-7)*{\cdots};
 (2.3,5)*{\bullet}+(-2,1)*{\scs 2};
 (9.9,5)*{\bullet};(0,-1.5)*{D_a};
  \endxy
\end{equation}

Drawing the divided power $\EC^{(a)}$ as an upward-oriented line of thickness $a$, they provide a host of diagrams which can be thought of as new symbolic notation for certain morphisms
in $\Kar(\UC)$. For example (using citation numbers from their paper), one has boxes on strands labelled by symmetric polynomials (2.63), thick crossings (2.42), splitters (before
Proposition 2.4), thick cups and caps (page 38), and fake thick bubbles (4.33, 4.34). It is not hard to observe that all their new symbols can be expressed in terms of
morphisms in $\UC$ and the \emph{complete splitters} (2.69):

\begin{subequations}
\noindent\begin{tabularx}{\textwidth}{@{}XXX@{}}
 \label{completesplitters}
 \begin{equation} \label{completesplitterdown}
	{
	\labellist
	\small\hair 2pt
	 \pinlabel {$a$} [ ] at 15 40
	\endlabellist
	\centering
	\ig{1}{projection}
	} \end{equation} &
	\begin{equation} \label{completesplitterup}
		{
		\labellist
		\small\hair 2pt
		 \pinlabel {$a$} [ ] at 15 11
		\endlabellist
		\centering
		\ig{1}{splitterup}
		}. \end{equation}
\end{tabularx}
\end{subequations}
These complete splitters take a strand of thickness $a$, and split it into $a$ thin strands. Both complete splitters are given degree $-a(a-1)/2$.

Instead of thinking of their thick calculus as notation for specific morphisms in $\Kar(\UC)$, we prefer to think of it as the partial idempotent completion $\Uthick := \UC(\{ \e_a \})$ obtained by adjoining the divided powers (i.e., the images of the idempotents above). It turns out that every idempotent in $\Uthick$ will factor, implying that $\Uthick$ is Karoubian, or that $\Uthick \cong \Kar(\UC)$ as additive categories. This fact follows from the classification of indecomposable objects in $\Kar(\UC)$, given in \cite[\S 5.3]{KLMS}. Nonetheless, for reasons discussed in the previous chapter, it is important to distinguish between $\Uthick$ and the true Karoubi envelope $\Kar(\UC)$.

Given an arbitrary additive category $\AC$ with a presentation by generators and relations, it is simple to come up with a presentation of the partial idempotent completion $\AC(\XM)$ for any family of idempotents $\XM$. Namely, when adjoining the new object $(M,\e)$, one should add morphisms (inclusion and projection maps) $u \co (M,\e) \to M$ and $v \co M \to (M,\e)$ which satisfy
\begin{subequations}
\begin{eqnarray}
uv &=& \e, \\
vu &=& \1_{(M,\e)}.
\end{eqnarray}
\end{subequations}
It is clear that all morphisms involving the object $(M,\e)$ can be
obtained using morphisms involving $M$ and these new morphisms $u$ and $v$. Following this procedure, one obtains a presentation for $\Uthick$, which we state below in Proposition \ref{prop:Uthickpresentation}.

Note that the complete splitters \eqref{completesplitters} are not meant to be the inclusion and projection maps on the nose. For our chosen idempotent $\e_a$ above, the projection map $v$ is
\[
\ig{1}{projection},
\]
while the inclusion map $u$ is
\[	
{
\labellist
\small\hair 2pt
 \pinlabel {$\d_a$} [ ] at 27 41
\endlabellist
\centering
\ig{1}{inclusion}
},
\]
which can be expressed in terms of the complete splitter and morphisms in $\UC$. Said another way, the morphism in $\Uthick$ represented by \eqref{completesplitterdown} is $\e_a \in \Hom(\EC^a, (\EC^a,\e_a))$, while the morphism represented by \eqref{completesplitterup} is $D_a = D_a \e_a \in \Hom((\EC^a,\e_a),\EC^a)$.

\begin{rmk}
Complete splitters are more natural than inclusion and projection maps for any given idempotent. This is because $\EC^a \cong [a]! \EC^{(a)}$ splits into a number of isomorphic summands having different grading shifts. The projection map to the minimal degree summand is canonical, as is the inclusion map from the maximal degree summand; these are the complete splitters. The other inclusions and projections are non-canonical, and can be obtained by placing polynomials above and below the complete splitters.
\end{rmk}

\begin{prop} \label{prop:Uthickpresentation} $\Uthick$ has a presentation by generators and relations, extending that of $\UC$. One adds oriented thick lines labeled by $a > 0$ as new $1$-morphisms (with the convention that a thick line labeled $1$ is just our usual generator $\EC$ or $\FC$). The new $2$-morphism generators are the complete
splitters \eqref{completesplitters}. The new relations are:

\begin{subequations}
\noindent\begin{tabularx}{\textwidth}{@{}XXX@{}}
\begin{equation} 	{
	\labellist
	\small\hair 2pt
	 \pinlabel {$a$} [ ] at 18 39
	 \pinlabel {$D_a$} [ ] at 96 37
	\endlabellist
	\centering
	\ig{1}{composition}
	} \end{equation} &
\begin{equation}
	{
	\labellist
	\small\hair 2pt
	 \pinlabel {$\bullet$} [ ] at 1 37
	 \pinlabel {$\scs a-1$} [ ] at -10 41
	 \pinlabel {$\bullet$} [ ] at 21 37
	 \pinlabel {$\scs a-2$} [ ] at 12 41
	 \pinlabel {$\ldots$} [ ] at 36 37
	 \pinlabel {$\bullet$} [ ] at 52 37
	 \pinlabel {$\scs 1$} [ ] at 57 41
	\endlabellist
	\centering
	\ig{1}{explode}
	}
	\quad = \quad
	{
	\labellist
	\small\hair 2pt
	 \pinlabel {$a$} [ ] at -5 23
	\endlabellist
	\centering
	\ig{1}{thickstrand}
	}
\end{equation}
\end{tabularx}
\end{subequations}
\end{prop}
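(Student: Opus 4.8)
The plan is to verify that the proposed presentation of $\Uthick$ is correct by invoking the general fact, stated immediately before Proposition \ref{prop:Uthickpresentation}, that a partial idempotent completion $\AC(\XM)$ of a presented additive category has a presentation obtained by adjoining, for each adjoined image $(M,\e)$, morphisms $u \co (M,\e)\to M$ and $v\co M\to (M,\e)$ subject only to $uv=\e$ and $vu=\1_{(M,\e)}$. Here $\AC = \UC$ with its Khovanov--Lauda presentation, $\XM = \{\e_a\}_{a>0}$ with $\e_a$ as in \eqref{eq_def_ea}, and the new object $(\EC^a,\e_a)$ is what we draw as a thick line labeled $a$. So the entire content of the proposition is to translate ``$u$ and $v$ with $uv=\e_a$, $vu=\1$'' into the stated diagrammatic relations, once we fix our preferred generators to be the complete splitters \eqref{completesplitterdown} and \eqref{completesplitterup} rather than $u$ and $v$ themselves.

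First I would record, as explained in the discussion preceding the proposition, the precise relationship between the complete splitters and the canonical inclusion/projection maps: the downward complete splitter \eqref{completesplitterdown} represents $v = \e_a \in \Hom(\EC^a,(\EC^a,\e_a))$, the projection onto the image of $\e_a = \d_a D_a$ factored as $\d_a$ following $D_a$ (in the order dictated by \eqref{eq_def_ea}); the upward complete splitter \eqref{completesplitterup} represents $D_a = D_a\e_a \in \Hom((\EC^a,\e_a),\EC^a)$; and the actual inclusion $u$ is recovered as the upward splitter with $\d_a$ stacked on top. With this dictionary in hand, the two defining relations $uv = \e_a$ and $vu = \1_{(\EC^a,\e_a)}$ become the two displayed relations in the proposition. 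The first displayed relation says that the upward splitter followed by a $D_a$-decorated downward splitter (i.e. $v$ post-composed with the $D_a$-free part, matching $u v$ rewritten through the dictionary) equals the identity thick strand; equivalently, $D_a \d_a$ acting appropriately is $\1$, which is exactly $vu = \1_{(\EC^a,\e_a)}$ after using $D_a\d_a D_a = D_a$ in $\NH_a$. The second displayed relation (exploding a thick strand into $a$ thin strands decorated by $x_1^{a-1}x_2^{a-2}\cdots x_{a-1}$ and then re-merging) is precisely $u v = \e_a = \d_a D_a$ read as an endomorphism of $\EC^a$ in $\UC$.

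Next I would check that these two relations, together with the $\UC$-relations, suffice — i.e. that every $2$-morphism in $\Uthick$ involving thick strands is generated by $\UC$-morphisms and the complete splitters modulo the listed relations. This is the ``clear'' clause in the general recipe: any Hom space $\HOM_{\Uthick}((\EC^{a_1}\cdots),(\EC^{b_1}\cdots))$ is by construction $e \cdot \HOM_\UC(\EC^{\cdots},\EC^{\cdots}) \cdot e'$ for appropriate products $e,e'$ of the $\e_{a_i}$'s and identities, so every such morphism is obtained by pre- and post-composing a $\UC$-morphism between the underlying $\EC$-words with the relevant $\e_{a_i}$'s, each of which factors as $u_{a_i} v_{a_i}$; rewriting $u,v$ via the dictionary above expresses everything through complete splitters and $\UC$-generators. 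Uniqueness of the presentation (no further relations needed) follows from the same recipe, since the only relations among $u$, $v$ and $\UC$-morphisms are consequences of $uv=\e$, $vu=\1$, and the $\UC$-relations. The main obstacle, and the only place real (if routine) computation is hidden, is the bookkeeping in the second bullet: one must confirm that the specific decoration $x_1^{a-1}x_2^{a-2}\cdots x_{a-1} = \d_a$ and the specific order of splitting and merging in the ``explode'' diagram reproduce $\e_a$ as defined in \eqref{eq_def_ea}, i.e. that $\d_a D_a$ has this diagrammatic form; this is an identity in the nilHecke algebra $\NH_a$ already recorded in \cite{KLMS}, so it can simply be cited. Having matched the two displayed relations with $uv=\e_a$ and $vu=\1$, the proposition follows from the general presentation recipe for partial idempotent completions.
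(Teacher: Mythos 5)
Your proposal is correct and takes the same route as the paper, whose entire proof is the remark that the proposition is a tautological unwinding of the general presentation recipe for partial idempotent completions together with the dictionary between the complete splitters and the maps $u,v$; you have simply written out that unwinding. One caveat: you appear to have swapped which displayed relation encodes which identity (the first, with $D_a$ on its right-hand side, is an identity of endomorphisms of $\EC^a$ and amounts to $uv=\e_a$, while the explode relation is an endomorphism of the thick strand and is $vu=\1_{(\EC^a,\e_a)}$), but since your argument only needs that the two relations are jointly equivalent to $\{uv=\e_a,\ vu=\1\}$ given the dictionary and the nilHecke identity $D_a\d_aD_a=D_a$, this does not affect the conclusion.
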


\begin{proof} This is little more than a tautological unwinding of the above discussion and the definitions in \cite{KLMS}. \end{proof}

Now we can view all the other diagrammatics from \cite{KLMS} as being notation for specific morphisms in $\Uthick$.

%
\subsection{A differential on \texorpdfstring{$\Uthick$}{U dot}}
%

\begin{defn} We place a differential $\dif$ on $\Uthick$, extending the differential $\dif = \dif_1$ on $\UC$ from \cite[Definition 5.9]{EQ1}. It satisfies
\begin{subequations}
\begin{equation} \label{difsplitup}
\dif\left( \ig{1}{splitterup} \right) \quad =  - \quad 	{
	\labellist
	\small\hair 2pt
	 \pinlabel {$\ll_a$} [ ] at 27 41
	\endlabellist
	\centering
	\ig{1}{inclusion}
	} \end{equation}
\begin{equation} \label{difsplitdown}
\dif\left( \ig{1}{projection} \right) \quad =  - \quad 	{
	\labellist
	\small\hair 2pt
	 \pinlabel {$\lr_a$} [ ] at 28 20
	\endlabellist
	\centering
	\ig{1}{splitterdownwbox}
	} \end{equation}
\end{subequations}
The notation $\ll, \lr$ stand for the linear functions defined in \eqref{lltri} and \eqref{lrtri}.
\end{defn}

Note the agreement with \eqref{eqn-d-full-flag} and \eqref{eqn-d-dull-full-flag}.

\begin{prop}
The differential defined by \eqref{difsplitup} and \eqref{difsplitdown} agrees with the canonical differential induced on a partial idempotent completion.
\end{prop}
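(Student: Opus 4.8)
The plan is to reduce everything to a single computation in the endomorphism algebra of $\EC^a$ inside $\UC$, using the machinery of \S\ref{subsec-differential-on-image-of-idempotent}. First I would recall that $\Uthick = \UC(\{\e_a\})$ as a partial idempotent completion, so the canonically induced differential $\odif$ on morphism spaces involving the new object $(\EC^a,\e_a)$ is governed by Lemma~\ref{lemma-dif-hom-idempotent}: for the projection map, viewed as $\e_a \in \Hom(\EC^a,(\EC^a,\e_a))$, one has $\odif(\e_a) = \e_a \dif(\e_a)$ (no redundant factor on the left since $\e_a$ is a submodule-type idempotent on that side), and dually for the inclusion map $D_a = D_a\e_a \in \Hom((\EC^a,\e_a),\EC^a)$ one has $\odif(D_a\e_a) = \dif(D_a\e_a)\e_a$. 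So the claim amounts to two identities in $\END_\UC(\EC^a)$ (after composing appropriately): that $\e_a \dif(\e_a)$ equals $-\,\d_a\cdot(\text{complete splitter})\cdot \ll_a$ suitably interpreted, matching \eqref{difsplitup}, and that $\dif(D_a \e_a)$ matches \eqref{difsplitdown} with the factor $\lr_a$.

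The key steps, in order: (1) Recall the explicit formula $\e_a = \d_a D_a$ (up to the placement conventions in \eqref{eq_def_ea}), where $\d_a = \d_a(x_1,\dots,x_a)$ is the polynomial $\prod x_i^{a-i}$ appearing on top and $D_a$ is the longest divided difference operator, realized diagrammatically. (2) Apply the Leibniz rule: $\dif(\e_a) = \dif(\d_a) D_a + \d_a \dif(D_a)$. (3) Substitute the known formulas $\dif(\d_a) = \ll_a \d_a$ from \eqref{difofpilambdalonghand}-land (the equation displayed just after \eqref{lrtri}) and $\dif(D_a) = -\ll_a D_a - D_a \lr_a$ from \eqref{eqn-diff-on-deltas}. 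This gives $\dif(\e_a) = \ll_a \d_a D_a - \d_a \ll_a D_a - \d_a D_a \lr_a = -\e_a \lr_a$, using that $\ll_a$ is a polynomial in the $x_i$ (not symmetric, but this is a genuine identity of operators since $\d_a \ll_a = \ll_a \d_a$). (4) Now compute $\e_a \dif(\e_a) = -\e_a \lr_a$ as well, since $\e_a^2 = \e_a$; translating back through the definitions of $u,v$ and the complete splitters, the polynomial $\lr_a$ sits on the thin-strand side of the downward splitter, which is exactly \eqref{difsplitdown}. For \eqref{difsplitup} one runs the dual computation: $\dif(D_a \e_a) = \dif(D_a)\e_a + D_a \dif(\e_a)$; using $\e_a = \d_a D_a$ and $D_a \d_a D_a = D_a$ (idempotency, since $\e_a$ is idempotent and $D_a \d_a$ acts as identity on the relevant summand), and the same substitutions, one collects the terms into $-\,(\text{something})\cdot \ll_a$, which after passing to the upward complete splitter notation (recalling the remark that $u$ is the complete splitter with $\d_a$ placed on top) is precisely \eqref{difsplitup}.

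The main obstacle I expect is bookkeeping of the conventions rather than any conceptual difficulty: one must be careful about (a) the distinction between the complete splitters \eqref{completesplitters} and the actual inclusion/projection maps $u,v$ (the former differ from the latter by the polynomial $\d_a$, per the discussion before Proposition~\ref{prop:Uthickpresentation}), so that the polynomial ending up on the correct side of the correct splitter genuinely matches the stated formulas; (b) the left-versus-right placement of the redundant idempotent factor in $\odif$, which depends on whether $\e_a$ is being treated as a submodule idempotent or a quotient idempotent on each side — here I would invoke \eqref{edeplusdee} and the fact, implicit in \cite{EQ1}, that the Khovanov-Lauda idempotents for $\dif = \dif_1$ are subquotient idempotents, so that Lemma~\ref{lemma-dif-hom-idempotent} applies cleanly; and (c) verifying that the identities $\dif(\d_a) = \ll_a\d_a$ and $\dif(D_a) = -\ll_a D_a - D_a\lr_a$, which were stated in the context of $\NH_a$ acting on $\PC_a^+$, transport correctly into the diagrammatic $\UC$ with the differential $\dif_1$ of \cite{EQ1} — but this is exactly the compatibility recorded in the sentence "Note the agreement with \eqref{eqn-d-full-flag} and \eqref{eqn-d-dull-full-flag}" and in Remark~\ref{rmk-dif-on-NH}, so it should be a matter of citing the right prior results rather than reproving them.
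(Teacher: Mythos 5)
Your proposal is correct and takes essentially the same route as the paper: both proofs apply the Leibniz rule to $\e_a = \d_a D_a$ using $\dif(\d_a)=\ll_a \d_a$ and \eqref{eqn-diff-on-deltas}, together with the vanishing of $D_a$ on $\lr_a\d_a$ (each monomial has two adjacent equal exponents), and your intermediate identity $\dif(\e_a)=-\e_a\lr_a$ is exactly what the paper's two computations of $\e_a\dif(\e_a)$ and $\dif(D_a)\e_a$ amount to. The only slip is in your opening paragraph, where you pair $\e_a\dif(\e_a)$ with \eqref{difsplitup} and $\dif(D_a\e_a)\e_a$ with \eqref{difsplitdown}; these should be swapped, as your later detailed steps correctly have it.
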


\begin{proof} To check that \eqref{difsplitup} agrees with Lemma \ref{lemma-dif-endo-algebra-idempotent}, we need to compute $\dif(D_a) \e_a$. We have computed $\dif(D_a)$ in \eqref{eq-dif-of-Dn}. Thus
\[ \dif(D_a) \e_a = -	{
	\labellist
	\small\hair 2pt
	 \pinlabel {$\ll_a$} [ ] at 8 40
	 \pinlabel {$D_a$} [ ] at 8 17
	\endlabellist
	\centering
	\ig{1.5}{linewithtwoboxes}
	} - {
	\labellist
	\small\hair 2pt
	 \pinlabel {$D_a$} [ ] at 8 88
	 \pinlabel {$\lr_a$} [ ] at 8 65
	 \pinlabel {$\d_a$} [ ] at 8 40
	 \pinlabel {$D_a$} [ ] at 8 17
	\endlabellist
	\centering
	\ig{1.5}{linewithfourboxes}
	}. \]
The second term is zero. This is because each monomial in $\lr_a \d_a$ has two adjacent strands with the same exponent, and so is killed by some divided difference operator. The first term is the right hand side of \eqref{difsplitup}, as desired.

To check that \eqref{difsplitdown} agrees with Lemma \ref{lemma-dif-endo-algebra-idempotent} as well, we need to compute $\e_a \dif(\e_a)$. Again using \eqref{eq-dif-of-Dn}, and the fact that $\dif(\d_a)=\d_a\ll_a$, we see that
\[
\e_a \dif(\e_a)  = - {
	\labellist
	\small\hair 2pt
	 \pinlabel {$\e_a$} [ ] at 8 40
	 \pinlabel {$\lr_a$} [ ] at 8 17
	\endlabellist
	\centering
	\ig{1.5}{linewithtwoboxes}
	},\]
precisely as desired.
 \end{proof}

Because of this proposition, we can compute the differentials on many other diagrams, such as downward-oriented splitters, or thick cups and caps, using our knowledge of the differential
on $\UC$.

%
\subsection{Compatibility with symmetric polynomials and symmetric functions}
%

Recall from \cite{KLMS} that
\begin{equation}
\END(\EC^{(a)}) \cong \sym_a \boxtimes \Lambda,
\end{equation}
where the ring $\sym_a$ is realized by placing Schur polynomials on a thick
strand, as in \cite{KLMS} (2.72), and the ring $\Lambda$ of symmetric functions is realized by placing (clockwise or counterclockwise) thin bubbles alongside.

We now check that the induced differential on $\END(\EC^{(a)})$ agrees with the differential defined in \eqref{difpil} on $\sym_a$. In other words

\begin{equation} \label{eq-dif-schur} \dif \left( {
\labellist
\small\hair 2pt
 \pinlabel {$\pi_\a$} [ ] at 22 31
 \pinlabel {$a$} [ ] at 13 7
\endlabellist
\centering
\ig{1}{linewithschur}
} \right) = \sum_{\a + \square} C(\square) \; \; {
\labellist
\small\hair 2pt
 \pinlabel {$\pi_{\a + \square}$} [ ] at 22 31
 \pinlabel {$a$} [ ] at 13 7
\endlabellist
\centering
\ig{1}{linewithschur}
}.
\end{equation}

This can be formally deduced using properties of the nilHecke algebra, but it may be useful to see a diagrammatic proof.

\begin{proof} Recall the definition of a Schur polynomial on a thick strand from KLMS (2.72):
\begin{equation} \label{eq-defschur} 	{
	\labellist
	\small\hair 2pt
	 \pinlabel {$\pi_\a$} [ ] at 22 31
	 \pinlabel {$a$} [ ] at 13 7
	\endlabellist
	\centering
	\ig{1}{linewithschur}
	} \quad = \quad \quad {
	\labellist
	\small\hair 2pt
	 \pinlabel {$\bullet$} [ ] at 1 37
	 \pinlabel {$\scs \a_1 +$} [ ] at -10 41
	 \pinlabel {$\scs a-1$} [ ] at -10 35
	 \pinlabel {$\bullet$} [ ] at 21 37
	 \pinlabel {$\scs \a_2 +$} [ ] at 12 41
	 \pinlabel {$\scs a-2$} [ ] at 12 35
	 \pinlabel {$\ldots$} [ ] at 36 37
	 \pinlabel {$\bullet$} [ ] at 52 37
	 \pinlabel {$\scs \a_{a-1}$} [ ] at 60 41
	 \pinlabel {$\scs + 1$} [ ] at 60 35
	 \pinlabel {$\bullet$} [ ] at 72 37
	 \pinlabel {$\scs \a_{a}$} [ ] at 80 41	
	\endlabellist
	\centering
	\ig{1}{explode}
	}
\end{equation}

Consider what happens when one places an extra dot on a single strand in the RHS of \eqref{eq-defschur}. If one can add a $\square$ to the $i$-th row of $\a$ to obtain another partition
$\b$, then $\pi_\b$ will be realized by placing an extra dot on the $i$-th strand. If adding a $\square$ to the $i$-th row of $\a$ will not create a partition, then placing an extra dot
on the $i$-th strand will yield zero, because the exponents of the dots on the $i$-th and $(i-1)$-st strands become equal.

When we apply the differential to the RHS there will be three terms, each of which contributes a dot to the polynomial in the middle: $-\ll_a$ from one splitter, $-\lr_a$ from the other
splitter, and $\sum_i (\a_i + a - i) x_i$ from the polynomial. Because $-\ll_a - \lr_a = -(a-1) \sum_i x_i$, the overall contribution is $\sum_i (\a_i - i + 1) x_i$. Therefore, if $\b =
\a + \square$, then $\pi_\b$ appears with coefficient precisely $C(\square)$, as desired. \end{proof}

We have already checked in \cite[Corollary 4.8]{EQ1} that the action of $\dif$ on thin bubbles agrees with its canonical action on $\Lambda$. From this compatibility, we can immediately
derive the formula for the action of $\dif$ on thick bubbles.

\begin{subequations} \label{eq-dif-thick-bubbles}
\begin{eqnarray}
	\dif \left(
	{
	\labellist
	\small\hair 2pt
	 \pinlabel {$\pi_\a^\spadesuit$} [ ] at 15 17
	 \pinlabel {$a$} [ ] at 40 8
	\pinlabel {$n$} [ ] at 57 30
	\endlabellist
	\centering
	\ig{1.25}{thickbubbleCW}
	} \quad
	\right) & \quad = \quad \sum_{\a + \square} C(\square) \;&
	{
	\labellist
	\small\hair 2pt
	 \pinlabel {$\pi_{\a+\square}^\spadesuit$} [ ] at 15 17
	 \pinlabel {$a$} [ ] at 40 8
	\pinlabel {$n$} [ ] at 57 30
	\endlabellist
	\centering
	\ig{1.25}{thickbubbleCW}
	}
\\
\nonumber \\
\dif \left(
{
\labellist
\small\hair 2pt
 \pinlabel {$\pi_\a^\spadesuit$} [ ] at 15 17
 \pinlabel {$a$} [ ] at 40 8
	\pinlabel {$n$} [ ] at 57 30
\endlabellist
\centering
\ig{1.25}{thickbubbleCCW}
} \quad
\right) & \quad = \quad \sum_{\a + \square} C(\square) \;&
{
\labellist
\small\hair 2pt
 \pinlabel {$\pi_{\a+\square}^\spadesuit$} [ ] at 15 17
 \pinlabel {$a$} [ ] at 40 8
	\pinlabel {$n$} [ ] at 57 30
\endlabellist
\centering
\ig{1.25}{thickbubbleCCW}
} \end{eqnarray}
\end{subequations}

We let $a$ denote the strand thickness, and $n$ the ambient weight. The ``spade" notation below is borrowed from \cite{KLMS}: see page 31 of \cite{KLMS} for the spade convention for thin
bubbles, and pages 43-44 for their generalization to thick bubbles. The symbol $\pi_\a^\spadesuit$ above represents a Schur polynomial for a partition depending on $\a$, $a$, $n$, and the
orientation of the bubble. Spade notation is useful because it allows formulas involving thick bubbles to be written in a form seemingly independent of $a$ and $n$.

Recall that, depending on a comparison between $\a$, $a$, $n$, and the orientation, these thick bubbles can be either fake or real. However, regardless of whether they are fake or real,
\cite[Theorem 4.9]{KLMS} allows one to define them using a Giambelli determinant formula in terms of the thin bubbles. Because this same Giambelli formula yields the Schur functions in
terms of elementary (resp. complete) symmetric functions, and because $\dif$ acts on thin bubbles as it does on elementary (resp. complete) symmetric functions, we can simply transfer
\eqref{difpil} to obtain \eqref{eq-dif-thick-bubbles}.

One can also compute \eqref{eq-dif-thick-bubbles} directly, using \eqref{eq-dif-schur} and the formulas for differentials of cups and caps, to be found in the following section.

%
\subsection{More formulas for the differential}
%

We now describe how the differential acts on various diagrams introduced in \cite{KLMS}. We will check some of these equalities, and leave the remainder as exercises.

First, we introduce some new notation. A ``thick dot" on a thick strand will indicate the symmetric polynomial $e_1$, which is $\pi_{\square}$.
\begin{equation} \label{def-thickdot}
	\ig{1}{thicklinewdot} \quad := \quad
	{
		\labellist
		\small\hair 2pt
		 \pinlabel {$e_1$} [ ] at 22 31
		\endlabellist
		\centering
		\ig{1}{linewithschur}
		}
\end{equation}
When the thickness of a strand is $1$, the thick dot above is, of course, equal to the usual dot. Thanks to \cite{KLMS} (2.67), one has the following relation for any thick splitter.
\begin{equation} \label{thickdotslide} \ig{1}{thickdotslide} \end{equation}
The thick crossing is defined in KLMS (2.42); it merges two thick strands into a ``crossbar," and then splits them again. We allow ourselves to place dots on a thick crossing as well, by the following convention.
\begin{equation} \label{thickdotoncrossing} \ig{1}{thickdotoncrossing} \end{equation}
In other words, a thick dot on a crossing is actually a thick dot on the ``crossbar."

Now we compute some differentials. The relations below hold independent of the ambient region labeling. We put scalars in parentheses to avoid confusion.

\begin{subequations} \label{eq-dif-thick-splitter}
\begin{eqnarray}
\label{eq-dif-thick-splitter-up} \dif \left(
{
\labellist
\small\hair 2pt
 \pinlabel {$a$} [ ] at 37 15
 \pinlabel {$k$} [ ] at 24 45
 \pinlabel {$a-k$} [ ] at 65 45
\endlabellist
\centering
\ig{1}{thicksplitterup}
} \quad
\right) & \quad = \quad &
(k-a)\;
{
\labellist
\small\hair 2pt
 \pinlabel {$a$} [ ] at 37 15
 \pinlabel {$k$} [ ] at 24 45
 \pinlabel {$a-k$} [ ] at 65 45
\endlabellist
\centering
\ig{1}{thicksplitterupwdot}
}
\\
\nonumber \\
\label{eq-dif-thick-splitter-down} \dif \left(
{
\labellist
\small\hair 2pt
 \pinlabel {$a$} [ ] at 38 46
 \pinlabel {$k$} [ ] at 17 8
 \pinlabel {$a-k$} [ ] at 75 8
\endlabellist
\centering
\ig{1}{thicksplitterdown}
} \quad \; \; \;
\right) & \quad = \quad &
(-k)\;
{
\labellist
\small\hair 2pt
 \pinlabel {$a$} [ ] at 38 46
 \pinlabel {$k$} [ ] at 17 8
 \pinlabel {$a-k$} [ ] at 75 8
\endlabellist
\centering
\ig{1}{thicksplitterdownwdot}
}
\end{eqnarray}
\end{subequations}

\begin{proof} We check \eqref{eq-dif-thick-splitter-up}, leaving the similar check of \eqref{eq-dif-thick-splitter-down} to the reader. We can express the thick splitter by the formula
\begin{equation}
	{
	\labellist
	\small\hair 2pt
	 \pinlabel {$a$} [ ] at 37 15
	 \pinlabel {$k$} [ ] at 24 45
	 \pinlabel {$a-k$} [ ] at 65 45
	\endlabellist
	\centering
	\ig{1}{thicksplitterup}
	} \quad
	\quad = \quad
	{
	\labellist
	\small\hair 2pt
	 \pinlabel {$a$} [ ] at 49 5
	 \pinlabel {$k$} [ ] at 21 116
	 \pinlabel {$a-k$} [ ] at 75 116
	 \pinlabel {$\d_k$} [ ] at 18 61
	 \pinlabel {$\d_{a-k}$} [ ] at 66 61
	 \pinlabel {$\dots$} [ ] at 28 43
	 \pinlabel {$\dots$} [ ] at 28 78
	 \pinlabel {$\dots$} [ ] at 60 43
	 \pinlabel {$\dots$} [ ] at 60 78
	\endlabellist
	\centering
	\ig{1}{thicksplittercompute}
	}.
\end{equation}
When applying $\dif$ to the right hand side (RHS), each term will consist of the same diagram with an extra dot on one of the central thin strands. The only extra dots which contribute a non-zero result
are a dot on the first strand or the $(k+1)$-st strand. One can easily compute that the coefficient of the extra dot on the first strand is $(i-a)$, which yields the RHS of
\eqref{eq-dif-thick-splitter-up}, while the coefficient of the extra dot on the $(k+1)$-st strand is zero.
\end{proof}

The following consequence is straightforward.

\begin{equation} \label{eq-dif-thick-crossing}  \dif \left(
	{
	\labellist
	\small\hair 2pt
	 \pinlabel {$a$} [ ] at 18 8
	 \pinlabel {$b$} [ ] at 65 8
	 \pinlabel {$b$} [ ] at 18 56
	 \pinlabel {$a$} [ ] at 65 56
	\endlabellist
	\centering
	\ig{1}{thickcrossing}
	} \quad \right) \quad = (-a) \quad
	{
	\labellist
	\small\hair 2pt
	 \pinlabel {$a$} [ ] at 18 8
	 \pinlabel {$b$} [ ] at 65 8
	 \pinlabel {$b$} [ ] at 18 56
	 \pinlabel {$a$} [ ] at 65 56
	\endlabellist
	\centering
	\ig{1}{thickcrossingdotUL}
	} \quad + (-a) \quad
	{
	\labellist
	\small\hair 2pt
	 \pinlabel {$a$} [ ] at 18 8
	 \pinlabel {$b$} [ ] at 65 8
	 \pinlabel {$b$} [ ] at 18 56
	 \pinlabel {$a$} [ ] at 65 56
	\endlabellist
	\centering
	\ig{1}{thickcrossingdotDR}
	}
\end{equation}

Applying the symmetries of $(\UC,\dif)$, one can quickly compute the differentials of downward-oriented splitters and crossings.

\begin{subequations} \label{eq-dif-thick-splitter-D}
\begin{eqnarray}
\label{eq-dif-thick-splitter-up-D} \dif \left(
{
\labellist
\small\hair 2pt
 \pinlabel {$a$} [ ] at 37 15
 \pinlabel {$k$} [ ] at 24 45
 \pinlabel {$a-k$} [ ] at 65 45
\endlabellist
\centering
\ig{1}{thicksplitterupD}
} \quad
\right) & \quad = \quad &
(-k)\;
{
\labellist
\small\hair 2pt
 \pinlabel {$a$} [ ] at 37 15
 \pinlabel {$k$} [ ] at 24 45
 \pinlabel {$a-k$} [ ] at 65 45
\endlabellist
\centering
\ig{1}{thicksplitterupwdotD}
}
\\
\nonumber \\
\label{eq-dif-thick-splitter-down-D} \dif \left(
{
\labellist
\small\hair 2pt
 \pinlabel {$a$} [ ] at 38 46
 \pinlabel {$k$} [ ] at 17 8
 \pinlabel {$a-k$} [ ] at 75 8
\endlabellist
\centering
\ig{1}{thicksplitterdownD}
} \quad \; \; \;
\right) & \quad = \quad &
(k-a)\;
{
\labellist
\small\hair 2pt
 \pinlabel {$a$} [ ] at 38 46
 \pinlabel {$k$} [ ] at 17 8
 \pinlabel {$a-k$} [ ] at 75 8
\endlabellist
\centering
\ig{1}{thicksplitterdownwdotD}
}
\end{eqnarray}
\end{subequations}

\begin{equation} \label{eq-dif-thick-crossing-D}  \dif \left(
	{
	\labellist
	\small\hair 2pt
	 \pinlabel {$a$} [ ] at 18 8
	 \pinlabel {$b$} [ ] at 65 8
	 \pinlabel {$b$} [ ] at 18 56
	 \pinlabel {$a$} [ ] at 65 56
	\endlabellist
	\centering
	\ig{1}{thickcrossingD}
	} \quad \right) \quad = (-b) \quad
	{
	\labellist
	\small\hair 2pt
	 \pinlabel {$a$} [ ] at 18 8
	 \pinlabel {$b$} [ ] at 65 8
	 \pinlabel {$b$} [ ] at 18 56
	 \pinlabel {$a$} [ ] at 65 56
	\endlabellist
	\centering
	\ig{1}{thickcrossingdotULD}
	} \quad + (-b) \quad
	{
	\labellist
	\small\hair 2pt
	 \pinlabel {$a$} [ ] at 18 8
	 \pinlabel {$b$} [ ] at 65 8
	 \pinlabel {$b$} [ ] at 18 56
	 \pinlabel {$a$} [ ] at 65 56
	\endlabellist
	\centering
	\ig{1}{thickcrossingdotDRD}
	}
\end{equation}

Now we compute the differentials of thick cups and caps, which do depend on the ambient region labeling. Here, $n$ is a label on a region, while $a$ is the thickness of a strand. The
notation for thin bubbles is preserved from \cite[Section 4.1]{EQ1}.

\begin{subequations} \label{eq-dif-thick-cupcap}
\begin{eqnarray}
\label{eq-dif-thick-CCW-cap} \dif \left(
{
\labellist
\small\hair 2pt
 \pinlabel {$a$} [ ] at 52 10
 \pinlabel {$n$} [ ] at 65 34
\endlabellist
\centering
\ig{1}{thickCCWcap}
} \right)
&\quad=\quad&
(n+a)\;
{
\labellist
\small\hair 2pt
 \pinlabel {$a$} [ ] at 52 10
 \pinlabel {$n$} [ ] at 65 34
\endlabellist
\centering
\ig{1}{thickCCWcapwdot}
} \\
\nonumber \\
\label{eq-dif-thick-CW-cup} \dif \left(
{
\labellist
\small\hair 2pt
\pinlabel {$a$} [ ] at 52 31
\pinlabel {$n$} [ ] at 65 9
\endlabellist
\centering
\ig{1}{thickCWcup}
} \right)
&\quad=\quad&
(a-n)\;
{
\labellist
\small\hair 2pt
\pinlabel {$a$} [ ] at 52 31
\pinlabel {$n$} [ ] at 65 9
\endlabellist
\centering
\ig{1}{thickCWcupwdot}
} \\
\nonumber \\
\label{eq-dif-thick-CW-cap} \dif \left(
{
\labellist
\small\hair 2pt
 \pinlabel {$a$} [ ] at 52 10
 \pinlabel {$n$} [ ] at 65 34
\endlabellist
\centering
\ig{1}{thickCWcap}
} \right)
&\quad=\quad&
(a)\;
{
\labellist
\small\hair 2pt
 \pinlabel {$a$} [ ] at 52 10
 \pinlabel {$n$} [ ] at 65 34
\endlabellist
\centering
\ig{1}{thickCWcapwdot}
}
\; - (a)\;
{
\labellist
\small\hair 2pt
 \pinlabel {$a$} [ ] at 52 10
 \pinlabel {$n$} [ ] at 65 34
\endlabellist
\centering
\ig{1}{thickCWcapwbubble}
}
\\
\nonumber \\
\label{eq-dif-thick-CCW-cup} \dif \left(
{
\labellist
\small\hair 2pt
\pinlabel {$a$} [ ] at 52 31
\pinlabel {$n$} [ ] at 65 9
\endlabellist
\centering
\ig{1}{thickCCWcup}
} \right)
&\quad=\quad&
(a)\;
{
\labellist
\small\hair 2pt
\pinlabel {$a$} [ ] at 52 31
\pinlabel {$n$} [ ] at 65 9
\endlabellist
\centering
\ig{1}{thickCCWcupwdot}
}
\; + (a)\;
{
\labellist
\small\hair 2pt
\pinlabel {$a$} [ ] at 52 41
\pinlabel {$n$} [ ] at 65 19
\endlabellist
\centering
\ig{1}{thickCCWcupwbubble}
}
\end{eqnarray}
\end{subequations}

\begin{proof} That these formulas all hold when $a=1$ was shown in \cite{EQ1}. Let us check \eqref{eq-dif-thick-CCW-cap}. Unraveling the definition of the thick cap, one has
\[
{
\labellist
\small\hair 2pt
 \pinlabel {$=$} [ ] at 79 32
 \pinlabel {$\d_a$} [ ] at 196 38
 \pinlabel {$a$} [ ] at 54 5
 \pinlabel {$n$} [ ] at 50 54
 \pinlabel {$a$} [ ] at 204 8
 \pinlabel {$n$} [ ] at 219 89
\endlabellist
\centering
\ig{1}{thickCCWcapresolved}
}.
\]
Thus its differential will have a number of terms: $\ll_a$ coming from the upward splitter, $\lr_a$ coming from the downward splitter, $\ll_a$ coming from $\dif(\d_a)$, and a dot on each thin strand coming from the $a=1$ case of \eqref{eq-dif-thick-CCW-cap} itself. Each dot will have a different coefficient, because each thin strand lives in a different ambient weight space. Putting it all together, we have
\[
\begin{array}{l}
\dif \left(
{
\labellist
\small\hair 2pt
 \pinlabel {$a$} [ ] at 52 10
 \pinlabel {$n$} [ ] at 65 34
\endlabellist
\centering
\ig{1}{thickCCWcap}
} \right)
\;  = \;
(-1) {
\labellist
\small\hair 2pt
 \pinlabel {$\d_a$} [ ] at 60 85
 \pinlabel {$\ll_a$} [ ] at 96 37
\endlabellist
\centering
\ig{1}{thickCCWcapproof1}
} \; + \;
(-1) {
\labellist
\small\hair 2pt
 \pinlabel {$\d_a$} [ ] at 64 85
 \pinlabel {$\lr_a$} [ ] at 28 37
\endlabellist
\centering
\ig{1}{thickCCWcapproof2}
} \;\\
\hspace{0.6in} +
{
\labellist
\small\hair 2pt
 \pinlabel {$\d_a$} [ ] at 60 85
 \pinlabel {$\ll_a$} [ ] at 96 37
\endlabellist
\centering
\ig{1}{thickCCWcapproof1}
} \; + \;
(n + 1) {
\labellist
\small\hair 2pt
 \pinlabel {$\d_a$} [ ] at 60 85
\endlabellist
\centering
\ig{1}{thickCCWcapproof3}
} \;
\\
\hspace{0.6in}+(n+3) {
\labellist
\small\hair 2pt
 \pinlabel {$\d_a$} [ ] at 60 85
\endlabellist
\centering
\ig{1}{thickCCWcapproof4}
}
  +  \dots +
(n+2a-1){
\labellist
\small\hair 2pt
 \pinlabel {$\d_a$} [ ] at 60 85
\endlabellist
\centering
\ig{1}{thickCCWcapproof5}
}.
\end{array}
\]
The two instances of $\ll_a$ cancel out. Moreover, several of the diagrams above represent the zero morphism. Because $\d_a x_i$ is symmetric in $x_i$ and $x_{i-1}$ whenever $i>1$, placing a dot on one of the thin strands will yield the zero morphism, unless that strand is innermost. For example, two of the diagrams on the second row will vanish, while the diagram on the last row will survive. The coefficient of the surviving dot is $-(a-1)$ from $-\lr_a$ on the first row, and $n+2a-1$ on the third row. Thus we obtain
\[
\dif \left(
{
\labellist
\small\hair 2pt
 \pinlabel {$a$} [ ] at 52 10
 \pinlabel {$n$} [ ] at 65 34
\endlabellist
\centering
\ig{1}{thickCCWcap}
} \right)
\;=\;
(n+a){
\labellist
\small\hair 2pt
 \pinlabel {$\d_a$} [ ] at 60 85
\endlabellist
\centering
\ig{1}{thickCCWcapproof5}
} \; = \;
(n+a)\;
{
\labellist
\small\hair 2pt
 \pinlabel {$a$} [ ] at 52 10
 \pinlabel {$n$} [ ] at 65 34
\endlabellist
\centering
\ig{1}{thickCCWcapwdot}
} \]
as desired.

The proofs of the other formulas are similar in nature, and we leave them to the reader. For instance, when checking \eqref{eq-dif-thick-CW-cap}, one rewrites the thick cap using splitters
and thin caps, and then applies the $a=1$ case of \eqref{eq-dif-thick-CW-cap}. An additional complication is that bubbles appear in the intermediate regions between thin strands, and must
be slid to the outer region using bubble slide relations. Nonetheless, because almost all dots contribute the zero morphism, it is not hard to compute the final result. \end{proof}

Now we can compute the differential of a sideways thick crossing, which does depend on the ambient region label.

\begin{subequations}
\begin{eqnarray}
\label{eq-dif-left-crossing} \dif \left(
	{
	\labellist
	\small\hair 2pt
	 \pinlabel {$a$} [ ] at 18 8
	 \pinlabel {$b$} [ ] at 65 8
	 \pinlabel {$b$} [ ] at 18 56
	 \pinlabel {$a$} [ ] at 65 56
	 \pinlabel {$n$} [ ] at 50 32
	\endlabellist
	\centering
	\ig{1}{thickcrossingleft}
	} \quad \right) & \hspace{-0.15in}= (a-b-n)& \hspace{-0.15in}
	{
	\labellist
	\small\hair 2pt
	 \pinlabel {$a$} [ ] at 18 8
	 \pinlabel {$b$} [ ] at 65 8
	 \pinlabel {$b$} [ ] at 18 56
	 \pinlabel {$a$} [ ] at 65 56
	\endlabellist
	\centering
	\ig{1}{thickcrossingleftdotUL}
	}  + (n+b-a)
	{
	\labellist
	\small\hair 2pt
	 \pinlabel {$a$} [ ] at 18 8
	 \pinlabel {$b$} [ ] at 65 8
	 \pinlabel {$b$} [ ] at 18 56
	 \pinlabel {$a$} [ ] at 65 56
	\endlabellist
	\centering
	\ig{1}{thickcrossingleftdotDR}
	}
\\ 	& \hspace{-0.15in} = (a-b-n) & \hspace{-0.15in}
	{
	\labellist
	\small\hair 2pt
	 \pinlabel {$a$} [ ] at 18 8
	 \pinlabel {$b$} [ ] at 65 8
	 \pinlabel {$b$} [ ] at 18 56
	 \pinlabel {$a$} [ ] at 65 56
	\endlabellist
	\centering
	\ig{1}{thickcrossingleftdotUR}
	}  + (n+b-a)
	{
	\labellist
	\small\hair 2pt
	 \pinlabel {$a$} [ ] at 18 8
	 \pinlabel {$b$} [ ] at 65 8
	 \pinlabel {$b$} [ ] at 18 56
	 \pinlabel {$a$} [ ] at 65 56
	\endlabellist
	\centering
	\ig{1}{thickcrossingleftdotDL}
	}  \nonumber \\
\nonumber \\
\label{eq-dif-right-crossing} \dif \left(
	{
	\labellist
	\small\hair 2pt
	 \pinlabel {$a$} [ ] at 18 8
	 \pinlabel {$b$} [ ] at 65 8
	 \pinlabel {$b$} [ ] at 18 56
	 \pinlabel {$a$} [ ] at 65 56
	 \pinlabel {$n$} [ ] at 50 32
	\endlabellist
	\centering
	\ig{1}{thickcrossingright}
	} \quad \right) & = & 0
\end{eqnarray}
\end{subequations}

\begin{proof} We check \eqref{eq-dif-left-crossing}. The equality of the two terms on the RHS is easy from \eqref{thickdotoncrossing}.  We show that the differential is equal to the second line on the RHS.
	
First, we express the sideways crossing in terms of an upwards crossing, with cups and caps.
\[ {
\labellist
\small\hair 2pt
 \pinlabel {$a$} [ ] at 18 8
 \pinlabel {$b$} [ ] at 65 8
 \pinlabel {$b$} [ ] at 18 56
 \pinlabel {$a$} [ ] at 65 56
 \pinlabel {$n$} [ ] at 50 32
\endlabellist
\centering
\ig{1}{thickcrossingleft}
} \quad = \quad {
\labellist
\small\hair 2pt
 \pinlabel {$b$} [ ] at 86 111
 \pinlabel {$a$} [ ] at 119 111
 \pinlabel {$a$} [ ] at 7 7
 \pinlabel {$b$} [ ] at 38 7
 \pinlabel {$n$} [ ] at 85 11
 \pinlabel {$n+2b-2a$} [ ] at 40 111
\endlabellist
\centering
\ig{1}{thickcrossingexpression}
} \]
Now we apply $\dif$, using \eqref{eq-dif-thick-CCW-cap}, \eqref{eq-dif-thick-CW-cup}, and \eqref{eq-dif-thick-crossing}. All terms add a dot to the strand of thickness $a$. For the dot
which meets the lower left output, one has coefficient $-b$ from the crossing and $(a+n+2b-2a)$ from the cap, yielding $n+b-a$ overall. For the dot which meets the upper right input, one
has coefficient $+b$ from the crossing and $(a-n)$ from the cup, yielding $(a-b-n)$ overall.

The check of \eqref{eq-dif-right-crossing} is slightly trickier, involving bubble slides, and we leave it as an exercise. \end{proof}

Let us rewrite \eqref{eq-dif-left-crossing} in the form we will use below.

\begin{align} \label{eq-dif-left-crossing-alt} \dif \left(
	{
	\labellist
	\small\hair 2pt
	 \pinlabel {$a$} [ ] at 18 8
	 \pinlabel {$b$} [ ] at 65 8
	 \pinlabel {$b$} [ ] at 18 56
	 \pinlabel {$a$} [ ] at 65 56
	 \pinlabel {$n$} [ ] at 50 32
	\endlabellist
	\centering
	\ig{1}{thickcrossingleft}
	} \quad \right) & = (a-b-n) {
	\labellist
	\small\hair 2pt
	 \pinlabel {$a$} [ ] at 18 8
	 \pinlabel {$b$} [ ] at 65 8
	 \pinlabel {$b$} [ ] at 18 56
	 \pinlabel {$a$} [ ] at 65 56
	 \pinlabel {$n$} [ ] at 50 32
	\endlabellist
	\centering
	\ig{1}{thickdotonleftcrossing}
	} \nonumber \\
 & - (a-b-n) \left( \quad {
	\labellist
	\small\hair 2pt
	 \pinlabel {$a$} [ ] at 18 8
	 \pinlabel {$b$} [ ] at 65 8
	 \pinlabel {$b$} [ ] at 18 56
	 \pinlabel {$a$} [ ] at 65 56
	\endlabellist
	\centering
	\ig{1}{thickcrossingleftdotDL}
	} \quad + \quad {
	\labellist
	\small\hair 2pt
	 \pinlabel {$a$} [ ] at 18 8
	 \pinlabel {$b$} [ ] at 65 8
	 \pinlabel {$b$} [ ] at 18 56
	 \pinlabel {$a$} [ ] at 65 56
	\endlabellist
	\centering
	\ig{1}{thickcrossingleftdotDR}
	} \quad \right).
\end{align}
	
\section{A fantastic filtration from the Sto\v{s}i\'{c} formula}\label{sec-categorification}

\subsection{The main theorem}
Now we define the BLM form $\dot{U}$ of quantum $\mathfrak{sl}_2$ specialized over $\mathbb{O}_p$, which is obtained from the its integral generic form $\dot{U}_{\Z[q^{\pm}]}$ via base change:
\[
\dot{U}:=\dot{U}_{\Z[q^{\pm}]}\otimes_{\Z[q^{\pm}]}\mathbb{O}_p.
\]
It is best described as an idempotented algebra (i.e., a category) as follows.

\begin{defn}
\begin{enumerate}
\item The $\mathfrak{sl}_2$-BLM quantum algebra $\dot{U}$ is an $\mathbb{O}_p$-linear category which has as objects integers $n\in \Z$. We will also write $1_n$ for the identity morphism of $n$. The generating morphisms are given by\footnote{We will just write $E^{(a)}1_{n}$ or $F^{(a)}1_{n}$ for short.}
\[
1_{n+2a}E^{(a)}1_{n}, \quad 1_{n-2a}F^{(a)}1_{n},
\]
for each $a\in \N$ and $n\in \N$. They satisfy the following relations
\begin{subequations}
\begin{eqnarray}
E^{(a)}E^{(b)}1_n & = & {a+b \brack a}E^{(a+b)}1_n, \label{eqn-EE}\\
F^{(a)}F^{(b)}1_n & = & {a+b \brack a}F^{(a+b)}1_n, \label{eqn-FF}\\
E^{(a)}F^{(b)}1_n & = & \sum_{j=0}^{\min (a,b)}{a-b+n \brack j}F^{(b-j)}E^{(a-j)}1_n,\label{eqn-EF}\\
F^{(a)}E^{(b)}1_n & = & \sum_{j=0}^{\min (a,b)}{a-b+n \brack j}E^{(b-j)}F^{(a-j)}1_n. \label{eqn-FE}
\end{eqnarray}
\end{subequations}
\item The small quantum group $\dot{u}$ is an $\mathbb{O}_p$-linear subcategory (or sub-algebroid) of $\dot{U}$, with the same objects, while generating morphisms are taken to be
    \[
    1_{n+2}E1_n, \quad 1_{n-2}F1_n,
    \]
    for each $n\in \Z$.
\end{enumerate}
\end{defn}
By specializing Lusztig's canonical basis for $\dot{U}_{\Z[q^{\pm}]}$ over $\mathbb{O}_p$, one obtains an $\mathbb{O}_p$-basis for $\dot{U}$, which will be denoted
\begin{equation}
\dot{\mathbb{B}}=\{E^{(a)}F^{(b)}1_n,~F^{(b)}E^{(a)}1_m|a,b\in \N, m,n\in \Z, n\leq b-a, m\geq b-a\}.
\end{equation}
The corresponding $\mathbb{O}_p$-basis for $\dot{u}$ is given by
\begin{equation}
\dot{\mathbb{B}}_{\dot{u}}=\{E^{(a)}F^{(b)}1_n,~F^{(b)}E^{(a)}1_m|0\leq a,b\leq p-1, m,n\in \Z, n\leq b-a, m\geq b-a\}.
\end{equation}

Now we are ready to state our main result of this chapter. Let $(\Uthick,\dif)$ be the $p$-DG category defined in \S\ref{sec-thick} (see equations \eqref{eq-dif-thick-splitter} through \eqref{eq-dif-thick-cupcap} for the differential).

\begin{thm} \label{thm-U-thick}
The $p$-DG Grothendieck group of the derived category $\DC(\Uthick,\dif)$ is isomorphic to the $\mathbb{O}_p$-alegbra $\dot{U}$. Furthermore, in $K_0(\Uthick)$, the symbols for the representable modules in the set
\begin{equation} \label{canonicalobjects}
\{\EC^{(a)}\FC^{(b)}\1_n,~\FC^{(b)}\EC^{(a)}\1_m|a,b\in \N, m,n\in \Z, n\leq b-a, m\geq b-a\}
\end{equation}
are identified with the images of Lusztig's canonical basis elements $\dot{\BM}$ in $\dot{U}$.
\end{thm}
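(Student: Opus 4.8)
The plan is to reduce Theorem \ref{thm-U-thick} to the machinery of \S\ref{sec-idempotents}, exactly as advertised in the last paragraph of that chapter: verify that $(\Uthick,\dif)$ is a Karoubian, mixed, fantastically filtered $p$-DG category, and then invoke Proposition \ref{propKaroubianMixedGroth}. The Karoubian property is recorded already (every idempotent in $\Uthick$ factors, so $\Uthick \cong \Kar(\UC)$ as additive categories, by the classification of indecomposables in \cite[\S5.3]{KLMS}). Mixedness requires identifying the self-dual indecomposables $\XM$: these are precisely the objects in \eqref{canonicalobjects} with appropriate grading shifts, matching Lusztig's canonical basis $\dot{\BM}$; the positivity of the graded endomorphism algebra $\END(\oplus_{X\in\XM}X)$, with only scalars in degree zero, follows from the computations of $\Hom$-spaces in \cite{KLMS} (the analogue of \cite[Proposition 9.9]{Lau1} for thick calculus). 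Granting these inputs, $K_0'(\Uthick) \cong \dot{U}_{\Z[q^\pm]}$ by the additive categorification theorem of \cite{KLMS}, and Proposition \ref{propKaroubianMixedGroth} then gives $K_0(\Uthick) \cong \dot{U}_{\Z[q^\pm]} \otimes_{\Z[q^\pm]} \OM_p = \dot{U}$, with the symbols of \eqref{canonicalobjects} mapping to $\dot{\BM}$.

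The real content, and the step I expect to occupy \S\ref{sec-categorification}, is proving that $\Uthick$ is \emph{fantastically filtered}: every object must admit an Fc-filtration (Definition \ref{defnFcfiltration}) whose subquotients are grading shifts of objects in $\XM$. The point is that $\Uthick$ is generated under composition and the monoidal structure by the objects $\EC^{(a)}$, $\FC^{(a)}$, so it suffices to produce Fc-filtrations for the relations \eqref{eqn-EE}--\eqref{eqn-FE}. The relations \eqref{eqn-EE} and \eqref{eqn-FF} are handled by Example \ref{eg-fc-condtion-for-Es} together with its downward-oriented mirror: the idempotent decomposition of $\EC^{(a)}\EC^{(b)}$ into copies of $\EC^{(a+b)}$ coming from the diagrammatic relations \eqref{eq-duality} and \eqref{eq-identitydecomp}, ordered by number of boxes, satisfies \eqref{conditionforfantastic} because of the differential formulas \eqref{eq-dif-thick-splitter}. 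The genuinely new input is the mixed relation \eqref{eqn-EF} (and its mirror \eqref{eqn-FE}): here one must show that the \textbf{Sto\v{s}i\'c formula} of \cite[Theorem 5.9]{KLMS} — the idempotent decomposition of $\EC^{(a)}\FC^{(b)}\1_n$ into summands $\FC^{(b-j)}\EC^{(a-j)}\1_n$ with multiplicity ${a-b+n \brack j}$ — is not merely a DG-filtration but an Fc-filtration. This means computing $\dif(v_i)u_j$ for the inclusion/projection morphisms $u_i,v_i$ appearing in the Sto\v{s}i\'c decomposition and checking the vanishing \eqref{conditionforfantastic} for a suitable total order (by $j$, or a refinement thereof). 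This is where one needs the differential formulas \eqref{eq-dif-thick-splitter} through \eqref{eq-dif-thick-cupcap}, and in particular \eqref{eq-dif-left-crossing-alt}, since the Sto\v{s}i\'c maps are built from thick crossings, cups, caps, and splitters.

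Concretely the steps in order are: (1) recall the Sto\v{s}i\'c decomposition and write down explicit inclusion maps $u_j$ and projection maps $v_j$ realizing $\FC^{(b-j)}\EC^{(a-j)}\1_n$ as summands of $\EC^{(a)}\FC^{(b)}\1_n$, following \cite{KLMS} closely; (2) verify the algebraic conditions \eqref{theygive1N}--\eqref{theydecompose}, which are immediate from \cite{KLMS}; (3) choose the total order on the index set (ordering by $j$, largest $j$ first, say, possibly refined within each $j$-stratum) and compute $\dif(v_j)$ using the formulas of \S\ref{sec-thick}; (4) show $\dif(v_j)u_{j'} = 0$ for $j' \ge j$ — the diagonal case $j'=j$ being the crux, since it is what distinguishes an Fc-filtration from a mere DG-filtration and forces the subquotients to be \emph{isomorphic} to the representable modules $\FC^{(b-j)}\EC^{(a-j)}\1_n$ rather than merely filtered by them; (5) repeat for \eqref{eqn-FE} by applying the symmetries of $(\Uthick,\dif)$; (6) assemble: $\Uthick$ is Fc-filtered, hence by Proposition \ref{propKaroubianMixedGroth} its $p$-DG Grothendieck group is $K_0'(\Uthick)\otimes_{\Z[q^\pm]}\OM_p \cong \dot{U}$, and the representable modules in \eqref{canonicalobjects}, being the self-dual indecomposables, have symbols equal to $\dot{\BM}$.

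The main obstacle is step (4), particularly the diagonal vanishing $\dif(v_j)u_j = 0$: it is a lengthy diagrammatic computation inside the thick calculus, tracking dots produced by the differential across thick crossings, cups, and caps with their weight-dependent coefficients, and relying on the fact that most such dotted diagrams vanish (e.g.\ a dot on a non-innermost strand above $\d_a$ is killed by a divided difference operator, as used repeatedly in \S\ref{sec-thick}). The coefficients appearing — quantum binomials ${a-b+n \brack j}$ and the linear weight factors $(a-b-n)$ etc.\ from \eqref{eq-dif-left-crossing-alt} — must conspire so that the cross-terms between different summands of the same $j$-stratum, and between strata $j' > j$, all cancel. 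This is essentially a $p$-differential-equivariant refinement of Sto\v{s}i\'c's original proof of \cite[Theorem 5.9]{KLMS}, and checking it carefully is the bulk of the remaining work.
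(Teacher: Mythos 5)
Your proposal matches the paper's proof essentially step for step: the paper also reduces to Proposition \ref{propKaroubianMixedGroth} by checking that the KLMS idempotent decompositions for \eqref{eqn-EE}--\eqref{eqn-FE} are Fc-filtrations, handling \eqref{eqn-EE}/\eqref{eqn-FF} via Example \ref{eg-fc-condtion-for-Es} and its mirror, and isolating the Sto\v{s}i\'{c} decomposition as the hard case (Proposition \ref{hardfcfilt}), whose verification via $\sigma^j_\b\dif(\l^i_\a)=0$ occupies the rest of \S\ref{sec-categorification} exactly as you anticipate. Correct, same route.
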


\begin{proof}
Relations \eqref{eqn-EE} through \eqref{eqn-FE} were shown for the additive decategorification of $\Uthick$ in \cite{KLMS} by constructing, for each relation, an idempotent decomposition. Now, we check that each of these idempotent decompositions is actually a Fc-filtration. Earlier, we haven seen in Example \ref{eg-fc-condtion-for-Es} that the decomposition of $\EC^{(a)} \EC^{(b)}$ into copies of $\EC^{(a+b)}$ was a Fc-filtration, which implies that \eqref{eqn-EE} holds in the $p$-DG Grothendieck group. A similar computation (rotated and horizontally flipped) implies that \eqref{eqn-FF} holds. That the decompositions leading to \eqref{eqn-EF} and \eqref{eqn-FE} are Fc-filtrations is the content of Proposition \ref{hardfcfilt}.

Assuming this proposition, it was shown in \cite{KLMS} that these decompositions are sufficient to decompose any object in $\Uthick$ into a direct sum of representable modules of the form
\eqref{canonicalobjects}, which are themselves indecomposable. Therefore,these objects are generators of the
compact derived category. Their endomorphism rings are positively graded, i.e. $\Uthick$ is mixed, as was shown in \cite{KLMS}. Therefore, Proposition
\ref{propKaroubianMixedGroth} implies that the $p$-DG Grothendieck group of $\Uthick$ is the specialization of the ordinary Grothendieck group at a $p$-th root of unity, whence the desired
result. \end{proof}

\begin{prop} \label{hardfcfilt}
The representable $p$-DG modules $\EC^{(a)}\FC^{(b)}\1_n$ $\FC^{(b)}\EC^{(a)}\1_n$, where $a,b\in \N$ and $n\in \Z$, admit a Fc-filtration. More specifically, the direct sum decompositions of the Sto\v{s}i\'{c} formula \cite[Theorem 5.9]{KLMS} form this Fc-filtration.
\end{prop}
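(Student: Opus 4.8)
The plan is to take the idempotent decompositions of $\EC^{(a)}\FC^{(b)}\1_n$ and $\FC^{(b)}\EC^{(a)}\1_n$ produced in \cite[Theorem 5.9]{KLMS} and verify that, with the differential of \S\ref{sec-thick} carried along, they satisfy the triangularity condition of Definition \ref{defnFcfiltration}. Write $M = \EC^{(a)}\FC^{(b)}\1_n$; the Sto\v{s}i\'c formula presents $\1_M = \sum_i u_iv_i$, where the summands $N_i$ are grading shifts of $\FC^{(b-j)}\EC^{(a-j)}\1_n$, indexed by the cancellation number $0\le j\le\min(a,b)$ together with a partition $\mu$ ranging over a box whose shape (and whether the accompanying thick bubbles are real or fake) depends on the sign of $a-b+n$, the multiplicity of each $j$ being ${a-b+n\brack j}$; the maps $u_i\co N_i\to M$ and $v_i\co M\to N_i$ are the explicit composites of thick splitters, thick crossings, thick cups and caps, and Schur-polynomial boxes written down in \cite[\S 5.2, \S 5.3]{KLMS}. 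Conditions \eqref{theygive1N}, \eqref{theyareorthogonal}, \eqref{theydecompose} are precisely the additive content of that theorem, so nothing new is needed there; the entire task is the remaining condition \eqref{conditionforfantastic}, namely $\dif(v_i)u_j = 0$ for $j\ge i$, for an appropriate total order on the index set.

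First I would fix a total order on the index set, refining the decomposition by the pair (cancellation number $j$, partition size $|\mu|$) in whichever directions make the computation below monotone; when $j=0$ and $\mu=\varnothing$ this is the leading summand carrying the ``identity-like'' copy of $\FC^{(b)}\EC^{(a)}\1_n$, exactly as the empty diagram is smallest in Example \ref{eg-fc-condtion-for-Es}. Next, since the differential on $\Uthick$ agrees with the canonically induced one, $\dif(v_i)$ may be computed by the Leibniz rule together with the formulas \eqref{eq-dif-thick-splitter}, \eqref{eq-dif-thick-crossing}, \eqref{eq-dif-left-crossing}, \eqref{eq-dif-right-crossing}, \eqref{eq-dif-thick-cupcap}, \eqref{eq-dif-thick-bubbles} and \eqref{eq-dif-schur}. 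Each of these replaces a generator by that generator with one extra thick dot (and, for a cap or cup, an additional dot-and-bubble term), with scalar coefficients depending only on thicknesses and ambient region labels. Using the thick-dot slides \eqref{thickdotslide}, \eqref{thickdotoncrossing}, the bubble slides, and the Schur relation \eqref{difpil}, every resulting decorated diagram is rewritten in the Sto\v{s}i\'c basis; the heart of the computation is that each such term is either (a scalar times) some $v_{i'}$ with $i'$ strictly below $i$ in our order — the new dot, once slid through a crossing, either modifies a curl (changing $j$) or, left where it sits, grows a Schur partition by a box exactly as in \eqref{difpil} — or else vanishes, because some Schur polynomial becomes symmetric in two adjacent variables and is killed by a divided difference, or a fake bubble degenerates. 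In particular $\dif(v_i)$ carries no diagonal component along $v_i$ itself.

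Granting that structural claim, \eqref{conditionforfantastic} follows formally: writing $\dif(v_i) = \sum_{i' < i} c_{i'} v_{i'}$, for any $j\ge i$ we get $\dif(v_i)u_j = \sum_{i' < i} c_{i'} v_{i'}u_j = 0$ by the orthogonality \eqref{theyareorthogonal}, since $i' < i \le j$ forces $i'\ne j$. This is the point of the $j \ge i$ rather than $j > i$ requirement: the absence of a diagonal term in $\dif(v_i)$ is exactly what forces the subquotients of the resulting $p$-DG filtration to be $p$-DG isomorphic to the $N_i$ with their natural differentials, just as in Example \ref{eg-fc-condtion-for-Es}. The mirror statement for $\FC^{(b)}\EC^{(a)}\1_n$ is then obtained by applying the symmetries of $(\Uthick,\dif)$ — rotation by $180^\circ$ and horizontal reflection — which permute the differential formulas of \S\ref{sec-thick} among themselves.

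I expect the main obstacle to be purely organizational: Sto\v{s}i\'c's maps $u_i, v_i$ are intricate nested composites, the index set splits into cases according to the sign of $a-b+n$ (real versus fake thick bubbles), and tracking which summand $N_{i'}$ each differential term lands in, and with which sign, amounts to redoing the relevant computations of \cite[Section 5]{KLMS} with a differential attached at every step. There is no conceptual novelty beyond the $\EC^{(a)}\EC^{(b)}$ case of Example \ref{eg-fc-condtion-for-Es}; the difficulty lies entirely in managing the dot-slide and bubble-slide manipulations uniformly enough that every term can be recognized in the Sto\v{s}i\'c basis and the triangularity read off directly.
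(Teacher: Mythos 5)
Your reduction of condition \eqref{conditionforfantastic} to the structural claim that ``$\dif(v_i)$ carries no diagonal component and is a combination of strictly lower $v_{i'}$'s'' is exactly where the argument breaks down, and it hides the entire technical content of the proof. When one actually computes $\dif(\l^i_\a)$ for the Sto\v{s}i\'{c} projection maps (using \eqref{eq-dif-thick-splitter}, \eqref{eq-dif-thick-CW-cup}, \eqref{eq-dif-left-crossing-alt} and \eqref{eq-dif-schur}), one gets three kinds of terms, as in \eqref{term1}--\eqref{term3}: a sum $T_1$ of genuine projection maps $\l^i_{\a+\square}$ (these are the only terms your orthogonality argument handles); a term $T_2$ equal to $\l^i_\a$ post-composed with thick dots on the two output strands; and a term $T_3$ equal to $\l^i_\a$ with an extra dot on the internal sideways crossing. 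Neither $T_2$ nor $T_3$ is a scalar multiple of any $v_{i'}$ in the Sto\v{s}i\'{c} basis, and both pair \emph{nontrivially} with the diagonal inclusion $\sigma^i_\a$ -- each yields a nonzero degree-two endomorphism of $N_i$. So $\dif(v_i)$ does have a ``diagonal component'' term by term; the required vanishing $\sigma^i_\a\dif(\l^i_\a)=0$ holds only because the contributions of $T_2$ and $T_3$ cancel against each other. Proving that cancellation, i.e.\ the identity \eqref{5.3analog}, is the hard part: it amounts to redoing the ladder/Square Flop computation of \cite[Lemma 5.3]{KLMS} with one extra dot inserted on the leftmost strand, checking that the inequality $x+y\le m-2$ driving their inductive cancellations survives the extra dot (this is where the hypothesis $n>b-a$ enters, the case $n=b-a$ being handled separately because the coefficient $n+a-b$ of $T_2$ and $T_3$ then vanishes).

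A second, smaller gap: even the \emph{off}-diagonal vanishing $\sigma^j_\b T_3=0$ for $j<i$ is not formal. Since $T_3$ is not in the span of the $v_{i'}$, one cannot invoke \eqref{theyareorthogonal}; one must rerun the resolution of $\sigma^j_\b\l^i_\a$ from \cite[Lemma 5.2]{KLMS} with the extra dot carried along and observe that only one of the two vanishing diagrams in their (5.27) acquires the dot, so the other still kills the uncancelled leftward-oriented line. Your overall architecture (compute $\dif(v_i)$, identify each term, use the order by $(j,|\a|)$, transfer to $\FC^{(b)}\EC^{(a)}\1_n$ by symmetry) matches the paper's, but as written the proposal asserts rather than proves the one statement -- absence of a net diagonal term -- that does not follow from bookkeeping and requires the genuinely new diagrammatic computation.
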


The subquotients of this filtration are all representable modules, categorifying the right hand side of the relations \eqref{eqn-EF} and \eqref{eqn-FE}.

The rest of this paper is our proof of Proposition \ref{hardfcfilt}. We follow Section 5.2 of \cite{KLMS} rather closely, and we will frequently reference their propositions and equation
numbers. To avoid conflict with \S\ref{sec-thick} of this paper, we will refer to their work as Section V.2, equation (V.27), and so forth. All citations of this form and all page numbers refer to \cite{KLMS}.

Finally, we should point out the relationship between the main result of \cite[Theorem 6.11]{EQ1} and Theorem \ref{thm-U-thick}. The following result is a consequence of Lemma \ref{lemma-classical-Morita} and Lemma \ref{lemma-Morita-homotopy-category}, as well as our construction of $({\Uthick},\dif)$ as a partial idempotent completion of $(\UC,\dif)$ (see Definition \cite[Definition]{EQ1}). The proof is in parallel with the ``one-half'' case of Corollary \ref{cor-categorical-embedding-half-u}, and will be left to the reader.

\begin{cor}
 The abelian and homotopy categories of $p$-DG modules over $({\Uthick},\dif)$ and $(\UC, \dif)$ are Morita equivalent. On the level of derived categories, $\DC(\UC, \dif)$ embeds fully faithfully inside $\DC(\Uthick,\dif)$. This embedding of derived categories categorifies the inclusion of the small quantum group $\dot{u}$ inside the BLM form $\dot{U}$ at a prime root of unity. \hfill$\square$
\end{cor}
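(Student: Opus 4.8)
The plan is to mimic, one dimension up, the proof of Corollary~\ref{cor-categorical-embedding-half-u}, using the fact that $(\Uthick,\dif)$ was \emph{constructed} as a partial idempotent completion $\UC(\{\e_a\})$ of $(\UC,\dif)$ with the induced differential $\odif$, which we checked agrees with the explicit formulas \eqref{eq-dif-thick-splitter}--\eqref{eq-dif-thick-cupcap}. First I would invoke Lemma~\ref{lemma-Morita-p-DG-abelian} with $\AC = \UC$ and $\XM = \{\e_a\}$: since each $\e_a$ is a subquotient idempotent (this is exactly the content of the $p$-DG filtration of $\EC^a$ into copies of $\EC^{(a)}$, reviewed in the proof of Theorem~\ref{thm-half-U-thick} and in \cite{EQ1}), the abelian categories of $p$-DG modules over $\UC$ and over $\Uthick$ are Morita equivalent, via tensoring with the $p$-DG bimodules $\UC \oplus (\oplus_a \UC\e_a)$ and its $\HOM$-dual. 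Then Lemma~\ref{lemma-Morita-homotopy-category} upgrades this to a triangulated equivalence of the homotopy categories $\HC(\UC,\dif) \simeq \HC(\Uthick,\dif)$, giving the first two claims of the corollary.

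Next I would address the derived category statement, which is the one that requires care, because (as the key example of \S\ref{sec-key-example} warns) a homotopy Morita equivalence need not descend to a derived equivalence. Here I would not claim a derived equivalence --- only a fully faithful embedding. The argument is: each $\UC\e_a$ is cofibrant as a left $p$-DG module over $\UC$ --- indeed $\UC\e_a$ is a direct summand of the representable module $\UC\1_{\EC^a}$, by the $p$-DG filtration of $\EC^a$ together with the fact that over the ground ring the idempotent decomposition splits --- so the bimodule $\MC := \UC \oplus (\oplus_a \UC\e_a)$ over $(\UC,\Uthick)$ has the cofibrance property of Proposition~\ref{prop-criteria-derived-equivalence}. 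Tensoring with the ``column'' bimodule $\HOM_{\UC}(\MC,\UC)$ over $\UC$ sends representable (hence compact cofibrant) $\UC$-modules to the summands of $\MC$, which are compact cofibrant $\Uthick$-modules; so the derived tensor functor $\HOM_{\UC}(\MC,\UC)\otimes^{\mathbf L}_{\UC}(-) : \DC(\UC,\dif) \to \DC(\Uthick,\dif)$ is well-defined on compact objects, and coincides there with the (underived) tensor that gives the homotopy equivalence. Full faithfulness then follows by computing $\RHom$ between images of representable $\UC$-modules: the source $\RHom_{\UC}(\UC\1_X, \UC\1_Y) = \HOM_{\UC}(X,Y)$ with its differential, and the target $\RHom_{\Uthick}$ between the corresponding summands of $\MC$, which by adjunction is again $\eta\,\HOM_{\UC}(X,Y)\,\e$ for the relevant idempotents --- and these agree, exactly as in the endomorphism-algebra computation in the proof of Corollary~\ref{cor-categorical-embedding-half-u}.

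Finally, on the level of Grothendieck groups: by Theorem~\ref{thm-U-thick} the symbols of the representable modules \eqref{canonicalobjects} are identified with Lusztig's canonical basis $\dot{\BM}$ of $\dot U$, and by \cite[Theorem~6.11]{EQ1} the symbols of the analogous representable $\UC$-modules $\EC^{(a)}\FC^{(b)}\1_n$, $\FC^{(b)}\EC^{(a)}\1_m$ with $a,b \in \{0,\dots,p-1\}$ form the $\OM_p$-basis $\dot{\BM}_{\dot u}$ of $\dot u$; the embedding sends each such generator of $\DC(\UC,\dif)$ to the representable $\Uthick$-module with the same name, hence $[K_0(\UC)] \hookrightarrow [K_0(\Uthick)]$ is precisely the inclusion $\dot u \hookrightarrow \dot U$ on these bases. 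The main obstacle --- the point where one must be genuinely careful rather than routine --- is the cofibrance of the summands $\UC\e_a$ over $\UC$, since a generic member of the family of differentials on $\UC$ would make $\e_a$ fail to be even a subquotient idempotent (cf.\ Remark~\ref{source-of-counterexamples}); but for $\dif = \dif_1$ this is precisely the assertion, established in \cite{EQ1}, that the Khovanov--Lauda idempotent decompositions are $p$-DG (indeed Fc-) filtrations, so the obstruction does not arise. Once cofibrance is in hand, everything else is a transcription of the ``one-half'' argument, and I would leave those details to the reader, as the corollary statement itself does.
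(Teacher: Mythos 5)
Your treatment of the first two claims is correct and matches the paper's intent: Lemma~\ref{lemma-Morita-p-DG-abelian} and Lemma~\ref{lemma-Morita-homotopy-category} applied to $\XM=\{\e_a\}$ give the abelian and homotopy Morita equivalences, and the derived statement is then meant to parallel Corollary~\ref{cor-categorical-embedding-half-u}. However, the step you single out as ``the main obstacle'' contains a genuine error: the claim that each $\UC\e_a$ is cofibrant as a left $p$-DG module over $\UC$ is false for $a\ge p$. If it were true, then Proposition~\ref{prop-criteria-derived-equivalence} (whose endomorphism condition is automatically satisfied here, since $\END_{\Uthick}(\MC)\cong\UC$ on the nose) would yield a derived \emph{equivalence} $\DC(\UC,\dif)\simeq\DC(\Uthick,\dif)$ and hence an isomorphism of Grothendieck groups $\dot{u}\cong\dot{U}$ --- contradicting Theorem~\ref{thm-U-thick} and the central phenomenon of the paper. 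Your justification also conflates two distinct notions: the $p$-DG filtration of $\EC^a$ exhibits $\UC\e_a$ as a $p$-DG \emph{subquotient} of the representable module $\UC\1_{\EC^a}$, not as a $p$-DG direct summand ($\dif(\e_a)\neq 0$, so $\e_a$ is only a subquotient idempotent). Subquotients of DG filtrations need not be cofibrant --- this is exactly the ``third reason'' of \S\ref{subsec-basic-problem}, and for $a\ge p$ the module $\UC\e_a$ is acyclic but not contractible over $\UC$, so it cannot be cofibrant (cf.\ the contractible case of \S\ref{sec-key-example}).

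The good news is that your argument does not actually need this claim, and the rest of your second paragraph is essentially the right route. The functor $\HOM_{\UC}(\MC,\UC)\otimes^{\mathbf L}_{\UC}(-)$ sends each representable module $\UC\1_X$ (already cofibrant, so no resolution is needed) to the representable $\Uthick$-module $\Uthick\1_X$, and for old objects $X,Y$ one has $\HOM_{\Uthick}(X,Y)=\HOM_{\UC}(X,Y)$ as $p$-complexes by construction of the partial idempotent completion; since these representables compactly generate $\DC(\UC,\dif)$, full faithfulness follows by d\'evissage, exactly as in the componentwise computation of Corollary~\ref{cor-categorical-embedding-half-u} (where the $a\ge p$ components contribute nothing because they are already zero in the source). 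You should therefore delete the cofibrance assertion and the appeal to Proposition~\ref{prop-criteria-derived-equivalence}, and run the full-faithfulness check directly on representable generators; the Grothendieck-group identification in your last paragraph is then fine as stated.
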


%
\subsection{Preliminaries}
%

Let us quickly recall what Section V.2 accomplishes.

Theorem V.9 proves that there are isomorphisms in $\Uthick$: when $n \ge b-a$, one has
\begin{equation} \label{firstone}
   \EC^{(a)}\FC^{(b)}\1_n \cong \bigoplus_{j=0}^{\min(a,b)} \bigoplus_{\alpha \in P(j,n+a-b-j)}\FC^{(b-j)}\EC^{(a-j)}\1_n\{2|\alpha|-j(n+a-b)\},
\end{equation}
while for $n \le b-a$ one has
\begin{equation} \label{secondone}
 \FC^{(b)} \EC^{(a)}\1_n \cong \bigoplus_{j=0}^{\min(a,b)} \bigoplus_{\alpha \in P(j,-n+b-a-j)}\EC^{(a-j)}\FC^{(b-j)}\1_n\{2|\alpha|-j(b-a-n)\}.
\end{equation}
We focus on \eqref{firstone}, and will discuss \eqref{secondone} briefly at the end. The case $n = b-a$ is special, as both equations apply.

The inclusion and projection maps for the direct sum decomposition \eqref{firstone} are $\sigma_\a^j$ and $\l_\a^j$ respectively. We recall the formulas here.
\begin{eqnarray}
 \l_{\a}^i :=
{
\labellist
\small\hair 2pt
 \pinlabel {$\pi_\a$} [ ] at 31 38
 \pinlabel {$a$} [ ] at 0 56
 \pinlabel {$b$} [ ] at 61 56
 \pinlabel {$i$} [ ] at 44 49
 \pinlabel {$b-i$} [ ] at 1 4
 \pinlabel {$a-i$} [ ] at 63 4
 \pinlabel {$n$} [ ] at 70 22
\endlabellist
\centering
\ig{1.5}{lambda}
}
\\ \nonumber \\
\sigma_{\a}^i  :=   (-1)^{ab}\sum_{\beta, \gamma, x, y}(-1)^{\frac{i(i+1)}{2}+|x|+|y|}
c_{\alpha,\beta,\gamma,x, y}^{K_i} &
{
\labellist
\small\hair 2pt
 \pinlabel {$\pi_\g^\spadesuit$} [ ] at 28 42
 \pinlabel {$\pi_{\overline{x}}$} [ ] at 8 34
 \pinlabel {$\pi_{\overline{y}}$} [ ] at 68 34
 \pinlabel {$\pi_\b$} [ ] at 37 22
 \pinlabel {$a$} [ ] at 6 7
 \pinlabel {$b$} [ ] at 69 7
 \pinlabel {$b-i$} [ ] at 10 70
 \pinlabel {$a-i$} [ ] at 66 70
 \pinlabel {$i$} [ ] at 19 24
 \pinlabel {$i$} [ ] at 53 40
 \pinlabel {$n$} [ ] at 80 49
\endlabellist
\centering
\ig{1.5}{sigma}
}
\end{eqnarray}
That these maps collectively satisfy \eqref{theydecompose} is the content of the Stosic formula, Theorem V.6. That they satisfy \eqref{theygive1N} and \eqref{theyareorthogonal} follows from Lemmas V.2 and V.3.

Our goal in this section is to prove that they also satisfy \eqref{conditionforfantastic}. More specifically, fix the partial order on the set $I = \{(j, \a) \; | \; 0 \le j \le \min(a,b), \a \in
P(j,n+a-b-j)\}$ which satisfies $(i,\a) \le (j,\b)$ if and only if $i \le j$, or $i=j$ and $\a \le \b$. Recall that, for two partitions, $\a \le \b$  means that $\b$ can be obtained by adding boxes iteratively to $\a$. We seek to show that
\begin{equation} \label{conditionforfantasticNow}
\sigma_\b^j \dif(\l_\a^i) = 0 \textrm{ for } (j,\b)
\le (i,\a).
\end{equation}
(We have reversed the partial order from our original condition \eqref{conditionforfantastic}, to make $\le$ seem more natural.) As this computation is close to the computation of $\sigma_b^j \l_\a^i$, one should expect it to closely follow Lemmas V.2 and V.3.

In the discussion before Corollary V.8, \cite{KLMS} show that the idempotent decomposition above for \eqref{firstone}, after applying the symmetries of $\Uthick$, yields an idempotent decomposition for \eqref{secondone}. A similar argument will allow \eqref{conditionforfantasticNow} to imply the corresponding condition for the idempotent decomposition of \eqref{secondone}. For this reason, we stick to the $n \ge b-a$ case.

%
\subsection{Checking the fantastical condition}
%

First we compute the differential of the top half of $\l_\a^i$.

\begin{equation} \label{dif-lambda} \dif \left( \quad \; \;
	{
	\labellist
	\small\hair 2pt
	 \pinlabel {$\pi_\a$} [ ] at 31 38
	 \pinlabel {$a$} [ ] at 0 56
	 \pinlabel {$b$} [ ] at 61 56
	 \pinlabel {$i$} [ ] at 44 49
	 \pinlabel {$a-i$} [ ] at 1 4
	 \pinlabel {$b-i$} [ ] at 63 4
	 \pinlabel {$n$} [ ] at 70 22
	\endlabellist
	\centering
	\ig{1.5}{lambdatop}
	} \quad \; \; \right)
\quad = \sum_{\a + \square} (C(\square) + i - n + b - a) \quad
{
\labellist
\small\hair 2pt
 \pinlabel {$\pi_{\a + \square}$} [ ] at 31 38
 \pinlabel {$a$} [ ] at 0 56
 \pinlabel {$b$} [ ] at 61 56
 \pinlabel {$i$} [ ] at 44 49
 \pinlabel {$a-i$} [ ] at 1 4
 \pinlabel {$b-i$} [ ] at 63 4
 \pinlabel {$n$} [ ] at 70 22
\endlabellist
\centering
\ig{1.5}{lambdatop}
}
\end{equation}

\begin{proof} Four terms contribute to this differential: the $a$ splitter, the $b$ splitter, the clockwise thick cup, and the Schur polynomial $\pi_\a$. The first three terms all yield the following diagram, with various coefficients.
\begin{equation} \label{lambdatopproof}	{
	\labellist
	\small\hair 2pt
	 \pinlabel {$\pi_{\a}$} [ ] at 23 38
	 \pinlabel {$a$} [ ] at 0 56
	 \pinlabel {$b$} [ ] at 61 56
	 \pinlabel {$i$} [ ] at 44 49
	 \pinlabel {$a-i$} [ ] at 1 4
	 \pinlabel {$b-i$} [ ] at 63 4
	 \pinlabel {$n$} [ ] at 70 22
	\endlabellist
	\centering
	\ig{1.5}{lambdatopdot}
	} \end{equation}
The $a$ splitter yields coefficient $i-a$, by \eqref{eq-dif-thick-splitter-down}. The $b$ splitter yields coefficient $i-b$, by \eqref{eq-dif-thick-splitter-down-D}. The cup yields coefficient $i-(n-2b+2i)$, by \eqref{eq-dif-thick-CW-cup}, because the bottommost region has label $n - 2b + 2i$. Summing together these coefficients, the overall result is $i-n+b-a$. Now the diagram in \eqref{lambdatopproof} has polynomial $e_1 \pi_\a$ on the crossbeam, and by the Pieri rule, \[ e_1 \pi_\a = \sum_{\a + \square} \pi_{\a + \square}. \] Combining this with \eqref{eq-dif-schur}, we get the desired result. \end{proof}

Using this formula and \eqref{eq-dif-left-crossing-alt}, we now compute $\dif(\l^i_\a)$.
\begin{align}
\dif(\l^i_\a) & = \sum_{\a + \square} (C(\square) + i + b - a - n) {
\labellist
\small\hair 2pt
 \pinlabel {$\pi_{\a+\square}$} [ ] at 31 38
 \pinlabel {$a$} [ ] at 0 56
 \pinlabel {$b$} [ ] at 61 56
 \pinlabel {$i$} [ ] at 44 49
 \pinlabel {$b-i$} [ ] at 1 4
 \pinlabel {$a-i$} [ ] at 63 4
 \pinlabel {$n$} [ ] at 70 22
\endlabellist
\centering
\ig{1.5}{lambda}
} \label{term1}  \\
& + (n + a - b)  \left( \quad  {
\labellist
\small\hair 2pt
 \pinlabel {$\pi_{\a}$} [ ] at 31 38
 \pinlabel {$a$} [ ] at 0 56
 \pinlabel {$b$} [ ] at 61 56
 \pinlabel {$i$} [ ] at 44 49
 \pinlabel {$b-i$} [ ] at 1 4
 \pinlabel {$a-i$} [ ] at 63 4
 \pinlabel {$n$} [ ] at 70 22
\endlabellist
\centering
\ig{1.5}{lambdaleftdot}
} \quad + \quad {
\labellist
\small\hair 2pt
 \pinlabel {$\pi_{\a}$} [ ] at 31 38
 \pinlabel {$a$} [ ] at 0 56
 \pinlabel {$b$} [ ] at 61 56
 \pinlabel {$i$} [ ] at 44 49
 \pinlabel {$b-i$} [ ] at 1 4
 \pinlabel {$a-i$} [ ] at 63 4
 \pinlabel {$n$} [ ] at 70 22
\endlabellist
\centering
\ig{1.5}{lambdarightdot}
} \quad \right) \label{term2} \\ &
  - (n + a - b) {
\labellist
\small\hair 2pt
 \pinlabel {$\pi_{\a}$} [ ] at 31 38
 \pinlabel {$a$} [ ] at 0 56
 \pinlabel {$b$} [ ] at 61 56
 \pinlabel {$i$} [ ] at 44 49
 \pinlabel {$b-i$} [ ] at 1 4
 \pinlabel {$a-i$} [ ] at 63 4
 \pinlabel {$n$} [ ] at 70 22
\endlabellist
\centering
\ig{1.5}{lambdacenterdot}
} \label{term3}
\end{align}

Consider the first term \eqref{term1}, which we call $T_1$. Consider a single summand in $T_1$, which we call $x$, corresponding to the partition $\a + \square$. When the partition $\a +
\square$ lies within $P(i,n+a-b-i)$, then $x = \l^i_{\a + \square}$. When $\a + \square$ is not contained in $P(i,n+a-b-i)$, it was because the box was added in the first row, and has content
precisely $n+a-b-i$. Therefore, $x$ appears with coefficient zero in $T_1$.

One concludes that
\begin{equation} \label{term1check}
\sigma^j_\b T_1 = 0 \textrm{ unless } j=i \textrm{ and } \b \in P(i,n+a-b-i) \textrm{ has the form } \a + \square.
\end{equation}
In particular, $\sigma^j_\b T_1 = 0$ when $(j,\b) \le (i,\a)$.

Suppose that $n=b-a$. Then the remaining terms have coefficient zero, and \eqref{conditionforfantasticNow} is proven. Thus we can assume below that $n > b-a$.

Consider the second term \eqref{term2}, which we call $(n+a-b) T_2$ (i.e. the sum of diagrams is $T_2$). Clearly $T_2$ is none other than $\l^i_\a$ with $\ig{.5}{dotplusdot}$ on the
bottom. In particular,
\begin{equation} \label{term2check}
\sigma^j_\b T_2 = \begin{cases} \ig{1}{dotplusdot} &\mbox{if } j = i, \a = \b, \\ 0 &\mbox{else.} \end{cases}
\end{equation}

Consider the third term \eqref{term3}, which we call $-(n+a-b) T_3$. Then $T_3$ looks much the same as $\l_\a^i$, except with an extra dot on the sideways crossing near the bottom. Let us
now resolve the diagram $\sigma^j_\b T_3$, closely following Lemma V.2 which resolves $\sigma^j_\b \l^i_\a$. Lemma V.2 goes through several pages of local manipulations, none of which involve the
sideways crossing near the bottom, until the middle paragraph of page 57. Thus we can apply these manipulations verbatim to our computation.

In the middle paragraph of page 57, \cite{KLMS} apply the Square Flop Lemma iteratively, which either swaps a rightward-oriented line with a leftward-oriented line, or removes them both. They
argue that every rightward-oriented line must eventually cancel against some leftward-oriented line. Otherwise, if some line remains uncanceled, it will create one of the two diagrams in
(V.27), which both vanish. In our circumstance, the first diagram in (V.27) has an extraneous dot on the sideways crossing, and this does not vanish! However, the second term in (V.27) is
unchanged, and vanishes as before. If $j < i$ then some leftward-oriented line remains uncanceled; one obtains the second diagram in (V.27), and the result is zero. We have shown
\begin{equation} \label{term3checkpart1}
\sigma^j_\b T_3 = 0 \textrm{ when } j<i.
\end{equation}

If $j > i$, it is possible for some rightward-oriented line not to cancel, and the result to be nonzero, but this is allowed by our partial order.

If $i=j$ and some line remains uncanceled, one also has an instance of the second diagram in (V.27), and the result is zero. Thus when $i=j$ we can assume all lines cancel, as they do on
page 57, and we can continue to follow their argument all the way through the end of the lemma. We obtain
\begin{equation} \label{term3checkpart2}
\sigma^i_\b T_3 =
\begin{cases}
(-1)^{(a-i)(b-i)} \ig{1}{doublecrosswithdotonX} &\mbox{if } \a = \b \\ 0 &\mbox{else.}
\end{cases}
\end{equation}

Putting together the results of \eqref{term1check}, \eqref{term2check}, \eqref{term3checkpart1}, and \eqref{term3checkpart2}, we have proven that
\[
\sigma^j_\b \dif(\l^i_\a) = 0
\]
whenever $j < i$, or whenever $i=j$ and $\b \ngeq \a$. We have also shown that $\sigma^i_\a \dif(\l^i_\a) = 0$ when $n = b-a$. It remains to show, when $i=j$ and $\b = \a$ and $n > b-a$, that $\sigma^i_\a \dif(\l^i_\a)=0$, which now amounts to
\begin{equation} \label{5.3analog} \ig{1}{doublecrosswithdotonX} = (-1)^{cd} \left(\;\; {
\labellist
\small\hair 2pt
 \pinlabel {$d$} [ ] at -6 9
 \pinlabel {$c$} [ ] at 31 9
\endlabellist
\centering
\ig{1}{dotonlefttall}
} \quad + \quad {
\labellist
\small\hair 2pt
 \pinlabel {$d$} [ ] at -6 9
 \pinlabel {$c$} [ ] at 31 9
\endlabellist
\centering
\ig{1}{dotonrighttall}
} \right),
\end{equation}
where $c=a-i$ and $d=b-i$. This is our final lemma.

\begin{lemma}
The equality \eqref{5.3analog} holds, when $n > d-c$.
 \end{lemma}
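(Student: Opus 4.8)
The plan is to reduce the equality \eqref{5.3analog} to a diagrammatic computation that takes place entirely inside the ``double crossing'' region, following the structure of Lemma V.3 in \cite{KLMS}. Recall that Lemma V.3 establishes the dotless analog of \eqref{5.3analog}, namely that the double crossing of a $c$-strand over a $d$-strand (reading from the bottom, first rightward then leftward) equals $(-1)^{cd}$ times the identity on the pair $d, c$. The statement we want is the same identity with a thick dot (i.e.\ $e_1$) inserted on the $c$-labelled crossbeam of the double crossing. So first I would recall from \cite{KLMS} the explicit resolution used to prove Lemma V.3, which expresses the double crossing as a sum over diagrams obtained by applying the Square Flop Lemma; in the dotless case all but one term vanish, leaving the sign $(-1)^{cd}$ times two parallel strands.

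Next, I would track the thick dot through exactly this resolution. The key observation is that the thick dot sits on the crossbeam of thickness $c$, which is the $c$-strand being double-crossed; as the Square Flop Lemma is applied iteratively, this crossbeam is carried along unchanged until the very last step, where the rightward and leftward $c$-lines cancel against each other and deposit the crossbeam (now carrying $e_1$) onto one of the two output strands. Since the $c$-strand can end up on either side depending on how the flops resolve --- but in fact the surviving term is unique up to the placement of the dot --- the single surviving term is $(-1)^{cd}$ times two parallel strands with $e_1$ placed on the resulting $c$-strand, which after using the thick dot slide \eqref{thickdotslide} can be written as the sum of $e_1$ on the $c$-output and $e_1$ on the $d$-output appropriately; more carefully, the thick dot on the crossbeam becomes precisely the sum of the two terms on the right-hand side of \eqref{5.3analog} by \eqref{thickdotoncrossing} and \eqref{thickdotslide}. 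The hypothesis $n > d-c$ is what guarantees that the relevant bubbles appearing in the intermediate regions are ``real'' rather than ``fake,'' so that the vanishing arguments in the proof of Lemma V.3 (which rely on the second diagram of (V.27) being zero) still apply verbatim; I would check this weight bookkeeping explicitly, since it is the only place the inequality enters.

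The main obstacle I anticipate is making precise how the extra dot interacts with the Square Flop Lemma and the fake-bubble/real-bubble dichotomy: in the dotless proof, \emph{both} diagrams in (V.27) vanish, whereas here the first diagram of (V.27) acquires a dot on the sideways crossing and need \emph{not} vanish --- this is exactly the subtlety already flagged in the discussion preceding \eqref{term3checkpart2}. So the careful point is to verify that when $i=j$ and $\b=\a$ the flops \emph{do} all cancel (no leftward line survives), so that the only contribution is the ``all lines cancel'' term, and that this term is precisely the right-hand side of \eqref{5.3analog}. Concretely, I would: (1) quote the resolution of the double crossing from the proof of Lemma V.3; (2) observe the dot rides the $c$-crossbeam inertly through all flops; (3) use the $n > d-c$ hypothesis to rule out surviving uncanceled lines exactly as in \cite{KLMS}; (4) apply \eqref{thickdotoncrossing} and \eqref{thickdotslide} to rewrite the resulting dot-on-crossbeam as the two-term sum. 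I expect step (3) --- the weight computation showing the relevant bubbles are real under $n>d-c$, hence that the KLMS vanishing still goes through --- to be the crux, with the rest being routine diagrammatic manipulation parallel to Lemma V.3.
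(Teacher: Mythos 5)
Your proposal is correct and follows essentially the same route as the paper: reduce via \eqref{thickdotoncrossing} and Lemma V.3 to the case of a dot on an external strand, explode into thin strands and rerun the Square Flop argument from the proof of Lemma V.3 with the extra dot, and use the hypothesis $n > d-c$ to guarantee that the KLMS simplification (the inequality $x+y \le m-2$ needed to reduce the Square Flop Lemma to (V.45)) still holds. The only presentational difference is that the paper performs the \eqref{thickdotoncrossing}/Lemma V.3 reduction \emph{before} exploding, so the ladder of (V.44) simply acquires $c$ dots instead of $c-1$ on its leftmost thin strand, rather than having the dot ``ride the crossbeam'' through the flops.
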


\begin{proof} First we use the relation \eqref{thickdotoncrossing}, and simplify using Lemma V.3 itself, to reduce the lemma to the following statement.
\begin{equation} \label{reduced5.3analog} \ig{1}{doublecrosswithdotonleft} = (-1)^{cd} 	{
	\labellist
	\small\hair 2pt
	 \pinlabel {$d$} [ ] at -6 9
	 \pinlabel {$c$} [ ] at 31 9
	\endlabellist
	\centering
	\ig{1}{dotonrighttall}
	}. \end{equation}
	
Now we simplify the left diagram in \eqref{reduced5.3analog}, following the proof of Lemma V.3. After exploding the diagram as in (V.43) we obtain a ladder as in (V.44), except that where (V.44) had $c-1$ dots on the leftmost strand, we now have $c$ dots.

\begin{equation} \label{eq_sideways_ladder}
 = \;\;
{
\labellist
\small\hair 2pt
 \pinlabel {$\ldots$} [ ] at 56 49
 \pinlabel {$\ldots$} [ ] at 56 17
 \pinlabel {$\ldots$} [ ] at 159 49
 \pinlabel {$\ldots$} [ ] at 159 17
 \pinlabel {$\bullet$} [ ] at 27.5 33
 \pinlabel {$\bullet$} [ ] at 83.5 33
 \pinlabel {$\bullet$} [ ] at 11.5 33
 \pinlabel {$\bullet$} [ ] at 203.5 33
 \pinlabel {$\bullet$} [ ] at 131.5 33
 \pinlabel {$\bullet$} [ ] at 187.5 33
 \pinlabel {$\scs c-2$} [ ] at 22 38
 \pinlabel {$\scs 1$} [ ] at 79 38
 \pinlabel {$\scs c$} [ ] at 6 38
 \pinlabel {$\scs d-1$} [ ] at 210 38
 \pinlabel {$\scs 1$} [ ] at 137 38
 \pinlabel {$\scs d-2$} [ ] at 194 38
 \pinlabel {$d$} [ ] at -3 56
 \pinlabel {$d$} [ ] at -3 7
 \pinlabel {$d+1$} [ ] at 21 57
 \pinlabel {$d+1$} [ ] at 21 10
 \pinlabel {$c+d$} [ ] at 108 57
 \pinlabel {$c+d$} [ ] at 108 11
 \pinlabel {$c+1$} [ ] at 194 57
 \pinlabel {$c+1$} [ ] at 194 11
 \pinlabel {$c$} [ ] at 216 56
 \pinlabel {$c$} [ ] at 216 10
 \pinlabel {$n$} [ ] at 220 35
\endlabellist
\centering
\ig{1.5}{ladder2}
}
\end{equation}

Now we repeatedly apply the Square Flop Lemma, as they do. They argue that, in this case, the Square Flop Lemma reduces to (V.45) because $x+y \le m-2$ (see the top of page 62). For our
situation with an extra dot, the statement $x+y \le m-2$ still holds; we may have increased $x$ by one, but we also ignored the edge case $n = d-c$ (i.e. $n = b-a$ in application) where
this might have caused problems. Thus we can follow their argument exactly, obtaining the following modification of (V.46).

\begin{equation}
= (-1)^{cd} \;\;
{
\labellist
\small\hair 2pt
 \pinlabel {$\ldots$} [ ] at 56 49
 \pinlabel {$\ldots$} [ ] at 56 17
 \pinlabel {$\ldots$} [ ] at 159 49
 \pinlabel {$\ldots$} [ ] at 159 17
 \pinlabel {$\bullet$} [ ] at 27.5 33
 \pinlabel {$\bullet$} [ ] at 83.5 33
 \pinlabel {$\bullet$} [ ] at 99.5 33
 \pinlabel {$\bullet$} [ ] at 115.5 33
 \pinlabel {$\bullet$} [ ] at 131.5 33
 \pinlabel {$\bullet$} [ ] at 187.5 33
 \pinlabel {$\scs 1$} [ ] at 23 38
 \pinlabel {$\scs d-2$} [ ] at 77 38
 \pinlabel {$\scs d-1$} [ ] at 93 38
 \pinlabel {$\scs c$} [ ] at 119 38
 \pinlabel {$\scs c-2$} [ ] at 139 38
 \pinlabel {$\scs 1$} [ ] at 192 38
 \pinlabel {$d$} [ ] at -3 56
 \pinlabel {$d$} [ ] at -3 7
 \pinlabel {$d-1$} [ ] at 21 57
 \pinlabel {$d-1$} [ ] at 21 10
 \pinlabel {$1$} [ ] at 92 57
 \pinlabel {$1$} [ ] at 92 11
 \pinlabel {$0$} [ ] at 108 57
 \pinlabel {$0$} [ ] at 108 11
 \pinlabel {$1$} [ ] at 123 57
 \pinlabel {$1$} [ ] at 123 11
 \pinlabel {$c-1$} [ ] at 194 57
 \pinlabel {$c-1$} [ ] at 194 11
 \pinlabel {$c$} [ ] at 216 56
 \pinlabel {$c$} [ ] at 216 10
 \pinlabel {$n$} [ ] at 220 35
\endlabellist
\centering
\ig{1.5}{ladder}
}
\nonumber
\end{equation}
\begin{equation}
  = (-1)^{cd} \;\;
	{
	\labellist
	\small\hair 2pt
	 \pinlabel {$\bullet$} [ ] at 21 40
	 \pinlabel {$\bullet$} [ ] at 51 40
	 \pinlabel {$\bullet$} [ ] at 72 40
	 \pinlabel {$\scs 1$} [ ] at 16 47
	 \pinlabel {$\scs d-2$} [ ] at 58 47
	 \pinlabel {$\scs d-1$} [ ] at 80 47
	 \pinlabel {$\ldots$} [ ] at 35 40
	 \pinlabel {$d$} [ ] at 25 11
	\endlabellist
	\centering
	\ig{1}{explodedown}
	} \qquad
	{
	\labellist
	\small\hair 2pt
	 \pinlabel {$\bullet$} [ ] at 1 37
	 \pinlabel {$\scs c$} [ ] at -6 41
	 \pinlabel {$\bullet$} [ ] at 21 37
	 \pinlabel {$\scs c-2$} [ ] at 12 41
	 \pinlabel {$\ldots$} [ ] at 36 37
	 \pinlabel {$\bullet$} [ ] at 52 37
	 \pinlabel {$\scs 1$} [ ] at 57 41
	\pinlabel {$c$} [ ] at 25 9
	\endlabellist
	\centering
	\ig{1}{explode}
	} \qquad = \quad (-1)^{cd} \;\; {
	\labellist
	\small\hair 2pt
	 \pinlabel {$d$} [ ] at -6 9
	 \pinlabel {$c$} [ ] at 31 9
	\endlabellist
	\centering
	\ig{1}{dotonrighttall}
	}
\end{equation}
This is the desired result. 
\end{proof}


\bibliographystyle{alpha}
\bibliography{qy-bib}


%

\vspace{0.1in}

\noindent B.~E.: {\small Department of Mathematics, University of Oregon,
Eugene, OR 97403, USA} \newline\noindent  {\tt \small email: belias@uoregon.edu}

\vspace{0.1in}

\noindent Y.~Q.: {\small Department of Mathematics, Yale University, New
Haven, CT 06511, USA} \newline \noindent {\tt \small email: you.qi@yale.edu}

%
\end{document}